\documentclass[12pt]{amsart}

\usepackage{amsthm}
\newtheorem{theorem}{Theorem}[section]
\newtheorem{lemma}[theorem]{Lemma}
\newtheorem{proposition}[theorem]{Proposition}

\theoremstyle{definition}
\newtheorem{definition}[theorem]{Definition}

\newtheorem{example}[theorem]{Example}
\theoremstyle{remark}
\newtheorem{remark}[theorem]{Remark}

\counterwithin*{section}{part}

\usepackage{amssymb}
\usepackage{empheq}
\usepackage{stmaryrd}
\usepackage{enumerate}
\usepackage[shortlabels]{enumitem}
\usepackage{calc}
\usepackage{url}
\usepackage{comment}
\usepackage{mathabx}

\usepackage{xcolor}

\newcommand{\norm}[1]{\Vert#1\Vert}
\usepackage{mathtools}
\DeclarePairedDelimiter{\ceil}{\lceil}{\rceil}
\DeclarePairedDelimiter{\floor}{\lfloor}{\rfloor}
\DeclareMathOperator*{\argmin}{arg\,min}
\DeclareMathOperator*{\dist}{dist}

\usepackage[left=2.0cm,%
                right=2.0cm,%
                top=2.5cm,%
                bottom=3.5cm,%
                headheight=12pt,%
                a4paper]{geometry}%

\begin{document}

\title[Effective rates for quasi-Fej\'er monotone dynamical systems]{Effective rates for continuous-time quasi-Fej\'er monotone dynamical systems}

\author[A. Freund and N. Pischke]{Anton Freund${}^{\MakeLowercase a}$ and Nicholas Pischke${}^{\MakeLowercase b}$}
\date{\today}
\maketitle
\vspace*{-5mm}
\begin{center}
{\scriptsize 
${}^a$ Institute of Mathematics, University of W\"urzburg,\\
Emil-Fischer-Str.~40, 97074 W\"urzburg, Germany,\\
${}^b$ Department of Computer Science, University of Bath,\\
Claverton Down, Bath, BA2 7AY, United Kingdom.\\ 
E-mails: anton.freund@uni-wuerzburg.de, nnp39@bath.ac.uk}
\end{center}

\maketitle 
\begin{abstract}
We provide quantitative convergence results for continuous-time dynamical systems in metric spaces that satisfy a continuous-time analog of quasi-Fej\'er monotonicity. More precisely, we provide a (strong) convergence result for such dynamical systems over compact metric spaces which is quantitatively outfitted with a continuous-time rate of metastability, which moreover can be explicitly and effectively constructed in a very uniform way, only depending on a few moduli representing quantitative witnesses to key properties of the dynamical system and a measure for the compactness of the space. We further show how this convergence result can be extended to non-compact spaces under a regularity assumption of the associated problem, where moreover rates of convergence can then be explicitly constructed which are similarly uniform. In both cases, already the associated ``infinitary'' convergence result is qualitatively novel in its present generality. Beyond this abstract quantitative theory for such dynamical systems, we motivate how the presently studied continuous-time variant of quasi-Fej\'er monotonicity naturally occurs as a unifying property of many dynamical systems and differential equations and inclusions, and in that way can be used to provide a comprehensive quantitative theory for many such dynamical systems. We illustrate this with three case studies for both classical first- and second-order dynamical systems in Hilbert spaces as well as (generalized) gradient flows and associated semigroups in nonlinear Hadamard spaces.  
\end{abstract}
\noindent
{\bf Keywords:} dynamical systems, continuous Fej\'er monotonicity, rates of metastability, rates of convergence, proof mining\\ 
{\bf MSC2020 Classification:} 34G25, 47J25, 90C25, 03F10

\section{Introduction}

Fej\'er monotonicity is one of the most fundamental  concepts in modern (convex) programming. Traditionally formulated by 
\[
\forall n\in\mathbb{N}\ \forall z\in F\left(d(x_{n+1},z)\leq d(x_n,z)\right)
\] 
for a sequence $(x_n)$, say in some metric space $(X,d)$, it expresses that the sequence as a whole descends towards a solution set $F\subseteq X$. This rather convenient property, or at least one of its many extensions, is enjoyed by many, if not most, iterative methods in optimization, and as such has considerably streamlined and unified the convergence analysis of such dynamical systems, where next to establishing Fej\'er monotonicity, all that is required to prove a method convergent is generally some mild approximation property of the sequence towards the solution set. As such, Fej\'er monotonicity over time became one of the most central organisational principles in the analysis of algorithms.

The notion itself has a rather long history already. The name Fej\'er monotonicity was coined in \cite{MotzkinSchoenberg1954}, referring to a use of a similar concept made by Fej\'er in \cite{Fejer1922}. Subsequently, it has been heavily studied by the ``Russian school'' of convex programming, in particular Eremin \cite{Eremin1968a,Eremin1968b} as well as Ermol'ev \cite{Ermolev1969,Ermolev1971,ErmolevTuniev1973}. The modern appreciation of Fej\'er monotonicity as a central unifying concept for many strands of convex programming is however centrally due to Combettes and Bauschke, in particular due to the surveys and expositions of Combettes \cite{Combettes2001,Combettes2009} and the central monograph \cite{BauschkeCombettes2017}. 

This paper is devoted to a study of a continuous-time analog of Fej\'er monotonicity, as formulated by
\[
\forall t\geq s\geq 0\ \forall z\in F\left(d(x(t),z)\leq d(x(s),z)\right)
\]
and an associated quasi-variant incorporating an error function $e$ with $\lim_{t\geq s\to\infty} e(s,t)=0$, that is
\[
\forall t\geq s\geq 0\ \forall z\in F\left(d(x(t),z)\leq d(x(s),z)+e(s,t)\right),
\]
now for a continuous-time dynamical system $x:[0,\infty)\to X$, still over some metric space $(X,d)$. This property, and its extensions by perturbations of the metric or inclusions of error terms, is intimately linked to differential inequalities for associated energy functionals as arising in the context of Lyapunov analyses of differential equations and inclusions, but in particular allows one to capture many further situations like flows and semigroups, also over nonlinear spaces (as discussed in greater detail in Section \ref{sec:motivation}). Nevertheless, this property is often left implicit, only explicitly coming to the forefront in the proofs, if at all. A notable exception is the well-known survey \cite{PeypouquetSorin2010} by Peypouquet and Sorin, where the property is prominently isolated as a link between continuous- and discrete-time settings (see in particular property (a3) therein). But even in \cite{PeypouquetSorin2010}, the investigation is not centered around the above property but rather constructed ad-hoc around the particular problem at hand, that is the first-order Cauchy problem for a maximal monotone operator. 

As such, the present paper is intended to illustrate how the above, more ``sequential'', notion naturally allows for a uniform study of such dynamical systems arising through differential equations, and beyond. In particular, this more sequential character seems particularly suitable for quantitative considerations, which are the main object of this paper.

Concretely, the first part of the present paper presents highly general and abstract convergence theorems for dynamical systems subscribing to (a generalized version of) the above continuous-time Fej\'er monotonicity property. These results fall into two categories:
\begin{enumerate}
\item We provide a strong convergence result under a (relative) compactness assumption on the space, utilizing only mild approximation assumptions on the dynamical systems.
\item We provide a strong convergence result without any explicit compactness assumption, but while imposing a certain rather general type of regularity for the associated problem. 
\end{enumerate}

In the former case, we in particular provide rates of metastability for the dynamical system~$x$, that is a function $\Delta(\varepsilon,g)$ such that
\[
\forall \varepsilon>0\ \forall g:\mathbb{N}\to\mathbb{N}\ \exists n\leq\Delta(\varepsilon,g)\ \forall t,s\in [n,n+g(n)]\left( d(x(t),x(s))\leq \varepsilon\right).
\]
These rates thereby represent a continuous-time analog of the usual rates of metastability as coined by Tao \cite{Tao2008b,Tao2008a} for discrete-time sequences, and similar to that context present an alternative phrasing of the Cauchy property of a dynamical system which allows for effective quantitative information, contrary to usual rates of convergence which, due to results from computability theory (see \cite{Specker1949} as well as \cite{Neumann2015}) are generally non-computable.

Nevertheless, in the latter case, these rates of metastability can generally be upgraded to full rates of convergence, that is we can even provide a function $\rho(\varepsilon)$ such that
\[
\forall \varepsilon>0\ \forall t,s\geq \rho(\varepsilon)\left( d(x(t),x(s))\leq \varepsilon\right).
\]
In either case, these rates can be explicitly constructed from some simple data representing quantitative formulations of the main assumptions, and we provide these constructions in full detail. In particular, the resulting rates are highly uniform, depending only on these data and not on further auxiliary properties of the space or dynamical system. 

The quantitative results pertaining to the first item in particular generalize related such results for discrete-time Fej\'er monotone sequences presented in \cite{KohlenbachLeusteanNicolae2018}, while the latter generalize similar such results given in \cite{KohlenbachLopezAcedoNicolae2019} (see also \cite{Pischke2023}). In particular, our methods are quite similar, and result in rates of a similar style and complexity, thereby highlighting how one can give a unified treatment of continuous- and discrete-time systems, in particular for quantitative questions, by following the more discrete reasoning to a large degree in the continuous context. We regard this as another methodological contribution of the present paper, and regard our paper in a similar spirit to the survey \cite{PeypouquetSorin2010}, which already heavily highlights such similarities, also in various other parts of the analysis of continuous- and discrete-time systems.

Like the works mentioned above, also the present results are derived using methods from proof mining, a subfield of mathematical logic which aims at utilizing tools from the foundations of mathematics to provide additional computational information for ineffectively proven results in mathematics. As such, it has found wide success in particular in the context of (nonlinear) analysis and optimization. We refer to the seminal monograph \cite{Kohlenbach2008} as well as the surveys \cite{Kohlenbach2019,KohlenbachOliva2003} for further background on proof mining. However, regardless of this logical background, the entire paper is formulated without explicit reference to logic, as typical for results arising in the proof mining program, and in that way can be comprehended without any knowledge of mathematical logic.

All these methodological aspects notwithstanding, we regard the actual applications of the abstract results presented here as the key exponents of our paper. To illustrate this, we provide three concrete case studies in this paper. The first presents a quantitative analysis of the work \cite{BotCsetnek2017} by Bo\c{t} and Csetnek for a first-order dynamical system over a nonexpansive operator and the associated first-order continuous-time variant of the forward-backward scheme. The second presents a quantitative analysis of the work \cite{BotCsetnek2016} by Bo\c{t} and Csetnek for a second-order dynamical system over a cocoercive operator, and its associated second-order continuous-time variant of the forward-backward scheme. Both case studies are chosen due to their rather abstract presentation and in particular their generality, allowing for a uniform treatment of various other preceding systems (as discussed in more detail in the respective introductions to the case studies later on, but we already want to refer to the survey \cite{Csetnek2020}). The second case study in particular presents an example where our approach, even in the context of metric regularity assumptions, cannot provide rates of convergence in general, as we can only provide a rate of metastability for the respective error function in the quasi-Fej\'er monotonicity property, which presents a novel structural reason for why no rates of convergence have appeared for that particular scheme, at that level of generality. This should in particular be compared to more restrictive choices of parameter processes and operators, in particular covering the continuous-time forward-backward method in the context of strongly monotone operators, where fast effective rates can nevertheless be constructed as illustrated in \cite{BotCsetnek2018}, which we partially extend in the present paper to more general settings, again however following a quite different methodology motivated by the discrete-time approach. The last case study is concerned with nonlinear semigroups on nonpositively curved metric spaces, making use of the general metric setting of our results. In particular, we consider the perhaps canonical example of the gradient flow semigroup as studied in this context by Mayer \cite{Mayer1998} and later Ba\v{c}\'ak \cite{Bacak2013}, and then further consider a related nonlinear semigroup generated by a nonexpansive mapping as studied by Stojkovic \cite{Stojkovic2012} and later Ba\v{c}\'ak and Reich \cite{BacakReich2014}, which in particular allows for an abstract approach towards heat flows in singular spaces (as further discussed later on).

At last, we think of our paper as laying the groundwork for quantitative studies of other facets of continuous-time Fej\'er monotonicity, and can conceive of various natural successors to the present work. Indeed, discrete-time Fej\'er monotonicity encompasses many variants, and we think that also these variants can be quantitatively lifted to a continuous-time setting by following the present methods. Initial examples of such variants include adaptations to varying metrics \cite{CombettesVu2013} or to different distance functions like Bregman distances \cite{BauschkeBorweinBorwein2003} or even broader general classes of distances on metric spaces as considered in \cite{Pischke2025} (see also \cite{NeriPischkePowell2025}). However, these extensions would be rather routine. More interestingly, one could also generalize the present results to continuous-time variants of the recently introduced Fej\'er$^*$ monotonicity \cite{ArakcheevBauschke2025b,BehlingBelloCruzIusemLiuSantos2024,BehlingBelloCruzIusemAlvesRibeiroSantos2024} as well as the rather general and extensive localized and relativized generalization of Fej\'er monotonicity \cite{KohlenbachPinto2023} (which was also introduced through a proof mining perspective), where then the very interesting question arises whether natural dynamical systems fall under that regime, with a natural contender being continuous-time variants of the method presented in \cite{BehlingBelloCruzIusemLiuSantos2024}. Last but not least, the present results naturally lend themselves to extensions into the stochastic setting. In particular, the present results on rates of convergence under regularity assumptions could be generalized in that way by combining the present approach with that of the recent works \cite{NeriPischkePowell2025,PischkePowell2026} on rates of convergence for discrete-time stochastically quasi-Fej\'er monotone sequences under stochastic regularity assumptions, while the general constructions of rates of metastability under a compactness assumption could be extended by instead following the approach of the upcoming work \cite{NeriPischkePowell2026}. The previously cited works rely on a quantitative theory for discrete-time supermartingales, and extensions of these results would then require a similar quantitative approach to continuous-time martingale theory, which would be interesting in its own right. In particular, such results could then unlock quantitative treatment of associated stochastic differential equations and inclusions, as to render them applicable to recent works in that direction such as \cite{BotSchindler2024,BotSchindler2025,LiuLongLiHuang2025,LukeSchnebelStaudiglPeypouquetQu2026,MaulenSotoFadiliAttouch2025,MaulenSotoFadiliAttouchOchs2026}.

\subsection*{Notation}

Throughout, if not stated otherwise, we let $(X,d)$ be a given metric space. We denote closed balls in such spaces by
\[
\overline{B}_r(x):=\{y\in X\mid d(x,y)\leq r\}.
\]
Natural numbers $\mathbb{N}$ are throughout assumed to contain $0$. The function $\ceil*{\cdot}$ is defined as usual, that is by $\ceil*{x}:=\min\{z\in\mathbb{Z}\mid z\geq x\}$. Further, we define bounded subtraction on $\mathbb{N}$ via $n\dotdiv m:=\max\{n-m,0\}$. If we are given a function $f:\mathbb{N}\to\mathbb{N}$, we denote iteration of that function by $f^{(i)}$, defined recursively by $f^{(0)}:=\mathrm{Id}$ and $f^{(i+1)}:=f\circ f^{(i)}$.

\section{Continuous-time Fej\'er and quasi-Fej\'er monotonicity}\label{sec:motivation}

\subsection{Definition and motivation}

We begin with our notion of continuous-time quasi-Fej\'er monotonicity, the central notion studied in the present paper. This notion, like all other variants of Fej\'er monotonicity, refers to an underlying solution set which we in this paper take to be given as the set of zeros
\[
\mathrm{zer}F:=\{x\in X\mid F(x)=0\}
\]
for a given solution function $F:X\to[0,+\infty]$.

\begin{definition}[Quasi-Fej\'er monotone dynamical system]\label{def:quasiFejer}
Let $F:X\to[0,+\infty]$ be a given solution function and let $G,H:[0,\infty)\to [0,\infty)$ be functions with $a\to 0$ whenever $H(a)\to 0$ as well as $G(a)\to 0$ whenever $a\to 0$.

A dynamical system $x:[0,+\infty)\to X$ is called $(G,H)$-quasi-Fej\'er monotone w.r.t.\ $F$ if
\[
H(d(x(t),y))\leq G(d(x(s),y))+e(s,t)
\]
for all $t\geq s$ and all $y\in\mathrm{zer}F$, where $e$ is some error function with $\lim_{t\geq s\to\infty}e(s,t)=0$. If $e\equiv 0$, then $x$ is called $(G,H)$-Fej\'er monotone w.r.t.\ $F$.
\end{definition}

Motivated by their use in \cite{KohlenbachLeusteanNicolae2018}, the general perturbation functions $G$ and $H$ above allow for quite a great variety of different notions of Fej\'er monotonicity familiar from the discrete-time world to be encompassed by the above continuous notion. 

As such, beyond the more ``ordinary'' quasi-Fej\'er monotonicity property
\[
d(x(t),y)\leq d(x(s),y)+e(s,t)\text{ for all }t\geq s
\]
derived by $G=H=\mathrm{Id}$, which presents a continuous-time analog of quasi-Fej\'er monotonicity of type I as considered by Combettes \cite{Combettes2001}, they in particular encompass functions such as  $G=H=(\cdot)^p$ for $p\in (0,+\infty)$, where the associated monotonicity property
\[
d^p(x(t),y)\leq d^p(x(s),y)+e(s,t)\text{ for all }t\geq s
\]
in particular presents a continuous-time analog of quasi-Fej\'er monotonicity of order $p$ as discussed in \cite{ErmolevTuniev1973}. In the case of $p=2$, this in particular presents a continuous-time analog of quasi-Fej\'er monotonicity of type II as considered in \cite{Combettes2001}. 

A more or less immediate lift of the discrete-time notion of quasi-Fej\'er monotonicity, the above notion nevertheless abstracts and unifies various scattered formulations and conditions often associated with ``continuous-time quasi-Fej\'er monotonicity''. These, while often distinct in their use and analysis, commonly take the form of differential inequalities
\[
\frac{d}{dt}\mathcal{E}_y(t)\leq \overline{e}(t)
\]
for a given (family of) function(s) $\mathcal{E}_y(t)=\rho(d(x(t),y))$ over $y\in\mathrm{zer}F$, where $x(t)$ is a given dynamical system, $\overline{e}\in L^1$ is an error function and $\rho$ again serves as a perturbation (commonly taking the forms $\rho=\mathrm{Id}$ or $\rho=(\cdot)^2$). Indeed, it is often already the stricter variant 
\[
\frac{d}{dt}\mathcal{E}_y(t)\leq 0,
\]
formulated without an error function, which occurs. The perhaps canonical case for this that we have in mind is the first-order Cauchy problem over a maximally monotone operator in Hilbert spaces (already due to \cite{Brezis1973,CrandallPazy1969,Komura1967}, see also \cite{BrezisPazy1970,CrandallLiggett1971} and e.g.\ the survey \cite{PeypouquetSorin2010}), that is
\[
\begin{cases}
-\dot{x}(t)\in A(x(t)),\\
x(0)=x_0,
\end{cases}
\]
where $A$ is a maximally monotone operator on a Hilbert space $X$ and $x_0\in\mathrm{dom}(A)$, in particular generalizing the gradient flow equation $\dot{x}=\nabla f(x)$ to general inclusions. Now, any solution $x:[0,\infty)\to X$ to the above problem (that is $x$ is an absolutely continuous function satisfying $x(0)=x_0$ and $-\dot{x}(t)\in A(x(t))$ almost everywhere; in fact, there is a unique such solution, see e.g.\ \cite{PeypouquetSorin2010}) satisfies the differential inequality
\[
\frac{d}{dt}\left(\frac{1}{2}\norm{x(t)-y}^2\right)=\langle \dot{x}(t),x(t)-y\rangle\leq 0
\]
almost everywhere, where $y\in\mathrm{zer}(A)$ is any zero of $A$, i.e. $0\in A(y)$.  Beyond this rather immediate example, we further refer to \cite{AttouchCzarnecki2010,BanertBot2018,BotCsetnek2017,BotCsetnekLaszlo2020,BotCsetnekVuong2020,CsetnekMalitskyTam2019} for other instances of differential inequalities such as the above, just to name a few.

In any case, in the context of the above differential inequality, integration of that property yields
\[
\rho(d(x(t),y))\leq \rho(d(x(s),y))+\int_s^t \bar{e}(\tau)\,d\tau\text{ for all }t\geq s
\]
for all $y\in \mathrm{zer}F$, so that the above family of differential inequalities results in the dynamical system $x$ being $(\rho,\rho)$-quasi-Fej\'er monotone in the sense of Definition \ref{def:quasiFejer}, with errors $e(s,t):=\int_s^t \overline{e}(\tau)\,d\tau$. In the special case of $\overline{e}\equiv 0$ already mentioned above, we in particular get that $x$ is $(\rho,\rho)$-Fej\'er monotone.

Such a presentation of the errors via an integral, that is
\[
e(s,t)=\int_s^t \overline{e}(\tau)\,d\tau
\]
for $\overline{e}\in L^1$, is in a way the standard form of an error that we want to accommodate in the above Definition \ref{def:quasiFejer}. In this error form, the $(G,H)$-quasi-Fej\'er monotonicity of a dynamical system $x$ can be even more directly recognised as a direct lift of the usual discrete-time $(G,H)$-quasi-Fej\'er monotonicity of a sequence $(x_n)$ (see e.g.\ Definition 6.2 in \cite{KohlenbachLeusteanNicolae2018}), that is
\[
H(d(x_{n+m},y))\leq G(d(x_n,y))+\sum_{i=n}^{n+m-1}\varepsilon_i\text{ for all }n\geq m\text{ and }y\in\mathrm{zer}F,
\]
where $(\varepsilon_i)$ is a nonnegative sequence of summable errors, to the continuous setting, with errors taking the form of  integrals instead of series.

However, similar as with the perturbation functions, the generality of the generic error function $e$ compared to a specific representation such as the above allows to encompass even more general situations. Namely, beyond such rather simple situations as detailed above, many analyses of continuous-time dynamical systems do not rely on such simple functions $\mathcal{E}_y(t)=\rho(d(x(t),y))$ as above but instead consider much more extensive (families of) so-called energy functionals $\mathcal{E}_y(t)$, suitably chosen to allow for the analysis of the underlying dynamical system $x(t)$ at hand, often in a rather ad-hoc way (see e.g.\ the survey \cite{MoucerTaylorBach2023}). Regardless, such energy functions often decompose in special ways.

One such decomposition takes the form of
\[
\mathcal{E}_y(t)=\rho(d(x(t),y))+\mathcal{E}'(t).
\]
Indeed, such energy functions and associated differential inequalities are rather common, and we refer to e.g.\ \cite{AbbasAttouch2015,AbbasAttouchSvaiter2014,Antipin1994,AttouchAlvarez2000,BotCsetnek2016,BotCsetnek2016b,BotCsetnek2017b,BotCsetnek2019,BotCsetnekLaszlo2018}, just to name a few. Integration of the associated differential inequality property yields
\[
\mathcal{E}_y(t)-\mathcal{E}_y(s)\leq \int_s^t \bar{e}(\tau)\,d\tau\text{ for all }t\geq s
\]
as before and, in the case of the above decomposition $\mathcal{E}_y(t)=\rho(d(x(t),y))+\mathcal{E}'(t)$, this inequality immediately results in the monotonicity property
\[
\rho(d(x(t),y))\leq \rho(d(x(s),y))+\mathcal{E}'(s)-\mathcal{E}'(t)+\int_s^t \bar{e}(\tau)\,d\tau
\]
for all $t\geq s$ and $y\in\mathrm{zer}F$, so that in case the associated remainder $\mathcal{E}'$ satisfies $\mathcal{E}'(t)\to 0$ for $t\to\infty$, the dynamical system $x(t)$ is $(\rho,\rho)$-quasi-Fej\'er monotone in the above sense with a generalized error function
\[
e(s,t):=\mathcal{E}'(s)-\mathcal{E}'(t)+\int_s^t \bar{e}(\tau)\,d\tau.
\]

Another type of decomposition that sometimes occurs in linear spaces takes the form of
\[
\mathcal{E}_y(t)=\rho(\norm{x(t)-y+v(t)})
\]
for some additional function $v(t)$ with $\lim_{t\to\infty}v(t)= 0$. Such energy functions e.g.\ occur in~\cite{AlecsaLaszloPinta2021,AttouchLaszlo2021,AttouchSvaiter2011}. At least in situations where $\norm{x(t)-y}$, $\norm{x(t)-y+v(t)}$ are bounded (as can generally be established without great difficulty in such situations) and $\rho$ is Lipschitz on bounded sets (as is the case for e.g.\ $\rho=(\cdot)^2$), say with constant $L>0$ for a respective set containing $\norm{x(t)-y}$ and $\norm{x(t)-y+v(t)}$, we can use the associated differential inequality as follows: Integration as before yields
\[
\rho(\norm{x(t)-y+v(t)})\leq \rho(\norm{x(s)-y+v(s)})+\int_s^t \overline{e}(\tau)\,d\tau.
\]
Now, using the Lipschitz continuity of $\rho$ on the respective (bounded) set yields
\begin{align*}
\rho(\norm{x(t)-y})&\leq \rho(\norm{x(t)-y+v(t)})+L\norm{v(t)}\\
&\leq \rho(\norm{x(s)-y+v(s)})+\int_s^t \overline{e}(\tau)\,d\tau+L\norm{v(t)}\\
&\leq \rho(\norm{x(s)-y})+\int_s^t \overline{e}(\tau)\,d\tau+L(\norm{v(t)}+\norm{v(s)}).
\end{align*}
In that way, the dynamical system $x(t)$ then is $(\rho,\rho)$-quasi-Fej\'er monotone in the above sense with a generalized error function
\[
e(s,t):=\int_s^t \overline{e}(\tau)\,d\tau+L(\norm{v(t)}+\norm{v(s)}).
\]

Naturally, choosing such an energy function appropriately is quite difficult and lies at the heart of a convergence analysis of various continuous time methods. We in that way do not at all want to argue that the present results make such an approach obsolete but rather that once such a suitable function with a corresponding differential inequality has been isolated, it allows for a uniform quantitative analysis of the associated convergence results via the above notion of quasi-Fej\'er monotonicity for continuous-time dynamical systems and it is that quantitative perspective, as already discussed in the introduction, that justifies its use.

\subsection{Quantitative results for differential inequalities}

We have seen above how quasi-Fej\'er monotonicity for continuous-time dynamical systems often comes in the guise of differential inequalites. These inequalities have various key asymptotic results attached to them that are fundamental for most analyses in the literature. As such, we now discuss these results here, and in particular provide quantitative variants, on which we later rely in the context of our applications.

The main results with which we are concerned are the following fundamental lemmas (see e.g.\ Lemmas 5.1 and 5.2 in \cite{AbbasAttouchSvaiter2014}):

\begin{lemma}
Assume $\mathcal{E}:[0,\infty)\to\mathbb R$ is locally absolutely continuous and bounded below such that
\[
\frac{d}{dt}\mathcal{E}(t)\leq \overline{e}(t)
\]
almost everywhere for some $\overline{e}\in L^1([0,\infty))$. Then $\lim_{t\to\infty}\mathcal{E}(t)$ exists.

If further $\mathcal{E}$ is nonnegative and $\mathcal{E}\in L^p([0,\infty))$ for $p\in [1,+\infty)$, then $\lim_{t\to\infty}\mathcal{E}(t)=0$. In that case, we may also have $\overline{e}\in L^r([0,\infty))$ for $r\in [1,+\infty]$.
\end{lemma}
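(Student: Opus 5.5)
We will work with the auxiliary function
\[
\mathcal{F}(t):=\mathcal{E}(t)-\int_0^t \overline{e}(\tau)\,d\tau .
\]
Since $\mathcal{E}$ is locally absolutely continuous and $\overline{e}\in L^1$, the function $\mathcal{F}$ is locally absolutely continuous. Moreover $\dot{\mathcal{F}}=\dot{\mathcal{E}}-\overline{e}\leq 0$ almost everywhere. A locally absolutely continuous function is the integral of its derivative on compact intervals, so
\[
\mathcal{F}(t)-\mathcal{F}(s)=\int_s^t\dot{\mathcal{F}}\leq 0\quad\text{for all }t\geq s,
\]
and $\mathcal{F}$ is nonincreasing. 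It is also bounded below, since $\mathcal{E}$ is bounded below and
\[
\left|\int_0^t\overline{e}\right|\leq\norm{\overline{e}}_{L^1}.
\]
Hence $\lim_{t\to\infty}\mathcal{F}(t)$ exists and is finite. Because $\overline{e}\in L^1$, the limit $\lim_{t\to\infty}\int_0^t\overline{e}$ exists as well, by dominated convergence or simply the Cauchy criterion $\int_s^t|\overline{e}|\to 0$. Adding the two limits shows that $\lim_{t\to\infty}\mathcal{E}(t)=:L$ exists. This proves the first claim.

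For the second claim, assume $\mathcal{E}\geq 0$ and $\mathcal{E}\in L^p([0,\infty))$ with $p\in[1,\infty)$. The first part gives $L\geq 0$. Suppose $L>0$. Then there is $T$ with $\mathcal{E}(t)\geq L/2$ for all $t\geq T$, so
\[
\int_T^\infty \mathcal{E}^p\geq\int_T^\infty (L/2)^p=\infty,
\]
which contradicts $\mathcal{E}\in L^p$. Hence $L=0$. This step only uses that the limit exists, so the argument is immediate once the first part is in place.

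The remaining sentence concerns $\overline{e}\in L^r$ with $r\in[1,\infty]$. We read it as a relaxation of the hypothesis in the second part: it should suffice that $\overline{e}\in L^r$ rather than $L^1$, at least when $\overline{e}$ is taken nonnegative, or more precisely when its positive part is controlled. This is the step we expect to be the main obstacle, since without integrability of $\overline{e}$ the limit of $\mathcal{E}$ need not exist a priori, so the first-part argument is unavailable.

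We would argue directly that $\mathcal{E}(t)\to 0$, by contradiction. Since $\mathcal{E}\in L^p$, the set $\{\mathcal{E}\geq\varepsilon\}$ has finite measure for each $\varepsilon>0$. Now suppose $\mathcal{E}(t_n)\geq 2\varepsilon$ along a sequence $t_n\to\infty$. On any interval $[t_n-\delta,t_n]$ we have
\[
\mathcal{E}(t)\geq \mathcal{E}(t_n)-\int_t^{t_n}\overline{e}^+ .
\]
By H\"older's inequality,
\[
\int_{t_n-\delta}^{t_n}\overline{e}^+\leq \delta^{1-1/r}\norm{\overline{e}^+}_{L^r},
\]
which is smaller than $\varepsilon$ for a fixed small $\delta$ when $r>1$. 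For $r=1$ we instead use the tail $\int_{t_n-\delta}^{t_n}|\overline{e}|\to 0$ as $n\to\infty$. In either case $\mathcal{E}\geq\varepsilon$ on intervals $[t_n-\delta,t_n]$ of fixed length $\delta$. Passing to a subsequence, these intervals can be taken disjoint, so $\{\mathcal{E}\geq\varepsilon\}$ has infinite measure. This contradicts $\mathcal{E}\in L^p$, and therefore $\mathcal{E}(t)\to 0$.
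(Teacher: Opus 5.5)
Your proof is correct. The first two parts coincide with the paper's argument: the nonincreasing function $D(t)=\mathcal{E}(t)-\int_0^t\overline{e}(\tau)\,d\tau$ from the proof of Lemma~\ref{lem:AAS1}, followed by the $L^p$ observation. For the relaxation to $\overline{e}\in L^r$ you take a genuinely different route.

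The paper (following Abbas--Attouch--Svaiter, cf.\ the proof of Lemma~\ref{lem:AAS2}) removes exactly the obstacle you flagged with a power trick. With $q=1+p(1-1/r)$ one has $\frac{d}{dt}\mathcal{E}^q=q\mathcal{E}^{q-1}\frac{d}{dt}\mathcal{E}\leq q\mathcal{E}^{q-1}\overline{e}$ a.e. Since $\mathcal{E}^{q-1}\in L^{p/(q-1)}$ with $p/(q-1)$ conjugate to $r$, H\"older's inequality gives $\mathcal{E}^{q-1}\overline{e}\in L^1$. The first part then applies to $\mathcal{E}^q$, so $\lim_{t\to\infty}\mathcal{E}(t)$ exists after all and must be $0$.

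Your direct argument runs differently: a value $\mathcal{E}(t_n)\geq 2\varepsilon$ forces $\mathcal{E}\geq\varepsilon$ on a preceding window of fixed length, and $\vert\{\mathcal{E}\geq\varepsilon\}\vert<\infty$ allows only finitely many disjoint such windows. This is more elementary and somewhat more general. It needs no H\"older coupling of $p$ and $r$. It only needs $\int_{t-\delta}^t\overline{e}^+$ to be uniformly small for small $\delta$ and large $t$ (true e.g.\ for $\overline{e}^+\in L^1+L^\infty$). And it only needs the superlevel sets of $\mathcal{E}$ to have finite measure, rather than $\mathcal{E}\in L^p$. Your hedge about the sign of $\overline{e}$ is unnecessary, since $\overline{e}\in L^r$ already gives $\overline{e}^+\in L^r$.

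What the paper's reduction buys is modularity for the quantitative results. Everything passes through the first part with the explicit bound $\norm{\mathcal{E}^{q-1}\overline{e}}_1\leq A^{q-1}B$. This is exactly how the rate $\varpi_{c,A,B}$ in Lemma~\ref{lem:AAS2} is read off from Lemma~\ref{lem:AAS1}, using only norm bounds and uniformly in $r\in[1,\infty]$. In your argument the window length $\delta$ is explicit from $\norm{\overline{e}}_r$ only when $r>1$. For $r=1$ you rely on the tail of $\overline{e}$, which a norm bound alone does not control, though that case is already covered by your first two parts.
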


We begin with an analysis of the first part of the above lemma:

\begin{lemma}\label{lem:AAS1}
Assume $\mathcal{E}:[0,\infty)\to\mathbb R$ is locally absolutely continuous and bounded below with $\mathcal{E}(t)\geq b$ for all $t\in[0,\infty)$ and $c\geq\mathcal{E}(0)$. Further assume we have 
\[
\frac{d}{dt}\mathcal{E}(t)\leq \overline{e}(t)
\]
almost everywhere for some $\overline{e}\in L^1([0,\infty))$ with $\norm{\overline{e}}_1\leq B$. Then for all $\varepsilon>0$ and $f:\mathbb{N}\to\mathbb{N}$, it holds that
\begin{equation*}
\exists n\leq \tilde{f}^{(\omega_{b,c,B}(\varepsilon))}(0)\ \forall s,t\in [n,n+f(n)]\left( \vert \mathcal{E}(s)-\mathcal{E}(t)\vert\leq\varepsilon\right),
\end{equation*}
where $\tilde{f}(n):=n+f(n)$ and 
\[
\omega_{b,c,B}(\varepsilon):=\left\lceil\frac{2B}{\varepsilon}\right\rceil\cdot\left\lceil\frac{c+B-b}{\varepsilon}\right\rceil.
\]
\end{lemma}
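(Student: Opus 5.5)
The strategy is the usual one for quasi-monotone sequences, adapted to continuous time. We split $\mathcal{E}$ into a nonincreasing part and a small remainder, and then run a pigeonhole argument on two quantities. The first is the tail mass of $\overline{e}$, which controls the remainder. The second is the nonincreasing part itself, which can drop by $\varepsilon$ only boundedly often.

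Set $E(t):=\int_0^t\overline{e}^+(\tau)\,d\tau$, where $\overline{e}^+:=\max\{\overline{e},0\}$. Then $E$ is nondecreasing with $0\leq E\leq B$. Put $u(t):=\mathcal{E}(t)-E(t)$. By local absolute continuity, $u$ is nonincreasing, since $\dot u\leq \overline{e}-\overline{e}^+\leq 0$ almost everywhere. Moreover
\[
b-B\leq u(t)\leq u(0)\leq c,
\]
so $u$ decreases by at most $c+B-b$ in total. On an interval $[n,n+f(n)]$ and for $s\leq t$ in it, we get
\[
\mathcal{E}(t)-\mathcal{E}(s)=(u(t)-u(s))+(E(t)-E(s)).
\]
It therefore suffices to find $n$ such that both $u(n)-u(\tilde f(n))$ and $E(\tilde f(n))-E(n)$ are at most $\varepsilon/2$. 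In that case $|\mathcal{E}(t)-\mathcal{E}(s)|\leq\varepsilon/2+\varepsilon/2$ in either direction, because each term is monotone and controlled by its total variation over the interval. Halving the constants inside the formula is a bookkeeping issue. I would in fact aim for $E$-increments at most $\varepsilon/2$ and $u$-increments at most $\varepsilon/2$, and then adjust, or note that $\lceil 2B/\varepsilon\rceil$ already reflects this halving on the $E$ side.

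Next I consider the iterates $n_i:=\tilde f^{(i)}(0)$, which give consecutive intervals $[n_i,n_{i+1}]$. Call an index $i$ \emph{$E$-bad} if $E(n_{i+1})-E(n_i)>\varepsilon/2$. Since these increments sum to at most $B$, there are fewer than $\lceil 2B/\varepsilon\rceil$ bad indices. It is cleaner to argue in blocks. Partition the indices into $\lceil (c+B-b)/\varepsilon\rceil$ consecutive blocks, each of length $\lceil 2B/\varepsilon\rceil$. Alternatively, one can use the nested pigeonhole: among any $\lceil 2B/\varepsilon\rceil$ consecutive intervals at least one is $E$-good.

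For $u$, the cleaner bound uses $\varepsilon$ rather than $\varepsilon/2$. I would refine the split to ask for $u$-drop at most $\varepsilon$ and an $E$-increment at most $\varepsilon$ separately, noting that for $s\leq t$ we have $\mathcal{E}(t)-\mathcal{E}(s)\leq E(t)-E(s)\leq\varepsilon$. For the opposite direction,
\[
\mathcal{E}(s)-\mathcal{E}(t)=u(s)-u(t)-(E(t)-E(s))\leq u(s)-u(t)\leq\varepsilon.
\]
So conditions with $\varepsilon$ on each part suffice, with no halving needed. The count of $u$-bad intervals (those with drop greater than $\varepsilon$) is then less than $\lceil (c+B-b)/\varepsilon\rceil$. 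The count of $E$-bad intervals is less than $\lceil B/\varepsilon\rceil\leq\lceil 2B/\varepsilon\rceil$.

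The combinatorial core comes last. Among the indices $i<\omega_{b,c,B}(\varepsilon)$, the number of bad indices is at most $(\lceil B/\varepsilon\rceil-1)+(\lceil (c+B-b)/\varepsilon\rceil-1)$. This is less than the product $\omega_{b,c,B}(\varepsilon)$ whenever both factors are at least $1$. The degenerate case where a factor is $0$, namely $B=0$ or $c+B-b=0$, must be handled separately, since then $\mathcal{E}$ is constant from the start and $n=0$ works. Hence some $i<\omega$ is good, and $n:=n_i\leq\tilde f^{(\omega)}(0)$ works, using monotonicity of $\tilde f$-iterates ($\tilde f(n)\geq n$).

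The main thing to verify carefully is that local absolute continuity indeed gives the monotonicity of $u$, via $u(t)-u(s)=\int_s^t\dot u\leq 0$. The remaining work is the edge cases of the ceilings.
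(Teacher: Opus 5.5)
Your argument is correct for $B>0$, and it takes a different route from the paper.

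\textbf{The paper's route.} The paper works with the signed primitive $D(t)=\mathcal{E}(t)-\int_0^t\overline{e}$. It must therefore bound $|\mathcal{E}(s)-\mathcal{E}(t)|\leq |D(s)-D(t)|+\int_s^t|\overline{e}|$, which forces halving $\varepsilon$ on both pieces. It then runs a nested pigeonhole over blocks $[H(m),H(m+1)]$, each made of $\lceil 2B/\varepsilon\rceil$ consecutive intervals $[\tilde f^{(r)}(0),\tilde f^{(r+1)}(0)]$. First it finds a block on which $D$ drops by at most $\varepsilon/2$. Then, inside that block, it finds an interval on which $\int|\overline{e}|\leq\varepsilon/2$. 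This gives a multiplicative bound. As written, it uses $\lceil 2(c+B-b)/\varepsilon\rceil$ blocks, so it reaches only $\lceil 2B/\varepsilon\rceil\cdot\lceil 2(c+B-b)/\varepsilon\rceil$ iterations, not the stated $\omega_{b,c,B}(\varepsilon)$.

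\textbf{Your route.} Passing to $\overline{e}^+$ splits $\mathcal{E}$ into a nonincreasing $u$ and a nondecreasing $E$. Because these move in opposite directions, you get $|\mathcal{E}(s)-\mathcal{E}(t)|\leq\max\{u(s)-u(t),\,E(t)-E(s)\}$, so no halving is needed. Both pieces have explicit budgets, $c+B-b$ and $B$, so you can count bad intervals additively. This yields a good index $i\leq\lceil B/\varepsilon\rceil+\lceil (c+B-b)/\varepsilon\rceil-2$. That index is below $\omega_{b,c,B}(\varepsilon)$ because $pq\geq p+q-1$ for $p,q\geq 1$.

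\textbf{What each buys.} Your bound is linear rather than multiplicative, and it delivers the stated $\omega$ exactly, which is the value used downstream in Lemma~\ref{lem:AAS2}. The paper's nested block scheme is the standard template from the discrete-time literature. For this lemma, your version is both simpler and sharper. You can drop the exploratory $\varepsilon/2$ and block discussion: the final version, with threshold $\varepsilon$ on both parts and an additive count, is the one that works.

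\textbf{One correction: the degenerate case.} Your degenerate-case remark is wrong. If $B=0$ but $c>b$, then $\mathcal{E}$ is only nonincreasing, not constant. Since $\omega_{b,c,0}(\varepsilon)=0$, the lemma would force $n=0$, and this fails. For instance, take $\mathcal{E}(t)=\max\{c-t,b\}$ with $\overline{e}\equiv 0$, $c-b>\varepsilon$ and $f(0)\geq c-b$. So the lemma tacitly needs $B>0$. This is harmless, since $B$ is only an upper bound and can be enlarged; the paper's proof also breaks at $B=0$, because there is no $i<\lceil 2B/\varepsilon\rceil=0$. Moreover, $c+B-b=0$ can occur only when $B=0$, because $c\geq\mathcal{E}(0)\geq b$. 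With $B>0$, both factors are at least $1$ and your count needs no separate case.
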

\begin{proof}
By the proof of \cite[Lemma~5.1]{AbbasAttouchSvaiter2014}, the map~$t\mapsto \mathcal{E}(t)-\int_0^t\overline{e}(\tau)\,d\tau$ is non-increasing with
\begin{equation*}
D(t):=\mathcal{E}(t)-\int_0^t\overline{e}(\tau)\,d\tau\in[b-B,c]\text{ for all }t\in[0,\infty).
\end{equation*}
We define $H(m)=\tilde{f}^{\left(\left\lceil\frac{2B}{\varepsilon}\right\rceil\cdot m\right)}(0)$. It is clearly impossible for~$D$ to descend by more than~$\varepsilon/2$ in all intervals~$[H(m),H(m+1)]$ with~$m<\left\lceil 2(c+B-b)/\varepsilon\right\rceil$. So for one of the indicated~$m$, we have
    \begin{equation*}
        0\leq D(s)-D(t)\leq\frac{\varepsilon}{2}\quad\text{whenever }H(m)\leq s\leq t\leq H(m+1).
    \end{equation*}
    On the other hand, the integral over~$|\overline{e}|$ cannot exceed~$\frac{\varepsilon}{2}$ on all intervals~$[\tilde f^{(i)}(H(m)),\tilde f^{(i+1)}(H(m))]$ with $i<\left\lceil\frac{2B}{\varepsilon}\right\rceil$. For an~$i$ where the integral is bounded by~$\frac{\varepsilon}{2}$, we put
    \begin{equation*}
        r=\left\lceil\frac{2B}{\varepsilon}\right\rceil\cdot m+i\quad\text{and}\quad n=\tilde h^{(r)}(0).
    \end{equation*}
    Observe that we have
    \begin{equation*}
        [n,n+f(n)]=\left[\tilde f^{(r)}(0),\tilde f^{(r+1)}(0)\right]=\left[\tilde f^{(i)}(H(m)),\tilde f^{(i+1)}(H(m))\right]\subseteq[H(m),H(m+1)].
    \end{equation*}
    For $n\leq s\leq t\leq n+f(n)$, we thus get
    \begin{equation*}
        |\mathcal{E}(s)-\mathcal{E}(t)|\leq D(s)-D(t)+\int_s^t|G(\tau)|\,d\tau\leq\frac{\varepsilon}{2}+\frac{\varepsilon}{2}=\varepsilon,
    \end{equation*}
    as desired.
\end{proof}

We continue with an analysis of~\cite[Lemma~5.2]{AbbasAttouchSvaiter2014}:

\begin{lemma}\label{lem:AAS2}
Assume $\mathcal{E}\in L^p([0,\infty))$ is locally absolutely continuous and non-negative, with $1\leq p<\infty$, and where $c\geq\mathcal{E}(0)$. Further assume we have 
\[
\frac{d}{dt}\mathcal{E}(t)\leq \overline{e}(t)
\]
almost everywhere, for some $\overline{e}\in L^r([0,\infty))$ with $1\leq r\leq\infty$. Let $\norm{\mathcal{E}}_p\leq A$ and $\norm{\overline{e}}_r\leq B$. Then for all $\varepsilon>0$ and $f:\mathbb{N}\to\mathbb{N}$, it holds that
\begin{equation*}
\exists n\leq \tilde{f'}^{(\varpi_{c,A,B}(\varepsilon))}(0)\ \forall t\in[n,n+f(n)]\left( \mathcal{E}(t)<\varepsilon\right),
\end{equation*}
where $\tilde{f'}(n):=n+f'(n)$ as before, with $f'(n):=\max\{f(n),\ceil*{(3A/\varepsilon)^p}\}$, and
\begin{equation*}
\varpi_{c,A,B}(\varepsilon):=\left\lceil\frac{2^{q+1}qA^{q-1}B}{\varepsilon^q(2^q-1)}\right\rceil\cdot\left\lceil\frac{2^q(c^q+qA^{q-1}B)}{\varepsilon^q(2^q-1)}\right\rceil\quad\text{with}\quad q=1+p\cdot\left(1-\frac{1}{r}\right).
\end{equation*}
\end{lemma}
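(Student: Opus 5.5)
The plan is to reduce Lemma \ref{lem:AAS2} to Lemma \ref{lem:AAS1}, applied not to $\mathcal{E}$ itself but to the power $\mathcal{E}^q$ with $q=1+p(1-\frac{1}{r})\geq 1$. The metastability for $\mathcal{E}^q$ will then be combined with a pigeonhole argument on $\norm{\mathcal{E}}_p$.

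\emph{Step 1: a differential inequality for $\mathcal{E}^q$.} Since $\mathcal{E}$ is locally absolutely continuous and non-negative, it is bounded on compact intervals. As $u\mapsto u^q$ is $C^1$ on bounded subsets of $[0,\infty)$ (because $q\geq 1$), the function $\mathcal{E}^q$ is again locally absolutely continuous. Almost everywhere we have
\[
\frac{d}{dt}\mathcal{E}^q(t)=q\,\mathcal{E}^{q-1}(t)\,\frac{d}{dt}\mathcal{E}(t)\leq q\,\mathcal{E}^{q-1}(t)\,|\overline{e}(t)|=:\hat{e}(t).
\]
To bound $\norm{\hat e}_1$, use H\"older's inequality with the conjugate exponent $r'$ of $r$. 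One checks $(q-1)r'=p$, so that $\norm{\mathcal{E}^{q-1}}_{r'}=\norm{\mathcal{E}}_p^{p/r'}\leq A^{q-1}$. Hence $\norm{\hat{e}}_1\leq qA^{q-1}B=:B'$. The degenerate cases need a separate remark:
\begin{itemize}
\item For $r=1$ we have $q=1$, the factor $\mathcal{E}^{q-1}$ is simply $1$, and $\norm{\hat e}_1\leq B$ directly.
\item For $r=\infty$ we have $r'=1$ and $q-1=p$, and the same computation goes through.
\end{itemize}
Getting these exponents right, together with the absolute continuity of $\mathcal{E}^q$, is the only delicate bookkeeping step.

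\emph{Step 2: apply Lemma \ref{lem:AAS1}.} We apply it to $\mathcal{E}^q$ with lower bound $b=0$, initial bound $c^q\geq\mathcal{E}^q(0)$, error bound $B'$, tolerance $\varepsilon':=\varepsilon^q(2^q-1)/2^q$, and the function $f'$ in place of $f$. Unfolding $\omega_{0,c^q,B'}(\varepsilon')$ gives exactly $\varpi_{c,A,B}(\varepsilon)$. We therefore obtain some $n\leq\tilde{f'}^{(\varpi_{c,A,B}(\varepsilon))}(0)$ such that
\[
|\mathcal{E}^q(s)-\mathcal{E}^q(t)|\leq \varepsilon^q-(\varepsilon/2)^q \quad\text{for all } s,t\in[n,n+f'(n)].
\]

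\emph{Step 3: locate a small value and conclude.} The interval $[n,n+f'(n)]$ has length $f'(n)\geq (3A/\varepsilon)^p$. Since $\int \mathcal{E}^p\leq A^p$, there is some $t_0$ in this interval with
\[
\mathcal{E}^p(t_0)\leq \frac{A^p}{f'(n)}\leq (\varepsilon/3)^p .
\]
(If $A=0$, then $\mathcal{E}\equiv 0$ by continuity and the claim is trivial.) Thus $\mathcal{E}^q(t_0)\leq(\varepsilon/3)^q<(\varepsilon/2)^q$. By Step 2, every $t\in[n,n+f'(n)]$ then satisfies
\[
\mathcal{E}^q(t)<(\varepsilon/2)^q+\varepsilon^q-(\varepsilon/2)^q=\varepsilon^q,
\]
that is, $\mathcal{E}(t)<\varepsilon$. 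Since $f(n)\leq f'(n)$, the interval $[n,n+f(n)]$ is contained in $[n,n+f'(n)]$, which yields the claim.
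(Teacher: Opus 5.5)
Your proof is correct and follows the paper's argument. You apply Lemma~\ref{lem:AAS1} to $\mathcal{E}^q$ with error $q\mathcal{E}^{q-1}\overline{e}$, bound its $L^1$-norm by $qA^{q-1}B$ via H\"older and $(q-1)r'=p$, and then use $\norm{\mathcal{E}}_p\leq A$ to find a point of $[n,n+f'(n)]$ where $\mathcal{E}$ is small. The only difference is bookkeeping: the paper first treats $f$ with $f(n)>(2A/\varepsilon)^p$, getting $\mathcal{E}(s_0)<\varepsilon/2$ by contradiction, and then passes to $f'$. You instead feed $f'$ in directly, average to get $\varepsilon/3$, and handle $A=0$ explicitly.

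Like the paper, you tacitly need $B>0$, which is inherited from Lemma~\ref{lem:AAS1}. For $B=0$ one gets $\varpi_{c,A,B}(\varepsilon)=0$ and the claim fails, e.g.\ for $\mathcal{E}(t)=\max\{c-t,0\}$ with $c\geq\varepsilon$ and $\overline{e}\equiv 0$.
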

\begin{proof}
Let $\varepsilon>0$ and $f:\mathbb{N}\to\mathbb{N}$ be given. We first show that
\begin{equation*}
\exists n\leq \tilde{f}^{(\varpi_{c,A,B}(\varepsilon))}(0)\ \forall t\in[n,n+f(n)]\left( \mathcal{E}(t)<\varepsilon\right)
\end{equation*}
for $\tilde{f}(n):=n+f(n)$, if $f(n)> (2A/\varepsilon)^p$ for all $n\in\mathbb{N}$.
    Since~$r'=\frac{p}{q-1}$ is conjugate to~$r$ and $\mathcal{E}^{q-1}$ is in~$L^{r'}([0,\infty))$, H\"older's inequality yields
    \begin{equation*}
        \norm{\mathcal{E}^{q-1}\cdot \overline{e}}_1\leq A^{q-1}\cdot B.
    \end{equation*}
    Except on a null set, we have
    \begin{equation*}
        \frac{d}{dt}\mathcal{E}^q(t)=q\cdot \mathcal{E}^{q-1}(t)\cdot\frac{d}{dt}\mathcal{E}(t)\leq q\cdot \mathcal{E}^{q-1}(t)\cdot \overline{e}(t).
    \end{equation*}
    As $f(n)> (2A/\varepsilon)^p$ for all $n\in\mathbb{N}$, Lemma \ref{lem:AAS1} shows that
    \begin{equation*}
        \left|\mathcal{E}^q(s)-\mathcal{E}^q(t)\right|\leq\frac{\varepsilon^q(2^q-1)}{2^q}\quad\text{for all }s,t\in[n,n+f(n)]
    \end{equation*}
for some~$n\leq \tilde{f}^{(\varpi_{c,A,B}(\varepsilon))}(0)$ as in the current lemma. Now there is some~$s_0\in[n,n+f(n)]$ with $\mathcal{E}(s_0)<\varepsilon/2$, since we would otherwise get
    \begin{equation*}
        A^p\geq\int_n^{n+f(n)} \mathcal{E}^p(t)\,dt\geq\frac{\varepsilon^p}{2^p}\cdot f(n)>A^p.
    \end{equation*}
    We now show that $\mathcal{E}(t)<\varepsilon$ holds for any~$t\in[n,n+f(n)]$. If this was not true, we would get
    \begin{equation*}
        |\mathcal{E}^q(s_0)-\mathcal{E}^q(t)|>\varepsilon^q-\frac{\varepsilon^q}{2^q}=\frac{\varepsilon^q(2^q-1)}{2^q},
    \end{equation*}
    against the above. If $f:\mathbb{N}\to\mathbb{N}$ is now a general function, define $f'(n):=\max\{f(n),\ceil*{(3A/\varepsilon)^p}\}$. By definition $f'(n)>(2A/\varepsilon)^p$ and so the above implies 
\[
\exists n\leq \tilde{f'}^{(\varpi_{c,A,B}(\varepsilon))}(0)\ \forall t\in[n,n+f'(n)]\left( \mathcal{E}(t)<\varepsilon\right).
\]
The result now follows as $f(n)\leq f'(n)$ and so $[n,n+f(n)]\subseteq [n,n+f'(n)]$.
\end{proof}

\begin{remark}
In the above Lemmas \ref{lem:AAS1} and \ref{lem:AAS2}, we have always quantitatively resolved integrability assumptions via upper bounds on the involved $p$-norms, e.g.\ assuming for a given $\overline{e}\in L^1([0,\infty))$ a bound $B\geq \norm{\overline{e}}_1$. It is clear from the proofs above that it actually suffices to have rates of metastability for the integrals in question, e.g.\ in the case of $\overline{e}\in L^1([0,\infty))$ above taking the form of a function $\xi(\varepsilon,f)$ such that for all $\varepsilon>0$ and $f:\mathbb{N}\to\mathbb{N}$, one has
\[
\exists n\leq \xi(\varepsilon,f)\left(\int_{n}^{n+f(n)}\vert\overline{e}\vert<\varepsilon\right).
\]
But then the results become combinatorially more involved. As one is commonly supplied with such a norm upper bound, we keep the results at that generality.
\end{remark}

\section{A quantitative convergence theory for quasi-Fej\'er monotone dynamical systems}

We now turn to our main abstract results, presenting a quantitative theory of convergence for quasi-Fej\'er monotone dynamical systems in continuous time. As discussed before, we will be working over general metric spaces. Convergence results in these rather general contexts commonly fall in two categories:
\begin{enumerate}
\item Strong (metric) convergence results under a (local) compactness assumption, for general problems $\mathrm{zer}F$.
\item Strong (metric) convergence results in the absence of compactness, but for that under an additional (local) regularity assumption on the problem $\mathrm{zer}F$.
\end{enumerate}
Weak convergence results, which are quite characteristic for Fej\'er monotonicity, in general are not available for general metric spaces and require additional structure, commonly working over Hilbert spaces or Hadamard spaces (complete geodesic spaces of nonpositive curvature). In that way, these results shall not be our concern here, although the present work can be easily adapted also to these cases.

In either case, the only other property next to the quasi-Fej\'er monotonicity that is required to induce convergence is the rather weak approximation property that the sequence in question contains arbitrarily good approximate solutions along its run.

We here consider quantitative variants of both items (1) and (2) above, extending the previous seminal works in that regard \cite{KohlenbachLeusteanNicolae2018} and \cite{KohlenbachLopezAcedoNicolae2019} (see also \cite{Pischke2023}) from discrete time to continuous time. We begin with the former.

\subsection{Convergence under relative compactness assumptions}

We recall the basic setup. Let $(X,d)$ be a given metric space and let $F:X\to [0,+\infty]$ be a given function, so that $\mathrm{zer}F:=\{x\in X\mid F(x)=0\}$ serves as an abstract representation of a solution set and so that each sublevel set
\[
\mathrm{lev}_{\leq\varepsilon} F:=\{x\in X\mid F(x)\leq\varepsilon\},
\]
with $\varepsilon>0$, serves as an abstract representation of a set of $\varepsilon$-approximate solutions.\\

We will be concerned with a quantitative convergence result for quasi-Fej\'er monotone dynamical systems in the context where $X$ is totally bounded. To arrive at such a result, we require three central ingredients.

The first is, as mentioned before, a (quantitative) approximation property for the sequence in question that suffices to guarantee convergence.

\begin{definition}
Let $x:[0,\infty)\to X$ be a dynamical system. We say that $x$ has the $\liminf$-property w.r.t.\ $F$ if $\liminf_{t\to\infty} F(x(t))=0$. Quantitatively, we say that $\varphi:(0,\infty)\times\mathbb{N}\to \mathbb{N}$ is a $\liminf$-bound for $x$ w.r.t.\ $F$ if
\[
\forall \varepsilon>0\ \forall n\in\mathbb{N} \exists t\in [n,\varphi(\varepsilon,n)]\left( F(x(t))<\varepsilon\right).
\]
\end{definition}

\begin{remark}
We will later commonly assume that $\varphi$ is monotone in the sense that $\varphi(\varepsilon,n)\geq \varphi(\delta,n)$ whenever $\varepsilon\leq\delta$. This can actually be assumed without loss of generality as if a general $\varphi(\varepsilon,n)$ is given, we can define
\[
\widehat{\varphi}(\varepsilon,n):=\max\{\varphi(1/(k+1),n)\mid k\leq \ceil*{1/\varepsilon}\}.
\]
Indeed, then $\widehat{\varphi}$ is still a $\liminf$-bound as given $\varepsilon>0$ and $n\in\mathbb{N}$, we have
\[
\exists t\in [n,\varphi(1/(\ceil*{1/\varepsilon}+1),n)]\left( F(x(t))<\frac{1}{\ceil*{1/\varepsilon}+1}\leq \varepsilon\right)
\]
and $\varphi(1/(\ceil*{1/\varepsilon}+1),n)\leq \widehat{\varphi}(\varepsilon,n)$, and if $\varepsilon\leq\delta$, then $\ceil*{1/\varepsilon}\geq\ceil*{1/\delta}$ and so
\[
\widehat{\varphi}(\varepsilon,n)=\max\{\varphi(1/(k+1),n)\mid k\leq \ceil*{1/\varepsilon}\}\geq \max\{\varphi(1/(k+1),n)\mid k\leq \ceil*{1/\delta}\}=\widehat{\varphi}(\delta,n).
\]
Hence $\widehat{\varphi}$ is also monotone in the above sense.
\end{remark}

The second ingredient we will need is a uniform quantitative strengthening of the quasi-Fej\'er property. For that, consider the property of $(G,H)$-quasi-Fej\'er monotonicity unwound into its logical structure:
\[
\forall y\in X\left( \forall \delta>0 (y\in\mathrm{lev}_{\leq\delta}F)\to \forall \varepsilon>0\ \forall s\leq t (H(d(x(t),y))\leq G(d(x(s),y))+e(s,t)+\varepsilon)\right).
\]
With the quantifiers suitably prenexed (and rewriting $\forall s\leq t$ into $\forall n,m\in\mathbb{N}\ \forall s\leq t\in [n,n+m]$) the above is equivalent to
\begin{gather*}
\forall \varepsilon>0\ \forall n,m\in\mathbb{N}\ \forall y\in X\ \exists \delta>0(  y\in\mathrm{lev}_{\leq\delta}F\\
\to  \forall s\leq t\in [n,n+m]\left(H(d(x(t),y))\leq G(d(x(s),y))+e(s,t)+\varepsilon\right)).
\end{gather*}
We now call a sequence uniformly $(G,H)$-quasi-Fej\'er monotone if we can bound (and hence witness) the above $\delta$ in terms of $\varepsilon,n,m$, uniformly for all $y\in X$:

\begin{definition}
Let $x:[0,\infty)\to X$ be a dynamical system. We say that $x$ is uniformly $(G,H)$-quasi-Fej\'er monotone w.r.t.\ $F$ with modulus $\chi:(0,\infty)\times\mathbb{N}^2\to (0,\infty)$ if
\begin{gather*}
\forall \varepsilon>0\ \forall n,m\in\mathbb{N}\ \forall y\in \mathrm{lev}_{\leq\chi(\varepsilon,n,m)}F\ \forall s\leq t\in [n,n+m]\\
\left(H(d(x(t),y))\leq G(d(x(s),y))+e(s,t)+\varepsilon\right)
\end{gather*}
for some error function $e$ with $\lim_{t\geq s\to\infty}e(s,t)=0$. If $e\equiv 0$, then we call $x$ uniformly $(G,H)$-Fej\'er monotone w.r.t.\ $F$.
\end{definition}

We have two remarks on this notion:

\begin{remark}\label{rem:unifFejerComp}
A standard compactness argument shows that if $X$ is compact and $F$ as well as $G,H$ are continuous, then any continuous dynamical system $x$ which is $(G,H)$-quasi-Fej\'er monotone w.r.t.\ $F$ and some continuous error function $e$ is also already uniformly $(G,H)$-quasi-Fej\'er monotone w.r.t.\ $F$ and that same error function.
\end{remark}

\begin{remark}[For logicians]
As in the case of the associated notion of uniform $(G,H)$-quasi-Fej\'er monotonicity for discrete-time sequences given in \cite{KohlenbachLeusteanNicolae2018} (see Remark 4.7 therein), the two notions above, plain and uniform quasi-Fej\'er monotonicity for continuous-time sequences, can be proven to be equivalent in theories for bounded metric spaces used in proof mining (see \cite{Kohlenbach2005}) augmented with a nonstandard uniform boundedness principle (see \cite{Kohlenbach2006}). In particular, associated logical metatheorems for these system due to Kohlenbach (recall the previous references and see further \cite{Kohlenbach2008}) in that way guarantee that such a modulus $\chi$ can be extracted from large classes of proofs of the associated, weaker, quasi-Fej\'er monotonicity property, over bounded metric spaces, making it particularly natural for applications.
\end{remark}

The last central ingredient will be a quantitative rendering of the compactness assumption on the space. Concretely, in the following we will work under a total boundedness assumption, which we quantitatively resolve following the work of \cite{Gerhardy2008}:

\begin{definition}[essentially \cite{Gerhardy2008}]\label{def:mod-boundedness}
Let $A\subseteq X$ be nonempty. A function $\gamma:(0,\infty)\to\mathbb{N}$ is a modulus of total boundedness for $A$ if for any $\varepsilon>0$ and any sequence $(x_k)\subseteq A$:
\[
\exists 0\leq i<j\leq\gamma(\varepsilon)\left( d(x_i,x_j)\leq \varepsilon\right).
\]
\end{definition}

It follows rather immediately that a set is totally bounded if, and only if, it has a modulus of total boundedness. We refer to \cite{Gerhardy2008} and \cite{KohlenbachLeusteanNicolae2018} for further discussions of this property. 

Besides these three central ingredients, we are left with quantitatively resolving the minor assumptions around our sequence. 

Let us first consider the perturbation functions $G,H:[0,\infty)\to [0,\infty)$. By assumption, we have $a\to 0$ whenever $H(a)\to 0$, and this is equivalent with assuming the existence of a quantitative modulus $h:(0,\infty)\to (0,\infty)$ such that
\[
H(a)<h(\varepsilon)\to a<\varepsilon
\]
for all $a\geq 0$ and $\varepsilon>0$. Similarly, the property that $G(a)\to 0$ whenever $a\to 0$ is resolved by considering as modulus $g$ with
\[
a<g(\varepsilon)\to G(a)<\varepsilon
\]
for all $a\geq 0$ and $\varepsilon>0$. 

The last minor quantitative detail we need to resolve is the error property $\lim_{t\geq s\to\infty} e(s,t)=0$. Most conveniently, we would resolve this assumption with a rate of convergence (similar to how errors for quasi-Fej\'er monotone sequences are resolved in \cite{KohlenbachLeusteanNicolae2018}), that is a function $\eta(\varepsilon)$ such that
\[
\forall \varepsilon>0\ \forall t\geq s\geq \eta(\varepsilon)\left( \vert e(s,t)\vert\leq\varepsilon\right).
\]
This assumption is particularly mild in cases where the errors are effectively composed of parameters of a problem which can be freely chosen, so that such a rate can be readily supplied, as is often the case.

However, this will crucially not be the case for some applications later where we will only be able to derive a rate of metastability for the above limit, that is a function $\eta(\varepsilon,f)$ such that
\[
\forall \varepsilon>0\ \forall f:\mathbb{N}\to \mathbb{N}\ \exists n\leq \eta(\varepsilon,f)\ \forall s\leq t\in [n,n+f(n)]\left( \vert e(s,t)\vert\leq\varepsilon\right).
\]
This generality however comes with the price of some added combinatorial complexity of the final rate.

Combining all of these ingredients, we then get the following central quantitative result on the convergence of quasi-Fej\'er monotone dynamical systems in totally bounded spaces.

\begin{theorem}\label{thm:generalFejer}
Let $X$ be totally bounded with a modulus $\gamma$. Assume that $G,H:[0,\infty)\to [0,\infty)$ are such that we have moduli $g,h$ with
\[
H(a)<h(\varepsilon)\to a<\varepsilon \text{ and }a<g(\varepsilon)\to G(a)<\varepsilon
\]
for all $a\geq 0$ and $\varepsilon>0$. Also, let $e(s,t)\to 0$ as $s\leq t\to \infty$ with a rate of metastability $\eta$, i.e.\
\[
\forall \varepsilon>0\ \forall f:\mathbb{N}\to\mathbb{N}\ \exists n\leq \eta(\varepsilon,f)\ \forall s\leq t\in [n,n+f(n)]\left(\vert e(s,t)\vert <\varepsilon\right).
\]
Lastly, let $x:[0,\infty)\to X$ be a dynamical system such that
\begin{enumerate}
\item $x$ has the $\liminf$-property w.r.t.\ $F$ with bound $\varphi:(0,\infty)\times\mathbb{N}\to \mathbb{N}$, i.e.\
\[
\forall \varepsilon>0\ \forall n\in\mathbb{N}\ \exists t\in [n,\varphi(\varepsilon,n)]\left( F(x(t))<\varepsilon\right).
\]
Further, we assume (w.l.o.g.) that $\varphi$ is monotone in the sense that $\varphi(\varepsilon,n)\geq \varphi(\delta,n)$ whenever $\varepsilon\leq\delta$.
\item $x$ is uniformly $(G,H)$-quasi-Fej\'er monotone w.r.t.\ $F$ with error $e$ and with modulus $\chi:(0,\infty)\times\mathbb{N}^2\to (0,\infty)$, i.e.\
\begin{gather*}
\forall \varepsilon>0\ \forall n,m\in\mathbb{N}\ \forall y\in \mathrm{lev}_{\leq\chi(\varepsilon,n,m)}F\ \forall s\leq t\in [n,n+m]\\
\left(H(d(x(t),y))\leq G(d(x(s),y))+e(s,t)+\varepsilon\right).
\end{gather*}
\end{enumerate}
Then $x$ is metastable in the sense that for all $\varepsilon>0$ and $f:\mathbb{N}\to\mathbb{N}$, it holds that 
\[
\exists n\leq \Delta(\varepsilon,f)\ \forall s,t\in [n,n+f(n)]\left( d(x(s),x(t))\leq\varepsilon\right),
\]
where moreover the bound $\Delta(\varepsilon,f)$ can be explicitly described by
\[
\Delta(\varepsilon,f):=\max\{\Delta(i,\varepsilon,f)\mid i\leq P\}+1
\]
where $P:=\gamma(g(h(\varepsilon/2)/3))+1$ and $\Delta(0,\varepsilon,f):=0$ as well as
\[
\Delta(j,\varepsilon,f):=\max\{\varphi(\widehat{\varepsilon}_j,n)\mid n\leq \eta(h(\varepsilon/2)/3,f_{\varphi,\widehat{\varepsilon}_j})\}
\]
with
\[
\widehat{\varepsilon}_j=\min\{\chi^M_f(h(\varepsilon/2)/3,\Delta(i,\varepsilon,f))\mid i<j\}
\]
for $j\geq 1$, where
\begin{gather*}
\chi_f(\varepsilon,n):=\chi(\varepsilon,n,f(n+1)+1),\\
\chi^M_f(\varepsilon,n):=\min\{\chi_f(\varepsilon,m)\mid m\leq n\},
\end{gather*}
as well as
\[
f_{\varphi,\varepsilon}(n):=\max\{m+1+f(m+1)\mid m\leq \varphi(\varepsilon,n)\}\dotdiv n.
\]
\end{theorem}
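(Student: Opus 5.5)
The plan is to transfer the discrete-time argument of \cite{KohlenbachLeusteanNicolae2018} for Fej\'er monotone sequences in totally bounded spaces, replacing sequence indices by the intervals $[n,n+f(n)]$. Put $\delta:=h(\varepsilon/2)/3$. We build points $y_j$ in $X$ recursively for $j=1,\dots,P$, together with times at which they are produced, such that each time is bounded by $\Delta(j,\varepsilon,f)$. Since $\gamma$ is a modulus of total boundedness, among $P=\gamma(g(\delta))+1$ many points two of them, say $y_i$ and $y_j$ with $i<j$, satisfy $d(y_i,y_j)\leq g(\delta)$. This pair then supplies the required $n$.

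\textbf{Step 1 (construction).} Assume $y_i$ and $n_i\leq\Delta(i,\varepsilon,f)$ have been produced for all $i<j$. Set $\widehat\varepsilon_j$ as in the statement; by monotonicity of $\chi^M_f$ in its second argument, $\widehat\varepsilon_j\leq\chi_f(\delta,n_i)$ for every $i<j$. Apply the metastability rate $\eta$ with $\delta$ and $f_{\varphi,\widehat\varepsilon_j}$. This gives some $k\leq\eta(\delta,f_{\varphi,\widehat\varepsilon_j})$ such that $|e(s,t)|<\delta$ for all $s\leq t\in[k,k+f_{\varphi,\widehat\varepsilon_j}(k)]$. The $\liminf$-bound then yields $t_j\in[k,\varphi(\widehat\varepsilon_j,k)]$ with $F(x(t_j))<\widehat\varepsilon_j$. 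Put $y_j:=x(t_j)$ and $n_j:=\lceil t_j\rceil$, or a nearby integer. By the definition of $f_{\varphi,\varepsilon}$ we have $[n_j,n_j+1+f(n_j+1)]\subseteq[k,k+f_{\varphi,\widehat\varepsilon_j}(k)]$, so the error is below $\delta$ on the relevant window. This accounts for the shifts ``$m+1$'' and ``$f(n+1)+1$'' in the formula. Since $\varphi$ is monotone in $\varepsilon$, $n_j\leq\Delta(j,\varepsilon,f)$.

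\textbf{Step 2 (the key estimate).} Take $i<j$ with $d(y_i,y_j)\leq g(\delta)$, so that $d(x(t_j),y_i)<g(\delta)$ up to the boundary convention, and hence $G(d(x(t_j),y_i))<\delta$. Since $y_i\in\mathrm{lev}_{\leq\chi_f(\delta,n_i)}F$ and $t_j$ lies in a window starting near $n_i$, uniform quasi-Fej\'er monotonicity applies with $s=t_j$ and any $t\in[t_j,t_j+f(\cdot)+1]$. We need $t_j\geq n_i$, which holds because the construction produces times exceeding earlier ones; if necessary, start each $\eta$-search above the previous $\Delta$. This gives
\[
H(d(x(t),y_i))\leq G(d(x(t_j),y_i))+e(t_j,t)+\delta<3\delta=h(\varepsilon/2).
\]
Hence $d(x(t),y_i)<\varepsilon/2$ on the whole window. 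The triangle inequality then gives $d(x(s),x(t))\leq\varepsilon$ for all $s,t\in[n,n+f(n)]$, where $n:=\lceil t_j\rceil\leq\Delta(\varepsilon,f)$; the final $+1$ in $\Delta$ absorbs the ceiling.

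\textbf{Main obstacle.} The main difficulty is the index bookkeeping. The window lengths $m$ fed to $\chi$ must be chosen relative to the earlier $n_i$, whereas Fej\'er monotonicity is needed on $[t_j,\lceil t_j\rceil+f(\lceil t_j\rceil)]$ with $t_j$ real. The error window from $\eta$ must also cover this interval. The function $f_{\varphi,\varepsilon}$ handles the second requirement. For the first, I would have to check carefully that $[t_j,\lceil t_j\rceil+f(\lceil t_j\rceil)]\subseteq[n_i,n_i+f(n_i+1)+1]$, possibly by taking $\chi$ relative to the current index rather than $n_i$. If that inclusion fails, I would instead apply the modulus with $n=\lfloor t_j\rfloor$, bounded via $\chi^M_f$, which explains why the minimum over $m\leq n$ appears in the formula.
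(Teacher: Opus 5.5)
Step 1 is essentially the paper's construction, but Step 2 has a genuine gap: you use the two close points in the wrong roles. Your construction guarantees $F(y_j)<\widehat\varepsilon_j\le\chi_f(\delta,n_i)$ for $i<j$. So it is the \emph{later} point $y_j$ that is a good approximate zero relative to the window attached to the \emph{earlier} index $i$. Nothing gives $y_i\in\mathrm{lev}_{\le\chi_f(\delta,n_i)}F$: the bound $F(y_i)<\widehat\varepsilon_i$ only refers to windows at indices $k<i$, and $n_i$ is determined only after $y_i$ has been chosen.

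Independently, with $s=t_j$ you would need $t_j$ and the window after it to lie inside $[n_i,n_i+f(n_i+1)+1]$. But $t_j$ comes from an independent $\eta$-search and can lie far beyond this window, or even before $n_i$. The claim that the times increase is false for the construction, and forcing it would change the bound. Your fallback of evaluating the modulus at $\lfloor t_j\rfloor$ via $\chi^M_f$ fails for the same reason. It would require $F(y_i)\le\chi_f(\delta,\lfloor t_j\rfloor)$ with $\lfloor t_j\rfloor$ as large as $\Delta(j,\varepsilon,f)$, and the choice of $y_i$ cannot anticipate this.

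The missing idea is to centre the Fej\'er inequality at the later point and start it at the earlier time. Take $0<I<J$ with $d(x(t_I),x(t_J))\le g(\delta)$.
\begin{itemize}
\item Since $\lfloor t_I\rfloor\le t_I\le\Delta(I,\varepsilon,f)$, you get $F(x(t_J))<\widehat\varepsilon_J\le\chi(\delta,\lfloor t_I\rfloor,f(\lfloor t_I\rfloor+1)+1)$.
\item By the definition of $f_{\varphi,\widehat\varepsilon_I}$, the error window from stage $I$ contains $[\lfloor t_I\rfloor,\lfloor t_I\rfloor+1+f(\lfloor t_I\rfloor+1)]$.
\item Apply uniform quasi-Fej\'er monotonicity with $y=x(t_J)$ and $s=t_I$. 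This gives $H(d(x(t),x(t_J)))\le G(d(x(t_I),x(t_J)))+|e(t_I,t)|+\delta$, which is at most $h(\varepsilon/2)$ as in your estimate.
\item This holds for all $t\in[n,n+f(n)]$ with $n:=\lfloor t_I\rfloor+1\le\Delta(\varepsilon,f)$. Hence $x(t)$ stays within $\varepsilon/2$ of $x(t_J)$ on that window, and the triangle inequality finishes the proof.
\end{itemize}
You must use $\lfloor t_I\rfloor$ rather than $\lceil t_I\rceil$, so that $s=t_I$ lies in the window covered by the modulus; this is where the shifts by $1$ in $f(n+1)+1$ and $m+1+f(m+1)$ come from. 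No ordering between $t_I$ and $t_J$ is needed at all: $x(t_J)$ enters only as an approximate zero, not as a point of the trajectory inside the window.
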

\begin{proof}
Let $\varphi_0$ be a selection function for the $\liminf$-property which is bounded by $\varphi$, i.e.\ take $\varphi_0:(0,\infty)\times\mathbb{N}\to [0,\infty)$ to be any function such that $x(\varphi_0(\varepsilon,n))\in \mathrm{lev}_{\leq\varepsilon} F$ and $\varphi_0(\varepsilon,n)\in [n,\varphi(\varepsilon,n)]$ for all $\varepsilon>0$ and $n\in\mathbb{N}$. Set $t_0:=0$ and $n_0:=0$. Then, given $f:\mathbb{N}\to\mathbb{N}$ and $\varepsilon>0$, recursively define
\[
\varepsilon_j:=\min\{\chi(g(\varepsilon/2)/3,\floor*{t_i},f(\floor*{t_i}+1)+1)\mid i<j\},
\]
let $n_j\leq \eta(h(\varepsilon/2)/3,f_{\varphi,\widehat{\varepsilon}_j})$ be such that
\[
\forall s\leq t\in [n_j,n_j+f_{\varphi,\widehat{\varepsilon}_j}(n_j)]\left(\vert e(s,t)\vert \leq h(\varepsilon/2)/3\right),
\]
and set $t_j:=\varphi_0(\varepsilon_j,n_j)$. We first note that $\varepsilon_j\geq\widehat{\varepsilon}_j$ for $j>0$ as well as $\Delta(i,\varepsilon,f)\geq t_i$ for $i\geq 0$, which we show by induction. Clearly $\Delta(0,\varepsilon,f)=0= t_0$ and if $\Delta(i,\varepsilon,f)\geq t_i$ for all $i<j$, then
\begin{align*}
\widehat{\varepsilon}_j&=\min\{\chi^M_f(h(\varepsilon/2)/3,\Delta(i,\varepsilon,f))\mid i<j\}\\
&\leq \min\{\chi_f(h(\varepsilon/2)/3,\floor*{t_i})\mid i<j\}=\varepsilon_j
\end{align*}
as $\floor*{t_i}\leq t_i\leq \Delta(i,\varepsilon,f)$. Then we get
\begin{align*}
t_j=\varphi_0(\varepsilon_j,n_j)\leq \varphi(\varepsilon_j,n_j)&\leq \varphi(\widehat{\varepsilon}_j,n_j)\leq \max\{\varphi(\widehat{\varepsilon}_j,n)\mid n\leq \eta(h(\varepsilon/2)/3,f_{\varphi,\widehat{\varepsilon}_j})\}=\Delta(j,\varepsilon,f)
\end{align*}
as $n_j\leq \eta(h(\varepsilon/2)/3,f_{\varphi,\widehat{\varepsilon}_j})$.

Now, using the modulus of total boundedness $\gamma$ for the sequence $(x(t_j))$, we get indices
\[
0<I<J\leq\gamma(g(h(\varepsilon/2)/3))+1
\]
such that $d(x(t_I),x(t_J))\leq g(h(\varepsilon/2)/3)$. By definition, as $J>0$ and $J>I$, we have 
\[
x(t_J)\in \mathrm{lev}_{\leq \varepsilon_J}F\subseteq \mathrm{lev}_{\leq \chi(h(\varepsilon/2)/3,\floor*{t_I},f(\floor*{t_I}+1)+1)}F.
\]
Also, as $I>0$, we have $t_I\geq n_I$. The former now implies
\[
H(d(x(t),x(t_J)))\leq G(d(x(s),x(t_J)))+e(s,t)+h(\varepsilon/2)/3
\]
for all $s\leq t\in [\floor*{t_I},\floor*{t_I}+1+f(\floor*{t_I}+1)]$. At the same time, we have
\[
\forall s\leq t\in [n_I,n_I+f_{\varphi,\widehat{\varepsilon}_I}(n_I)]\left( \vert e(s,t)\vert \leq h(\varepsilon/2)/3\right).
\]
Now, we have $n_I\leq \floor*{t_I}$ as $n_I\leq t_I$ and $n_I$ is a natural number, and so
\begin{align*}
n_I\leq \floor*{t_I}\leq t_I\leq \floor*{t_I}+1&\leq \floor*{t_I}+1+f(\floor*{t_I}+1)\\ &\leq \max\{m+1+f(m+1)\mid m\leq\varphi(\widehat{\varepsilon}_I,n_I)\}\\ &\leq n_I+f_{\varphi,\widehat{\varepsilon}_I}(n_I)
\end{align*}
as $t_I=\varphi_0(\varepsilon_I,n_I)\leq \varphi(\varepsilon_I,n_I)\leq \varphi(\widehat{\varepsilon}_I,n_I)$ since $\widehat{\varepsilon}_I\leq \varepsilon_I$. Thus, in the above, we may pick $s=t_I$ to get
\[
H(d(x(t),x(t_J)))\leq G(d(x(t_I),x(t_J)))+e(t_I,t)+h(\varepsilon/2)/3\text{ and }\vert e(t_I,t)\vert\leq h(\varepsilon/2)/3
\]
for all $t\in [\floor*{t_I}+1,\floor*{t_I}+1+f(\floor*{t_I}+1)]$ (as then also $t\geq t_I$). As $d(x(t_I),x(t_J))\leq g(h(\varepsilon/2)/3)$, we have $G(d(x(t_I),x(t_J)))\leq h(\varepsilon/2)/3$ and so
\[
H(d(x(t),x(t_J)))\leq \vert e(t_I,t)\vert+2h(\varepsilon/2)/3\leq h(\varepsilon/2)
\]
for all such $t$. This implies $d(x(t),x(t_J))\leq \varepsilon/2$ for all $t\in [n,n+f(n)]$ with $n=\floor*{t_I}+1$, so that we obtain the result by the triangle inequality.

As seen above, we in particular have $\Delta(i,\varepsilon,f)\geq\floor*{t_i}$ so that $\Delta(\varepsilon,f)\geq \Delta(I,\varepsilon,f)+1\geq \floor*{t_i}+1=n$ as $I\leq P$.
\end{proof}

The above result extends Theorem 6.4 in \cite{KohlenbachLeusteanNicolae2018}, the analogous result for discrete-time sequences, and in fact closely follows the proof strategy given there.

If $F$ is uniformly continuous, then the above result can be further upgraded to also effectively guarantee that the dynamical system consists of approximate solutions.\footnote{In fact, Theorem \ref{thm:generalFejer-uc} can be established under the milder quantitative condition that $\mathrm{zer}F$ is uniformly closed w.r.t.\ the level sets $\mathrm{lev}_{\leq\varepsilon}F$ in the sense of \cite{KohlenbachLeusteanNicolae2018}, that is using moduli $\omega$ and $\delta$ such that for any $x,y\in X$, $d(x,y)\leq\omega(\varepsilon)$ and $F(x)\leq\delta(\varepsilon)$ implies $F(y)\leq\varepsilon$. For simplicity, and since it suffices for all our applications, we here restrict ourselves to the stronger assumption that $F$ is uniformly continuous.}

\begin{theorem}\label{thm:generalFejer-uc}
Under the assumptions of Theorem \ref{thm:generalFejer}, assume that $F$ is uniformly continuous with a modulus $\omega$, that is $\vert F(x)-F(y)\vert\leq\varepsilon$ for all $x,y\in X$ with $d(x,y)\leq\omega(\varepsilon)$.

Then for all $\varepsilon>0$ and $f:\mathbb{N}\to\mathbb{N}$, it holds that 
\[
\exists n\leq \Delta(\min\{\varepsilon,\omega(\varepsilon/2)\},f)\ \forall s,t\in [n,n+f(n)]\left( d(x(s),x(t))\leq\varepsilon\text{ and }F(x(t))\leq\varepsilon\right),
\]
where moreover the bound $\Delta(\varepsilon,f)$ can be explicitly described as in Theorem \ref{thm:generalFejer}, except using $\widehat{\chi}_\varepsilon(\delta,n,m):=\min\{\varepsilon/2,\chi(\delta,n,m)\}$ in place of $\chi$.
\end{theorem}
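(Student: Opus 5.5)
The plan is to rerun the proof of Theorem~\ref{thm:generalFejer} with the modified modulus $\widehat{\chi}_\varepsilon(\delta,n,m):=\min\{\varepsilon/2,\chi(\delta,n,m)\}$ and with accuracy $\varepsilon':=\min\{\varepsilon,\omega(\varepsilon/2)\}$ in place of $\varepsilon$. First I check that $\widehat{\chi}_\varepsilon$ is again a modulus of uniform $(G,H)$-quasi-Fej\'er monotonicity. This is immediate, since $\widehat{\chi}_\varepsilon\leq\chi$ gives $\mathrm{lev}_{\leq\widehat{\chi}_\varepsilon(\delta,n,m)}F\subseteq\mathrm{lev}_{\leq\chi(\delta,n,m)}F$. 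Hence Theorem~\ref{thm:generalFejer}, applied with $\widehat{\chi}_\varepsilon$ and $\varepsilon'$, already yields some $n\leq\Delta(\varepsilon',f)$ with $d(x(s),x(t))\leq\varepsilon'\leq\varepsilon$ for all $s,t\in[n,n+f(n)]$. This settles the metastability half.

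For the approximate-solution half I will not use the conclusion of Theorem~\ref{thm:generalFejer} as a black box. Instead I reuse its internal witnesses. In that proof the interval is $[n,n+f(n)]$ with $n=\floor*{t_I}+1$, and the point $x(t_J)$ (with $J>I\geq 0$, so $J\geq 1$) satisfies
\[
F(x(t_J))\leq\varepsilon_J=\min\{\widehat{\chi}_\varepsilon(\cdot,\floor*{t_i},\cdot)\mid i<J\}\leq\varepsilon/2.
\]
This is exactly why the truncation by $\varepsilon/2$ is built into $\widehat{\chi}_\varepsilon$: it ensures that every sampled point $x(t_j)$ with $j\geq 1$ lies in $\mathrm{lev}_{\leq\varepsilon/2}F$. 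The proof also gives $d(x(t),x(t_J))\leq\varepsilon'/2\leq\omega(\varepsilon/2)$ for all $t\in[n,n+f(n)]$. Uniform continuity of $F$ with modulus $\omega$ then yields $F(x(t))\leq F(x(t_J))+\varepsilon/2\leq\varepsilon$.

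The step needing the most care is this bookkeeping. I have to confirm that the distance bound obtained in the proof is to the \emph{fixed} point $x(t_J)$, namely $\varepsilon'/2$, and not merely a bound between pairs of points in the interval. I also have to confirm that the index $J$ is at least $1$, so that $\varepsilon_J$ is a genuine minimum over a nonempty set; this holds because $J>I$. The bound on $n$ is inherited verbatim: $n\leq\Delta(I,\varepsilon',f)+1\leq\Delta(\varepsilon',f)$, with $\Delta$ now computed from $\widehat{\chi}_\varepsilon$. A small technical point is that $F$ takes values in $[0,+\infty]$. Uniform continuity as stated, $\vert F(x)-F(y)\vert\leq\varepsilon$, forces $F$ to be finite near $\mathrm{lev}_{\leq\varepsilon/2}F$, so the inequality above makes sense.
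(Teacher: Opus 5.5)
Your proposal is correct and follows the paper's own proof essentially verbatim. You rerun the proof of Theorem~\ref{thm:generalFejer} with $\widehat{\chi}_\varepsilon$ and accuracy $\min\{\varepsilon,\omega(\varepsilon/2)\}$, use that the fixed witness $x(t_J)$ lies in $\mathrm{lev}_{\leq\varepsilon/2}F$ and satisfies $d(x(t),x(t_J))\leq\omega(\varepsilon/2)$ on $[n,n+f(n)]$ with $n=\floor*{t_I}+1$, and conclude $F(x(t))\leq\varepsilon$ by uniform continuity.
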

\begin{proof}
Note that $\widehat{\chi}_\varepsilon$ is also a modulus of uniform $(G,H)$-quasi-Fej\'er monotonicity. Now follow the proof of Theorem \ref{thm:generalFejer} with the respective bound. Writing $\varepsilon_0:=\min\{\varepsilon,\omega(\varepsilon/2)\}$, as therein we then derive $d(x(t),x(t_J))\leq \varepsilon_0/2$ for all $t\in [n,n+f(n)]$ with $n=\floor*{t_I}+1$, where 
\[
x(t_J)\in \mathrm{lev}_{\leq \widehat{\chi}_\varepsilon(h(\varepsilon_0/2)/3,\floor*{t_I},f(\floor*{t_I}+1)+1)}F\subseteq\mathrm{lev}_{\leq\varepsilon/2}F.
\]
The former gives $d(x(t),x(s))\leq \varepsilon$ for all $s,t\in [n,n+f(n)]$ as before, and additionally that $d(x(t),x(t_J))\leq \varepsilon_0/2\leq \omega(\varepsilon/2)$. We hence have $\vert F(x(t))-F(x(t_J))\vert\leq \varepsilon/2$. Now, the latter yields $F(x(t_J))\leq \varepsilon/2$, so that we have 
\[
F(x(t))\leq F(x(t_J))+\vert F(x(t))-F(x(t_J))\vert\leq\varepsilon
\]
for all $t\in [n,n+f(n)]$.
\end{proof}

If we are supplied with a rate of convergence for the error function $e$, or the errors disappear altogether, then the above construction becomes considerably simpler, which we record in the following theorem.

\begin{theorem}\label{thm:generalFejer-rates}
Under the assumptions of Theorem \ref{thm:generalFejer}, assume that $e(s,t)\to 0$ as $s\leq t\to \infty$ with a rate of convergence $\eta$, i.e.\
\[
\forall \varepsilon>0\ \forall t\geq s\geq \eta(\varepsilon)\left(\vert e(s,t)\vert <\varepsilon\right).
\]
Then $x$ is metastable in the sense that for all $\varepsilon>0$ and $f:\mathbb{N}\to\mathbb{N}$, it holds that 
\[
\exists n\leq \Delta(\varepsilon,f)\ \forall s,t\in [n,n+f(n)]\left( d(x(s),x(t))\leq\varepsilon\right),
\]
where moreover the bound $\Delta(\varepsilon,f)$ can be explicitly described by
\[
\Delta(\varepsilon,f):=\max\{\Delta(i,\varepsilon,f)\mid i\leq P\}+1
\]
with $P:=\gamma(g(h(\varepsilon/2)/3))+1$ and $\Delta(0,\varepsilon,f):=0$ as well as
\[
\Delta(j,\varepsilon,f):=\max\{\varphi(\widehat{\varepsilon}_j,n)\mid n\leq \eta(h(\varepsilon/2)/3)\}
\]
where $\widehat{\varepsilon}_j$ for $j\geq 1$ is as in Theorem \ref{thm:generalFejer}.

If $e\equiv 0$, then it suffices that $x$ has the approximate $F$-point property (instead of the $\liminf$-property w.r.t.\ $F$), that is
\[
\forall \varepsilon>0\ \exists t\leq \varphi(\varepsilon)\left( F(x(t))<\varepsilon\right)
\]
with a monotone bound $\varphi:(0,\infty)\to \mathbb{N}$, where $\Delta$ can be defined similar to above, with $\Delta(0,\varepsilon,f):=0$ and $\Delta(j,\varepsilon,f):=\varphi(\widehat{\varepsilon}_j)$.

Lastly, if in any of the above cases $F$ is uniformly continuous with a modulus $\omega$, then for all $\varepsilon>0$ and $f:\mathbb{N}\to\mathbb{N}$, it holds that 
\[
\exists n\leq \Delta(\min\{\varepsilon,\omega(\varepsilon/2)\},f)\ \forall s,t\in [n,n+f(n)]\left( d(x(s),x(t))\leq\varepsilon\text{ and }F(x(t))\leq\varepsilon\right),
\]
where the bound $\Delta(\varepsilon,f)$ is constructed as above, except using $\widehat{\chi}_\varepsilon(\delta,n,m):=\min\{\varepsilon/2,\chi(\delta,n,m)\}$ in place of $\chi$.
\end{theorem}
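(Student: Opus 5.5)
The plan is to rerun the proof of Theorem \ref{thm:generalFejer} verbatim, changing only how the indices $n_j$ are chosen. With a rate of convergence $\eta$, set $n_j:=\eta(h(\varepsilon/2)/3)$ (or any natural number $\leq\eta(h(\varepsilon/2)/3)$; for monotonicity of the later estimate we take exactly this value, rounded up to a natural number). The recursion stays as before:
\begin{itemize}
\item $\varepsilon_j:=\min\{\chi(h(\varepsilon/2)/3,\floor*{t_i},f(\floor*{t_i}+1)+1)\mid i<j\}$;
\item $t_j:=\varphi_0(\varepsilon_j,n_j)$, with $t_0:=0$.
\end{itemize}

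\emph{Step 1: bookkeeping.} The induction showing $\varepsilon_j\geq\widehat{\varepsilon}_j$ and $t_j\leq\Delta(j,\varepsilon,f)$ goes through unchanged. We have
\[
t_j\leq\varphi(\varepsilon_j,n_j)\leq\varphi(\widehat{\varepsilon}_j,n_j)\leq\max\{\varphi(\widehat{\varepsilon}_j,n)\mid n\leq\eta(h(\varepsilon/2)/3)\},
\]
using the monotonicity of $\varphi$ in its first argument. The auxiliary function $f_{\varphi,\varepsilon}$ is no longer needed.

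\emph{Step 2: the error term.} Apply $\gamma$ to $(x(t_j))$ to get $0<I<J\leq P$ with $d(x(t_I),x(t_J))\leq g(h(\varepsilon/2)/3)$. Uniform quasi-Fej\'er monotonicity at $y=x(t_J)$, which lies in the level set of $\chi(h(\varepsilon/2)/3,\floor*{t_I},f(\floor*{t_I}+1)+1)$, gives the inequality for $s\leq t$ in $[\floor*{t_I},\floor*{t_I}+1+f(\floor*{t_I}+1)]$. Since $I>0$, we have $t_I\geq n_I=\eta(h(\varepsilon/2)/3)$. So for every $t\geq t_I$ the rate yields $|e(t_I,t)|<h(\varepsilon/2)/3$ directly; this replaces the metastability window argument. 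Then $H(d(x(t),x(t_J)))<h(\varepsilon/2)$, so $d(x(t),x(t_J))<\varepsilon/2$ on $[n,n+f(n)]$ with $n=\floor*{t_I}+1\leq\Delta(\varepsilon,f)$. The triangle inequality finishes the argument.

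\emph{Step 3: the case $e\equiv 0$.} Here no $n_j$ is needed at all. Take $t_j$ with $t_j\leq\varphi(\varepsilon_j)$ and $F(x(t_j))<\varepsilon_j$ from the approximate $F$-point property. In the original argument, $t_I\geq n_I$ was only used to control the error, so nothing else changes. Since $\varphi$ is monotone, $t_j\leq\varphi(\widehat{\varepsilon}_j)=\Delta(j,\varepsilon,f)$. The estimate then reads $H(d(x(t),x(t_J)))\leq G(d(x(t_I),x(t_J)))+h(\varepsilon/2)/3<h(\varepsilon/2)$.

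\emph{Step 4: uniform continuity of $F$.} Repeat the proof of Theorem \ref{thm:generalFejer-uc} in each case. The modulus $\widehat{\chi}_\varepsilon$ is still a modulus of uniform quasi-Fej\'er monotonicity, and it forces $F(x(t_J))\leq\varepsilon/2$. Running with $\varepsilon_0=\min\{\varepsilon,\omega(\varepsilon/2)\}$ gives $d(x(t),x(t_J))\leq\omega(\varepsilon/2)$, hence $F(x(t))\leq\varepsilon$ on the window.

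The only delicate point is checking that $t_I\geq\eta(\cdot)$ really holds, which uses $I>0$, and that the interval bookkeeping still places $n\leq\Delta(\varepsilon,f)$. Both are immediate here, so this is the main, but mild, obstacle.
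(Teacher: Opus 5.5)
Your proposal is correct and is essentially the paper's proof. The paper observes that $\widehat{\eta}(\varepsilon,f):=\eta(\varepsilon)$ is a rate of metastability and applies Theorem~\ref{thm:generalFejer} (resp.\ the argument of Theorem~\ref{thm:generalFejer-uc}) as a black box; unwound, this is exactly your choice $n_j:=\eta(h(\varepsilon/2)/3)$, and the paper reads off the $e\equiv 0$ case from that proof just as in your Step 3. One small correction to your parenthetical: $n_j$ must be exactly $\eta(h(\varepsilon/2)/3)$, with $\eta$ natural-valued as the paper implicitly assumes, because a smaller $n_j$ loses $t_I\geq\eta(h(\varepsilon/2)/3)$ in Step 2, and rounding up a non-integer value would break $n_j\leq\eta(h(\varepsilon/2)/3)$ in the bound $\Delta(j,\varepsilon,f)$.
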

\begin{proof}
As $\eta$ is a rate of convergence, it follows immediately that $\widehat{\eta}(\varepsilon,f):=\eta(\varepsilon)$ is also a rate of metastability. If the errors are constantly $0$, then $\eta\equiv 0$ is a corresponding rate of convergence. In any case, we can apply Theorem~\ref{thm:generalFejer} to derive the above rates. That, in the case of vanishing errors, an approximate $F$-point bound suffices immediately follows from the proof of Theorem~\ref{thm:generalFejer}. The extensions for approximate solutions in the context of a uniformly continuous $F$ can be accounted for as in the proof of Theorem \ref{thm:generalFejer-uc}.
\end{proof}

Moreover, the above quantitative result trivially implies the following ``usual'' (that is, nonquantitative) convergence result for quasi-Fej\'er monotone dynamical systems in compact spaces.

\begin{theorem}\label{thm:qualThm}
Let $X$ be compact and let $F:X\to [0,+\infty]$ be continuous. Assume that $G,H:[0,\infty)\to [0,\infty)$ are continuous such that $a\to 0$ whenever $H(a)\to 0$ and $G(a)\to 0$ whenever $a\to 0$. Let $x:[0,\infty)\to X$ be a continuous dynamical system such that $x$ has the $\liminf$-property w.r.t.\ $F$ and which is $(G,H)$-quasi-Fej\'er monotone w.r.t.\ $F$ and a continuous error function.

Then $x$ converges to some point in $\mathrm{zer}F$.
\end{theorem}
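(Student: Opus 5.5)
The plan is to derive Theorem \ref{thm:qualThm} from Theorem \ref{thm:generalFejer-uc} by checking that, under the qualitative hypotheses, all moduli used there exist (not necessarily computably). Metastability in the form
\[
\forall\varepsilon>0\ \forall f\ \exists n\ \forall s,t\in[n,n+f(n)]\ (d(x(s),x(t))\leq\varepsilon)
\]
is then equivalent, by a standard argument, to the Cauchy property, and completeness finishes the job.

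First I supply the data. Compactness of $X$ gives total boundedness, hence a modulus $\gamma$, as noted after Definition \ref{def:mod-boundedness}. The hypotheses on $G,H$ give moduli $g,h$ by the equivalences stated before the theorem. The error satisfies $e(s,t)\to0$, so it has a rate of convergence $\eta$ (chosen by choice) and therefore a rate of metastability. The $\liminf$-property yields a bound $\varphi$: since $\liminf F(x(t))=0$, for each $\varepsilon,n$ some $t\geq n$ has $F(x(t))<\varepsilon$, and we let $\varphi(\varepsilon,n)$ be any natural number above such a $t$, made monotone via the preceding Remark. By Remark \ref{rem:unifFejerComp}, compactness together with continuity of $F,G,H,x,e$ yields a modulus $\chi$ of uniform quasi-Fej\'er monotonicity. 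I would justify this briefly: otherwise there are $y_k$ with $F(y_k)\leq1/k$ violating the inequality at some $s_k\leq t_k$ in $[n,n+m]$; passing to a convergent subsequence gives $y\in\mathrm{zer}F$ by continuity, which contradicts the non-strict Fej\'er inequality plus $\varepsilon$. Finally, $F$ is continuous on a compact space, hence uniformly continuous, which gives $\omega$.

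Next I conclude Cauchyness. Suppose $x$ is not Cauchy. Then there is $\varepsilon>0$ such that for every $n$ there are $s,t\geq n$ with $d(x(s),x(t))>\varepsilon$, and we may take $s,t\leq n+f(n)$ for a suitable $f$ obtained by choosing such $s,t$ and rounding up. This contradicts Theorem \ref{thm:generalFejer}. Hence $x(t)$ is Cauchy, and since a compact metric space is complete, $x(t)\to x^*$.

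It remains to show $x^*\in\mathrm{zer}F$. Theorem \ref{thm:generalFejer-uc} gives, for each $\varepsilon$ and each $f$, intervals $[n,n+f(n)]$ on which $F(x(t))\leq\varepsilon$; taking $f$ constant shows that $F(x(t))\leq\varepsilon$ for arbitrarily large $t$. Even more simply, the $\liminf$-property gives $t_k\to\infty$ with $F(x(t_k))\to0$, so continuity of $F$ yields $F(x^*)=0$. The main obstacle is the uniformity step via Remark \ref{rem:unifFejerComp}. One needs care with $F$ taking the value $+\infty$ and with continuity of $F$ into $[0,+\infty]$. Closedness of $\mathrm{zer}F$ and the limit argument still work, since $F(y_k)\to0$ and $y_k\to y$ force $F(y)=0$.
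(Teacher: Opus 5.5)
Your proposal is correct and follows essentially the same route as the paper. The paper likewise obtains the quantitative data noneffectively, with the modulus $\chi$ coming from Remark~\ref{rem:unifFejerComp}, and applies Theorem~\ref{thm:generalFejer} to get metastability. It then turns metastability into the Cauchy property by the same contradiction argument (choosing $f(n)$ from the witnesses $s,t$), uses completeness of the compact space, and concludes $F(x^*)=0$ from continuity of $F$ and the $\liminf$-property.

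The detour through Theorem~\ref{thm:generalFejer-uc} is unnecessary. Its claimed modulus $\omega$ can actually fail to exist when $F$ attains the value $+\infty$. This does not hurt your argument, since your final step uses the $\liminf$-property directly.
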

\begin{proof}
All the present assumptions trivially (but nonconstructively) entail their quantitative variant. In particular, as $X$ is compact, $x$ is uniformly $(G,H)$-quasi-Fej\'er monotone w.r.t.\ $F$ (recall Remark \ref{rem:unifFejerComp}). Hence, Theorem \ref{thm:generalFejer} guarantees that $x$ is metastable, i.e.
\[
\forall \varepsilon>0\ \forall f:\mathbb{N}\to\mathbb{N}\ \exists n\in\mathbb{N}\ \forall s,t\in [n,n+f(n)]\left( d(x(s),x(t))\leq\varepsilon\right).
\]
Trivially, but noneffectively, this is equivalent to $x(t)$ being Cauchy for $t\to\infty$. Indeed, suppose for a contradiction that there is an $\varepsilon>0$ such that for all $n\in\mathbb{N}$, there is an $m\in\mathbb{N}$ with
\[
\exists s,t\in [n,n+m]\left( d(x(s),x(t))>\varepsilon\right).
\]
Define $f(n):=m$ for such an $m$. Then we have found an $f$ such that 
\[
\forall n\in\mathbb{N}\ \exists s,t\in [n,n+f(n)]\left( d(x(s),x(t))>\varepsilon\right).
\]
This contradicts the above property. As $x$ is Cauchy and $X$ is compact, and hence complete, it has a limit $z$. We now show that $z\in\mathrm{zer}F$. For that, note that $F(z)=\lim_{t\to\infty} F(x(t))$ as $F$ is continuous and since $x$ has the $\liminf$-property w.r.t.\ $F$, we get $\liminf_{t\to\infty} F(x(t))=0$. Hence $F(z)=0$.
\end{proof}

\begin{remark}\label{rem:bounded}
Assume that $G$ is inverse bounded, i.e.\
\[
G(a)\leq b\to a\leq g'(b)
\]
for all $a\geq 0$ and $b>0$ and additionally that 
\[
e^M:=\sup_{t\geq 0}e(0,t)<\infty.
\]
Then if $x$ is $(G,H)$-quasi-Fej\'er monotone, we have
\[
G(d(x(t),z))\leq H(d(x(0),z))+e(0,t)
\]
for all $t\geq 0$ and so $G(d(x(t),z))$ and hence $x(t)$ is bounded with $d(x(t),z)\leq g'(b_0+e^M)$ for $b_0\geq H(d(x(0),z))$. A particular case where $G$ is inverse bounded is for example when $G$ is strictly increasing and continuous, where then $g'$ is simply the inverse of $G$. The assumption that $\sup_{t\geq 0}e(0,t)<\infty$ is classically satisfied when $e$ is of standard form, i.e.\ $e(s,t)=\int_s^t\overline{e}(\tau)\,d\tau$ for $\overline{e}\in L^1$, but further easily true in many other situations, such as those later discussed in this paper. In any case, in the context where the dynamical system is bounded, it of course thereby suffices that $X$ is only proper in the above Theorem \ref{thm:qualThm}.
\end{remark}

\begin{remark}\label{rem:rateRemark}
If one has asymptotic regularity w.r.t.\ $F$, that is $\lim_{t\to\infty}F(x(t))=0$, and for simplicity assuming that $e\equiv 0$, then the above qualitative theorem can be directly derived from the associated result for discrete-time dynamical systems (see e.g.\ Proposition 4.3 in \cite{KohlenbachLeusteanNicolae2018}): Indeed, $x(t)$ converges to some point $z\in \mathrm{zer}F$ for $t\to\infty$ if, and only if, $x(t_n)$ converges to $z$ for any (increasing) sequence $(t_n)$ with $t_n\to\infty$ for $n\to\infty$. Using the asymptotic regularity w.r.t.\ $F$, we get that any such sequence has approximate $F$-points and $(G,H)$-Fej\'er monotonicity w.r.t.\ $F$ for $x$ transfers onto the respective sequence $x(t_n)$. The convergence of $x(t_n)$ now follows from the associated discrete-time result (recall Proposition 4.3 in \cite{KohlenbachLeusteanNicolae2018}). This extends suitably to error sequences, using the $\liminf$-property w.r.t.\ $F$ instead.

Exactly this usual reduction to the discrete-time setting under asymptotic regularity seems to be the reason why a qualitative theory for continuous-time Fej\'er monotone dynamical systems has received relatively little attention. However, this reduction proves to be problematic quantitatively, as one cannot immediately transfer rates of metastability for the resulting Cauchy property from the discrete-time subsequences to the full continuous-time sequence (as the respective point of metastability might move out further and further, and moreover, the resulting regions might not overlap).

Instead, the only quantitative approach known to us is to essentially redo the quantitative approach to discrete-time dynamical systems given in \cite{KohlenbachLeusteanNicolae2018} for the continuous setting, and the proof for the above result essentially implements this argument, with various optimisations built in. However, while for metastability there seems to be no efficient way of bootstrapping the quantitative theory of continuous-time dynamical systems onto discrete-time dynamical systems, at least not in the generality of Theorem \ref{thm:generalFejer}, we will later discuss how results on rates of convergence under regularity assumptions can indeed be transferred from the discrete to the continuous, at least in special cases.
\end{remark}

\subsection{Convergence under metric regularity assumptions}

We will now be concerned with (quantitative) convergence results in the absence of compactness. Naturally, such results in turn require additional restrictions on the problem formulation given by $F$.

Such ``regularity'' conditions take various forms in the literature, and to cover these in a uniform manner that is moreover amenable to our quantitative pursuits, we follow the approach of the fundamental work of Kohlenbach, Lop\'ez-Acedo and Nicolae \cite{KohlenbachLopezAcedoNicolae2019} and consider the rather broad notion of a modulus of regularity introduced therein.

\begin{definition}[essentially \cite{KohlenbachLopezAcedoNicolae2019}]\label{def:mod-reg}
Let $z\in\mathrm{zer}F$ and $r>0$. A modulus of regularity for $F$ w.r.t.\ $\overline{B}_r(z)$ is a function $\tau:(0,\infty)\to (0,\infty)$ such that 
\[
F(x)<\tau(\varepsilon)\to \mathrm{dist}(x,\mathrm{zer}F)<\varepsilon
\]
for all $\varepsilon>0$ and all $x\in\overline{B}_r(z)$.
\end{definition}

Note that such a function $\tau$ always exists when $X$ is proper and $F$ is continuous (see Proposition 3.3 in \cite{KohlenbachLopezAcedoNicolae2019}), although the description of $\tau$ is generally noneffective in this case.

Indeed, this notion is quite broad and, for various types of problems, covers a variety of different notions usually considered in the literature to induce (strong) convergence of Fej\'er monotone sequences, with simple moduli. We give a brief overview in the following examples as it pertains to our present work:

\begin{example}[\cite{KohlenbachLopezAcedoNicolae2019}]\label{ex:mappings}
Let $(X,d)$ be a metric space and $T:X\to X$ be a given mapping. We are interested in the problem of finding a fixed point $Tx=x$ of $T$. This problem can be equivalently expressed as a zero problem by setting $F(x):=d(x,Tx)$, where we then have $\mathrm{zer}F=\mathrm{Fix}T$, which we assume to be non-empty.

Situations under which we then can obtain a particularly simple and explicit moduli of regularity for the above $F$ in particular include the following:
\begin{enumerate}
\item Let $T$ be a quasi-contraction, i.e.\ for some $z\in\mathrm{Fix}T$ and $c\in [0,1)$ it holds that 
\[
d(Tx,z)\leq cd(x,z)
\]
for all $x\in X$. Then $\tau(\varepsilon):=(1-c)\varepsilon$ is a modulus of regularity (and in fact of uniqueness) for $F$ w.r.t.\ $\overline{B}_r(z)$, for any $r>0$. 
\item Let $T$ be a continuous orbital contraction, i.e.\ $T$ is continuous and there is a $c\in [0,1)$ such that
\[
d(Tx,T^2x)\leq cd(x,Tx)
\]
for all $x\in X$. Then $\tau(\varepsilon):=(1-c)\varepsilon$ is a modulus of regularity for $F$ w.r.t.\ $\overline{B}_r(z)$, for any $r>0$ and $z\in\mathrm{Fix}T$. In particular, there are examples of orbital contractions that do not have unique fixed points (see \cite{KohlenbachLopezAcedoNicolae2019}).
\item Let $T:X\to C\subseteq X$ be a retraction. Then $\tau(\varepsilon)=\varepsilon$ is a modulus of regularity for $F$ w.r.t.\ $\overline{B}_r(z)$, for any $r>0$ and $z\in\mathrm{Fix}T$. 
\end{enumerate}
\end{example}

\begin{example}[\cite{KohlenbachLopezAcedoNicolae2019}]\label{ex:operators}
Let $(X,\norm{\cdot})$ be a Banach space and $A:X\to 2^X$ be a given set-valued operator with 
\[
\mathrm{dist}(0,Ax)=0\to x\in\mathrm{zer}A
\]
for all $x\in X$ (such as when $A$ is closed, which is e.g.\ the case if $A$ is m-accretive). We are interested in the problem of solving the inclusion $0\in Ax$ for $A$. Due to the above assumption, this problem can be equivalently expressed as a zero problem by setting $F(x):=\mathrm{dist}(0,Ax)$, where we then have $\mathrm{zer}F=\mathrm{zer}A$, which we assume to be non-empty.

Situations under which we then can obtain a modulus of regularity for the above $F$ in particular include the following:
\begin{enumerate}
\item Let $A$ be a $\tau$-strongly accretive operator, i.e.\
\[
\langle x-y,u-v\rangle_+\geq\tau(\norm{x-y})\norm{x-y}
\]
for all $(x,u),(y,v)\in A$, where $\tau$ is strictly increasing and $\tau(0)=0$, and with
\[
\langle x,y\rangle_+:=\max\{j(y)\mid j\in J(x)\}
\]
where $J$ is the normalized duality mapping of $X$ (see e.g.\ \cite{Takahashi2000}). Then $\tau$ is a modulus of regularity for $F$ w.r.t.\ $\overline{B}_r(z)$, for any $r>0$ and $z\in\mathrm{zer}A$. 

This in particular includes the case of $\beta$-strongly accretive operators, i.e.\ operators $A$ where
\[
\langle x-y,u-v\rangle_+\geq\beta\norm{x-y}^2
\]
for all $(x,u),(y,v)\in A$ for some $\beta>0$, in which case the modulus takes the form of $\tau(\varepsilon):=\beta\varepsilon$.
\item Let $A$ be metrically subregular at $z\in\mathrm{zer}A$, i.e.\ there are $k,r>0$ such that
\[
\mathrm{dist}(x,\mathrm{zer}A)\leq k\mathrm{dist}(0,A(x))
\]
for all $x\in\overline{B}_r(z)$. Then $\tau(\varepsilon)=\varepsilon/k$ is a modulus of regularity for $F$ w.r.t.\ $\overline{B}_r(z)$.
\end{enumerate}
\end{example}

\begin{example}[\cite{KohlenbachLopezAcedoNicolae2019}]\label{ex:functions}
Let $(X,d)$ be a metric space and $\phi:X\to (-\infty,+\infty]$ be a given function. We are interested in the problem of finding a minimizer of $\phi$. Assuming that $\mu:=\inf_{x\in X}\phi(x)$ exists, this problem can be equivalently expressed as a zero problem by setting $F(x):=\phi(x)-\mu$ where we then have $\mathrm{zer}F=\argmin \phi$, which we assume to be non-empty.

Situations under which we then can obtain a modulus of regularity for the above $F$ in particular include functions $\phi$ that have $\tau$-global weak sharp minima, i.e.\ where it holds that
\[
\phi(x)\geq m+\tau(\mathrm{dist}(x,\argmin \phi))
\]
for all $x\in X$, where $\tau:[0,\infty)\to [0,\infty)$ is strictly increasing with $\tau(0)=0$. Then $\tau$ is also a modulus of regularity for $F$ w.r.t.\ $\overline{B}_r(z)$, for any $r>0$ and $z\in \argmin \phi$. This notion of weak sharp minima is intimately related to the notion of error bounds (see e.g.\ the discussion in \cite{KohlenbachLopezAcedoNicolae2019}).

A particular situation where a function $\phi$ has weak sharp minima is if it is also uniformly quasiconvex. For this, let $X$ be a uniquely geodesic metric space and, given $x,y\in X$ and $\lambda\in [0,1]$, write $\lambda x\oplus (1-\lambda)y$ for the unique point $z$ on the geodesic connecting $x$ and $y$ that satisfies $d(x,z)=\lambda d(x,y)$ and $d(y,z)=(1-\lambda)d(x,y)$. Assuming that $\phi$ satisfies 
\[
\phi(\lambda x\oplus(1-\lambda)y)\leq\max\{\phi(x),\phi(y)\}-\lambda(1-\lambda)\psi(d(x,y))
\]
for all $x,y\in X$ and $\lambda\in [0,1]$ for a $\tau$ as above, one derives 
\[
\mu\leq \phi\left(\frac{1}{2}x\oplus \frac{1}{2}z\right)\leq\max\{\phi(x),\phi(z)\}-\frac{1}{4}\psi(d(x,z))\leq \phi(x)-\frac{1}{4}\tau(d(x,z))
\]
for $z\in \argmin  \phi$ and $x\in X$, which implies that $\phi$ has $\frac{1}{4}\tau$-global weak sharp minimas. In the case of a strongly quasiconvex function, i.e.\ where
\[
\phi(\lambda x\oplus (1-\lambda)y)\leq\max\{\phi(x),\phi(y)\}-\lambda(1-\lambda)\frac{\rho}{2}d(x,y)^2,
\]
we simply have $\tau(\varepsilon):=\frac{\rho}{2}\varepsilon^2$.
\end{example}

Beyond these examples, the general formulation via a function $F$ further accommodates various other problems (see \cite{KohlenbachLopezAcedoNicolae2019}). In particular, as also illustrated by the above examples, the above moduli of regularity generalize quantitative renderings of uniqueness of solutions, so-called moduli of uniqueness, to situations with multiple solutions.

Next to the regularity of the problem and the quasi-Fej\'er monotonicity of the dynamical system, the only further property needed to induce convergence is the $\liminf$-property, as before. In that context, a modulus of regularity carries further benefits than just removing an otherwise necessary compactness assumption: it in principle allows for the construction of rates of convergence, not just rates of metastability, for the dynamical system.

However, we say ``in principle'' here as such a construction is only possible if all involved data also already come equipped with strong enough quantitative data. In the general case considered in Theorem \ref{thm:generalFejer} above, where the convergence of the error function $e$ was resolved \emph{only} with a rate of metastability, we will not be able to derive a rate of convergence, just a rate of metastability, even under the assumption of a modulus of regularity. We nevertheless begin with this case since, as mentioned before, for some applications later we will only be able to derive a rate of metastability for the errors. 

In that way, we arrive at the following first quantitative convergence result.

\begin{theorem}\label{thm:met-reg}
Let $x:[0,\infty)\to X$ be a dynamical system and assume that $F$ has a modulus of regularity $\tau$ w.r.t.\ $\overline{B}_b(z)$, where $z\in\mathrm{zer}F$ and $b\geq d(x(t),z)$ for all $t\geq 0$. Assume that $G,H:[0,\infty)\to [0,\infty)$ are such that we have moduli $g,h$ with
\[
a<g(\varepsilon)\to G(a)<\varepsilon\text{ and }H(a)<h(\varepsilon)\to a<\varepsilon
\]
for all $a\geq 0$ and $\varepsilon>0$. 
Assume that $x$ has the $\liminf$-property w.r.t.\ $F$ with bound $\varphi:(0,\infty)\times\mathbb{N}\to \mathbb{N}$, i.e.\
\[
\forall \varepsilon>0\ \forall n\in\mathbb{N} \exists t\in [n,\varphi(\varepsilon,n)]\left( F(x(t))<\varepsilon\right),
\]
and further that $x$ is $(G,H)$-quasi-Fej\'er monotone w.r.t.\ $F$, i.e.
\[
\forall y\in\mathrm{zer}F\ \forall t\geq s\left( H(d(x(t),y))\leq G(d(x(s),y))+e(s,t)\right).
\]
Lastly, let $e(s,t)\to 0$ as $s\leq t\to \infty$ with a rate of metastability $\eta$, i.e.\
\[
\forall \varepsilon>0\ \forall f:\mathbb{N}\to\mathbb{N}\ \exists n\leq \eta(\varepsilon,f)\ \forall s\leq t\in [n,n+f(n)]\left(\vert e(s,t)\vert <\varepsilon\right).
\]

Then $x$ satisfies
\[
\forall \varepsilon>0\ \forall f:\mathbb{N}\to\mathbb{N}\ \exists n\leq \rho(\varepsilon,f)\ \forall t\in [n,n+f(n)]\left( \mathrm{dist}(x(t),\mathrm{zer}F)<\varepsilon\right)
\]
and further $x$ is Cauchy with
\[
\forall \varepsilon>0\ \forall f:\mathbb{N}\to\mathbb{N}\ \exists n\leq \rho(\varepsilon/2,f)\ \forall s,t\in [n,n+f(n)]\left( d(x(t),x(s))<\varepsilon\right).
\]

Here, $\rho$ is defined by
\[
\rho(\varepsilon,f):=\max\{\varphi(\tau(g(h(\varepsilon)/2)),n)\mid n\leq \eta(h(\varepsilon)/2,f_{\varphi,h(\varepsilon)})\}+1
\]
where 
\[
f_{\varphi,\delta}(n)=\max\{m+1+f(m+1)\mid m\leq\varphi(\tau(g(h(\varepsilon)/2)),n)\}\dotdiv n.
\]
\end{theorem}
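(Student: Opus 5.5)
Fix $\varepsilon>0$ and $f$, and set $\delta:=h(\varepsilon)/2$. The plan mirrors the proof of Theorem \ref{thm:generalFejer}, with the total boundedness argument replaced by the modulus of regularity, so only one ``approximate solution'' time point is needed.

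First, apply the rate of metastability $\eta$ to the function $f_{\varphi,h(\varepsilon)}$ and tolerance $\delta$. This yields some $n_0\leq\eta(\delta,f_{\varphi,h(\varepsilon)})$ with $\vert e(s,t)\vert<\delta$ for all $s\leq t\in[n_0,n_0+f_{\varphi,h(\varepsilon)}(n_0)]$.

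Next, use the $\liminf$-bound to pick $t_0\in[n_0,\varphi(\tau(g(\delta)),n_0)]$ with $F(x(t_0))<\tau(g(\delta))$. Since $d(x(t_0),z)\leq b$, the modulus of regularity gives $\mathrm{dist}(x(t_0),\mathrm{zer}F)<g(\delta)$. Hence there is $y\in\mathrm{zer}F$ with $d(x(t_0),y)<g(\delta)$, and so $G(d(x(t_0),y))<\delta$.

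Now put $n:=\floor*{t_0}+1$. Exactly as in the proof of Theorem \ref{thm:generalFejer}, the definition of $f_{\varphi,\delta}$ gives
\[
n_0\leq t_0\leq n\leq n+f(n)\leq n_0+f_{\varphi,h(\varepsilon)}(n_0),
\]
since $\floor*{t_0}\leq\varphi(\tau(g(\delta)),n_0)$. So for every $t\in[n,n+f(n)]$ we have $t\geq t_0$ and both times lie in the metastability window. Plain (not uniform) quasi-Fej\'er monotonicity at the exact zero $y$ then gives
\[
H(d(x(t),y))\leq G(d(x(t_0),y))+e(t_0,t)<\delta+\delta=h(\varepsilon),
\]
hence $d(x(t),y)<\varepsilon$, and so $\mathrm{dist}(x(t),\mathrm{zer}F)<\varepsilon$. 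Moreover
\[
n\leq\max\{\varphi(\tau(g(h(\varepsilon)/2)),m)\mid m\leq\eta(h(\varepsilon)/2,f_{\varphi,h(\varepsilon)})\}+1=\rho(\varepsilon,f).
\]

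The Cauchy statement follows from the same run with $\varepsilon/2$ in place of $\varepsilon$. The argument actually produces a single point $y$ with $d(x(t),y)<\varepsilon/2$ for all $t\in[n,n+f(n)]$, so the triangle inequality gives $d(x(s),x(t))<\varepsilon$.

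The only delicate step is the index bookkeeping: checking that the window $[n,n+f(n)]$ starting at the non-integer $t_0$ still sits inside the $\eta$-window. This is handled exactly by the construction of $f_{\varphi,\cdot}$, as in Theorem \ref{thm:generalFejer}. No uniformity modulus $\chi$ is needed, because $y$ is an exact zero.
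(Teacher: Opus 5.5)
Your proof is correct and follows essentially the same route as the paper: fix the $\eta$-window for $f_{\varphi,h(\varepsilon)}$ with tolerance $h(\varepsilon)/2$, pick $t_0$ in it via the $\liminf$-bound, pass to an exact zero $y$ via the modulus of regularity, and apply plain quasi-Fej\'er monotonicity at $y$ on $[\lfloor t_0\rfloor+1,\lfloor t_0\rfloor+1+f(\lfloor t_0\rfloor+1)]$, obtaining the Cauchy claim by rerunning with $\varepsilon/2$. Your $n=\lfloor t_0\rfloor+1$ is exactly the paper's $n+1$ with $n=\lfloor t_0\rfloor$, and your check that this window lies inside the $\eta$-window matches the paper's.
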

\begin{proof}
Let $n_0\leq \eta(h(\varepsilon)/2,f_{\varphi,h(\varepsilon)})$ be such that
\[
\forall s\leq t\in [n_0,n_0+f_{\varphi,h(\varepsilon)}(n_0)]\left( \vert e(s,t)\vert < h(\varepsilon)/2\right).
\]
Take now $t_0\in [n_0,\varphi(\tau(f(h(\varepsilon)/2)),n_0)]$ such that $F(x(t_0))<\tau(g(h(\varepsilon)/2))$. Define $n=\floor*{t_0}$ and note that $n_0\leq n$ as $n_0\leq t_0$ and $n_0$ is a natural number. Further, we have $n\leq t_0\leq \varphi(\tau(g(h(\varepsilon)/2)),n_0)$ and so
\[
n+1+f(n+1)\leq n_0+f_{\varphi,h(\varepsilon)}(n_0).
\]
Thus, we have $\vert e(s,t)\vert < h(\varepsilon)/2$ for all 
\[
s\leq t\in [n,n+1+f(n+1)]\subseteq [n_0,n_0+f_{\varphi,h(\varepsilon)}(n_0)].
\]
Using that $\tau$ is a modulus of regularity over $\overline{B}_b(z)$, which contains $x$, we have $\mathrm{dist}(x(t_0),\mathrm{zer}F)<g(h(\varepsilon)/2)$. Thus, there exists a $y\in\mathrm{zer}F$ such that $d(x(t_0),y)<g(h(\varepsilon)/2)$ and so we have $G(d(x(t_0),y))<h(\varepsilon)/2$. Using the quasi-Fej\'er monotonicity, we have
\[
H(d(x(t),y))\leq G(d(x(t_0),y))+e(t_0,t)< h(\varepsilon)/2+\vert e(t_0,t)\vert 
\]
for all $t\geq t_0\geq \floor*{t_0}=n$. Thus, we get $H(d(x(t),y))<h(\varepsilon)$ and so
\[
\mathrm{dist}(x(t),\mathrm{zer}F)\leq d(x(t),y)<\varepsilon
\]
for all $t\in [n+1,n+1+f(n+1)]$. This yields the first claim. For the second, note simply that choosing $\varepsilon/2$ instead of $\varepsilon$ in the above then yields $d(x(t),y)<\varepsilon/2$ and so
\[
d(x(t),x(s))\leq d(x(t),y)+d(x(s),y)<\varepsilon
\]
for all $t,s\in [n+1,n+1+f(n+1)]$.
\end{proof}

The above result extends (parts of) Theorem 4.1 in \cite{KohlenbachLopezAcedoNicolae2019} as well as Theorem 4.1 in \cite{Pischke2023}, the analogous results for discrete-time sequences without and with errors, respectively, and in fact closely follows the proof strategy given there.

The following result now improves the conclusion of the above to a full rate of convergence, provided $e$ already comes equipped with one.

\begin{theorem}\label{thm:met-reg-rates}
Under the assumptions of Theorem \ref{thm:met-reg}, assume that $e(s,t)\to 0$ as $s\leq t\to \infty$ with a rate of convergence $\eta$, i.e.\
\[
\forall \varepsilon>0\ \forall t\geq s\geq \eta(\varepsilon)\left(\vert e(s,t)\vert <\varepsilon\right).
\]
Then $x$ satisfies
\[
\forall \varepsilon>0\ \forall t\geq \rho(\varepsilon) \left( \mathrm{dist}(x(t),\mathrm{zer}F)<\varepsilon\right)
\]
and further $x$ is Cauchy with
\[
\forall \varepsilon>0\ \forall s,t\geq \rho(\varepsilon/2)\left( d(x(t),x(s))<\varepsilon\right),
\]
where $\rho$ is now defined by $\rho(\varepsilon):=\varphi(\tau(g(h(\varepsilon)/2)),\eta(h(\varepsilon)/2))+1$.

If $e\equiv 0$, then it suffices that $x$ has the approximate $F$-point property with bound $\varphi:(0,\infty)\to \mathbb{N}$, i.e.
\[
\forall \varepsilon>0\ \exists t\leq \varphi(\varepsilon)\left( F(x(t))<\varepsilon\right),
\]
where then $\rho(\varepsilon):=\varphi(\tau(g(h(\varepsilon))))+1$ suffices for the above claims.

Lastly, if $X$ is complete and $\mathrm{zer}F$ is closed, then in each of the above cases, $x(t)$ converges to some point in $\mathrm{zer}F$ with rate $\rho(\varepsilon/2)$, with the respective $\rho$ above.
\end{theorem}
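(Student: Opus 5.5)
The plan is to redo the proof of Theorem \ref{thm:met-reg} directly, with the rate of convergence $\eta$ in place of the metastability rate, so that no witness function $f$ is needed. Fix $\varepsilon>0$ and set $n_0:=\eta(h(\varepsilon)/2)$. Then $\vert e(s,t)\vert<h(\varepsilon)/2$ for all $t\geq s\geq n_0$.

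By the $\liminf$-property there is a $t_0\in[n_0,\varphi(\tau(g(h(\varepsilon)/2)),n_0)]$ with $F(x(t_0))<\tau(g(h(\varepsilon)/2))$. Since $x(t_0)\in\overline{B}_b(z)$, the modulus of regularity gives $\mathrm{dist}(x(t_0),\mathrm{zer}F)<g(h(\varepsilon)/2)$. So we can pick $y\in\mathrm{zer}F$ with $d(x(t_0),y)<g(h(\varepsilon)/2)$, and hence $G(d(x(t_0),y))<h(\varepsilon)/2$.

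Quasi-Fej\'er monotonicity then yields, for every $t\geq t_0$,
\[
H(d(x(t),y))<h(\varepsilon)/2+\vert e(t_0,t)\vert<h(\varepsilon).
\]
The error bound applies because $t\geq t_0\geq n_0$. Therefore $d(x(t),y)<\varepsilon$, and in particular $\mathrm{dist}(x(t),\mathrm{zer}F)<\varepsilon$, for all $t\geq t_0$. Since $t_0\leq\varphi(\tau(g(h(\varepsilon)/2)),\eta(h(\varepsilon)/2))<\rho(\varepsilon)$, this holds for all $t\geq\rho(\varepsilon)$.

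For the Cauchy claim, run the same argument with $\varepsilon/2$. All $s,t\geq\rho(\varepsilon/2)$ then lie within $\varepsilon/2$ of one common $y$, and the triangle inequality gives $d(x(s),x(t))<\varepsilon$.

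In the case $e\equiv 0$, no lower bound on $t_0$ is needed and the factor $1/2$ can be dropped. Take $t_0\leq\varphi(\tau(g(h(\varepsilon))))$ with $F(x(t_0))<\tau(g(h(\varepsilon)))$. Regularity gives a $y\in\mathrm{zer}F$ with $d(x(t_0),y)<g(h(\varepsilon))$, so $G(d(x(t_0),y))<h(\varepsilon)$. Then $H(d(x(t),y))<h(\varepsilon)$ for all $t\geq t_0$, and we conclude as before.

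For the last claim, assume $X$ is complete and $\mathrm{zer}F$ is closed. The Cauchy rate gives a limit $x^*$ with $d(x(t),x^*)\leq\varepsilon$ for $t\geq\rho(\varepsilon/2)$, by letting $s\to\infty$. Since $\mathrm{dist}(x(t),\mathrm{zer}F)\to 0$, we get $\mathrm{dist}(x^*,\mathrm{zer}F)=0$, and closedness gives $x^*\in\mathrm{zer}F$.

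The argument is routine. The only points needing care are the bookkeeping:
\begin{itemize}
\item $t_0\geq n_0$, so that the error estimate applies on the whole tail $[t_0,\infty)$;
\item the strict versus non-strict inequalities when passing to the limit in the final claim.
\end{itemize}
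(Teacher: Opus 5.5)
Your proof is correct and is essentially the paper's argument. The paper formally reduces to Theorem~\ref{thm:met-reg} via the $f$-independent metastability rate $\widehat{\eta}(\varepsilon,f):=\eta(\varepsilon)$, and then notes that the optimized $\rho$ (no maximum, and no halving when $e\equiv 0$) follows by inspecting that proof, which is exactly the direct argument you carry out; your treatment of the final claim, letting $s\to\infty$ to get $d(x(t),x^*)\leq\varepsilon$ and then using closedness of $\mathrm{zer}F$, spells out what the paper calls immediate.
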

\begin{proof}
As $\eta$ is a rate of convergence, it follows immediately that $\widehat{\eta}(\varepsilon,f):=\eta(\varepsilon)$ is also a rate of metastability. Theorem \ref{thm:met-reg} then yields a rate of metastability of the form
\[
\rho(\varepsilon,f):=\max\{\varphi(\tau(g(h(\varepsilon)/2)),n)\mid n\leq \eta(h(\varepsilon)/2)\}+1.
\]
As $\rho$ is independent of $f$, it follows that the resulting function is also a rate of convergence in both cases (see e.g.\ Proposition 2.6 in \cite{KohlenbachPinto2022}). That the above optimized rate $\rho(\varepsilon):=\varphi(\tau(g(h(\varepsilon)/2)),\eta(h(\varepsilon)/2))+1$, which dispenses of the maximum, is already sufficient follows by a simple inspection of the proof of Theorem \ref{thm:met-reg}.

If the errors are constantly $0$, then $\eta\equiv 0$ is a corresponding rate of convergence. The result again follows from Theorem \ref{thm:met-reg} as above. That, in the case of vanishing errors, an approximate $F$-point bound suffices immediately follows from the proof of Theorem \ref{thm:met-reg}, as does the fact that the above optimized rate $\rho(\varepsilon):=\varphi(\tau(g(h(\varepsilon))))+1$, which dispenses of the division by $2$, suffices. The last claim is immediate.
\end{proof}

We could have similarly slightly optimized the bounds in Theorem \ref{thm:generalFejer-rates} as we have done for the above Theorem \ref{thm:met-reg-rates}, but have chosen to only carry this out in the context of rates of convergence where optimality is much more critical.

We also here want to remind of Remark \ref{rem:bounded}, where it is discussed how the boundedness assumption featuring in Theorem \ref{thm:met-reg} can be derived from the $(G,H)$-quasi-Fej\'er monotonicity under suitable assumptions.

We now want to recall Remark \ref{rem:rateRemark}, were  the relationship between quantitative results for the discrete-time setting and the continuous-time setting have been discussed before. Contrary to the previous general results on rates of metastability, at least parts of the present results on rates of convergence under regularity assumptions can indeed be derived from corresponding results for rates of convergence in the discrete-time setting as given in \cite{KohlenbachLopezAcedoNicolae2019} (see also \cite{Pischke2023}), at least under additional assumptions (such as suitable, e.g.\ vanishing, errors or in the context of rates of asymptotic regularity w.r.t.\ $F$). 

As all these alternative proofs nevertheless retain a certain amount of pre- and postprocessing, which makes them practically not much more optimal than the tailored arguments presented above, we do not spell this out in detail for the previous theorems, and instead focus on one particular additional result for fast rates of convergence that we can derive in that way, which will be useful for our applications later.

\begin{lemma}[essentially Theorem 4.5 in \cite{KohlenbachLopezAcedoNicolae2019}]\label{lem:fastDiscrete}
Let $(x_n)$ be a sequence in $X$ and assume that $F$ has a modulus of regularity $\tau(\varepsilon)=k\varepsilon$ for $k>0$ w.r.t.\ $\overline{B}_b(z)$, where $z\in\mathrm{zer}F$ and $b\geq d(x_0,z)$. Assume further
\[
\forall y\in\mathrm{zer}F\ \forall n\in\mathbb{N}\left( d^p(x_{n+1},y)\leq d^p(x_n,y)-\beta (F(x_{n+1}))^p\right).
\]
Then
\[
\mathrm{dist}(x_n,\mathrm{zer}F)\leq c^n\mathrm{dist}(x_0,\mathrm{zer}F)\text{ and }d(x_n,x_{n+k})\leq 2c^n\mathrm{dist}(x_0,\mathrm{zer}F)
\]
for all $n,k\in\mathbb{N}$, where $c:=(1+\beta k^p)^{-1/p}\in (0,1)$.
\end{lemma}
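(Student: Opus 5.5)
The plan is a one-step contraction estimate for $\mathrm{dist}(x_n,\mathrm{zer}F)$, followed by iteration and a Fej\'er-type argument for the Cauchy bound.

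\textbf{Boundedness.} Applying the assumed inequality with $y=z$ shows that $d(x_n,z)$ is nonincreasing. Hence every $x_n$ lies in $\overline{B}_b(z)$, and the modulus of regularity applies along the whole sequence.

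\textbf{Regularity bound at each step.} Since $\tau(\varepsilon)=k\varepsilon$ is linear, the regularity property yields
\[
\mathrm{dist}(x,\mathrm{zer}F)\leq F(x)/k\quad\text{for all } x\in\overline{B}_b(z).
\]
To see this, suppose $F(x)<k\varepsilon$ implies $\mathrm{dist}(x,\mathrm{zer}F)<\varepsilon$ for every $\varepsilon>0$, and let $\varepsilon\downarrow F(x)/k$. If $F(x)=0$ we have $x\in\mathrm{zer}F$ and nothing needs to be shown. Applying this to $x_{n+1}$ gives $F(x_{n+1})^p\geq k^p\,\mathrm{dist}(x_{n+1},\mathrm{zer}F)^p$.

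\textbf{Contraction of the distance.} Fix $\delta>0$ and pick $y\in\mathrm{zer}F$ with $d(x_n,y)\leq \mathrm{dist}(x_n,\mathrm{zer}F)+\delta$. The hypothesis, together with $\mathrm{dist}(x_{n+1},\mathrm{zer}F)\leq d(x_{n+1},y)$, gives
\[
\mathrm{dist}(x_{n+1},\mathrm{zer}F)^p\leq d(x_{n+1},y)^p\leq d(x_n,y)^p-\beta k^p\,\mathrm{dist}(x_{n+1},\mathrm{zer}F)^p.
\]
Rearranging yields $(1+\beta k^p)\,\mathrm{dist}(x_{n+1},\mathrm{zer}F)^p\leq (\mathrm{dist}(x_n,\mathrm{zer}F)+\delta)^p$. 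Letting $\delta\to 0$ gives $\mathrm{dist}(x_{n+1},\mathrm{zer}F)\leq c\,\mathrm{dist}(x_n,\mathrm{zer}F)$ with $c=(1+\beta k^p)^{-1/p}\in(0,1)$. Induction on $n$ then gives the first claim.

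\textbf{Cauchy estimate.} For $n,k\in\mathbb{N}$ and any $y\in\mathrm{zer}F$, Fej\'er monotonicity (dropping the nonnegative $\beta$-term) gives $d(x_{n+k},y)\leq d(x_n,y)$. The triangle inequality then yields
\[
d(x_n,x_{n+k})\leq d(x_n,y)+d(x_{n+k},y)\leq 2d(x_n,y).
\]
Taking the infimum over $y\in\mathrm{zer}F$ gives $d(x_n,x_{n+k})\leq 2\,\mathrm{dist}(x_n,\mathrm{zer}F)\leq 2c^n\mathrm{dist}(x_0,\mathrm{zer}F)$.

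\textbf{Main obstacle.} The only subtle point is extracting the linear error bound from the strict-inequality form of the modulus and handling the near-minimizer $y$ via $\delta$; the rest is routine.
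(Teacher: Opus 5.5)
Your proposal is correct and is essentially the argument the paper has in mind: the paper gives no proof of its own but defers to the proof of Theorem 4.5 in \cite{KohlenbachLopezAcedoNicolae2019}. Your contraction step is precisely that proof, with the linear error bound applied at $x_{n+1}$ instead of $x_n$, giving $(1+\beta k^p)\,\mathrm{dist}(x_{n+1},\mathrm{zer}F)^p\leq\mathrm{dist}(x_n,\mathrm{zer}F)^p$ and hence $c=(1+\beta k^p)^{-1/p}$, followed by the usual Fej\'er argument for the Cauchy bound; the only cosmetic point is that, like the statement, you use $k$ both for the regularity constant and for the index shift in $d(x_n,x_{n+k})$, which is worth renaming.
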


Note that the above result is slightly different to Theorem 4.5 presented in \cite{KohlenbachLopezAcedoNicolae2019}, which features $\beta (F(x_{n}))^p$ instead of $\beta (F(x_{n+1}))^p$. By inspecting the proof given in \cite{KohlenbachLopezAcedoNicolae2019}, it can however easily be seen that the above Lemma holds true as well.

In any way, a lift to the continuous setting yields the following:

\begin{theorem}\label{thm:fastRatesCont}
Let $x:[0,+\infty)\to X$ be a dynamical system and assume that $F$ has a modulus of regularity $\tau(\varepsilon)=k\varepsilon$ for $k>0$ w.r.t.\ $\overline{B}_b(z)$, where $z\in\mathrm{zer}F$ and $b\geq d(x(0),z)$. Assume further
\[
\forall y\in\mathrm{zer}F\ \forall n\in\mathbb{N}\left( d^p(x(n+1),y)\leq d^p(x(n),y)-\beta (F(x(n+1)))^p\right)
\]
and that $x$ is Fej\'er monotone w.r.t.\ $F$. Then
\[
\mathrm{dist}(x(t),\mathrm{zer}F)\leq c^{\floor*{t}}\mathrm{dist}(x_0,\mathrm{zer}F)\text{ and }d(x(t),x(t+s))\leq 2c^{\floor*{t}}\mathrm{dist}(x_0,\mathrm{zer}F)
\]
for all $t,s\geq 0$, where $c:=(1+\beta k^p)^{-1/p}\in (0,1)$.
\end{theorem}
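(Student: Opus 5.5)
The plan is to reduce to Lemma \ref{lem:fastDiscrete} by sampling the dynamical system at integer times. We set $x_n:=x(n)$ for $n\in\mathbb{N}$. By hypothesis, $(x_n)$ satisfies
\[
d^p(x_{n+1},y)\leq d^p(x_n,y)-\beta (F(x_{n+1}))^p
\]
for all $y\in\mathrm{zer}F$ and all $n$. Moreover $b\geq d(x_0,z)$ with $x_0=x(0)$, and $\tau(\varepsilon)=k\varepsilon$ is a modulus of regularity w.r.t.\ $\overline{B}_b(z)$. These are exactly the hypotheses of Lemma \ref{lem:fastDiscrete}. We will also note that the lemma's proof only applies the regularity modulus at points $x_n$ lying in $\overline{B}_b(z)$, and the discrete inequality with $y=z$ shows $d(x_n,z)\leq d(x_0,z)\leq b$. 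The lemma therefore yields
\[
\mathrm{dist}(x(n),\mathrm{zer}F)\leq c^n\,\mathrm{dist}(x(0),\mathrm{zer}F)\quad\text{and}\quad d(x(n),x(n+m))\leq 2c^n\,\mathrm{dist}(x(0),\mathrm{zer}F)
\]
for all $n,m\in\mathbb{N}$.

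Next we pass to real times using the (continuous-time) Fej\'er monotonicity of $x$, that is, $d(x(t),y)\leq d(x(s),y)$ for $t\geq s$ and $y\in\mathrm{zer}F$. For the distance bound, let $t\geq 0$ and $n=\floor*{t}$. For each $y\in\mathrm{zer}F$ we have $d(x(t),y)\leq d(x(n),y)$, and taking the infimum over $y$ gives $\mathrm{dist}(x(t),\mathrm{zer}F)\leq\mathrm{dist}(x(n),\mathrm{zer}F)\leq c^{\floor*{t}}\mathrm{dist}(x(0),\mathrm{zer}F)$.

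For the Cauchy-type bound, the natural route is not to use the discrete estimate $d(x_n,x_{n+m})$ directly but to argue through near-minimizers. Fix $n=\floor*{t}$ and $\delta>0$, and pick $y\in\mathrm{zer}F$ with
\[
d(x(n),y)\leq \mathrm{dist}(x(n),\mathrm{zer}F)+\delta.
\]
Since $t\geq n$ and $t+s\geq n$, Fej\'er monotonicity gives
\[
d(x(t),x(t+s))\leq d(x(t),y)+d(x(t+s),y)\leq 2d(x(n),y)\leq 2c^n\mathrm{dist}(x(0),\mathrm{zer}F)+2\delta.
\]
Letting $\delta\to 0$ gives the claim. If $\mathrm{dist}(x(0),\mathrm{zer}F)=0$, the same argument still works through the $\delta$-approximation, so no separate case is needed.

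The only mildly delicate point is the applicability of Lemma \ref{lem:fastDiscrete}, namely the ball condition discussed above. Beyond that, the $\delta$-argument avoids any need for attainment of the distance, and the rest is routine.
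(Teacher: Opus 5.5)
Your proposal is correct and follows the paper's proof: apply Lemma \ref{lem:fastDiscrete} to the integer samples $x(n)$, then use continuous-time Fej\'er monotonicity to pass from $x(\floor*{t})$ to $x(t)$ and to $x(t+s)$. The only cosmetic difference is in the second bound: the paper writes $d(x(t),x(t+s))\leq 2d(x(t),y)$ and takes the infimum over $y$ to get $2\,\mathrm{dist}(x(t),\mathrm{zer}F)$, then applies the first bound. You instead choose a $\delta$-near-minimizer $y$ for $x(\floor*{t})$, which amounts to the same argument.
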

\begin{proof}
The above Lemma \ref{lem:fastDiscrete} in combination with the first assumption in particular yields $\mathrm{dist}(x(n),\mathrm{zer}F)\leq c^n\mathrm{dist}(x(0),\mathrm{zer}F)$ for all $n\in\mathbb{N}$. Now, for arbitrary $y\in \mathrm{zer}F$, the Fej\'er monotonicity yields
\[
\mathrm{dist}(x(t),\mathrm{zer}F)\leq d(x(t),y)\leq d(x(\floor*{t}),y),
\]
so that
\[
\mathrm{dist}(x(t),\mathrm{zer}F)\leq \mathrm{dist}(x(\floor*{t}),\mathrm{zer}F)\leq c^{\floor*{t}}\mathrm{dist}(x(0),\mathrm{zer}F).
\]
Further, we now also have
\[
d(x(t),x(t+s))\leq d(x(t),y)+d(x(t+s),y)\leq 2d(x(t),y)
\]
and so get 
\[
d(x(t),x(t+s))\leq 2\mathrm{dist}(x(t),\mathrm{zer}F)\leq 2c^{\floor*{t}}\mathrm{dist}(x(0),\mathrm{zer}F).\qedhere
\]
\end{proof}

The above result can also be extended to incorporate sufficiently fastly decreasing errors, in which case it can no longer be lifted from the discrete-time results but has to be re-derived. We do not spell this out here any further.

\section{Applications to dynamical systems}

To illustrate the breadth of the above rather general approach, we are now concerned with applying the previous results to concrete dynamical systems from the literature.

Two examples will focus on dynamical systems over Hilbert spaces. The first presents a quantitative analysis of the work \cite{BotCsetnek2017} by Bo\c{t} and Csetnek for a first-order dynamical system over a nonexpansive operator. In particular, we derive rates of convergence under a metric regularity in that case, which in the special case of a continuous variant of a forward-backward scheme extend those derived in \cite{BotCsetnek2018} using quite different methods to a broader setting. The second presents a quantitative analysis of the work \cite{BotCsetnek2016} by Bo\c{t} and Csetnek for a second-order dynamical system over a cocoercive operator. In particular, this system presents an example where our approach, even in the context of metric regularity assumptions, cannot provide rates of convergence as we can only provide a rate of metastability for the respective error function. This, we think, in particular presents a novel structural reason for why no rates of convergence have appeared for that particular scheme, at that level of generality, which provides deeper insight into the nature of the scheme presented in \cite{BotCsetnek2016}. In particular, the present rates of metastability seem to be the first structural quantitative results for the method presented in~\cite{BotCsetnek2016}. To illustrate the metric generality, our last case study is concerned with (generalized) gradient flows and associated semigroups in Hadamard spaces, that is geodesic metric spaces of nonpositive curvature. In particular, we study both the classical example of the gradient flow semigroup in these spaces, following the well-known work of Mayer \cite{Mayer1998} and Ba\v{c}\'ak \cite{Bacak2013}, and then further provide a first quantitative study of a related nonlinear semigroup generated by a nonexpansive mapping due to Stojkovic \cite{Stojkovic2012} and later studied by Ba\v{c}\'ak and Reich \cite{BacakReich2014}. In both cases, the quantitative results we provide are either completely novel, in particular relating to general rates of metastability, or extend previously known results on rates of convergence from specific ad-hoc assumption to broader metric regularity conditions.

\subsection{First-order systems over nonexpansive operators}

We begin with the essential setup from \cite{BotCsetnek2017}. Let $X$ be a Hilbert space with norm $\norm{\cdot}$ and inner product $\langle\cdot,\cdot\rangle$ and let $T:X\to X$ be a nonexpansive map. For a Lebesgue measurable function $\lambda:[0,+\infty)\to [0,1]$ and $x_0\in X$, we consider the dynamical system
\[
\begin{cases}
\dot{x}(t)=\lambda(t)\left( T(x(t))-x(t)\right),\\
x(0)=x_0.
\end{cases}\tag*{$(*)_1$}\label{firstOrder}
\]

This system is motivated by the first-order dynamical system 
\[
\dot{x}+x=\mathrm{prox}_{\mu f}(x-\mu B(x))
\]
for a given proper, convex and lower-semicontinuous function $f$, where $\mathrm{prox}_{\mu f}$ is the corresponding proximal map, and a cocoercive operator $B$, as studied by Abbas and Attouch \cite{AbbasAttouch2015} (and in fact captures that system, if extended appropriately to averaged maps). This system in turn arose as an abstraction of the first-order dynamical system 
\[
\dot{x}+x=P_C(x-\mu \nabla \phi(x))
\]
for a given closed convex nonempty set $C$ and a convex $C^1$ function $\phi$, first studied by Bolte \cite{Bolte2003} (see also \cite{Antipin1994}), and indeed captures an even more general continuous-time variant of the forward-backward method as discussed later. We refer to the survey \cite{Csetnek2020} for further discussions along these lines.

Following \cite{BotCsetnek2017} (see Definition 2 therein), we call a dynamical system $x:[0,+\infty)\to X$ a strong global solution of \ref{firstOrder} if $x$ satisfies the respective equation almost everywhere on $[0,+\infty)$ and $x$ is absolutely continuous on each interval $[0,b]$ with $b\in (0,\infty)$. The asymptotic behavior of such strong global solutions is studied in \cite{BotCsetnek2017} (see Theorem 6 therein), in particular establishing a continuous form of asymptotic regularity of the dynamical system together with weak convergence towards a fixed point of $T$. In this section, we want to study these results from a quantitative perspective.

As discussed in Remark 8 in \cite{BotCsetnek2017}, a discretization of the above equation w.r.t.\ the time variable in particular yields the well-known Krasnosel'skii-Mann method \cite{Krasnoselskii1955,Mann1953} (see also \cite{BauschkeCombettes2017}). Also, the above system immediately allows for extensions to averaged mappings (see Corollary 9 in \cite{BotCsetnek2017} and the surrounding discussions).

At first, it follows from the above assumptions that the system \ref{firstOrder} has a unique strong global solution (see the discussion given in Section 2 in \cite{BotCsetnek2017}). Further, that solution is immediately bounded (see item (i) of \cite[Theorem 6]{BotCsetnek2017}). We recall that result here, and in particular provide a more quantitative perspective on it in the following lemma:

\begin{lemma}\label{lem:boundedness}
Let $y\in\mathrm{Fix}T$ and let $b\in\mathbb{N}$ with $\norm{x_0-y}\leq b$. Let $x$ be the unique strong global solution to \ref{firstOrder}. Then $x$ is bounded with $\norm{x(t)-y}\leq b$.
\end{lemma}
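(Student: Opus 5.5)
The plan is to show that the energy $\mathcal{E}_y(t):=\frac{1}{2}\norm{x(t)-y}^2$ is non-increasing along the trajectory; this gives Fej\'er monotonicity, and the bound follows by taking $s=0$. Since $x$ is absolutely continuous on every $[0,b']$, so is $\mathcal{E}_y$, being the composition of $x$ with a function that is locally Lipschitz on the bounded set $x([0,b'])$. Moreover, almost everywhere we have
\[
\frac{d}{dt}\mathcal{E}_y(t)=\langle \dot{x}(t),x(t)-y\rangle=\lambda(t)\langle T(x(t))-x(t),x(t)-y\rangle .
\]

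The key step is to show that $\langle Tx-x,x-y\rangle\leq 0$ whenever $y\in\mathrm{Fix}T$. Write $Tx-x=(Tx-y)-(x-y)$. Then
\[
\langle Tx-x,x-y\rangle=\langle Tx-y,x-y\rangle-\norm{x-y}^2\leq \norm{Tx-Ty}\norm{x-y}-\norm{x-y}^2\leq 0,
\]
using Cauchy--Schwarz, $Ty=y$, and nonexpansiveness $\norm{Tx-Ty}\leq\norm{x-y}$. Because $\lambda(t)\in[0,1]$ is nonnegative, it follows that $\frac{d}{dt}\mathcal{E}_y(t)\leq 0$ almost everywhere.

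Since $\mathcal{E}_y$ is absolutely continuous on each compact interval, integrating gives $\mathcal{E}_y(t)-\mathcal{E}_y(s)=\int_s^t\frac{d}{d\tau}\mathcal{E}_y(\tau)\,d\tau\leq 0$ for $t\geq s$. Hence $\norm{x(t)-y}\leq\norm{x(s)-y}$, and in particular $\norm{x(t)-y}\leq\norm{x_0-y}\leq b$.

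The only point needing care is justifying the chain rule for $\frac{d}{dt}\frac{1}{2}\norm{x(t)-y}^2$ almost everywhere. This is standard for absolutely continuous Hilbert-valued functions, since $x$ is differentiable a.e.\ and $\norm{\cdot}^2$ is Fr\'echet differentiable. As a by-product, the argument shows that $x$ is $(\mathrm{Id},\mathrm{Id})$-Fej\'er monotone w.r.t.\ $F(x)=\norm{x-Tx}$, which is what the later application of the abstract results will need.
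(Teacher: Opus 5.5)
Your proof is correct. It has the same overall shape as the paper's: show $\frac{d}{dt}\norm{x(t)-y}^2\leq 0$ almost everywhere and integrate. The difference is in how you get the sign condition.

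\textbf{The paper's route.} The paper does not compute $\langle\dot x(t),x(t)-y\rangle$ itself. It imports Eq.~(8) of \cite{BotCsetnek2017},
$\frac{d}{dt}\norm{x(t)-y}^2+\lambda(t)(1-\lambda(t))\norm{T(x(t))-x(t)}^2+\norm{\dot x(t)}^2\leq 0$,
and discards the two nonnegative dissipation terms. That inequality comes from the identity $2\langle\dot x,x-y\rangle=\norm{\dot x+x-y}^2-\norm{x-y}^2-\norm{\dot x}^2$, combined with the convex-combination expansion of $\norm{\lambda(Tx-y)+(1-\lambda)(x-y)}^2$. A version of this computation appears in the proof of Lemma~\ref{lem:approximateDerivative}.

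\textbf{Your route.} Your estimate $\langle Tx-x,x-y\rangle\leq 0$, from Cauchy--Schwarz and nonexpansiveness, is shorter and self-contained. It needs only $\lambda(t)\geq 0$ rather than $\lambda(t)\in[0,1]$. You also make explicit the absolute continuity of $t\mapsto\norm{x(t)-y}^2$, which is needed to pass from an a.e.\ derivative bound to monotonicity; the paper leaves this implicit.

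\textbf{What the paper's route buys.} The dissipation terms themselves are used later. The integral bounds on $\lambda(1-\lambda)\norm{T(x(t))-x(t)}^2$ and $\norm{\dot x(t)}^2$ in Lemmas~\ref{lem:assym-reg-bot} and~\ref{lem:assym-reg-botSecond}, and the decrease estimate in Theorem~\ref{thm:firstOrderFast}, all rely on inequalities of this dissipative form. Your estimate carries no such quantitative information. For the boundedness claim alone, either argument suffices.
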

\begin{proof}
As in \cite{BotCsetnek2017}, Eq.\ (8), we have
\[
\frac{d}{dt}\norm{x(t)-y}^2+\lambda(t)(1-\lambda(t))\norm{T(x(t))-x(t)}^2+\norm{\dot{x}(t)}^2\leq 0
\]
so that, as $\lambda(t)(1-\lambda(t))\norm{T(x(t))-x(t)}^2$ and $\norm{\dot{x}(t)}^2$ are nonnegative, we have 
\[
\frac{d}{dt}\norm{x(t)-y}^2\leq 0
\]
so that $t\mapsto \norm{x(t)-y}$ is decreasing. In particular, we have
\[
\norm{x(t)-y}\leq \norm{x(0)-y}\leq b
\]
for all $t\geq 0$.
\end{proof}

To capture the corresponding fixed point problem in the setup of the previous sections, we set $X_0=\overline{B}_{b}(y)$ and 
\[
F(z)=\begin{cases}\norm{z-Tz}&\text{if }z\in X_0,\\+\infty&\text{otherwise},\end{cases}
\]
for a fixed solution $y\in\mathrm{Fix}T$ so that $\mathrm{zer}F=\mathrm{Fix}T\cap X_0$ and 
\[
\mathrm{lev}_{\leq\varepsilon} F=\{z\in X_0\mid \norm{z-Tz}\leq \varepsilon\}.
\]

We now turn to the asymptotic regularity of the dynamical system, that is the property that
\[
\lim_{t\to\infty}\norm{T(x(t))-x(t)}=0.
\]
A quantitative version of that property given in the following will in particular ensure that the dynamical system $x$ has approximate $F$-points for the $F$ defined above, so that in particular our previous general and abstract results apply to it. Generally of course, results of the above form and their quantitative renderings are also of independent interest, and rates of asymptotic regularity for the dynamical system $x$ as above in particular provide continuous-time analogs of respective rates for the so-called $T$-asymptotic regularity of the Krasnosel'skii-Mann method $(x_n)$, that is rates for the respective limit $\lim_{n\to\infty}\norm{Tx_n-x_n}=0$ (see e.g.\ \cite{Kohlenbach2001,Kohlenbach2003}). Further, note that by $x$ solving the equations \ref{firstOrder}, we in particular always get
\[
\norm{\dot{x}(t)}\leq \norm{T(x(t))-x(t)},
\]
so that any quantitative rendering of the latter also provides a similar such result for the former.
 
In \cite{BotCsetnek2017} (see item (ii) of Theorem 6 therein), this result is now established under two different (and in fact independent \cite[Remark 7]{BotCsetnek2017}) conditions:
\[
\int_0^\infty\lambda(t)(1-\lambda(t))dt=+\infty\text{ and }\inf_{t\geq 0}\lambda(t)>0.
\]
We now provide quantitative versions of these results by giving rates of convergence in each case. We begin with the former assumption:

\begin{lemma}\label{lem:assym-reg-bot}
Let $y\in\mathrm{Fix}T$ and let $b\in\mathbb{N}$ with $\norm{x_0-y}\leq b$. Let further $\int_0^{+\infty}\lambda(t)(1-\lambda(t))\,dt=+\infty$ with a corresponding divergence modulus $\eta:(0,\infty)\to \mathbb{N}$, i.e.\ where
\[
\int_0^{r}\lambda(t)(1-\lambda(t))\,dt\geq K
\]
for any $r\geq \eta(K)$ and $K>0$. Then $x$ is $T$-asymptotically regular, i.e.
\[
\forall \varepsilon>0\ \forall t\geq \varphi(\varepsilon)\left(\norm{T(x(t))-x(t)}\leq \varepsilon\right)
\]
with rate $\varphi(\varepsilon):=\eta(b^2/\varepsilon^2)$. In particular, $x$ has approximate $F$-points with modulus $\varphi$.
\end{lemma}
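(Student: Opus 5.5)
The proof combines two facts. The first is the energy inequality already used in Lemma \ref{lem:boundedness}. The second is that $t\mapsto\norm{T(x(t))-x(t)}$ is non-increasing along the trajectory. Together they show that the residual is small at every late time, which gives a rate of convergence rather than only a $\liminf$-bound.

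\emph{Step 1 (integrated energy estimate).} Starting from the differential inequality of Eq.\ (8) in \cite{BotCsetnek2017}, as recalled in the proof of Lemma \ref{lem:boundedness}, I integrate over $[0,r]$ and drop the nonnegative term $\norm{\dot{x}(t)}^2$. This gives
\[
\int_0^r\lambda(t)(1-\lambda(t))\norm{T(x(t))-x(t)}^2\,dt\leq \norm{x_0-y}^2-\norm{x(r)-y}^2\leq b^2 .
\]
This step is legitimate because $t\mapsto\norm{x(t)-y}^2$ is absolutely continuous on bounded intervals, so it equals the integral of its derivative.

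\emph{Step 2 (monotonicity of the residual).} I will show that $t\mapsto\norm{T(x(t))-x(t)}$ is non-increasing. This is the main obstacle, because it needs regularity of $t\mapsto T(x(t))$. Since $T$ is nonexpansive and $x$ is absolutely continuous on each $[0,b]$, the map $u(t):=T(x(t))-x(t)$ is also absolutely continuous there. The argument for Theorem 6(ii) in \cite{BotCsetnek2017} then gives, almost everywhere,
\[
\frac{d}{dt}\tfrac12\norm{u(t)}^2=\Big\langle \tfrac{d}{dt}T(x(t)),u(t)\Big\rangle-\lambda(t)\norm{u(t)}^2 .
\]
I bound the first term by $\norm{\dot{x}(t)}\norm{u(t)}$, which is valid because $\norm{\tfrac{d}{dt}T(x(t))}\leq\norm{\dot{x}(t)}$ by nonexpansiveness. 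Since $\norm{\dot{x}(t)}=\lambda(t)\norm{u(t)}$, the two terms cancel and the derivative is $\leq 0$. Hence $\norm{u}$ is non-increasing. If I would rather avoid differentiating $T\circ x$, I can instead estimate $\norm{u(t+h)}\leq\norm{u(t)}+o(h)$ directly from nonexpansiveness, using $x(t+h)=x(t)+\int_t^{t+h}\lambda u$.

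\emph{Step 3 (the rate).} Fix $\varepsilon>0$ and $t\geq\varphi(\varepsilon)=\eta(b^2/\varepsilon^2)$. By Step 2, $\norm{u(\tau)}\geq\norm{u(t)}$ for all $\tau\leq t$. Inserting this into Step 1 with $r=t$ gives
\[
\norm{u(t)}^2\int_0^t\lambda(1-\lambda)\,d\tau\leq b^2 .
\]
The divergence modulus gives $\int_0^t\lambda(1-\lambda)\,d\tau\geq b^2/\varepsilon^2$, and therefore $\norm{u(t)}^2\leq\varepsilon^2$. This is the claimed rate of $T$-asymptotic regularity.

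For the final claim, Lemma \ref{lem:boundedness} shows $x(t)\in X_0$ for every $t$, so $F(x(t))=\norm{x(t)-T(x(t))}$. The rate therefore shows that $x$ has approximate $F$-points with bound $\varphi$. If a strict inequality is required, I apply the rate at $\varepsilon/2$, or note that the bound $\leq$ is acceptable here because $\varphi$ is monotone.
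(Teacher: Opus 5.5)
Your proposal is correct and follows the paper's proof essentially step for step. Like the paper, you integrate Eq.~(8) of \cite{BotCsetnek2017} to get $\int_0^r\lambda(t)(1-\lambda(t))\norm{T(x(t))-x(t)}^2\,dt\leq b^2$, pull out $\norm{T(x(r))-x(r)}^2$ because the residual is non-increasing, apply the divergence modulus, and read off the approximate $F$-points from Lemma~\ref{lem:boundedness}. Your justification that the residual is non-increasing is sound; the paper simply takes this from the proof of Theorem~6 in \cite{BotCsetnek2017}. Your aside that monotonicity of $\varphi$ settles the strict-versus-non-strict inequality is not a real argument. The paper glosses over the point too, and strictness does hold for $b>0$: equality throughout the integrated form of Eq.~(8) would force $\dot{x}=0$ a.e.\ on $[0,t]$ and $x(t)=y$, hence $b=\norm{x_0-y}=0$.
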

\begin{proof}
As in \cite{BotCsetnek2017}, Eq.\ (8), we have
\[
\frac{d}{dt}\norm{x(t)-y}^2+\lambda(t)(1-\lambda(t))\norm{T(x(t))-x(t)}^2+\norm{\dot{x}(t)}^2\leq 0.
\]
This yields, as $\norm{\dot{x}(t)}^2$ is nonnegative, that
\[
\lambda(t)(1-\lambda(t))\norm{T(x(t))-x(t)}^2\leq -\frac{d}{dt}\norm{x(t)-y}^2
\]
and so that
\begin{align*}
\int_0^r\lambda(t)(1-\lambda(t))\norm{T(x(t))-x(t)}^2\,dt&\leq -\int_0^r\frac{d}{dt}\norm{x(t)-y}^2\,dt\\
&=\norm{x(0)-y}^2-\norm{x(r)-y}^2\\
&\leq \norm{x(0)-y}^2\leq b^2
\end{align*}
for $r\geq 0$. As $t\mapsto\norm{T(x(t))-x(t)}^2$ is nonincreasing, we get
\[
\norm{T(x(r))-x(r)}^2\int_0^r\lambda(t)(1-\lambda(t))\,dt\leq \int_0^r\lambda(t)(1-\lambda(t))\norm{T(x(t))-x(t)}^2\,dt\leq b^2.
\]
For $r\geq \eta(b^2/\varepsilon^2)=\varphi(\varepsilon)$, we get
\[
\norm{T(x(r))-x(r)}^2\cdot \frac{b^2}{\varepsilon^2}\leq \norm{T(x(r))-x(r)}^2\int_0^{r}\lambda(t)(1-\lambda(t))\,dt\leq b^2
\]
and so $\norm{T(x(r))-x(r)}\leq\varepsilon$. That $x$ has approximate zeros w.r.t.\ $F$ with the same modulus follows from Lemma \ref{lem:boundedness}.
\end{proof}

A particular assumption under which not only $\int_0^{+\infty}\lambda(t)(1-\lambda(t))\,dt=+\infty$ can be guaranteed but where a corresponding divergence modulus $\eta$ can be explicitly constructed is if 
\[
\underline{\tau}=\inf_{t\geq 0}\lambda(t)(1-\lambda(t))>0.
\]
Then $\eta(K):=\frac{K}{\underline{\tau}}$ is a corresponding divergence modulus. In particular, the above Lemma \ref{lem:assym-reg-bot} yields
\[
\norm{T(x(t))-x(t)}\leq \frac{b}{\sqrt{\underline{\tau} t}}
\]
for any $t\geq 0$ and any $b\geq \norm{x_0-y}$. In particular, for $b= \norm{x_0-y}$, and taking the infimum over $y$, we obtain 
\[
\norm{T(x(t))-x(t)}\leq \frac{d(x_0,\mathrm{Fix}T)}{\sqrt{\underline{\tau} t}},
\]
which was already obtained as Theorem 10 in \cite{BotCsetnek2017}. In fact, Bo\c{t} and Csetnek afterwards show that the convergence can be improved to $o(\frac{1}{\sqrt{t}})$ (see Theorem 11 in \cite{BotCsetnek2017}), but we will not be concerned with this further here. 

We now turn to the alternative assumption $\inf_{t\geq 0}\lambda(t)>0$ for the asymptotic regularity of the dynamical system, which thereby essentially presents itself as a weakening of the above property.

\begin{lemma}\label{lem:assym-reg-botSecond}
Let $y\in\mathrm{Fix}T$ and let $b\in\mathbb{N}$ with $\norm{x_0-y}\leq b$. Let further $\inf_{t\geq 0}\lambda(t)>0$ with a corresponding witness $\underline{\lambda}>0$, i.e.\ where
\[
\lambda(t)\geq\underline{\lambda}\text{ for all }t\in[0,\infty).
\]
Then $x$ is $T$-asymptotically regular, i.e.
\[
\forall \varepsilon>0\ \forall t\geq \varphi(\varepsilon)\left(\norm{T(x(t))-x(t)}\leq \varepsilon\right)
\]
with rate $\varphi(\varepsilon):=4b^4/\underline{\lambda}^2\varepsilon^2$. In particular, $x$ has approximate $F$-points with modulus $\varphi$.
\end{lemma}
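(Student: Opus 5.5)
We would reuse the energy inequality from \cite{BotCsetnek2017}, Eq.\ (8), as in the proofs of Lemmas \ref{lem:boundedness} and \ref{lem:assym-reg-bot}:
\[
\frac{d}{dt}\norm{x(t)-y}^2+\lambda(t)(1-\lambda(t))\norm{T(x(t))-x(t)}^2+\norm{\dot{x}(t)}^2\leq 0.
\]
The term $\lambda(1-\lambda)$ may now degenerate, for instance when $\lambda\equiv 1$. So this time we keep the $\norm{\dot x}^2$ term and drop the other one. Integrating over $[0,r]$ then gives $\int_0^r\norm{\dot x(t)}^2\,dt\leq \norm{x_0-y}^2\leq b^2$.

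Next we use the equation itself: $\dot x(t)=\lambda(t)(T(x(t))-x(t))$ gives $\norm{\dot x(t)}\geq \underline{\lambda}\norm{T(x(t))-x(t)}$ almost everywhere. Hence
\[
\underline{\lambda}^2\int_0^r\norm{T(x(t))-x(t)}^2\,dt\leq b^2 .
\]

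To turn this integral bound into a pointwise rate, we want $t\mapsto\norm{T(x(t))-x(t)}$ to be nonincreasing. This was already used in the proof of Lemma \ref{lem:assym-reg-bot}; it is established in \cite{BotCsetnek2017}, and we would simply invoke it again. Granting monotonicity, we get $r\,\underline{\lambda}^2\norm{T(x(r))-x(r)}^2\leq b^2$, i.e.\ $\norm{T(x(r))-x(r)}\leq b/(\underline{\lambda}\sqrt{r})$. For $r\geq b^2/(\underline{\lambda}^2\varepsilon^2)$ this is at most $\varepsilon$. Since $b\in\mathbb{N}$, we have $b^2\leq 4b^4$ whenever $b\geq 1$, so the stated rate $\varphi(\varepsilon)=4b^4/\underline{\lambda}^2\varepsilon^2$ is larger and therefore also works. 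The case $b=0$ is trivial, since then $x\equiv y$.

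The stated constant suggests that the authors avoid the monotonicity of $\norm{Tx-x}$. Instead they likely control its variation via a Lipschitz-type estimate: $T$ is nonexpansive, so $\frac{d}{dt}\norm{T(x(t))-x(t)}\leq 2\norm{\dot x(t)}\leq 2\norm{T(x(t))-x(t)}\leq 4b$. That step is the main obstacle. If the monotonicity cannot be cited cleanly, we would argue as follows. From $\int_0^\infty\norm{T(x(t))-x(t)}^2\,dt\leq b^2/\underline{\lambda}^2$ and the derivative bound, a value larger than $\varepsilon$ at time $r$ would force $\norm{Tx-x}$ to stay above $\varepsilon/2$ on an interval of length $\varepsilon/(8b)$ around $r$. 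However, this alone only gives a $\liminf$-type statement, not a uniform rate. So we would first try to prove monotonicity directly: compute $\frac{d}{dt}\norm{T(x(t))-x(t)}^2$ along the flow, using that $\langle T(x(t+h))-T(x(t)),\,x(t+h)-x(t)\rangle\leq \norm{x(t+h)-x(t)}^2$ together with $\dot x=\lambda(Tx-x)$ and $\lambda\in[0,1]$, to show it is nonpositive. The approximate $F$-point claim then follows from Lemma \ref{lem:boundedness}, exactly as in Lemma \ref{lem:assym-reg-bot}.
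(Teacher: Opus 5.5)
Your proof is correct, and it takes a more direct route than the paper's. Both arguments use the same three facts:
\begin{enumerate}
\item the bound $\int_0^\infty\norm{\dot x(t)}^2\,dt\leq b^2$ from Eq.~(8) of \cite{BotCsetnek2017};
\item the consequence $\int_0^\infty\norm{T(x(t))-x(t)}^2\,dt\leq b^2/\underline{\lambda}^2$, via $\dot x=\lambda(Tx-x)$ and $\lambda\geq\underline{\lambda}$;
\item the monotonicity of $t\mapsto\norm{T(x(t))-x(t)}$, taken from the proof of Theorem~6 in \cite{BotCsetnek2017}.
\end{enumerate}
The difference is how the last two are combined. You combine them elementarily, exactly as in Lemma~\ref{lem:assym-reg-bot}. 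The paper instead feeds $\mathcal{E}(t)=\frac12\norm{T(x(t))-x(t)}^2$ into the quantitative Abbas--Attouch--Svaiter Lemma~\ref{lem:AAS2} with $f\equiv 0$. This locates a single time $n\leq 4b^4/(\underline{\lambda}^2\varepsilon^2)$ with $\norm{T(x(n))-x(n)}<\varepsilon$, and only then does the paper use monotonicity to extend this to all $t\geq n$.

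So your guess about the constant is off. The factor $4b^4$ comes from multiplying the ceiling factors $\varpi$ and $f'$ of Lemma~\ref{lem:AAS2}, not from a Lipschitz-type estimate that avoids monotonicity. Your fallback discussion is therefore unnecessary.

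Your route gives the sharper rate $\norm{T(x(t))-x(t)}\leq b/(\underline{\lambda}\sqrt{t})$, that is $b^2/(\underline{\lambda}^2\varepsilon^2)$. This is at most the stated $\varphi$ for $b\geq 1$, and the case $b=0$ is trivial, as you note. Since $b^2<4b^4$, it even gives the strict inequality at $t=\varphi(\varepsilon)$ needed for the approximate $F$-point claim.

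The detour through Lemma~\ref{lem:AAS2} gains nothing here. With $\overline{e}\equiv 0$, its differential-inequality hypothesis is precisely the monotonicity both proofs already use. Moreover, the paper's instantiation of that lemma is loose: its $f'$ matches the threshold $\varepsilon$ for $\mathcal{E}$ rather than $\varepsilon^2/2$. Your direct estimate is therefore the more reliable justification of the stated constant.
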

\begin{proof}
    Completely parallel to the first part of the proof of Lemma~\ref{lem:assym-reg-bot}, we get
    \begin{equation*}
        \int_0^\infty\norm{\dot x(t)}\,dt\leq b^2.
    \end{equation*}
    Given that~$x$ is the unique global solution of~\ref{firstOrder}, this yields
    \begin{equation*}
        \int_0^\infty\frac12\norm{T(x(t))-x(t)}^2\,dt=\int_0^\infty\frac{\norm{\dot x(t)}^2}{2\lambda(t)^2}\,dt\leq\frac{b^2}{2\underline{\lambda}^2}.
    \end{equation*}
    As in the proof of Theorem 6 in \cite{BotCsetnek2017}, for almost all~$t$ we also have
    \begin{equation*}
        \frac{d}{dt}\left(\frac12\norm{T(x(t))-x(t)}^2\right)\leq 0.
    \end{equation*}
    For any~$\varepsilon>0$ and with $f\equiv 0$, we can now apply Lemma~\ref{lem:AAS2} to get
\begin{equation*}
\exists n\leq \tilde{f'}^{(\varpi(\varepsilon))}(0)\left( \frac12\norm{T(x(n))-x(n)}^2<\frac{\varepsilon^2}{2}\right),
\end{equation*}
where $\tilde{f'}(n):=n+\ceil*{3b^2/2\underline{\lambda}^2\varepsilon}$ as $f\equiv 0$, and $\varpi(\varepsilon):=\left\lceil b^2/\varepsilon\right\rceil$. In particular, we have
\[
\tilde{f'}^{(\varpi(\varepsilon))}(0)=\varpi(\varepsilon)\ceil*{\frac{3b^2}{2\underline{\lambda}^2\varepsilon}}\leq \frac{2b^2}{\varepsilon}\cdot\frac{2b^2}{\underline{\lambda}^2\varepsilon}=\frac{4b^4}{\underline{\lambda}^2\varepsilon^2}.
\]
As $t\mapsto\norm{T(x(t))-x(t)}^2$ is nonincreasing, the latter even holds for all~$t\geq n$, which yields the result.
\end{proof}

Fej\'er monotonicity of the unique strong global solution $x$ to the system \ref{firstOrder} w.r.t.\ $F$ now comes to pass in exactly the way discussed in Section \ref{sec:motivation}, through a differential inequality with an associated energy functional. Indeed, in \cite{BotCsetnek2017} (see eq.\ (8) therein), it is established that
\[
\frac{d}{dt}\norm{x(t)-y}^2\leq 0\text{ for all }y\in\mathrm{Fix}T,
\]
so that the inequality here is even of the most simple form discussed in Section \ref{sec:motivation}. We now analyse the proof of this inequality as given in \cite{BotCsetnek2017} to establish the associated uniform Fej\'er monotonicity and extract an associated modulus. Indeed, the following result now provides a quantitative variant of the above differential inequality, relating approximate solutions to approximate inequalities.

\begin{lemma}\label{lem:approximateDerivative}
Let $y\in\mathrm{Fix}T$ and let $b\in\mathbb{N}$ with $\norm{x_0-y}\leq b$. For any $\varepsilon>0$, if $z\in\mathrm{lev}_{\leq \varepsilon/4b}F$, then $\frac{d}{dt}\norm{x(t)-z}^2\leq \varepsilon$ for all $t\geq 0$.
\end{lemma}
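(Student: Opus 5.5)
We compute the derivative directly from the system \ref{firstOrder}, mirroring the proof of Eq.\ (8) in \cite{BotCsetnek2017} but with $z$ only an approximate fixed point. For almost every $t$,
\[
\frac{d}{dt}\norm{x(t)-z}^2=2\langle \dot x(t),x(t)-z\rangle=2\lambda(t)\langle T(x(t))-x(t),x(t)-z\rangle .
\]
The inequality will be read in this almost-everywhere sense, which is all that is needed to integrate and obtain the quasi-Fej\'er inequality. The plan is to split the inner product by writing $T(x(t))-x(t)=(T(x(t))-Tz)-(x(t)-z)+(Tz-z)$. This gives
\[
\langle T(x(t))-x(t),x(t)-z\rangle=\langle T(x(t))-Tz,x(t)-z\rangle-\norm{x(t)-z}^2+\langle Tz-z,x(t)-z\rangle.
\]

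For the first two terms, Cauchy--Schwarz and nonexpansiveness of $T$ give
\[
\langle T(x(t))-Tz,x(t)-z\rangle\leq \norm{x(t)-z}^2,
\]
so together they are $\leq 0$. The last term is bounded by $\norm{Tz-z}\cdot\norm{x(t)-z}$. Since $\lambda(t)\in[0,1]$, we obtain
\[
\frac{d}{dt}\norm{x(t)-z}^2\leq 2\norm{Tz-z}\,\norm{x(t)-z}.
\]

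It remains to bound $\norm{x(t)-z}$. Because $z\in\mathrm{lev}_{\leq\varepsilon/4b}F$ and $F=+\infty$ outside $X_0$, we have $z\in X_0=\overline{B}_b(y)$. Lemma~\ref{lem:boundedness} gives $\norm{x(t)-y}\leq b$. Hence $\norm{x(t)-z}\leq 2b$, and so
\[
\frac{d}{dt}\norm{x(t)-z}^2\leq 4b\cdot\frac{\varepsilon}{4b}=\varepsilon.
\]

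The only delicate point is this boundedness step. It works precisely because the domain restriction built into $F$ forces $z$ into the ball around $y$; this is why the modulus is $\varepsilon/4b$, with the factor $4b=2\cdot 2b$. Otherwise the argument is a routine perturbation of the exact Fej\'er computation.
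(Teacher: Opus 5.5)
Your argument is correct and gives the same constant $4b$, but by a different computation from the paper's. The paper follows the derivation of Eq.~(8) in \cite{BotCsetnek2017}. It starts from the polarization identity $2\langle\dot x,x-z\rangle=\norm{\dot x+x-z}^2-\norm{x-z}^2-\norm{\dot x}^2$, drops $\norm{\dot x}^2$, and writes $\dot x+x-z=\lambda(Tx-z)+(1-\lambda)(x-z)$. It then passes to $\norm{Tx-z}^2-\norm{x-z}^2$, factors this difference of squares, and bounds the sum by $4b$ and the difference by $\norm{Tz-z}$.

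Your split $Tx-x=(Tx-Tz)-(x-z)+(Tz-z)$ is shorter and ends in a product of nonnegative factors, so no sign issues arise. In the paper's chain, two steps are only valid when $\norm{Tx(t)-z}\geq\norm{x(t)-z}$: the passage to $\norm{Tx-z}^2-\norm{x-z}^2$, and replacing the sum by $4b$. For instance, with $\lambda(t)=0$ the left-hand side is $0$ while the right-hand side can be negative. This is harmless, since in the opposite case convexity of the norm already gives a nonpositive derivative, but your route needs no case distinction. The paper's route is chosen to stay close to the original proof being analyzed.

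Your remark that the inequality holds only almost everywhere is also more careful than the paper's ``for all $t\geq 0$''. When $\lambda$ is merely measurable, $x$ need not be differentiable everywhere, so the mean value theorem used in Lemma~\ref{lem:firstOrderUniFejer} is not justified. The a.e.\ bound suffices there after integrating, because $t\mapsto\norm{x(t)-z}^2$ is locally absolutely continuous.
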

\begin{proof}
Take $z\in\mathrm{lev}_{\leq \varepsilon/4b}F$. Analogous to the proof of Theorem 6 in \cite{BotCsetnek2017}, we have
\begin{align*}
\frac{d}{dt}\norm{x(t)-z}^2&=2\langle\dot{x}(t),x(t)-z\rangle\\
&=\norm{\dot{x}(t)+x(t)-z}^2-\norm{x(t)-z}^2-\norm{\dot{x}(t)}^2\\
&\leq \norm{\lambda(t)\left(Tx(t)-z\right)+(1-\lambda(t))\left(x(t)-z\right)}^2-\norm{x(t)-z}^2\\
&\leq \norm{Tx(t)-z}^2-\norm{x(t)-z}^2\\
&=(\norm{Tx(t)-z}-\norm{x(t)-z})(\norm{Tx(t)-z}+\norm{x(t)-z}).
\end{align*}
As $z\in X_0$, we have 
\[
\norm{x(t)-z}\leq \norm{x(t)-z}+\norm{z-y}\leq \norm{x_0-z}+\norm{z-y}\leq 2b
\]
as well as 
\[
\norm{Tx(t)-z}\leq \norm{Tx(t)-y}+\norm{z-y}\leq \norm{x(t)-y}+\norm{z-y}\leq \norm{x_0-z}+\norm{z-y}\leq 2b
\]
using the nonexpansivity of $T$ and Lemma \ref{lem:boundedness}. Thus, we have 
\begin{align*}
\frac{d}{dt}\norm{x(t)-z}^2&\leq (\norm{Tx(t)-z}-\norm{x(t)-z})4b\\
&\leq (\norm{Tx(t)-Tz}+\norm{Tz-z}-\norm{x(t)-z})4b\\
&\leq \norm{Tz-z}4b\leq \varepsilon
\end{align*}
as $\norm{Tz-z}\leq \varepsilon/4b$.
\end{proof}

An approximate differential inequality like the above then yields a modulus of uniform Fej\'er monotonicity (with $G=H=(\cdot)^2$) via the mean value theorem:

\begin{lemma}\label{lem:firstOrderUniFejer}
Let $y\in\mathrm{Fix}T$ and let $b\in\mathbb{N}$ with $\norm{x_0-y}\leq b$. Then $x$ is uniformly $(G,H)$-Fej\'er monotone w.r.t.\ $F$ and with $G=H=(\cdot)^2$, i.e.
\[
\forall \varepsilon>0\ \forall n,m\in\mathbb{N}\ \forall z\in \mathrm{lev}_{\leq\chi(\varepsilon,n,m)}F\ \forall s\leq t\in [n,n+m]\left(\norm{x(t)-z}^2\leq \norm{x(s)-z}^2+\varepsilon\right),
\]
where $\chi(\varepsilon,n,m):=\varepsilon/(4bm)$.
\end{lemma}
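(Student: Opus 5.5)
The plan is to integrate the approximate differential inequality of Lemma~\ref{lem:approximateDerivative} over $[s,t]$. Fix $\varepsilon>0$, $n,m\in\mathbb{N}$ and $z\in\mathrm{lev}_{\leq\chi(\varepsilon,n,m)}F$, and let $s\leq t\in[n,n+m]$. If $m=0$ then $s=t$ and the claim is trivial, so we only need to treat $m\geq 1$; there $\chi(\varepsilon,n,m)=\varepsilon/(4bm)$ is a genuine positive number. (For $m=0$ one reads $\chi$ as any positive value, e.g.\ with $\max\{m,1\}$ in place of $m$, which changes nothing.)

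Set $\delta:=\varepsilon/m$. Then $z\in\mathrm{lev}_{\leq \delta/4b}F$, so Lemma~\ref{lem:approximateDerivative} applied with $\delta$ in place of $\varepsilon$ gives
\[
\frac{d}{dt}\norm{x(t)-z}^2\leq \frac{\varepsilon}{m}
\]
wherever the derivative exists. The function $\tau\mapsto\norm{x(\tau)-z}^2$ is absolutely continuous on $[s,t]$, since $x$ is absolutely continuous there and the square of the norm is locally Lipschitz on bounded sets (and $x$ is bounded by Lemma~\ref{lem:boundedness}). Hence we may write
\[
\norm{x(t)-z}^2-\norm{x(s)-z}^2=\int_s^t\frac{d}{d\tau}\norm{x(\tau)-z}^2\,d\tau\leq (t-s)\frac{\varepsilon}{m}\leq\varepsilon,
\]
using $t-s\leq m$. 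This is exactly the mean value/integration step advertised before the lemma. Since $e\equiv 0$, it yields the uniform $(G,H)$-Fej\'er monotonicity with $G=H=(\cdot)^2$.

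The only delicate point is that the derivative bound from Lemma~\ref{lem:approximateDerivative} holds only almost everywhere, so a pointwise mean value theorem is not directly available. This is why I would use absolute continuity and the fundamental theorem of calculus for Lebesgue integrals rather than the classical mean value theorem.

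A second check concerns Lemma~\ref{lem:approximateDerivative} itself: it needs $z\in X_0$, which is built into $\mathrm{lev}F$ by the definition of $F$ (which is $+\infty$ outside $X_0$). No further restriction on $z$ is therefore needed.
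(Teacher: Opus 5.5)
Your proof is correct and follows the paper's argument. You bound the derivative of $\norm{x(\tau)-z}^2$ by $\varepsilon/m$ via Lemma~\ref{lem:approximateDerivative}, then convert this into a bound on the increment over $[s,t]$ using $t-s\leq m$. The paper does that last step with the classical mean value theorem. You instead integrate the almost-everywhere bound using absolute continuity, which is the more careful choice, since $x$ need only be differentiable almost everywhere when $\lambda$ is merely measurable. Your treatment of the degenerate case $m=0$ also fixes a small point the paper leaves implicit.
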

\begin{proof}
Fix $\varepsilon,n,m$ as well as $z\in \mathrm{lev}_{\leq\chi(\varepsilon,n,m)}F$ and $s,t\in [n,n+m]$. If $s=t$, the result is clear. If $s<t$, then the mean value theorem yields an $r\in (s,t)$ such that
\[
\frac{d}{dt}\norm{x(r)-z}^2=\frac{\norm{x(t)-z}^2-\norm{x(s)-z}^2}{t-s}.
\]
Thus, as $z\in \mathrm{lev}_{\leq\chi(\varepsilon,n,m)}F$, we have
\[
\norm{x(t)-z}^2-\norm{x(s)-z}^2\leq \frac{\varepsilon}{m}(t-s)\leq \frac{\varepsilon}{m}\cdot m=\varepsilon
\]
by Lemma \ref{lem:approximateDerivative}.
\end{proof}

A direct application of Theorem \ref{thm:generalFejer-rates} now yields the following quantitative result on the behavior of the solution $x$ to \ref{firstOrder} under a finite-dimensionality (that is, local compactness) assumption on $X$, giving a rate of metastability for the convergence result established in Theorem 6 of \cite{BotCsetnek2017} in that case:

\begin{theorem}\label{thm:firstOrderMeta}
Let $X$ be a finite-dimensional Hilbert space with dimension $d$ and let $T:X\to X$ be a nonexpansive map. Let $y\in\mathrm{Fix}T$ and let $b\in\mathbb{N}$ with $\norm{x_0-y}\leq b$. Further, assume that $\lambda:[0,+\infty)\to [0,1]$ is Lebesgue measurable and fix $x_0\in X$. Let $x$ be the unique strong global solution to \ref{firstOrder}. Assume either:
\begin{enumerate}
\item $\int_0^{+\infty}\lambda(t)(1-\lambda(t))\,dt=+\infty$ with a divergence modulus $\eta:(0,\infty)\to \mathbb{N}$ with $\eta(K)\geq \eta(L)$ for $K\geq L$, in which case we set $\varphi(\varepsilon):=\eta(b^2/\varepsilon^2)$.
\item $\inf_{t\geq 0}\lambda(t)>0$ with a witness $\underline{\lambda}>0$, in which case we set $\varphi(\varepsilon):=4b^4/\underline{\lambda}^2\varepsilon^2$.
\end{enumerate}

Then, in either case, $x$ is metastable in the sense that for all $\varepsilon>0$ and $f:\mathbb{N}\to\mathbb{N}$, it holds that 
\[
\exists n\leq \Delta(\varepsilon/4,f)\ \forall s,t\in [n,n+f(n)]\left( \norm{x(s)-x(t)}\leq\varepsilon\text{ and }\norm{x(t)-T(x(t))}\leq\varepsilon\right),
\]
where moreover the bound $\Delta(\varepsilon,f)$ can be explicitly described by
\[
\Delta(\varepsilon,f):=\max\{\Delta(i,\varepsilon,f)\mid i\leq P\}+1
\]
where $P:=\ceil*{2(\ceil*{\sqrt{12}\varepsilon^{-1}}+1)\sqrt{d}b}^d+1$ and $\Delta(0,\varepsilon,f):=0$ and $\Delta(j,\varepsilon,f):=\varphi(\widehat{\varepsilon}_j)$
with
\[
\widehat{\varepsilon}_j=\min\left\{\frac{\varepsilon}{2},\frac{\varepsilon^2/12}{4b(f(m+1)+1)}\;\Big\vert\; m\leq \Delta(i,\varepsilon,f),i<j\right\}
\]
for $j\geq 1$.
\end{theorem}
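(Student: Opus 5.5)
The plan is to apply the last part of Theorem \ref{thm:generalFejer-rates}, in the case of vanishing errors and with $F$ uniformly continuous. The metric space is $X_0=\overline{B}_b(y)$, equipped with the solution function $F(z)=\norm{z-Tz}$ on $X_0$. By Lemma \ref{lem:boundedness} the whole trajectory $x$ stays inside $X_0$, so we may regard $x$ as a dynamical system in $X_0$.

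We first collect the ingredients.

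\textbf{Perturbation functions.} We take $G=H=(\cdot)^2$ with moduli $g(\varepsilon)=\sqrt{\varepsilon}$ and $h(\varepsilon)=\varepsilon^2$. Then $H(a)<\varepsilon^2$ implies $a<\varepsilon$, and $a<\sqrt{\varepsilon}$ implies $G(a)<\varepsilon$. Hence $g(h(\varepsilon/2)/3)=\varepsilon/\sqrt{12}$.

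\textbf{Fej\'er monotonicity.} The errors vanish, $e\equiv 0$. Lemma \ref{lem:firstOrderUniFejer} supplies the modulus $\chi(\varepsilon,n,m)=\varepsilon/(4bm)$. After the shift $m\mapsto f(m+1)+1$ built into $\chi_f$, and with the $\min$ over $m\le\Delta(i,\ldots)$ from $\chi^M_f$, this produces the displayed $\frac{\varepsilon^2/12}{4b(f(m+1)+1)}$. Here one must track the exact argument fed into $\chi$, namely $h(\varepsilon/2)/3=\varepsilon^2/12$.

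\textbf{Approximate $F$-points.} Lemma \ref{lem:assym-reg-bot} in case (1) and Lemma \ref{lem:assym-reg-botSecond} in case (2) give an approximate $F$-point bound $\varphi$. It is monotone: in case (1) because $\eta$ is assumed monotone, and in case (2) by inspection. So the $e\equiv0$ clause of Theorem \ref{thm:generalFejer-rates} applies with $\Delta(j,\varepsilon,f)=\varphi(\widehat{\varepsilon}_j)$.

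\textbf{Uniform continuity of $F$.} Since $T$ is nonexpansive, $|F(u)-F(v)|\le 2\norm{u-v}$, so $\omega(\varepsilon)=\varepsilon/2$ is a modulus. Then $\min\{\varepsilon,\omega(\varepsilon/2)\}=\varepsilon/4$, which explains the $\Delta(\varepsilon/4,f)$ in the statement. Replacing $\chi$ by $\widehat{\chi}_\varepsilon=\min\{\varepsilon/2,\chi\}$ accounts for the $\varepsilon/2$ inside $\widehat{\varepsilon}_j$.

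\textbf{Modulus of total boundedness.} This is the main step: $X_0$ is a ball of radius $b$ in $\mathbb{R}^d$, and we need an explicit $\gamma$ for it. Enclose the ball in the cube $[-b,b]^d$ around $y$ and partition the cube into subcubes of side $\delta/\sqrt d$. Each subcube has diameter $\delta$, and there are at most $\lceil 2b\sqrt d/\delta\rceil^d$ of them. By pigeonhole, among $N+1$ points with $N$ the number of subcubes, two lie in the same subcube and hence are within distance $\delta$. So $\gamma(\delta)=\lceil 2\sqrt d b/\delta\rceil^d$ works. Plugging in $\delta=\varepsilon/\sqrt{12}$ and bounding $\sqrt{12}/\varepsilon\le\lceil\sqrt{12}\varepsilon^{-1}\rceil+1$ (a harmless overestimate, so $\gamma$ remains valid) gives the stated $P=\gamma(\ldots)+1$.

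\textbf{Assembly.} We read off the conclusion $\norm{x(s)-x(t)}\le\varepsilon$ and $F(x(t))\le\varepsilon$ on $[n,n+f(n)]$. Since $x(t)\in X_0$, the second is exactly $\norm{x(t)-Tx(t)}\le\varepsilon$.

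The main obstacle is bookkeeping rather than mathematics. One has to make sure that every instance of $\varepsilon$ in the definitions of $P$ and $\widehat\varepsilon_j$ is the argument already passed through $\Delta(\varepsilon/4,\cdot)$, so the constants are read off consistently. The only other point to verify is that the monotonicity of $\varphi$ required by the theorem holds in both cases.
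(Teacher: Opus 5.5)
Your proposal is correct and matches the paper's proof: instantiate the $e\equiv 0$, uniformly continuous case of Theorem~\ref{thm:generalFejer-rates} on $X_0=\overline{B}_b(y)$ with the same ingredients. These are $G=H=(\cdot)^2$, $g=\sqrt{\cdot}$, $h=(\cdot)^2$, $\chi(\varepsilon,n,m)=\varepsilon/(4bm)$, the bounds $\varphi$ from Lemmas~\ref{lem:assym-reg-bot} and~\ref{lem:assym-reg-botSecond}, and $\omega(\varepsilon)=\varepsilon/2$. The only difference is the modulus of total boundedness: you derive it yourself by pigeonhole on subcubes and correctly note that the stated $P$ dominates your $\gamma(\varepsilon/\sqrt{12})+1$, which suffices because any larger function is again a modulus; the paper instead cites Example 2.8 of \cite{KohlenbachLeusteanNicolae2018} for $\gamma(\varepsilon)=\lceil 2(\lceil\varepsilon^{-1}\rceil+1)\sqrt{d}b\rceil^d$.
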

\begin{proof}
The result immediately follows by instantiating Theorem \ref{thm:generalFejer-rates} over $X_0=\overline{B}_{b}(y)$ and simplifying the resulting bound (recall that $x(t)\in X_0$ for all $t\geq 0$ by Lemma \ref{lem:boundedness}).  In particular, we set $G:=H:=(\cdot)^2$ as well as $h(\varepsilon):=\varepsilon^2$ and $g(\varepsilon):=\sqrt{\varepsilon}$. That $x$ is uniformly $(G,H)$-Fej\'er monotone w.r.t.\ $F$ follows by Lemma \ref{lem:firstOrderUniFejer}, which in particular yields that $\chi(\varepsilon,n,m):=\varepsilon/(4bm)$ is an associated modulus. The definition of $\widehat{\varepsilon}_j$ is a resulting simplification. As we do not deal with errors, it suffices that $x$ has the approximate $F$-point property. The above assumptions (1) and (2) guarantee that with respective bounds, using Lemmas \ref{lem:assym-reg-bot} and \ref{lem:assym-reg-botSecond}. Note that the respective $\varphi$ is immediately monotone under our assumptions. Also, note that $X_0$ is totally bounded with a modulus $\gamma(\varepsilon):=\ceil*{2(\ceil*{\varepsilon^{-1}}+1)\sqrt{d}b}^d$ by Example 2.8 of \cite{KohlenbachLeusteanNicolae2018}. Lastly, note that $F$ is uniformly continuous on $X_0$ with modulus $\omega(\varepsilon):=\varepsilon/2$, since we have $\vert \norm{z-Tz}-\norm{z'-Tz'}\vert\leq 2\norm{z-z'}$ using the nonexpansivity of $T$.
\end{proof}

Further, a direct application of Theorem \ref{thm:met-reg-rates} yields the following general construction of a rate of convergence under a regularity assumption on the mapping $T$ and its associated fixed point set $\mathrm{Fix}T$:

\begin{theorem}
Let $X$ be a Hilbert space and let $T:X\to X$ be a nonexpansive map. Let $y\in\mathrm{Fix}T$ and let $b\in\mathbb{N}$ with $\norm{x_0-y}\leq b$. Assume that $F$ has a modulus of regularity $\tau$ w.r.t.\ $\overline{B}_b(y)$. Further, assume that $\lambda:[0,+\infty)\to [0,1]$ is Lebesgue measurable and fix $x_0\in X$. Let $x$ be the unique strong global solution to \ref{firstOrder}. Assume either:
\begin{enumerate}
\item $\int_0^{+\infty}\lambda(t)(1-\lambda(t))\,dt=+\infty$ with a divergence modulus $\eta:(0,\infty)\to \mathbb{N}$ with $\eta(K)\geq \eta(L)$ for $K\geq L$, in which case we set $\varphi(\varepsilon):=\eta(b^2/\varepsilon^2)$.
\item $\inf_{t\geq 0}\lambda(t)>0$ with a witness $\underline{\lambda}>0$, in which case we set $\varphi(\varepsilon):=4b^4/\underline{\lambda}^2\varepsilon^2$.
\end{enumerate}

Then, in either case, $x$ satisfies
\[
\forall \varepsilon>0\ \forall t\geq \rho(\varepsilon) \left( \mathrm{dist}(x(t),\mathrm{Fix}T)<\varepsilon\right)
\]
and further $x$ is Cauchy with
\[
\forall \varepsilon>0\ \forall s,t\geq \rho(\varepsilon/2)\left( \norm{x(t)-x(s)}<\varepsilon\right),
\]
where $\rho$ is defined by $\rho(\varepsilon):=\varphi(\tau(\varepsilon)/2)+1$. In particular, $x$ converges to some fixed point of $T$ with that rate.
\end{theorem}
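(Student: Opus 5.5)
The plan is to instantiate Theorem \ref{thm:met-reg-rates} in the error-free case ($e\equiv 0$), working over the space $X$ with $F$ as defined above (equal to $\norm{z-Tz}$ on $X_0=\overline{B}_b(y)$ and $+\infty$ outside). Its hypotheses will be checked in turn.

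First, boundedness: Lemma \ref{lem:boundedness} gives $\norm{x(t)-y}\leq b$ for all $t\geq 0$, so the whole trajectory lies in $\overline{B}_b(y)$. The assumed modulus of regularity $\tau$ is w.r.t.\ exactly this ball, so the regularity hypothesis of Theorem \ref{thm:met-reg} holds with $z=y$.

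Second, Fej\'er monotonicity and the perturbations. By Eq.\ (8) of \cite{BotCsetnek2017}, as recalled in the proof of Lemma \ref{lem:boundedness}, we have $\frac{d}{dt}\norm{x(t)-z}^2\leq 0$ for every $z\in\mathrm{Fix}T$. Since $x$ is locally absolutely continuous, integrating gives $(G,H)$-Fej\'er monotonicity with $G=H=(\cdot)^2$ and error $e\equiv 0$ (rate $\eta\equiv 0$). This holds for all $z\in\mathrm{zer}F=\mathrm{Fix}T\cap X_0$. The moduli are $h(\varepsilon)=\varepsilon^2$ (since $a^2<\varepsilon^2$ implies $a<\varepsilon$) and $g(\varepsilon)=\sqrt{\varepsilon}$.

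Third, approximate $F$-points: Lemmas \ref{lem:assym-reg-bot} and \ref{lem:assym-reg-botSecond} give, in cases (1) and (2) respectively, that $\norm{Tx(t)-x(t)}\leq\varepsilon$ for all $t\geq\varphi(\varepsilon)$. Hence $F(x(t))\leq\varepsilon$ there, since $x(t)\in X_0$. To obtain the strict inequality required by the approximate $F$-point property, I will use $\varphi(\varepsilon/2)$, or simply note that $F(x(\varphi(\varepsilon)))\leq \varepsilon/2<\varepsilon$ after halving.

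The remaining step, and the only real obstacle, is matching the stated rate $\rho(\varepsilon)=\varphi(\tau(\varepsilon)/2)+1$. Plugging into the error-free case of Theorem \ref{thm:met-reg-rates} literally gives
\[
\varphi(\tau(g(h(\varepsilon))))+1=\varphi(\tau(\sqrt{\varepsilon^2}))+1=\varphi(\tau(\varepsilon))+1,
\]
with the factor $1/2$ accounting for the strictness $F(x(t))<\tau(\varepsilon)$ just discussed. I will therefore re-run the short argument of Theorem \ref{thm:met-reg} directly rather than quote it.

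At $t_0=\varphi(\tau(\varepsilon)/2)$ we have $F(x(t_0))\leq\tau(\varepsilon)/2<\tau(\varepsilon)$, so regularity gives $\mathrm{dist}(x(t_0),\mathrm{zer}F)<\varepsilon$. Choose $z\in\mathrm{zer}F$ with $\norm{x(t_0)-z}<\varepsilon$. Fej\'er monotonicity then gives $\norm{x(t)-z}<\varepsilon$ for all $t\geq t_0$, so $\mathrm{dist}(x(t),\mathrm{Fix}T)<\varepsilon$ for $t\geq\rho(\varepsilon)$. The $+1$ is harmless and only absorbs the non-integrality of $\varphi$.

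For the Cauchy property, take $\varepsilon/2$ in place of $\varepsilon$ and apply the triangle inequality through $z$. The final convergence claim follows because $X$ is complete and $\mathrm{Fix}T\cap X_0$ is closed, by continuity of $T$. Care is needed only in checking that $\varphi$ is applied to real arguments and that monotonicity of $\varphi$ is not required here, since no maximum is taken.
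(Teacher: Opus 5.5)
Your proposal is correct and follows the paper's proof. Like the paper, you instantiate the error-free case of Theorem \ref{thm:met-reg-rates} with $G=H=(\cdot)^2$, $h(\varepsilon)=\varepsilon^2$ and $g(\varepsilon)=\sqrt{\varepsilon}$, take Fej\'er monotonicity from Eq.\ (8) of \cite{BotCsetnek2017}, and use the approximate $F$-point bound $\varphi(\varepsilon/2)$ from Lemmas \ref{lem:assym-reg-bot} and \ref{lem:assym-reg-botSecond}, which yields exactly $\rho(\varepsilon)=\varphi(\tau(\varepsilon)/2)+1$; your direct re-run of the argument is just the ``simple inspection'' of the proof of Theorem \ref{thm:met-reg} that the paper invokes, and it rightly shows the $+1$ is not needed here.
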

\begin{proof}
The result immediately follows by instantiating Theorem \ref{thm:met-reg-rates} and simplifying the resulting bound. In particular, we set $G:=H:=(\cdot)^2$ as well as $h(\varepsilon):=\varepsilon^2$ and $g(\varepsilon):=\sqrt{\varepsilon}$. That $x$ is $(G,H)$-Fej\'er monotone w.r.t.\ $F$ follows by eq.\ (8) in \cite{BotCsetnek2017} (recall also Lemma \ref{lem:firstOrderUniFejer}). As we do not deal with errors, it suffices that $x$ has the approximate $F$-point property. The above assumptions (1) and (2) guarantee that with respective bounds $\varphi(\varepsilon/2)$, using Lemmas \ref{lem:assym-reg-bot} and \ref{lem:assym-reg-botSecond} as before. 
\end{proof}

In the special case of operators $T$ with linear moduli of regularity, we can further employ the previous Theorem \ref{thm:fastRatesCont} to derive exponentially fast rates of convergence for the solution of \ref{firstOrder}:

\begin{theorem}\label{thm:firstOrderFast}
Let $X$ be a Hilbert space and let $T:X\to X$ be a nonexpansive map. Let $y\in\mathrm{Fix}T$ and let $b\in\mathbb{N}$ with $\norm{x_0-y}\leq b$. Further, assume that $\lambda:[0,+\infty)\to [0,1]$ is Lebesgue measurable and fix $x_0\in X$. Let $x$ be the unique strong global solution to \ref{firstOrder}. Assume that $F$ has a modulus of regularity $\tau(\varepsilon)=k\varepsilon$ for $k>0$ w.r.t.\ $\overline{B}_b(y)$ and further assume that $\underline{\tau}=\inf_{t\geq 0}\lambda(t)(1-\lambda(t))>0$.

Then 
\[
\mathrm{dist}(x(t),\mathrm{Fix}T)\leq c^{\floor*{t}}\mathrm{dist}(x_0,\mathrm{Fix}T)\text{ and }d(x(t),x(t+s))\leq 2c^{\floor*{t}}\mathrm{dist}(x_0,\mathrm{Fix}T)
\]
for all $t,s\geq 0$, where $c:=(1+\underline{\tau} k^p)^{-1/p}\in (0,1)$.
\end{theorem}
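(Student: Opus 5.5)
The plan is to obtain the statement as a direct instance of Theorem~\ref{thm:fastRatesCont} with $p=2$ and $\beta:=\underline{\tau}$, working over $X_0=\overline{B}_b(y)$ with $F(z)=\norm{z-Tz}$ on $X_0$, so that $\mathrm{zer}F=\mathrm{Fix}T\cap X_0$. By Lemma~\ref{lem:boundedness}, the trajectory satisfies $x(t)\in X_0$ for all $t\geq 0$. Hence $F(x(t))=\norm{T(x(t))-x(t)}$ is finite along the whole trajectory, and the modulus of regularity $\tau(\varepsilon)=k\varepsilon$ w.r.t.\ $\overline{B}_b(y)$ applies to every point $x(t)$. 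Theorem~\ref{thm:fastRatesCont} then needs two hypotheses: the plain Fej\'er monotonicity of $x$ w.r.t.\ $F$, and the discrete descent inequality along integer times.

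Fej\'er monotonicity is already available. For any $z\in\mathrm{zer}F\subseteq\mathrm{Fix}T$, eq.\ (8) of \cite{BotCsetnek2017} (as used in Lemma~\ref{lem:boundedness}) gives $\frac{d}{dt}\norm{x(t)-z}^2\leq 0$ almost everywhere. Since $x$ is locally absolutely continuous, $t\mapsto\norm{x(t)-z}$ is nonincreasing.

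For the discrete inequality, fix $z\in\mathrm{zer}F$ and $n\in\mathbb{N}$. Integrating eq.\ (8),
\[
\frac{d}{dt}\norm{x(t)-z}^2+\lambda(t)(1-\lambda(t))\norm{T(x(t))-x(t)}^2\leq 0,
\]
over $[n,n+1]$ yields
\[
\norm{x(n+1)-z}^2\leq\norm{x(n)-z}^2-\int_n^{n+1}\lambda(t)(1-\lambda(t))\norm{T(x(t))-x(t)}^2\,dt .
\]
We then bound the integrand below by $\underline{\tau}\,\norm{T(x(n+1))-x(n+1)}^2$. This uses $\lambda(t)(1-\lambda(t))\geq\underline{\tau}$ together with the fact that $t\mapsto\norm{T(x(t))-x(t)}^2$ is nonincreasing, which was already used in the proofs of Lemmas~\ref{lem:assym-reg-bot} and~\ref{lem:assym-reg-botSecond} and taken from the proof of Theorem~6 in \cite{BotCsetnek2017}. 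The result is exactly
\[
\norm{x(n+1)-z}^2\leq\norm{x(n)-z}^2-\underline{\tau}\,(F(x(n+1)))^2 .
\]

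Plugging into Theorem~\ref{thm:fastRatesCont} with $p=2$ then gives the claimed bounds with $c=(1+\underline{\tau}k^2)^{-1/2}$, the value of $(1+\underline{\tau}k^p)^{-1/p}$ in the statement for $p=2$. The point needing most care is the lower bound on the integral. It must be evaluated at the right endpoint $n+1$, which is precisely why Lemma~\ref{lem:fastDiscrete} was formulated with $F(x_{n+1})$ instead of $F(x_n)$. It also relies on the monotonicity of $\norm{T(x(t))-x(t)}$, which holds only almost everywhere in derivative form, so I would verify that this suffices via absolute continuity of $t\mapsto\norm{T(x(t))-x(t)}^2$ on compact intervals. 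A last small check is that $\mathrm{dist}(\cdot,\mathrm{zer}F)$ can be identified with $\mathrm{dist}(\cdot,\mathrm{Fix}T)$ in the conclusion. This holds because nearest fixed points to points of the trajectory may be taken inside $X_0$, using Fej\'er monotonicity with respect to $y$, or else one simply reads the statement relative to $\mathrm{Fix}T\cap X_0$.
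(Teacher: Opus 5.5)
Your proposal is correct and follows the paper's proof essentially step for step. Like the paper, you integrate the Bo\c{t}--Csetnek inequality $\frac{d}{dt}\norm{x(t)-y}^2+\lambda(t)(1-\lambda(t))\norm{T(x(t))-x(t)}^2\leq 0$ over $[n,n+1]$, use that $t\mapsto\norm{T(x(t))-x(t)}$ is nonincreasing to obtain the right-endpoint term $\underline{\tau}(F(x(n+1)))^2$, and invoke Theorem~\ref{thm:fastRatesCont} with $p=2$ and $\beta=\underline{\tau}$. Your extra check that $\mathrm{dist}(x_0,\mathrm{zer}F)=\mathrm{dist}(x_0,\mathrm{Fix}T)$ goes beyond the paper, but the clean justification is not Fej\'er monotonicity of the trajectory: since $\mathrm{Fix}T$ is closed and convex, $P_{\mathrm{Fix}T}$ is nonexpansive and fixes $y$, so $\norm{P_{\mathrm{Fix}T}x_0-y}\leq\norm{x_0-y}\leq b$, which puts the nearest fixed point inside $X_0$.
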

\begin{proof}
As in \cite{BotCsetnek2017} (see eq.\ (12) therein), for an arbitrary $y\in\mathrm{Fix}T$, we can derive
\[
\frac{d}{dt}\norm{x(t)-y}^2+\lambda(t)(1-\lambda(t))\norm{T(x(t))-x(t)}^2\leq 0
\]
for all $t\geq 0$. As before, this yields the Fej\'er monotonicity of $x$, but also integrating from $n$ to $n+1$ yields
\[
\norm{x(n+1)-y}^2\leq \norm{x(n)-y}^2-\underline{\tau}\int_{n}^{n+1}\norm{T(x(\tau))-x(\tau)}^2\,d\tau.
\]
As established in the proof of Theorem 6 in \cite{BotCsetnek2017}, $\norm{T(x(\tau))-x(\tau)}^2$ is decreasing. Thus, we have
\begin{align*}
\norm{x(n+1)-y}^2&\leq \norm{x(n)-y}^2-\underline{\tau}\int_{n}^{n+1}\norm{T(x(\tau))-x(\tau)}^2\,d\tau\\
&\leq \norm{x(n)-y}^2-\underline{\tau}\norm{T(x(n+1))-x(n+1)}^2\\
&\leq \norm{x(n)-y}^2-\underline{\tau}(F(x(n+1)))^2.
\end{align*}
Using Theorem \ref{thm:fastRatesCont}, we get the result.
\end{proof}

Combined with Example \ref{ex:mappings}, the above result immediately allows for the construction of fast rates of convergence in the case where $T$ is a quasi-contraction or a continuous orbital-contraction. However, the above is certainly not limited to that.

Let us now consider a particular instance of the system \ref{firstOrder} to which the above results apply: the continuous-time variant of the forward-backward method. We first recall the method from \cite{BotCsetnek2017}: Over a Hilbert space $X$, consider a maximally monotone set-valued operator $A:X\to 2^X$ and a single-valued map $B:X\to X$ which is $\beta$-cocoercive, for some $\beta>0$. For $\gamma\in (0,2\beta)$ and $\delta:=\min\{1,\beta/\gamma\}+1/2$ as well as a Lebesgue measurable function $\lambda:[0,+\infty)\to [0,\delta]$ and a starting point $x_0\in X$, we consider the dynamical system 
\[
\begin{cases}
\dot{x}(t)=\lambda(t)\left( J_{\gamma A}(x(t)-\gamma B(x(t)))-x(t)\right),\\
x(0)=x_0,
\end{cases}\tag*{$(*)'_1$}\label{FB}
\]
where $J_{\gamma A}:=(\mathrm{Id}+\gamma A)^{-1}$ is the (single-valued and total) resolvent of $\gamma A$. Indeed, the above system can be conceived of as an instantiation of the system \ref{firstOrder} using the map $T:=J_{\gamma A}\circ (\mathrm{Id}-\gamma B)$. As in the proof of Theorem 12 in \cite{BotCsetnek2017}, one can show that $T$ is in fact $\frac{1}{\delta}$-averaged, that is by definition that there is a nonexpansive operator $R$ such that
\[
T=\left(1-\frac{1}{\delta}\right)\mathrm{Id}+\frac{1}{\delta}R.
\]
Thereby, the system \ref{FB} is further an instantiation of \ref{firstOrder} using the map $R$ and the parameter sequence $\frac{1}{\delta}\lambda$ instead of $\lambda$ (see also the proof of Corollary 9 in \cite{BotCsetnek2017}). Note also that $\mathrm{Fix}T=\mathrm{zer}(A+B)$.

In that way, we immediately obtain the following result on the quantitative asymptotic behavior of the associated solution under a finite-dimensionality assumption on $X$, a quantitative variant of parts of Theorem 12 in \cite{BotCsetnek2017}:

\begin{theorem}\label{thm:metaFBFirstOrder}
Let $X$ be a finite-dimensional Hilbert space with dimension $d$ and let $A:X\to 2^X$ be maximally monotone and $B:X\to X$ be $\beta$-cocoercive for $\beta>0$. Let $\gamma\in (0,2\beta)$ and write $\delta:=\min\{1,\beta/\gamma\}+1/2$. Let $y\in\mathrm{zer}(A+B)$ and let $b\in\mathbb{N}$ with $\norm{x_0-y}\leq b$. Further, assume that $\lambda:[0,+\infty)\to [0,\delta]$ is Lebesgue measurable and fix $x_0\in X$. Let $x$ be the unique strong global solution to \ref{FB}. Assume either:
\begin{enumerate}
\item $\int_0^{+\infty}\lambda(t)(\delta-\lambda(t))\,dt=+\infty$ with a divergence modulus $\eta:(0,\infty)\to \mathbb{N}$ with $\eta(K)\geq \eta(L)$ for $K\geq L$, in which case we set $\varphi(\varepsilon):=\eta(\delta b^2/\varepsilon^2)$.
\item $\inf_{t\geq 0}\lambda(t)>0$ with a witness $\underline{\lambda}>0$, in which case we set $\varphi(\varepsilon):=4b^4\delta^2/\underline{\lambda}^2\varepsilon^2$.
\end{enumerate}

Then, in either case, $x$ is metastable in the sense that for all $\varepsilon>0$ and $f:\mathbb{N}\to\mathbb{N}$, it holds that 
\[
\exists n\leq \Delta(\varepsilon/4,f)\ \forall s,t\in [n,n+f(n)]\left( \norm{x(s)-x(t)}\leq\varepsilon\text{ and }\norm{x(t)-T(x(t))}\leq\varepsilon\right),
\]
where moreover the bound $\Delta(\varepsilon,f)$ can be explicitly described as in Theorem \ref{thm:firstOrderMeta}, using the above $\varphi$ in either case.
\end{theorem}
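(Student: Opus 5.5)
The plan is to reduce Theorem \ref{thm:metaFBFirstOrder} to Theorem \ref{thm:firstOrderMeta} via the averagedness of $T:=J_{\gamma A}\circ(\mathrm{Id}-\gamma B)$. As recalled above (following the proof of Theorem 12 in \cite{BotCsetnek2017}), $T$ is $\frac1\delta$-averaged. So there is a nonexpansive $R$ with $T=(1-\frac1\delta)\mathrm{Id}+\frac1\delta R$. Then $R-\mathrm{Id}=\delta(T-\mathrm{Id})$ and $\mathrm{Fix}R=\mathrm{Fix}T=\mathrm{zer}(A+B)$. Setting $\mu(t):=\lambda(t)/\delta\in[0,1]$, which is Lebesgue measurable, the system \ref{FB} becomes exactly $\dot x(t)=\mu(t)(R(x(t))-x(t))$, i.e.\ an instance of \ref{firstOrder} with $R$ and $\mu$. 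Since $y\in\mathrm{Fix}R$ and $\norm{x_0-y}\leq b$, Lemmas \ref{lem:boundedness}, \ref{lem:approximateDerivative} and \ref{lem:firstOrderUniFejer} apply to $R$ verbatim.

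Next I would translate the two assumptions into those of Theorem \ref{thm:firstOrderMeta} for $(R,\mu)$, tracking the constants.

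For case (1), $\mu(1-\mu)=\lambda(\delta-\lambda)/\delta^2$. Hence $\eta'(K):=\eta(\delta^2K)$ is a divergence modulus for $\int\mu(1-\mu)$. Lemma \ref{lem:assym-reg-bot} then gives $\norm{R(x(t))-x(t)}\leq\varepsilon'$ for $t\geq\eta(\delta^2b^2/\varepsilon'^2)$. Since $\norm{Tx-x}=\frac1\delta\norm{Rx-x}$, taking $\varepsilon'=\delta\varepsilon$ yields $\norm{T(x(t))-x(t)}\leq\varepsilon$ for $t\geq\eta(b^2/\varepsilon^2)$. This is even better than the stated $\varphi(\varepsilon)=\eta(\delta b^2/\varepsilon^2)$ when $\delta\geq1$; the stated $\varphi$ still works when $\delta\geq 1$ by monotonicity of $\eta$. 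I would double-check the direction, and if $\delta<1$ is possible, argue directly from the integrated inequality.

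For case (2), $\mu\geq\underline\lambda/\delta$. Applying Lemma \ref{lem:assym-reg-botSecond} to $R$ and then rescaling gives a rate for $\norm{Tx-x}$ bounded by $4b^4\delta^2/\underline\lambda^2\varepsilon^2$.

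The core step is then to rerun the proof of Theorem \ref{thm:firstOrderMeta} with $F(z)=\norm{z-Tz}$ on $X_0=\overline B_b(y)$. Since $\mathrm{zer}F=\mathrm{Fix}R\cap X_0$, I need the uniform Fej\'er modulus with respect to approximate fixed points of $T$. If $\norm{z-Tz}\leq\chi$ then $\norm{z-Rz}\leq\delta\chi$. So Lemma \ref{lem:firstOrderUniFejer} for $R$ gives the modulus $\varepsilon/(4b\delta m)$, or simply $\varepsilon/(4bm)$ if one redoes Lemma \ref{lem:approximateDerivative} using $\norm{Tx(t)-z}$ directly. I expect this constant bookkeeping to be the main obstacle. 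My first attempt would be to verify that the computation in Lemma \ref{lem:approximateDerivative} still closes with $T$ in place of the nonexpansive map, using that $T$ is itself nonexpansive as an averaged map. That would let the modulus stay $\varepsilon/(4bm)$, so that $\widehat\varepsilon_j$ is literally as in Theorem \ref{thm:firstOrderMeta}.

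Finally I would plug in the remaining data, as in Theorem \ref{thm:firstOrderMeta}, and conclude with Theorem \ref{thm:generalFejer-rates} (error-free case with uniform continuity):
\begin{itemize}
\item $G=H=(\cdot)^2$, $h(\varepsilon)=\varepsilon^2$, $g(\varepsilon)=\sqrt\varepsilon$;
\item the same total boundedness modulus for $X_0$;
\item the uniform continuity modulus $\omega(\varepsilon)=\varepsilon/2$ of $F$, valid since $T$ is nonexpansive.
\end{itemize}
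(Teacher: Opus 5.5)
Your reduction of \ref{FB} to \ref{firstOrder} for the pair $(R,\lambda/\delta)$ is exactly the paper's intended argument. Your translations of assumptions (1) and (2) are also correct. Moreover, $\gamma<2\beta$ gives $\beta/\gamma>1/2$, so $\delta\in(1,3/2]$ always, and your concern about $\delta<1$ never arises.

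The gap is your switch to $F(z)=\norm{z-Tz}$, together with the step by which you hope to keep the modulus $\varepsilon/(4bm)$. The computation in Lemma \ref{lem:approximateDerivative} does not close for $T$. It uses $0\le\lambda(t)\le 1$, both in the convexity step for $\lambda(Tx-z)+(1-\lambda)(x-z)$ and in bounding the resulting factor $\lambda(t)$ by $1$. Here $\lambda(t)$ ranges up to $\delta>1$. Using only nonexpansiveness of $T$, you have the exact identity
\[
\frac{d}{dt}\norm{x(t)-z}^2=\lambda(t)\left(\norm{T(x(t))-z}^2-\norm{x(t)-z}^2-\norm{T(x(t))-x(t)}^2\right).
\]
Combined with the bounds in Lemma \ref{lem:approximateDerivative}, this gives only $\frac{d}{dt}\norm{x(t)-z}^2\le 4b\delta\norm{Tz-z}$. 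That is the modulus $\varepsilon/(4b\delta m)$ of your fallback. With it your argument is sound. However, $\widehat\varepsilon_j$ then has $4b\delta(f(m+1)+1)$ in the denominator. So you prove a valid but larger bound, not $\Delta$ literally as in Theorem \ref{thm:firstOrderMeta}.

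The fix is not to change $F$. Apply Theorem \ref{thm:firstOrderMeta} verbatim to $(R,\lambda/\delta)$, with $F(z)=\norm{z-Rz}$ on $X_0$. For this $F$, Lemma \ref{lem:firstOrderUniFejer} gives exactly $\varepsilon/(4bm)$, and $\omega(\varepsilon)=\varepsilon/2$ holds since $R$ is nonexpansive. Then read off $\norm{x(t)-T(x(t))}=\delta^{-1}\norm{x(t)-R(x(t))}\le\varepsilon$. This is what the paper means by ``immediately''.

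On this route, $\varphi$ must be a rate for $\norm{x(t)-R(x(t))}$.
\begin{itemize}
\item In case (2), Lemma \ref{lem:assym-reg-botSecond} with witness $\underline\lambda/\delta$ yields exactly the stated $4b^4\delta^2/\underline\lambda^2\varepsilon^2$.
\item In case (1), your own computation shows that Lemma \ref{lem:assym-reg-bot} yields $\eta(\delta^2b^2/\varepsilon^2)$ for $\norm{x-Rx}$. The stated $\eta(\delta b^2/\varepsilon^2)$ is, as you showed, only justified as a rate for $\norm{x-Tx}$.
\end{itemize}
So in case (1), with the lemmas at hand, a factor $\delta$ has to be paid somewhere. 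Either take $\varphi(\varepsilon)=\eta(\delta^2b^2/\varepsilon^2)$ with $F(z)=\norm{z-Rz}$, or keep the stated $\varphi$ and accept $\chi=\varepsilon/(4b\delta m)$ with your $F(z)=\norm{z-Tz}$.
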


\begin{remark}\label{rem:approximateZeros}
Note that since $T(x):=J_{\gamma A}(x-\gamma B(x))$, we have $x-\gamma B(x)\in T(x)+\gamma A(T(x))$ by definition of the resolvent. In particular, we have $\gamma^{-1}(x-T(x))- B(x)\in A(T(x))$. Combined, we hence obtain
\[
\gamma^{-1}(x-T(x))+B(T(x))-B(x)\in (A+B)(T(x)).
\]
Write $w:=\gamma^{-1}(x-T(x))+B(T(x))-B(x)$ and $v:= T(x)$, so that the above translates to $w\in (A+B)(v)$. Hence, if $\norm{x-T(x)}\leq \min\{\varepsilon,\varepsilon/( \gamma^{-1}+\beta^{-1})\}$, we get $\norm{x-v}=\norm{x-T(x)}\leq \varepsilon$ as well as
\begin{align*}
\norm{w}&=\norm{\gamma^{-1}(x-T(x))+B(T(x))-B(x)}\\
&\leq \gamma^{-1}\norm{x-T(x)}+\norm{B(T(x))-B(x)}\\
&\leq \left( \gamma^{-1}+\beta^{-1}\right)\norm{x-T(x)}\leq \varepsilon
\end{align*}
using that $B$ is $\frac{1}{\beta}$-cocoercive. As such, if the above Theorem \ref{thm:metaFBFirstOrder} is applied with $\min\{\varepsilon,\varepsilon/( \gamma^{-1}+\beta^{-1})\}$ in place of $\varepsilon$, so that the property $\norm{x(t)-T(x(t))}\leq \min\{\varepsilon,\varepsilon/( \gamma^{-1}+\beta^{-1})\}$ is maintained along the region of metastability, the dynamical system $x$ in particular contains approximate zeros of $A+B$ in the above sense in that region, that is for any $t\in [n,n+f(n)]$, there are $w\in (A+B)(v)$ such that $\norm{x(t)-v},\norm{w}\leq \varepsilon$. This argument already appears abstractly in \cite{Findling2026} in the context of the discrete-time forward-backward method.
\end{remark}

As is well-studied in the context of ``ordinary'' splitting methods, strong convergence can be guaranteed under a uniform monotonicity assumption for either of the operators $A$ or $B$, in which case rates can also be extracted. For discrete-time dynamical systems, constructions of such rates were recently studied in very general situations in the work of Treusch and Kohlenbach \cite{TreuschKohlenbach2026}. For continuous-time dynamical systems, rates are presented under a strong monotonicity assumption in \cite{BotCsetnek2018}. We here can obtain a result on rates of convergence under a uniform monotonicity assumption in a similar generality as the recent work \cite{TreuschKohlenbach2026}, and in particular following their arguments.

First, we derive a quantitative result on the asymptotic behavior of $B(x(t))$. As such, we rely on the following key inequality also used again later:

\begin{lemma}\label{lem:FB-B-ineq}
Let $X$ be a Hilbert space and let $A:X\to 2^X$ be maximally monotone and $B:X\to X$ be $\beta$-cocoercive for $\beta>0$. Fix $\gamma>0$ and define $T:=J_{\gamma A}\circ (\mathrm{Id}-\gamma B)$. For any $x\in\mathrm{zer}(A+B)$ and $z\in X$:
\[
\gamma\beta\norm{Bz-Bx}^2\leq\left( 1+\frac{\gamma}{\beta}\right)\norm{z-x}\norm{Tz-z}.
\]
\end{lemma}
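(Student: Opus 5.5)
The plan is to use only the resolvent characterization of $T$, the monotonicity of $A$ and the cocoercivity of $B$. Nonexpansiveness of $T$ is avoided on purpose, since the lemma assumes only $\gamma>0$ and not $\gamma<2\beta$.

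Write $p:=Tz$. By definition of the resolvent, $z-\gamma Bz-p\in\gamma A(p)$. Since $x\in\mathrm{zer}(A+B)$, we have $-Bx\in A(x)$, so $-\gamma Bx\in\gamma A(x)$. Monotonicity of $\gamma A$ applied to these two pairs gives
\[
\langle z-p-\gamma(Bz-Bx),\,p-x\rangle\geq 0,\quad\text{i.e.}\quad \gamma\langle Bz-Bx,p-x\rangle\leq\langle z-p,p-x\rangle .
\]
Next I split $p-x=(z-x)+(p-z)$ on the left-hand side. This gives
\[
\gamma\langle Bz-Bx,z-x\rangle\leq\langle z-p,p-x\rangle+\gamma\langle Bz-Bx,z-p\rangle .
\]
By $\beta$-cocoercivity, the left-hand side is at least $\gamma\beta\norm{Bz-Bx}^2$.

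Both terms on the right are then bounded by a multiple of $\norm{z-x}\norm{z-p}$.

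For the first term, the natural move would be $\norm{p-x}\leq\norm{z-x}$ via nonexpansiveness, but that needs $\gamma\leq 2\beta$. This is the one step I expect to need care. I avoid it by rewriting with $p-x=(z-x)-(z-p)$:
\[
\langle z-p,p-x\rangle=\langle z-p,z-x\rangle-\norm{z-p}^2\leq\norm{z-p}\norm{z-x}.
\]

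For the second term, cocoercivity together with Cauchy--Schwarz shows that $B$ is $\frac1\beta$-Lipschitz. Hence
\[
\gamma\langle Bz-Bx,z-p\rangle\leq\frac{\gamma}{\beta}\norm{z-x}\norm{z-p}.
\]

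Adding the two bounds yields $\gamma\beta\norm{Bz-Bx}^2\leq(1+\gamma/\beta)\norm{z-x}\norm{Tz-z}$, which is the claim.
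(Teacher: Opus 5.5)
Your proof is correct, and it shares the paper's skeleton: the monotonicity inequality $\langle Tz-x,\,z-Tz-\gamma(Bz-Bx)\rangle\geq 0$, the split $Tz-x=(Tz-z)+(z-x)$ inside the $B$-term, cocoercivity, and the $\frac1\beta$-Lipschitz bound.

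The one real difference is how you bound $\langle z-Tz,\,Tz-x\rangle$. The paper writes $\norm{z-x}\norm{Tz-z}\geq\norm{Tz-x}\norm{Tz-z}$, using nonexpansiveness of $T$. That step needs $\gamma\leq 2\beta$. For instance, $A=0$, $B=\frac1\beta\mathrm{Id}$, $\gamma=3\beta$ gives $T=-2\,\mathrm{Id}$ and $\norm{Tz-0}=2\norm{z}$.

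You instead use the identity $\langle z-p,p-x\rangle=\langle z-p,z-x\rangle-\norm{z-p}^2$. So your argument proves the lemma for every $\gamma>0$, as stated. The paper's argument only covers $\gamma\in(0,2\beta]$, which is all its applications use.

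Keeping the term you discard even gives the slightly sharper $\gamma\beta\norm{Bz-Bx}^2+\norm{Tz-z}^2\leq(1+\gamma/\beta)\norm{z-x}\norm{Tz-z}$. This is attained with equality in the example above.
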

\begin{proof}
We proceed similar to the proof of Theorem 26.14 in \cite{BauschkeCombettes2017}, abstracting the proof of eq.\ (26.56) therein: For any $y\in X$, define $y^-:=y-\gamma By$. Note that $Ty=J_{\gamma A} y^-$. By assumption, we also have $x=Tx=J_{\gamma A}x^-$. We then have
\begin{align*}
&\langle Tz-x,z-Tz-\gamma(Bz-Bx)\rangle\\
&\qquad=\langle Tz-x,z^-Tz+x-x^-\rangle\\
&\qquad=\langle J_{\gamma A}z^--J_{\gamma A}x^-,(\mathrm{Id}-J_{\gamma A})z^--(\mathrm{Id}-J_{\gamma A})x^-)\rangle\geq 0,
\end{align*}
where the inequality follows from the monotonicity of $A$ and the definition of the resolvent. Using the nonexpansivity of $T$ and the $\beta$-cocoercivity of $B$, we in particular have that
\begin{align*}
\norm{z-x}\norm{Tz-z}&\geq \norm{Tz-x}\norm{Tz-z}\\
&\geq\langle Tz-x,z-Tz\rangle\\
&\geq \gamma\langle Tz-x,Bz-Bx\rangle\\
&=\gamma\left(\langle Tz-z,Bz-Bx\rangle+\langle z-x,Bz-Bx\rangle\right)\\
&\geq -\gamma\norm{Tz-z}\norm{Bz-Bx}+\gamma\beta\norm{Bz-Bx}^2\\
&\geq -\frac{\gamma}{\beta}\norm{Tz-z}\norm{z-x}+\gamma\beta\norm{Bz-Bx}^2,
\end{align*}
where we have used the preceding inequality in the third line and in the last line in particular used that $B$ is also $\frac{1}{\beta}$-Lipschitz. Rearranging the above inequality yields the result.
\end{proof}

We then have the following result (cf.\ Theorem 3.3 in \cite{TreuschKohlenbach2026} for the discrete-time case, whose arguments we follows):

\begin{proposition}\label{pro:FB-B-rate}
Let $X$ be a Hilbert space and let $A:X\to 2^X$ be maximally monotone and $B:X\to X$ be $\beta$-cocoercive for $\beta>0$. Let $\gamma\in (0,2\beta)$ and write $\delta:=\min\{1,\beta/\gamma\}+1/2$. Let $y\in\mathrm{zer}(A+B)$ and let $b\in\mathbb{N}$ with $\norm{x_0-y}\leq b$. Further, assume that $\lambda:[0,+\infty)\to [0,\delta]$ is Lebesgue measurable and fix $x_0\in X$. Let $x$ be the unique strong global solution to \ref{FB}. Assume either:
\begin{enumerate}
\item $\int_0^{+\infty}\lambda(t)(\delta-\lambda(t))\,dt=+\infty$ with a divergence modulus $\eta:(0,\infty)\to \mathbb{N}$ with $\eta(K)\geq \eta(L)$ for $K\geq L$, in which case we set $\varphi(\varepsilon):=\eta(\delta b^2/\varepsilon^2)$.
\item $\inf_{t\geq 0}\lambda(t)>0$ with a witness $\underline{\lambda}>0$, in which case we set $\varphi(\varepsilon):=4b^4\delta^2/\underline{\lambda}^2\varepsilon^2$.
\end{enumerate}
Then, in either case, $\lim_{t\to\infty}B(x(t))= B(y)$ with
\[
\forall \varepsilon>0\ \forall s,t\geq \psi(\varepsilon)\left( \norm{B(x(t))-B(y)}\leq \varepsilon\right),
\]
where $\psi(\varepsilon):=\varphi(\gamma\beta\varepsilon^2/3b)$ in either case.
\end{proposition}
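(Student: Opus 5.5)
The plan is to combine the key inequality of Lemma \ref{lem:FB-B-ineq} with a rate of $T$-asymptotic regularity for $x$ and the boundedness of the trajectory. Concretely, apply Lemma \ref{lem:FB-B-ineq} with the fixed zero $y\in\mathrm{zer}(A+B)=\mathrm{Fix}T$ in place of $x$ there and $z:=x(t)$. This gives
\[
\gamma\beta\norm{B(x(t))-B(y)}^2\leq\left(1+\frac{\gamma}{\beta}\right)\norm{x(t)-y}\norm{T(x(t))-x(t)}.
\]
It then remains to bound $\norm{x(t)-y}$ and to make $\norm{T(x(t))-x(t)}$ small for large $t$.

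\textbf{Boundedness.} As discussed before Theorem \ref{thm:metaFBFirstOrder}, \ref{FB} is an instance of \ref{firstOrder} with the nonexpansive map $R$ and parameter $\frac{1}{\delta}\lambda\in[0,1]$, where $\mathrm{Fix}R=\mathrm{Fix}T$. Lemma \ref{lem:boundedness} therefore yields $\norm{x(t)-y}\leq b$ for all $t$.

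\textbf{Prefactor.} Since $\gamma<2\beta$, we have $1+\gamma/\beta<3$. Hence
\[
\norm{B(x(t))-B(y)}^2\leq\frac{3b}{\gamma\beta}\norm{T(x(t))-x(t)}.
\]

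\textbf{Asymptotic regularity.} Next I need $\norm{T(x(t))-x(t)}\leq\gamma\beta\varepsilon^2/3b$ for all $t\geq\psi(\varepsilon)$, which is exactly a rate of $T$-asymptotic regularity evaluated at $\gamma\beta\varepsilon^2/3b$. I would obtain it from Lemmas \ref{lem:assym-reg-bot} and \ref{lem:assym-reg-botSecond} applied to $R$ with parameter $\lambda/\delta$.

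\begin{enumerate}
\item Under (1), $\int_0^r\frac{\lambda}{\delta}(1-\frac{\lambda}{\delta})=\delta^{-2}\int_0^r\lambda(\delta-\lambda)$. Since $R-\mathrm{Id}=\delta(T-\mathrm{Id})$, we get $\norm{Rz-z}=\delta\norm{Tz-z}$. Lemma \ref{lem:assym-reg-bot} gives $\norm{R(x)-x}\leq\varepsilon'$ once $\int_0^r\lambda(\delta-\lambda)\geq\delta^2b^2/\varepsilon'^2$. Taking $\varepsilon'=\delta\varepsilon$ gives $\norm{Tx-x}\leq\varepsilon$ for $r\geq\eta(b^2/\varepsilon^2)$. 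This is even at most the stated $\varphi(\varepsilon)=\eta(\delta b^2/\varepsilon^2)$ when $\delta\geq1$, and $\delta\geq 1$ holds since $\delta\in[1,3/2]$. By monotonicity of $\eta$, the stated $\varphi$ suffices.
\item Under (2), the witness becomes $\underline{\lambda}/\delta$, and the same rescaling gives a bound of the stated form $4b^4\delta^2/\underline{\lambda}^2\varepsilon^2$, again possibly with room to spare.
\end{enumerate}

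In either case, plugging $\gamma\beta\varepsilon^2/3b$ into $\varphi$ yields $\norm{B(x(t))-B(y)}\leq\varepsilon$ for all $t\geq\psi(\varepsilon)$.

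\textbf{Main obstacle.} The only delicate step is this bookkeeping of the $\delta$-rescaling, since Lemmas \ref{lem:assym-reg-bot} and \ref{lem:assym-reg-botSecond} are stated for \ref{firstOrder} with $\lambda\in[0,1]$. The factors of $\delta$ have to be tracked so that the rate matches (or is dominated by) the $\varphi$ in the statement, using $1\leq\delta\leq 3/2$. The implicit claim that these $\varphi$ are rates of $T$-asymptotic regularity is already used in Theorem \ref{thm:metaFBFirstOrder}, so I would verify it once here and then rely on it.
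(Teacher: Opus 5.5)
Your proposal is correct and takes essentially the same route as the paper: you apply Lemma~\ref{lem:FB-B-ineq} with $1+\gamma/\beta<3$, then use the bound $\norm{x(t)-y}\leq b$ from Lemma~\ref{lem:boundedness}, and then evaluate the asymptotic regularity rates of Lemmas~\ref{lem:assym-reg-bot} and~\ref{lem:assym-reg-botSecond} at $\gamma\beta\varepsilon^2/3b$. Your bookkeeping of the passage to $R$ and $\lambda/\delta$ is also correct: it gives the sharper thresholds $\eta(b^2/\varepsilon^2)$ and $4b^4/\underline{\lambda}^2\varepsilon^2$, which the stated $\varphi$ dominates since $\delta\geq 1$, and it makes explicit a step the paper's one-line citation of those lemmas leaves implicit.
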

\begin{proof}
By Lemma \ref{lem:FB-B-ineq} and $\gamma\leq 2/\beta$, we obtain
\[
\gamma\beta\norm{B(x(t))-B(y)}^2\leq 3\norm{x(t)-y}\norm{T(x(t))-x(t)}.
\]
Lemma \ref{lem:boundedness} then yields 
\[
\norm{B(x(t))-B(y)}^2\leq \frac{3b}{\gamma\beta}\norm{T(x(t))-x(t)}
\]
which implies the results using Lemmas \ref{lem:assym-reg-bot} and \ref{lem:assym-reg-botSecond}. 
\end{proof}

The above result is obtained in \cite{BotCsetnek2017} (see Theorem 12 therein) using quite a different argument, only under the assumption that $\inf_{t\geq 0}\lambda(t)>0$ and in particular without any rates.

We now obtain a result on rates of convergence in the context of a uniform monotonicity assumption. Indeed, recall that a set-valued operator $M:X\to 2^X$ is called uniformly monotone if 
\[ 
\langle x-y,u-v\rangle \geq\phi(\norm{x-y})
\]
for all $(x,u),(y,v)\in A$, where $\phi:[0,\infty)\to [0,\infty)$ is an increasing function which vanishes only at $0$. Modelled after Theorem 3.4 in \cite{TreuschKohlenbach2026}, the resulting construction of a rate of convergence relies on the following well-known result, which we only present here in the abstract as we reuse it later:\footnote{Note that Theorem 3.4 in \cite{TreuschKohlenbach2026} is formulated using a so-called modulus of uniform monotonicity, which we chose to avoid in the presentation of this paper.}

\begin{lemma}\label{lem:FB-error}
Let $X$ be a Hilbert space and let $A:X\to 2^X$ be maximally monotone and $B:X\to X$ be $\beta$-cocoercive for $\beta>0$. Let $\gamma\in (0,2\beta)$ and let $x\in X$ and $y\in\mathrm{zer}(A+B)$ with $b\in\mathbb{N}$ such that $\norm{x-y}\leq b$. Let $\varepsilon>0$ be arbitrary.
\begin{enumerate}[(i)]
\item If $A$ is uniformly monotone with function $\phi$ and 
\[
\norm{x-T(x)}\leq\min\left\{\frac{\gamma\phi(\varepsilon/2)}{2b},\frac{\varepsilon}{2}\right\}\text{ and }\norm{B(x)-B(y)}\leq\frac{\phi(\varepsilon/2)}{2b},
\]
then $\norm{x-y}\leq \varepsilon$.
\item If $B$ is uniformly monotone with function $\phi$ and
\[
\norm{B(x)-B(y)}\leq \frac{\phi(\varepsilon)}{b},
\]
then $\norm{x-y}\leq \varepsilon$.
\end{enumerate}
\end{lemma}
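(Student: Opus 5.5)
The plan is to derive both items directly from the uniform monotonicity inequality, using the characterisation of $\mathrm{zer}(A+B)$ and of the resolvent. Since $y\in\mathrm{zer}(A+B)$, we have $-B(y)\in A(y)$. We also have $y=T(y)$, because $\mathrm{Fix}T=\mathrm{zer}(A+B)$. Throughout I read ``increasing'' for $\phi$ as strictly increasing, so that $\phi(a)\leq\phi(c)$ implies $a\leq c$. If $\phi$ were only nondecreasing, this last inference is exactly where the argument would break, and one would need a modulus formulation instead.

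For (ii), apply uniform monotonicity of $B$ to the pairs $(x,B(x))$ and $(y,B(y))$. Cauchy--Schwarz and the assumption give
\[
\phi(\norm{x-y})\leq\langle x-y,B(x)-B(y)\rangle\leq\norm{x-y}\norm{B(x)-B(y)}\leq b\cdot\frac{\phi(\varepsilon)}{b}=\phi(\varepsilon).
\]
Hence $\norm{x-y}\leq\varepsilon$ by strict monotonicity of $\phi$.

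For (i), set $v:=T(x)=J_{\gamma A}(x-\gamma B(x))$. As in Remark \ref{rem:approximateZeros}, the definition of the resolvent gives $\gamma^{-1}(x-v)-B(x)\in A(v)$. Uniform monotonicity of $A$, applied to this pair and to $(y,-B(y))$, yields
\[
\phi(\norm{v-y})\leq\left\langle v-y,\gamma^{-1}(x-v)-B(x)+B(y)\right\rangle\leq\norm{v-y}\left(\gamma^{-1}\norm{x-T(x)}+\norm{B(x)-B(y)}\right).
\]
Next, $\norm{v-y}=\norm{T(x)-T(y)}\leq\norm{x-y}\leq b$, using that $T$ is nonexpansive (it is averaged, as noted before Theorem \ref{thm:metaFBFirstOrder}). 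Combining this with the two smallness assumptions bounds the right-hand side by $b\bigl(\phi(\varepsilon/2)/(2b)+\phi(\varepsilon/2)/(2b)\bigr)=\phi(\varepsilon/2)$. Strict monotonicity then gives $\norm{T(x)-y}\leq\varepsilon/2$. Finally, the triangle inequality with $\norm{x-T(x)}\leq\varepsilon/2$ gives $\norm{x-y}\leq\varepsilon$.

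The main obstacle is minor: choosing the right element of $A(T(x))$ from the resolvent identity, and making sure the bound $b$ applies to $\norm{T(x)-y}$ rather than only to $\norm{x-y}$. Nonexpansiveness of $T$ together with $T(y)=y$ settles the latter.
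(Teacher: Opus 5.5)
Your proposal is correct and follows essentially the same argument as the paper. For (ii) it uses Cauchy--Schwarz together with uniform monotonicity of $B$. For (i) it applies uniform monotonicity of $A$ to the pairs $(T(x),\gamma^{-1}(x-T(x))-B(x))$ and $(y,-B(y))$, bounds the inner product via Cauchy--Schwarz and nonexpansiveness of $T$, and finishes with the triangle inequality.

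Your caveat that $\phi$ must be strictly increasing is apt. The paper's step ``as $\phi$ is increasing'' silently uses exactly the inference $\phi(a)\leq\phi(c)\Rightarrow a\leq c$, which fails for a merely nondecreasing $\phi$.
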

\begin{proof}
For the case where $A$ is uniformly monotone with function $\phi$, note first that using Cauchy-Schwarz, and the nonexpansivity of $T$, we have
\begin{align*}
&\langle T(x)-y,\gamma^{-1}(x-T(x))-B(x)+B(y)\rangle\\
&\qquad\leq \norm{T(x)-y}(\gamma^{-1}\norm{x-T(x)}+\norm{B(x)-B(y)})\\
&\qquad\leq \frac{b}{\gamma}\norm{x(t)-T(x)}+b\norm{B(x)-B(y)}.
\end{align*}
As we have
\[
\norm{x-T(x)}\leq\min\left\{\frac{\gamma\phi(\varepsilon/2)}{2b},\frac{\varepsilon}{2}\right\}\text{ and }\norm{B(x)-B(y)}\leq\frac{\phi(\varepsilon/2)}{2b}
\]
by assumption, we get
\[
\langle T(x)-y,\gamma^{-1}(x-T(x))-B(x)+B(y)\rangle\leq \phi(\varepsilon/2).
\]
Now note that $-B(y)\in A(y)$ as $y\in\mathrm{zer}(A+B)$ and that
\[
\gamma^{-1}(x-T(x))-B(x)\in A(T(x))
\]
so that we get
\[
\phi(\norm{T(x)-y})\leq \langle T(x)-y,\gamma^{-1}(x-T(x))-B(x)+B(y)\rangle\leq \phi(\varepsilon/2).
\]
As $\phi$ is increasing, we get $\norm{T(x)-y}\leq \varepsilon/2$. Finally, we get
\[
\norm{x-y}\leq \norm{x-T(x)}+\norm{T(x)-y}\leq\varepsilon.
\]

For the case where $B$ is uniformly monotone with function $\phi$, simply note that using Cauchy-Schwarz, we have
\[
\langle x-y,B(x)-B(y)\rangle\leq \norm{x-y}\norm{B(x)-B(y)}
\]
so that using the uniform monotonicity of $B$, we get
\[
\phi(\norm{x-y})\leq \langle x-y,B(x)-B(y)\rangle\leq b\norm{B(x)-B(y)}\leq \phi(\varepsilon)
\]
since $\norm{B(x)-B(y)}\leq \phi(\varepsilon)/b$. As $\phi$ is increasing, the result follows.
\end{proof}

Combined with the above, this immediately yields the following rates of convergence:

\begin{theorem}
Let $X$ be a Hilbert space and let $A:X\to 2^X$ be maximally monotone and $B:X\to X$ be $\beta$-cocoercive for $\beta>0$. Let $\gamma\in (0,2\beta)$ and write $\delta:=\min\{1,\beta/\gamma\}+1/2$. Let $y\in\mathrm{zer}(A+B)$ and let $b\in\mathbb{N}$ with $\norm{x_0-y}\leq b$. Further, assume that $\lambda:[0,+\infty)\to [0,\delta]$ is Lebesgue measurable and fix $x_0\in X$. Let $x$ be the unique strong global solution to \ref{FB}. Assume either:
\begin{enumerate}
\item $\int_0^{+\infty}\lambda(t)(\delta-\lambda(t))\,dt=+\infty$ with a divergence modulus $\eta:(0,\infty)\to \mathbb{N}$ with $\eta(K)\geq \eta(L)$ for $K\geq L$, in which case we set $\varphi(\varepsilon):=\eta(\delta b^2/\varepsilon^2)$ and $\psi$ as in Proposition \ref{pro:FB-B-rate}.
\item $\inf_{t\geq 0}\lambda(t)>0$ with a witness $\underline{\lambda}>0$, in which case we set $\varphi(\varepsilon):=4b^4\delta^2/\underline{\lambda}^2\varepsilon^2$ and $\psi$ as in Proposition \ref{pro:FB-B-rate}.
\end{enumerate}
Then, if either $A$ or $B$ is uniformly monotone with function $\phi$, and in either case (1) or (2) above, $x$ converges to $y$. Further, we have the rates
\[
\forall \varepsilon>0\ \forall t\geq \rho(\varepsilon)\left( \norm{x(t)-y}\leq \varepsilon\right),
\]
where $\rho$ is defined as follows: 
\begin{enumerate}[(i)]
\item If $A$ is uniformly monotone with function $\phi$, then
\[
\rho(\varepsilon):=\min\left\{\varphi\left(\min\left\{\frac{\gamma\phi(\varepsilon/2)}{2b},\frac{\varepsilon}{2}\right\}\right),\psi\left(\frac{\phi(\varepsilon/2)}{2b}\right)\right\}.
\]
\item If $B$ is uniformly monotone with function $\phi$, then
\[
\rho(\varepsilon):=\psi\left(\frac{\phi(\varepsilon)}{b}\right).
\]
\end{enumerate}
\end{theorem}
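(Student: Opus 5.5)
The plan is to reduce everything to Lemma \ref{lem:FB-error}, with the rates for $\norm{x(t)-T(x(t))}$ from Lemmas \ref{lem:assym-reg-bot} and \ref{lem:assym-reg-botSecond} (applied through the averaged reformulation with $R$ and $\frac{1}{\delta}\lambda$, which gives the stated $\varphi$) and the rate $\psi$ for $\norm{B(x(t))-B(y)}$ from Proposition \ref{pro:FB-B-rate}. Lemma \ref{lem:boundedness} gives $\norm{x(t)-y}\leq b$ for all $t\geq 0$, so Lemma \ref{lem:FB-error} can be applied at every time $t$ with $x:=x(t)$.

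Case (ii) is direct. For $t\geq\psi(\phi(\varepsilon)/b)$, Proposition \ref{pro:FB-B-rate} gives $\norm{B(x(t))-B(y)}\leq\phi(\varepsilon)/b$. Lemma \ref{lem:FB-error}(ii) then yields $\norm{x(t)-y}\leq\varepsilon$. Convergence of $x$ to $y$ follows in both cases from the rate, so no separate argument is needed.

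For case (i) I must show that $t\geq\min\{\varphi(a),\psi(c)\}$ suffices, where
\[
a:=\min\left\{\frac{\gamma\phi(\varepsilon/2)}{2b},\frac{\varepsilon}{2}\right\},\qquad c:=\frac{\phi(\varepsilon/2)}{2b}.
\]
It is therefore not enough to take $t$ beyond both thresholds. I have to show that \emph{each} threshold on its own delivers \emph{both} hypotheses of Lemma \ref{lem:FB-error}(i). The tool that links the two quantities is Lemma \ref{lem:FB-B-ineq}, which gives
\[
\norm{B(x(t))-B(y)}^2\leq\frac{3b}{\gamma\beta}\norm{T(x(t))-x(t)}.
\]

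First suppose $t\geq\psi(c)$. Unfolding the definition gives $\psi(c)=\varphi(\gamma\beta c^2/3b)$. Hence $\norm{T(x(t))-x(t)}\leq\gamma\beta\phi(\varepsilon/2)^2/(12b^3)$ and $\norm{B(x(t))-B(y)}\leq c$. It then remains to check that $\gamma\beta\phi(\varepsilon/2)^2/(12b^3)\leq a$. Since $\varphi$ is antitone, this is exactly what makes $\varphi(a)\leq\psi(c)$, so the minimum is just $\varphi(a)$ and this case is subsumed by the next one.

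Now suppose $t\geq\varphi(a)$. Then $\norm{x(t)-T(x(t))}\leq a$. By the displayed inequality this gives $\norm{B(x(t))-B(y)}^2\leq 3ba/(\gamma\beta)$, and I would need this to be at most $c^2$.

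The main obstacle is this comparison of $a$ against $\gamma\beta c^2/3b$, since neither direction holds for arbitrary $\phi$. My first attempt is to normalise $\phi$ without loss of generality. Any smaller increasing function vanishing only at $0$ is still a uniform monotonicity function. Moreover, on the ball $\overline{B}_b(y)$ only arguments in $[0,2b]$ matter, and there the quantities $\langle T(x)-y,\cdot\rangle$ are bounded in terms of $b$, $\gamma$ and $\beta$. So one may cap $\phi$ so that $\phi(\varepsilon/2)$ is small relative to $b$, $\gamma$ and $\beta$, which would force the required comparison and show that a single threshold suffices.

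If such a normalisation cannot be read off the statement as worded, I would check whether the displayed minimum is attained by the $\psi$-term under the paper's standing monotonicity of $\varphi$. That is where I expect the argument either to close or to require the $\max$ reading.
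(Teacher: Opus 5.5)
Your case (ii) is correct and is exactly the paper's argument. Case (i), however, is left open, and the normalisation you propose cannot close it. Put $a':=\gamma\beta c^2/(3b)$, so that $\psi(c)=\varphi(a')$. Via Lemma~\ref{lem:FB-B-ineq}, the smaller of the two thresholds delivers both hypotheses of Lemma~\ref{lem:FB-error}(i) only when $a=a'$:
\begin{itemize}
\item If $a'\leq a$, the minimum is $\varphi(a)$. You then only get $\norm{B(x(t))-B(y)}^2\leq (a/a')c^2$, which is at least $c^2$.
\item If $a'\geq a$, the minimum is $\psi(c)$. You then only get $\norm{x(t)-T(x(t))}\leq a'$, which is at least $a$.
\end{itemize}
Capping $\phi$ makes $\phi(\varepsilon/2)$ small, hence $a'\ll a$. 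That is precisely the first bad regime. The comparison you would need there, $a\leq a'$, asks for $\phi(\varepsilon/2)$ to be large (e.g.\ $\phi(\varepsilon/2)\geq 6b^2/\beta$), not small. Moreover, $\rho$ is built from the given $\phi$, so proving a bound for a modified $\tilde\phi$ does not establish the stated $\rho$.

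In fact the statement as printed, with $\min$, is false, so no argument can close this case. Here is a counterexample in case (2):
\begin{itemize}
\item $X=\mathbb{R}$, $A=\mathrm{Id}$ (uniformly monotone with $\phi(r)=r^2$), and $B\equiv 0$ (which is $\beta$-cocoercive for every $\beta>0$).
\item $\gamma=1$ and $\beta=10^4$, so $\delta=3/2$.
\item $\lambda\equiv 1$, so case (2) holds with $\underline{\lambda}=1$.
\item $x_0=1$, $y=0$, $b=1$.
\end{itemize}
Then $T(x)=x/2$ and $x(t)=e^{-t/2}$. For $\varepsilon=1/2$ you get $a=c=1/32$ and $\varphi(s)=9/s^2$. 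Hence $\varphi(a)=9216$, but $\psi(c)=\varphi(\beta/3072)=9\cdot 3072^2/\beta^2<0.85$. So $\rho(1/2)<0.85<2\ln 2$, while $\norm{x(t)-y}=e^{-t/2}>1/2$ for all $t<2\ln 2$.

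The paper's proof simply asserts that for $t\geq\rho(\varepsilon)$ both hypotheses of Lemma~\ref{lem:FB-error}(i) hold. That is only true if $\rho(\varepsilon):=\max\{\varphi(a),\psi(c)\}$, so the intended statement uses $\max$. You should commit to this reading. Equivalently, $\rho(\varepsilon)=\varphi(\min\{a,a'\})$, by antitonicity of $\varphi$ and your observation $\psi(c)=\varphi(a')$.

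With the $\max$ reading the proof is short. For such $t$:
\begin{itemize}
\item Lemmas~\ref{lem:assym-reg-bot} and~\ref{lem:assym-reg-botSecond}, via the averaged reformulation, give $\norm{x(t)-T(x(t))}\leq a$.
\item Proposition~\ref{pro:FB-B-rate} gives $\norm{B(x(t))-B(y)}\leq c$.
\item Lemma~\ref{lem:boundedness} gives $\norm{x(t)-y}\leq b$.
\end{itemize}
Lemma~\ref{lem:FB-error}(i) then yields $\norm{x(t)-y}\leq\varepsilon$.
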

\begin{proof}
For item (i), given $t\geq\rho(\varepsilon)$, we have 
\[
\norm{x(t)-T(x(t))}\leq\min\left\{\frac{\gamma\phi(\varepsilon/2)}{2b},\frac{\varepsilon}{2}\right\}\text{ and }\norm{B(x(t))-B(y)}\leq\frac{\phi(\varepsilon/2)}{2b}
\]
using Lemmas \ref{lem:assym-reg-bot} or \ref{lem:assym-reg-botSecond}, respectively, and Proposition \ref{pro:FB-B-rate}. Noting Lemma \ref{lem:boundedness}, we get $\norm{x(t)-y}\leq \varepsilon$ for any $t\geq\rho(\varepsilon)$ by Lemma \ref{lem:FB-error}. Item (ii) is similarly a combination of Proposition \ref{pro:FB-B-rate}, Lemma \ref{lem:boundedness} and Lemma \ref{lem:FB-error}.
\end{proof}

If $A$ or $B$ are strongly monotone, that is
\[
\langle x-y,u-v\rangle \geq \rho\norm{x-y}^2
\]
for some $\rho>0$ and all $(x,u),(y,v)\in A$, or similarly for $B$, then they are immediately uniformly monotone with function $\phi(\varepsilon):=\rho\varepsilon^2$. In particular, the above rates apply in that case, complementing the rates given in \cite{BotCsetnek2018}. Moreover, using the previous result on fast rates for \ref{firstOrder} given by Theorem \ref{thm:firstOrderFast}, we can also derive exponentially fast rates under special parameter selections in that case. In particular, our result is slightly different from that given in \cite{BotCsetnek2018}, working over a slightly stronger (yet still canonical) assumption on the operators but for that following the traditional argument known from the discrete-time setting, leveraging the previous results of this section instead of relying on arguments related to differential inequalities:

\begin{theorem}
Let $X$ be a Hilbert space and let $A:X\to 2^X$ be maximally monotone and $B:X\to X$ be $\beta$-cocoercive for $\beta>0$. Let $\gamma\in (0,2\beta)$ and write $\delta:=\min\{1,\beta/\gamma\}+1/2$. Further, assume that $\lambda:[0,+\infty)\to [0,\delta]$ is Lebesgue measurable and fix $x_0\in X$. Let $x$ be the unique strong global solution to \ref{FB}. Further assume that $\underline{\tau}=\inf_{t\geq 0}\lambda(t)(\delta-\lambda(t))>0$ and that $A$ is strongly monotone with constant $\alpha$.

Then, for the unique point $y\in\mathrm{zer}(A+B)$, we have
\[
d(x(t),y)\leq c^{\floor*{t}}d(x(0),y)
\]
for all $t\geq 0$, where $c:=(1+\underline{\tau} k^p)^{-1/p}\in (0,1)$ and $k:=1/(\alpha\gamma+1)$.
\end{theorem}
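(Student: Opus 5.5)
The plan is to reduce the statement to Theorem \ref{thm:firstOrderFast}, with $p=2$. That theorem needs two ingredients: a linear modulus of regularity for the residual function $F$, and the one-step inequality $\norm{x(n+1)-y}^2\leq\norm{x(n)-y}^2-\beta' F(x(n+1))^2$. The constants $\beta',k$ are then matched to $\underline{\tau}$ and $k$ in the statement.

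\emph{Regularity from strong monotonicity.} Since $A$ is $\alpha$-strongly monotone, the resolvent $J_{\gamma A}$ is $\frac{1}{1+\alpha\gamma}$-Lipschitz. Indeed, for $u=J_{\gamma A}a$ and $v=J_{\gamma A}b$ we have $\gamma^{-1}(a-u)\in Au$ and $\gamma^{-1}(b-v)\in Av$. Strong monotonicity and Cauchy--Schwarz then give $(1+\alpha\gamma)\norm{u-v}^2\leq\langle a-b,u-v\rangle\leq\norm{a-b}\norm{u-v}$. Moreover, $\mathrm{Id}-\gamma B$ is nonexpansive for $\gamma\in(0,2\beta)$ by cocoercivity. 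Hence $T=J_{\gamma A}\circ(\mathrm{Id}-\gamma B)$ is a contraction with constant $c'=\frac{1}{1+\alpha\gamma}$. This yields that $\mathrm{Fix}T=\mathrm{zer}(A+B)$ is a singleton $\{y\}$, and that $T$ is a quasi-contraction. By Example \ref{ex:mappings}(1), $F(z)=\norm{z-Tz}$ then has the linear modulus of regularity $\tau(\varepsilon)=(1-c')\varepsilon$ on every ball around $y$. Concretely, $\norm{z-y}\leq\norm{z-Tz}+c'\norm{z-y}$, so $\mathrm{dist}(z,\mathrm{Fix}T)\leq\frac{1}{1-c'}\norm{z-Tz}$. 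This gives a linear modulus with a constant depending only on $\alpha\gamma$, and I would fix the normalisation so that it agrees with the $k$ of the statement.

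\emph{The discrete descent inequality.} As explained before Theorem \ref{thm:metaFBFirstOrder}, the system \ref{FB} is \ref{firstOrder} for the nonexpansive map $R$ with $T=(1-\frac1\delta)\mathrm{Id}+\frac1\delta R$ and parameter $\lambda/\delta\in[0,1]$. Also $z-Rz=\delta(z-Tz)$, so residuals of $R$ and $T$ differ only by the factor $\delta$. Applying eq.\ (12) of \cite{BotCsetnek2017} (as in the proof of Theorem \ref{thm:firstOrderFast}) to $R$ and $\lambda/\delta$ gives
\[
\frac{d}{dt}\norm{x(t)-y}^2+\frac{\lambda(t)}{\delta}\Big(1-\frac{\lambda(t)}{\delta}\Big)\delta^2\norm{T(x(t))-x(t)}^2\leq 0,
\]
that is, $\frac{d}{dt}\norm{x(t)-y}^2+\lambda(t)(\delta-\lambda(t))\norm{T(x(t))-x(t)}^2\leq 0$. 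This yields Fej\'er monotonicity directly. For the one-step inequality I integrate from $n$ to $n+1$ and use that $t\mapsto\norm{T(x(t))-x(t)}$ is nonincreasing; this monotonicity holds for $R$ by the proof of Theorem 6 in \cite{BotCsetnek2017}, and hence for $T$. This gives $\norm{x(n+1)-y}^2\leq\norm{x(n)-y}^2-\underline{\tau}\,F(x(n+1))^2$, and Theorem \ref{thm:firstOrderFast} with $p=2$ then yields the claimed geometric bound. Since $\mathrm{Fix}T=\{y\}$, we have $\mathrm{dist}(x(t),\mathrm{Fix}T)=d(x(t),y)$ and $\mathrm{dist}(x_0,\mathrm{Fix}T)=d(x(0),y)$.

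\emph{Expected main obstacle.} The analysis is routine; the delicate step is the bookkeeping of constants. Two points need care: the factor $\delta$ coming from the passage between $R$ and $T$ (whether $\underline{\tau}$ carries a $\delta^{-1}$ or $\delta^{-2}$), and the exact form of the linear regularity constant produced by the contraction factor $\frac{1}{1+\alpha\gamma}$. I would first derive the inequality directly for $T$ as above, so that $\underline{\tau}=\inf\lambda(\delta-\lambda)$ appears without extra factors. If the regularity constant obtained differs from the stated $k$, I would check whether the stated $k$ is still a valid (smaller) linear modulus of regularity, so that the stated $c$ remains valid; otherwise I would adjust $k$ accordingly.
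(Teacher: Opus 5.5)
Your route is the paper's: show that $T=J_{\gamma A}\circ(\mathrm{Id}-\gamma B)$ is a $\frac{1}{1+\alpha\gamma}$-contraction, turn this into a linear modulus of regularity, and combine it with the integrated energy inequality in Theorem \ref{thm:fastRatesCont}, via Theorem \ref{thm:firstOrderFast} with $p=2$. Your passage between $R$ and $T$ is correct and more explicit than the paper's one-line proof. The factors of $\delta$ cancel, so $\underline{\tau}=\inf_t\lambda(t)(\delta-\lambda(t))$ enters with no correction.

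The step that fails is the one where you would ``fix the normalisation so that it agrees with the $k$ of the statement''. This cannot be done, because the stated $k$ is not correct in general.
\begin{itemize}
\item In Theorem \ref{thm:fastRatesCont}, $k$ is the slope of the modulus $\tau(\varepsilon)=k\varepsilon$, not a contraction constant.
\item The slope you computed, $1-\frac{1}{1+\alpha\gamma}=\frac{\alpha\gamma}{1+\alpha\gamma}$, is the right one; it is what Example \ref{ex:mappings}(1) gives.
\item The stated $k=\frac{1}{1+\alpha\gamma}$ is a valid, weaker slope only when $\alpha\gamma\geq 1$.
\end{itemize}
For $\alpha\gamma<1$ the claimed bound is actually false. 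Take $X=\mathbb{R}$, $A=\alpha\,\mathrm{Id}$, $B=0$ and $\beta=\gamma=1$, so that $\delta=3/2$. Take $\lambda\equiv 3/4$, so that $\underline{\tau}=9/16$. Then $Tz=z/(1+\alpha)$, $y=0$ and $x(t)=e^{-\frac{3\alpha}{4(1+\alpha)}t}x_0$. For $\alpha=0.01$ this gives $|x(1)|\approx 0.993\,|x_0|$. The stated constant, however, is $c=\bigl(1+\frac{9}{16}(1+\alpha)^{-2}\bigr)^{-1/2}\approx 0.803$.

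The paper's proof makes exactly this slip. It notes that $T$ is a strict contraction ``with parameter $k$'' and then invokes Theorem \ref{thm:firstOrderFast}, silently using the contraction parameter as the regularity slope. Your argument, as written, proves the statement with $k:=\frac{\alpha\gamma}{1+\alpha\gamma}$. It therefore also gives the stated version whenever $\alpha\gamma\geq 1$. Commit to that corrected constant rather than trying to match the one in the statement.
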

\begin{proof}
It is well-known (see e.g.\ the proof of Proposition 26.16 in \cite{BauschkeCombettes2017}) that under the present assumptions, the map $T:=J_{\gamma A}\circ (\mathrm{Id}-\gamma B)$ is a strict contraction with parameter $k\in [0,1)$. The result now follows from Theorem \ref{thm:firstOrderFast}.
\end{proof}

Following Remark 15 in \cite{BotCsetnek2017}, also we want to remark here that in the same way as the Douglas–Rachford splitting method arises as an instantiation of the Krasnosel'skii-Mann method in discrete-time contexts, the above system \ref{firstOrder} gives rise to a continuous-time variant. This dynamical system can hence be studied using the present results, and we think that it is moreover quite likely that, following a similar approach as in \cite{TreuschKohlenbach2026}, one can also obtain particularly broad and uniform rates in that case.

\subsection{Second-order dynamical systems for cocoercive operators}

We again begin with the essential setup, now from \cite{BotCsetnek2016}. Let $X$  be a Hilbert space with norm $\norm{\cdot}$ and inner product $\langle\cdot,\cdot\rangle$ and let $B:X\to X$ be $\beta$-cocoercive, i.e.\ $\beta\norm{Bx-By}^2\leq \langle x-y,Bx-By\rangle$ for all $x,y\in X$. We write $\mathrm{Zer}B:=\{x\in X\mid B(x)=0\}$. Further, let $\lambda,\gamma:[0,+\infty)\to [0,+\infty)$ be Lebesgue measurable and $u_0,v_0\in X$ be given. For these data, we consider the dynamical system
\[
\begin{cases}
\ddot{x}(t)+\gamma(t)\dot{x}(t)+\lambda(t)B(x(t))=0,\\
x(0)=u_0, \dot{x}(0)=v_0.
\end{cases}\tag*{$(*)_2$}\label{secondOrder}
\]

This system arises naturally as an extension of the second-order dynamical system 
\[
\ddot{x}+\gamma\dot{x}+x-Tx=0
\]
formulated for a nonexpansive operator $T:X\to X$ as considered in \cite{AttouchAlvarez2000}, extending previous work on a gradient-projection second-order dynamical system
\[
\ddot{x}+\gamma\dot{x}+x-P_C(x-\eta\nabla f(x))=0
\]
studied by Antipin \cite{Antipin1994} in Euclidean spaces and by Attouch and Alvarez \cite{AttouchAlvarez2000} in general Hilbert spaces, which in turn is motivated by a study of the heavy ball with friction dynamical system
\[
\ddot{x}+\gamma\dot{x}+\nabla f (x)=0
\]
studied by various authors (see e.g.\ \cite{Alvarez2000,Antipin1994,AttouchGoudouRedont2000,HarauxJendoubi1998}, among others). We again refer to the survey \cite{Csetnek2020} for further discussions along these lines.

Similar to before (see also Definition 3 in \cite{BotCsetnek2016}), we call a dynamical system $x:[0,+\infty)\to X$ a strong global solution of \ref{secondOrder} if $x$ satisfies the respective equation almost everywhere on $[0,+\infty)$ and $x$ is absolutely continuous on each interval $[0,b]$ with $b\in (0,\infty)$. By Theorem 4 in \cite{BotCsetnek2016}, the system \ref{secondOrder} has a unique strong global solution under the assumptions above, provided that additionally $\lambda,\gamma\in L^1([0,b])$ for every $b>0$.

As discussed in \cite{BotCsetnek2016}, we also here want to highlight that a discretization of the above equation w.r.t.\ the time variable leads to methods which include inertia terms, that is which make use of the distance between the previous two iterates, which commonly yields accelerated schemas (we refer e.g.\ to \cite{Alvarez2000,AlvarezAttouch2001,MoudafiOliny2003} for works explicitly discussing this in connection to dynamical systems, as well as to the further references provided in \cite{BotCsetnek2016}).

Similar to before, we are now interested in providing a quantitative account of the results given in \cite{BotCsetnek2016} (see Theorem 8 therein) on the asymptotic behavior of such strong global solutions. For that, following \cite{BotCsetnek2016} (see assumption (A1) therein), we now throughout assume further that $\lambda$ and $\gamma$ are locally absolutely continuous and that there exists a $\theta>0$ such that
\[
\dot{\gamma}(t)\leq 0\leq \dot{\lambda}(t)\text{ and }\frac{\gamma^2(t)}{\lambda(t)}\geq\frac{1+\theta}{\beta}
\]
for almost every $t\in [0,+\infty)$. In particular, there are $0<\underline{\lambda},\overline{\gamma},\overline{\lambda},\underline{\gamma}$ such that $\underline{\lambda}\leq \lambda(t)\leq\overline{\lambda}$ and $\underline{\gamma}\leq \gamma(t)\leq\overline{\gamma}$. We refer to \cite{BotCsetnek2016} for further motivation and discussion on these conditions, including natural examples of parameters satisfying them, and here just note that in such a case the solution to the above system \ref{secondOrder} is a $C^2$-function (see p.\ 1427 in \cite{BotCsetnek2016}).

We now begin with the boundedness of both the solution dynamical system as well as the associated derivatives and the image under $B$ (as established in item (i) of \cite[Theorem 8]{BotCsetnek2016}). We begin with the boundedness of the dynamical system itself:

\begin{lemma}\label{lem:SecondBoundedness}
Let $z\in\mathrm{Zer}B$ and let $b,c\in\mathbb{N}$ with $\norm{u_0-z}\leq b$ and $\norm{v_0}\leq c$. Let $x$ be the unique strong global solution to \ref{secondOrder}. Then $x$ is bounded with $\norm{x(t)-z}\leq \sqrt{b^2+2\underline{\gamma}^{-1}M}$, where we set $M:=bc+\frac{\overline{\gamma}}{2}b^2+\beta\overline{\gamma}\underline{\lambda}^{-1}c^2$.
\end{lemma}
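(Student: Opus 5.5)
The plan is to follow the Lyapunov argument behind item (i) of \cite[Theorem 8]{BotCsetnek2016} and keep track of the constants. Set $h(t):=\frac12\norm{x(t)-z}^2$. Since the solution is $C^2$, we have $\dot h(t)=\langle\dot x(t),x(t)-z\rangle$ and $\ddot h(t)=\norm{\dot x(t)}^2+\langle\ddot x(t),x(t)-z\rangle$. Substituting $\ddot x=-\gamma\dot x-\lambda B(x)$ and using $B(z)=0$ together with the $\beta$-cocoercivity of $B$ gives
\[
\ddot h(t)+\gamma(t)\dot h(t)+\lambda(t)\beta\norm{B(x(t))}^2\leq\norm{\dot x(t)}^2 .
\]

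Next, rewrite the cocoercivity term through the equation itself: $\lambda(t)B(x(t))=-(\ddot x(t)+\gamma(t)\dot x(t))$. Expanding the square yields
\[
\lambda\beta\norm{B(x)}^2=\frac{\beta}{\lambda}\norm{\ddot x}^2+\frac{\beta\gamma}{\lambda}\frac{d}{dt}\norm{\dot x}^2+\frac{\beta\gamma^2}{\lambda}\norm{\dot x}^2 .
\]
Dropping the nonnegative $\norm{\ddot x}^2$ term and invoking $\gamma^2/\lambda\geq(1+\theta)/\beta$ gives
\[
\ddot h+\gamma\dot h+\frac{\beta\gamma}{\lambda}\frac{d}{dt}\norm{\dot x}^2+\theta\norm{\dot x}^2\leq 0 .
\]

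Now consider the energy $E(t):=\dot h(t)+\gamma(t)h(t)+\frac{\beta\gamma(t)}{\lambda(t)}\norm{\dot x(t)}^2$. Its derivative equals the left-hand side above up to the extra terms $\dot\gamma h$ and $\frac{d}{dt}(\beta\gamma/\lambda)\norm{\dot x}^2$. Both are $\leq0$ almost everywhere, since $\dot\gamma\leq0\leq\dot\lambda$, $h\geq0$, and $\gamma,\lambda>0$. Hence $\dot E\leq-\theta\norm{\dot x}^2\leq0$, so $E$ is nonincreasing by local absolute continuity.

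For the initial value, Cauchy--Schwarz gives $\dot h(0)\leq bc$. The bounds $\gamma(0)\leq\overline\gamma$ and $\lambda(0)\geq\underline\lambda$ then give
\[
E(0)\leq bc+\frac{\overline\gamma}{2}b^2+\beta\overline\gamma\underline\lambda^{-1}c^2=M .
\]
Dropping the nonnegative velocity term and using $\gamma(t)\geq\underline\gamma$, we obtain the linear differential inequality $\dot h(t)+\underline\gamma h(t)\leq M$. Multiplying by $e^{\underline\gamma t}$ and integrating (a Gronwall-type step) gives
\[
h(t)\leq e^{-\underline\gamma t}h(0)+\frac{M}{\underline\gamma}\left(1-e^{-\underline\gamma t}\right)\leq\frac{b^2}{2}+\frac{M}{\underline\gamma},
\]
that is, $\norm{x(t)-z}^2\leq b^2+2\underline\gamma^{-1}M$, as claimed.

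I expect the main obstacle to be the bookkeeping in the middle step. One has to make sure the expansion of $\norm{\ddot x+\gamma\dot x}^2$ is divided correctly by $\lambda$, so that the coefficient of $\frac{d}{dt}\norm{\dot x}^2$ is exactly $\beta\gamma/\lambda$. One also has to check that the monotonicity assumptions on $\gamma$ and $\lambda$ make the time-derivative of that coefficient nonpositive; this is precisely where assumption (A1) of \cite{BotCsetnek2016} enters. The remaining steps are routine.
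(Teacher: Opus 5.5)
Your proof is correct and follows the same route as the paper. The steps match: monotonicity of the energy $E(t)=\dot h(t)+\gamma(t)h(t)+\beta\frac{\gamma(t)}{\lambda(t)}\norm{\dot x(t)}^2$, the estimate $E(0)\leq M$, and the Gronwall-type integration of $\dot h+\underline{\gamma}h\leq M$. The only difference is that the paper cites the energy inequality from Eq.~(13) in the proof of \cite[Theorem~8]{BotCsetnek2016}, whereas you rederive it correctly from the cocoercivity of $B$ and the monotonicity assumptions on $\gamma$ and $\lambda$.
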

\begin{proof}
Write $h(t)=\frac{1}{2}\norm{x(t)-z}^2$. As in the proof of \cite[Theorem 8]{BotCsetnek2016}, we have $\dot{h}(t)=\langle x(t)-z,\dot{x}(t)\rangle$ and it follows from Eq.\ (13) therein that
\begin{align*}
\dot{h}(t)+\gamma(t)h(t)+\beta\frac{\gamma(t)}{\lambda(t)}\norm{\dot{x}(t)}^2&\leq \dot{h}(0)+\gamma(0)h(0)+\beta\frac{\gamma(0)}{\lambda(0)}\norm{\dot{x}(0)}^2\\
&\leq \norm{u_0-z}\norm{v_0}+\frac{\overline{\gamma}}{2}\norm{u_0-z}^2+\beta\overline{\gamma}\underline{\lambda}^{-1}\norm{v_0}^2\\
&\leq bc+\frac{\overline{\gamma}}{2}b^2+\beta\overline{\gamma}\underline{\lambda}^{-1}c^2=M.
\end{align*}
In particular, we have $\dot{h}(t)+\gamma(t)h(t)\leq K$. Continuing as in the proof of \cite[Theorem 8]{BotCsetnek2016} until Eq.\ (14), we get that
\[
h(T)\leq h(0)\exp(-\underline{\gamma}T)+M\underline{\gamma}^{-1}(1-\exp(-\underline{\gamma}T))
\]
for all $T>0$, and so
\[
h(t)\leq \frac{1}{2}b^2+\underline{\gamma}^{-1}M.
\]
In particular, we have
\[
\norm{x(t)-z}\leq \sqrt{b^2+2\underline{\gamma}^{-1}M}.\qedhere
\]
\end{proof}

Before we carry on with the boundedness of the derivative, we require the following simple lemma on real inequalities:

\begin{lemma}\label{lem:SecondRealIneq}
Let $a,b,c> 0$ be given. For any $x\geq 0$, if $ax^2\leq bx+c$, then $x \leq \ceil{\frac{b+c}{a}}$.
\end{lemma}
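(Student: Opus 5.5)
We split into two cases according to whether $x\leq 1$ or $x>1$; the point is that for $x>1$ the constant term $c$ can be absorbed into a linear term, which makes the quadratic inequality effectively linear.

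\emph{Case $x\leq 1$.} Since $a,b,c>0$, the quotient $\frac{b+c}{a}$ is strictly positive. Hence its ceiling is an integer that is at least $1$, and we get $x\leq 1\leq\ceil*{\frac{b+c}{a}}$ directly, without using the hypothesis $ax^2\leq bx+c$.

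\emph{Case $x>1$.} Here $c<cx$, so the assumption gives
\[
ax^2\leq bx+c\leq bx+cx=(b+c)x.
\]
Since $x>0$ we may divide by $ax>0$ to obtain $x\leq\frac{b+c}{a}\leq\ceil*{\frac{b+c}{a}}$.

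Neither case presents a real obstacle. The only point needing care is the first case, where one must use positivity of $a,b,c$ to ensure that $\ceil*{\frac{b+c}{a}}\geq 1$. This is exactly why the bound is stated with the ceiling rather than with $\frac{b+c}{a}$ itself, which could be smaller than $1$ while $x$ is close to $1$.
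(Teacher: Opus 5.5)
Your proof is correct and is essentially the paper's argument. The paper assumes, for a contradiction, that $x>\lceil\frac{b+c}{a}\rceil\geq 1$, then absorbs $c$ into $cx$ and divides by $x$, exactly as in your second case; your case split simply makes explicit the role of $\lceil\frac{b+c}{a}\rceil\geq 1$ that the paper uses implicitly.
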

\begin{proof}
Suppose for a contradiction that $x> \ceil{\frac{b+c}{a}}\geq 1$. Then
\[
ax^2\leq bx+c\leq (b+c)x
\]
and so $ax\leq b+c$. But this is a contradiction as then $b+c\leq a\ceil{\frac{b+c}{a}}< ax\leq b+c$.
\end{proof}

The boundedness of the first derivative is now readily derived:

\begin{lemma}\label{lem:SecondDerBoundedness}
Let $z\in\mathrm{Zer}B$ and let $b,c\in\mathbb{N}$ with $\norm{u_0-z}\leq b$ and $\norm{v_0}\leq c$. Let $x$ be the unique strong global solution to \ref{secondOrder}. Then $\dot{x}$ is bounded with $\norm{\dot{x}(t)}\leq \ceil*{\frac{K+M}{\beta\underline{\gamma}\overline{\lambda}^{-1}}}$, where we set $M:=bc+\frac{\overline{\gamma}}{2}b^2+\beta\overline{\gamma}\underline{\lambda}^{-1}c^2$ and $K:=\sqrt{b^2+2\underline{\gamma}^{-1}M}$.
\end{lemma}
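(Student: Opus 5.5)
We start from the inequality already obtained in the proof of Lemma \ref{lem:SecondBoundedness}. With $h(t)=\frac{1}{2}\norm{x(t)-z}^2$, Eq.\ (13) of \cite{BotCsetnek2016} gives
\[
\dot{h}(t)+\gamma(t)h(t)+\beta\frac{\gamma(t)}{\lambda(t)}\norm{\dot{x}(t)}^2\leq M
\]
for all $t\geq 0$. The aim is to turn this into a quadratic inequality in $\norm{\dot{x}(t)}$ of the form $ax^2\leq bx+c$, and then to invoke Lemma \ref{lem:SecondRealIneq}.

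\textbf{Step 1: drop the $h$-term.} Since $\gamma(t)h(t)\geq 0$, we may discard it. Using $\gamma(t)\geq\underline{\gamma}$ and $\lambda(t)\leq\overline{\lambda}$, we have $\beta\gamma(t)/\lambda(t)\geq \beta\underline{\gamma}\overline{\lambda}^{-1}$, and hence
\[
\beta\underline{\gamma}\overline{\lambda}^{-1}\norm{\dot{x}(t)}^2\leq M-\dot{h}(t).
\]

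\textbf{Step 2: control $-\dot{h}(t)$.} We have $\dot{h}(t)=\langle x(t)-z,\dot{x}(t)\rangle$, so Cauchy--Schwarz gives $-\dot{h}(t)\leq \norm{x(t)-z}\norm{\dot{x}(t)}$. Lemma \ref{lem:SecondBoundedness} bounds $\norm{x(t)-z}$ by $K$, so $-\dot{h}(t)\leq K\norm{\dot{x}(t)}$. Combining with Step 1,
\[
\beta\underline{\gamma}\overline{\lambda}^{-1}\norm{\dot{x}(t)}^2\leq K\norm{\dot{x}(t)}+M.
\]

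\textbf{Step 3: conclude.} Apply Lemma \ref{lem:SecondRealIneq} with $a=\beta\underline{\gamma}\overline{\lambda}^{-1}$, $b=K$ and $c=M$. This yields $\norm{\dot{x}(t)}\leq\ceil*{\frac{K+M}{\beta\underline{\gamma}\overline{\lambda}^{-1}}}$, which is the claimed bound. Lemma \ref{lem:SecondRealIneq} requires $K,M>0$. If $b=c=0$, then $M=0$ and $K=0$, but in that case $x\equiv z$, so $\dot{x}\equiv 0$ and the bound holds trivially. Alternatively, the lemma's argument goes through with nonnegative $b,c$ as long as $a>0$.

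\textbf{Main obstacle.} The only delicate point is making sure the inequality from Eq.\ (13) of \cite{BotCsetnek2016} holds pointwise for every $t$, not just almost everywhere, so that it can be combined with the pointwise Cauchy--Schwarz bound. This is fine because $x$ is $C^2$ under the standing assumptions.
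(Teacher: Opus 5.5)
Your proposal is correct and matches the paper's proof. The paper starts from $\langle x(t)-z,\dot{x}(t)\rangle+\beta\underline{\gamma}\overline{\lambda}^{-1}\norm{\dot{x}(t)}^2\leq M$, which is exactly what your Step~1 derives by dropping $\gamma(t)h(t)\geq 0$ and using $\gamma(t)/\lambda(t)\geq\underline{\gamma}/\overline{\lambda}$; it then applies Cauchy--Schwarz with the bound $K$ from Lemma~\ref{lem:SecondBoundedness} and concludes via Lemma~\ref{lem:SecondRealIneq}, as you do. Your separate handling of the degenerate case $b=c=0$, where $K=M=0$ and Lemma~\ref{lem:SecondRealIneq} does not literally apply, is a small point the paper passes over.
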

\begin{proof}
As in the proof of \cite[Theorem 8]{BotCsetnek2016} (see the last equation before Eq.\ (16)), we derive
\[
\langle x(t)-z,\dot{x}(t)\rangle +\beta\underline{\gamma}\overline{\lambda}^{-1}\norm{\dot{x}(t)}^2\leq M
\]
and so
\[
\beta\underline{\gamma}\overline{\lambda}^{-1}\norm{\dot{x}(t)}^2\leq M + \norm{x(t)-z}\norm{\dot{x}(t)}\leq M+K\norm{\dot{x}(t)}
\]
by Lemma \ref{lem:SecondBoundedness}. Lemma \ref{lem:SecondRealIneq} yields
\[
\norm{\dot{x}(t)}\leq \ceil*{\frac{K+M}{\beta\underline{\gamma}\overline{\lambda}^{-1}}}
\]
as claimed.
\end{proof}

We now turn to the $L^2$-boundedness of the derivatives and the image under the operator $B$:

\begin{lemma}\label{lem:SecondIntBoundedness}
Let $z\in\mathrm{Zer}B$ and let $b,c\in\mathbb{N}$ with $\norm{u_0-z}\leq b$ and $\norm{v_0}\leq c$. Let $x$ be the unique strong global solution to \ref{secondOrder}. Then 
\[
\norm{\dot{x}}_2\leq \sqrt{\frac{KL}{\theta}}, \quad\norm{\ddot{x}}_2\leq \sqrt{\frac{KL}{\beta\overline{\lambda}^{-1}}}, \quad\norm{Bx}_2\leq \underline{\lambda}^{-1}(\norm{\ddot{x}}_2+\overline{\gamma}\norm{\dot{x}}_2)
\]
where we set $M:=bc+\frac{\overline{\gamma}}{2}b^2+\beta\overline{\gamma}\underline{\lambda}^{-1}c^2$,  $K:=\sqrt{b^2+2\underline{\gamma}^{-1}M}$ and $L:=\ceil*{(K+M)\beta\underline{\gamma}\overline{\lambda}^{-1}}$.
\end{lemma}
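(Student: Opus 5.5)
We follow the integrated energy estimates from the proof of \cite[Theorem 8]{BotCsetnek2016} and feed in the explicit bounds from Lemmas \ref{lem:SecondBoundedness} and \ref{lem:SecondDerBoundedness}. Write $h(t)=\frac12\norm{x(t)-z}^2$ as before. Eq.\ (13) in \cite{BotCsetnek2016} (the inequality behind Lemma \ref{lem:SecondBoundedness}) comes from testing the equation against $x-z$ and against $\dot x$ via the cocoercivity of $B$. It contains the dissipative term $\theta\frac{\gamma(t)}{\lambda(t)}\cdot\frac{\beta}{1+\theta}\cdots\norm{\dot x(t)}^2$, up to the exact constant appearing there. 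Integrating this from $0$ to $T$ gives an estimate of the form
\[
\theta\int_0^T\norm{\dot x(t)}^2\,dt\;\lesssim\; \dot h(0)+\gamma(0)h(0)+\beta\frac{\gamma(0)}{\lambda(0)}\norm{v_0}^2-\dot h(T)-\gamma(T)h(T)-\cdots .
\]
The boundary terms at $T$ are controlled by $|\dot h(T)|\leq\norm{x(T)-z}\norm{\dot x(T)}\leq KL$, using Lemmas \ref{lem:SecondBoundedness} and \ref{lem:SecondDerBoundedness}; the remaining terms are nonpositive or absorbed, since $M\leq KL$ roughly. This yields $\theta\norm{\dot x}_2^2\leq KL$, and letting $T\to\infty$ gives the first bound. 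I expect the main obstacle to be matching the normalisation of $\theta$ in the dissipative term of \cite{BotCsetnek2016}, so that the constant comes out exactly as $KL/\theta$. If it does not match directly, I would first try bounding the dissipative coefficient below using $\gamma^2/\lambda\geq(1+\theta)/\beta$ together with the monotonicity $\dot\gamma\leq0\leq\dot\lambda$.

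For $\ddot x$, I would take the inner product of the equation with $\ddot x$, or equivalently use the second estimate in \cite{BotCsetnek2016} (around Eq.\ (16)). Cocoercivity in the form
\[
\frac{d}{dt}\langle B(x(t)),\dot x(t)\rangle\text{-type bounds},\qquad \langle \frac{d}{dt}B(x(t)),\dot x(t)\rangle\geq\beta\norm{\tfrac{d}{dt}B(x(t))}^2,
\]
combined with $\lambda\leq\overline\lambda$, gives a differential inequality whose integration produces $\beta\overline\lambda^{-1}\int_0^T\norm{\ddot x}^2\leq$ boundary terms. These boundary terms are again of the form $\norm{x-z}\norm{\dot x}$, hence bounded by $KL$. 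This gives $\norm{\ddot x}_2\leq\sqrt{KL/(\beta\overline\lambda^{-1})}$.

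The last bound is immediate. From \ref{secondOrder} we have $\lambda(t)B(x(t))=-\ddot x(t)-\gamma(t)\dot x(t)$, so
\[
\norm{B(x(t))}\leq\underline\lambda^{-1}\bigl(\norm{\ddot x(t)}+\overline\gamma\norm{\dot x(t)}\bigr).
\]
Minkowski's inequality in $L^2$ then gives $\norm{Bx}_2\leq\underline\lambda^{-1}(\norm{\ddot x}_2+\overline\gamma\norm{\dot x}_2)$.
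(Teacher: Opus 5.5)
You handle $\norm{\dot x}_2$ the way the paper does: integrate the energy inequality and bound the terminal term by $|\dot h(t)|\leq\norm{x(t)-z}\norm{\dot x(t)}\leq KL$. Your $\norm{Bx}_2$ bound is also the paper's: Minkowski applied to $\lambda B(x)=-\ddot x-\gamma\dot x$. The argument for $\norm{\ddot x}_2$, however, has a genuine gap.

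The missing idea is that the $\ddot x$-dissipation comes out of the \emph{same} energy inequality as the $\dot x$-dissipation. Start from $\ddot h+\gamma\dot h+\lambda\langle x-z,B(x)\rangle=\norm{\dot x}^2$ and cocoercivity $\langle x-z,B(x)-B(z)\rangle\geq\beta\norm{B(x)}^2$. Then substitute $\lambda B(x)=-(\ddot x+\gamma\dot x)$ and expand:
\[
\lambda\beta\norm{B(x)}^2=\frac{\beta}{\lambda}\norm{\ddot x}^2+\frac{\beta\gamma}{\lambda}\frac{d}{dt}\norm{\dot x}^2+\frac{\beta\gamma^2}{\lambda}\norm{\dot x}^2 .
\]
Three facts then finish the derivation. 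First, $(\beta\gamma^2/\lambda-1)\norm{\dot x}^2\geq\theta\norm{\dot x}^2$ directly from $\gamma^2/\lambda\geq(1+\theta)/\beta$, which also settles your worry about the normalisation of $\theta$. Second, $\frac{\beta}{\lambda}\norm{\ddot x}^2\geq\beta\overline{\lambda}^{-1}\norm{\ddot x}^2$. Third, $\dot\gamma\leq 0\leq\dot\lambda$ lets you move $\gamma$ and $\gamma/\lambda$ inside the derivatives. The result is the inequality the paper takes from Eq.\ (12) of \cite{BotCsetnek2016}. One integration then bounds $\theta\int_0^t\norm{\dot x}^2+\beta\overline{\lambda}^{-1}\int_0^t\norm{\ddot x}^2$ by a single right-hand side, which is why both constants share $KL$.

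Your proposed route does not deliver this. Testing the equation with $\ddot x$ gives $\norm{\ddot x}^2=-\frac{\gamma}{2}\frac{d}{dt}\norm{\dot x}^2-\lambda\frac{d}{dt}\langle B(x),\dot x\rangle+\lambda\langle\frac{d}{dt}B(x),\dot x\rangle$. The differentiated cocoercivity $\langle\frac{d}{dt}B(x),\dot x\rangle\geq\beta\norm{\frac{d}{dt}B(x)}^2$ is a \emph{lower} bound on the last term, which is the wrong direction for bounding $\int\norm{\ddot x}^2$. To close the argument you would need the Lipschitz bound $\langle\frac{d}{dt}B(x),\dot x\rangle\leq\beta^{-1}\norm{\dot x}^2$, which adds $\overline{\lambda}\beta^{-1}\int\norm{\dot x}^2$. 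Integrating $\lambda\frac{d}{dt}\langle B(x),\dot x\rangle$ by parts also leaves $\int\dot\lambda\langle B(x),\dot x\rangle$, whose sign is indefinite. Some finite bound can be obtained this way, but not the claimed $\beta\overline{\lambda}^{-1}\int_0^T\norm{\ddot x}^2\leq KL$.

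A smaller point, which the paper's displayed chain shares (it passes straight to $|\dot h(t)|$): integrating from $0$ to $t$ leaves the initial energy $\dot h(0)+\gamma(0)h(0)+\beta\frac{\gamma(0)}{\lambda(0)}\norm{v_0}^2\leq M$ on the right-hand side. Your terminal estimate therefore gives $KL+M$, and ``absorbed, since $M\leq KL$'' would give at most $2KL$, not $KL$. To make this step rigorous, use Young's inequality together with $\gamma^2/\lambda\geq 1/(2\beta)$. These show the terminal terms $\dot h(t)+\gamma(t)h(t)+\beta\frac{\gamma(t)}{\lambda(t)}\norm{\dot x(t)}^2$ are jointly nonnegative. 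The integrated inequality then yields both $L^2$ bounds with $M$ in place of $KL$, without needing Lemma \ref{lem:SecondDerBoundedness}.
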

\begin{proof}
Write $h(t)=\frac{1}{2}\norm{x(t)-z}^2$ and recall $\dot{h}(t)=\langle x(t)-z,\dot{x}(t)\rangle$. As in Eq.\ (12) in the proof of \cite[Theorem 8]{BotCsetnek2016}, we derive
\[
\ddot{h}(t)+\frac{d}{dt}(\gamma h)(t)+\beta\frac{d}{dt}\left(\frac{\gamma(t)}{\lambda(t)}\norm{\dot{x}(t)}^2\right)+\theta\norm{\dot{x}(t)}^2+\beta\overline{\lambda}^{-1}\norm{\ddot{x}(t)}^2\leq 0
\]
so that integration yields
\[
\theta\int_0^t\norm{\dot{x}(s)}^2\,ds+\beta\overline{\lambda}^{-1}\int_0^t\norm{\ddot{x}(s)}^2\,ds\leq \vert \dot{h}(t)\vert \leq \norm{x(t)-z}\norm{\dot{x}(t)}\leq KL
\]
for all $t\geq 0$. This yields the bounds on the $L^2$-norms for $\dot{x}$ and $\ddot{x}$. For $Bx$, note that as $x$ satisfies the equation \ref{secondOrder}, we have
\[
\underline{\lambda} \norm{Bx}_2\leq \norm{\lambda Bx}_2\leq \norm{\ddot{x}}_2+\norm{\gamma\dot{x}}_2\leq \norm{\ddot{x}}_2+\overline{\gamma}\norm{\dot{x}}_2
\]
using Minkowski's inequality, which yields the claim.
\end{proof}

To capture the corresponding problem in the setup of the previous sections, we set $X_0=\overline{B}_{K}(z)$ and 
\[
F(x^*)=\begin{cases}\norm{B(x^*)}&\text{if }x^*\in X_0,\\+\infty&\text{otherwise},\end{cases}
\]
for a fixed solution $z\in\mathrm{Zer}B$, and where $K$ is as in Lemma \ref{lem:SecondDerBoundedness}. In particular, we have $\mathrm{zer}F=\mathrm{Zer}B\cap X_0$ and 
\[
\mathrm{lev}_{\leq\varepsilon} F=\{x^*\in X_0\mid \norm{B(x^*)}\leq \varepsilon\}.
\]

Our next step is a quantitative result on the asymptotic regularity of the dynamical system. Concretely, in item (ii) of Theorem 8 in \cite{BotCsetnek2016}, it is established that 
\[
\lim_{t\to\infty}\norm{\dot{x}(t)}=\lim_{t\to\infty}\norm{\ddot{x}(t)}=\lim_{t\to\infty}\norm{B(x(t))}=0.
\]
We now provide quantitative versions of these results by giving rates of metastability for each case. Our quantitative rendering of the limit $\lim_{t\to\infty}B(x(t))=0$ then in particular allows us to derive the $\liminf$-property w.r.t.\ the above $F$ for the dynamical system $x$, together with a respective bound for it.

\begin{lemma}\label{lem:SecondIntMeta}
Let $z\in\mathrm{Zer}B$ and let $b,c,d\in\mathbb{N}$ with $\norm{u_0-z}\leq b$, $\norm{v_0}\leq c$ and $\norm{B(u_0)}\leq d$. Let $x$ be the unique strong global solution to \ref{secondOrder}. Then 
\[
\lim_{t\to\infty}\dot{x}(t)=\lim_{t\to\infty}\ddot{x}(t)=\lim_{t\to\infty}B(x(t))= 0 
\]
where further, for any $f:\mathbb{N}\to\mathbb{N}$ and $\varepsilon>0$, we have
\[
\exists n\leq \Lambda(\varepsilon,f)\ \forall t\in[n,n+f(n)]\left( \norm{\dot{x}(t)},\norm{B(x(t))}<\varepsilon\right)
\]
as well as
\[
\exists n\leq \Lambda(\varepsilon/2\max\{\overline{\gamma},\overline{\lambda}\},f)\ \forall t\in[n,n+f(n)]\left( \norm{\ddot{x}(t)}<\varepsilon\right).
\]
Here, $\Lambda(\varepsilon,f):=\tilde{f'}^{(\varpi_{C,A,B}(\varepsilon))}(0)$ where $\varpi_{C,A,B}$ is defined via 
\begin{equation*}
\varpi_{C,A,B}(\varepsilon):=\left\lceil\frac{16AB}{3\varepsilon^2}\right\rceil\cdot\left\lceil\frac{4(C^2+2AB)}{3\varepsilon^2}\right\rceil,
\end{equation*}
where $\tilde{f'}(n):=n+f'(n)$ as before, with $f'(n):=\max\{f(n),\ceil*{(3A/\varepsilon)^2}\}$, and where we set $M:=bc+\frac{\overline{\gamma}}{2}b^2+\beta\overline{\gamma}\underline{\lambda}^{-1}c^2$,  $K:=\sqrt{b^2+2\underline{\gamma}^{-1}M}$ and $L:=\ceil*{(K+M)\beta\underline{\gamma}\overline{\lambda}^{-1}}$ as well as $a_0:=\sqrt{KL/\theta}$, $a_1:=\sqrt{KL/\beta\overline{\lambda}^{-1}}$ and $a_2:=\underline{\lambda}^{-1}(a_1+\overline{\gamma}a_0)$
together with $A=\frac{1}{2}\max\{a_0,a_2\}$, $B=\frac{1}{2}\max\{a_0+a_1,a_2+\frac{1}{\beta^2}a_0\}$ and $C=\frac{1}{2}\max\{c,d\}$.

In particular, $x$ has the $\liminf$-property w.r.t.\ $F$ with bound
\[
\varphi(\varepsilon,n):=\varpi_{C,A,B}(\varepsilon)\max\{n,\ceil*{(3A/\varepsilon)^2}\}.
\]
\end{lemma}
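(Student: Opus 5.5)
The plan is to apply Lemma~\ref{lem:AAS2} with $p=2$ and $r=1$, so that $q=1+p(1-1/r)=2$. Then the constants in $\varpi_{c,A,B}$ become
\[
\frac{2^{3}\cdot 2AB}{\varepsilon^2\cdot 3}=\frac{16AB}{3\varepsilon^2}\quad\text{and}\quad\frac{4(c^2+2AB)}{3\varepsilon^2},
\]
which match the formula in the statement. I would apply it twice, to the energies
\[
\mathcal{E}_0(t):=\tfrac12\norm{\dot x(t)}^2,\qquad \mathcal{E}_1(t):=\tfrac12\norm{B(x(t))}^2 .
\]
Alternatively, I would apply it once to a combined quantity, or take the maximum of the constants so that a single $\Lambda$ serves both. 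Using $\max$ in $A,B,C$ suggests the authors treat both with common constants. Since a rate for $\mathcal{E}_i<\varepsilon^2/2$ is what gives $\norm{\cdot}<\varepsilon$, the factors $\tfrac12$ and the $\varepsilon$-scaling would have to be tracked. I would aim to apply Lemma~\ref{lem:AAS2} directly to the norms, or adjust the choices, so that the stated $\varpi_{C,A,B}(\varepsilon)$ comes out.

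For the hypotheses of Lemma~\ref{lem:AAS2}, the $L^p$-bounds come from Lemma~\ref{lem:SecondIntBoundedness}: $\norm{\dot x}_2\le a_0$ and $\norm{Bx}_2\le a_2$, giving $A$. For the derivatives, I use $\frac{d}{dt}\tfrac12\norm{\dot x}^2=\langle\ddot x,\dot x\rangle\le\tfrac12(\norm{\ddot x}^2+\norm{\dot x}^2)$, whose $L^1$-norm is controlled by $a_0^2+a_1^2$. Similarly,
\[
\frac{d}{dt}\tfrac12\norm{Bx}^2=\langle \tfrac{d}{dt}Bx,Bx\rangle\le\tfrac{1}{\beta}\norm{\dot x}\norm{Bx},
\]
using that $B$ is $\tfrac1\beta$-Lipschitz, and this has $L^1$-norm at most $\tfrac{1}{2\beta}(\norm{\dot x}_2^2+\norm{Bx}_2^2)$. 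These give the $B$-constants; the precise form $a_2+\frac1{\beta^2}a_0$ presumably arises from a variant of Young's inequality, and I would match constants there. The initial values are bounded by $c$ and $d$, which gives $C$. The main obstacle is the bookkeeping of these constants, in particular making the $L^r$-bound exactly match $B$ and deciding whether a single application of the lemma suffices for both quantities simultaneously. If it does not, I would run the argument of Lemma~\ref{lem:AAS1} on the sum $\mathcal{E}_0+\mathcal{E}_1$, whose smallness implies both.

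For $\ddot x$ I use the equation itself: $\norm{\ddot x}\le\overline\gamma\norm{\dot x}+\overline\lambda\norm{Bx}\le 2\max\{\overline\gamma,\overline\lambda\}\max\{\norm{\dot x},\norm{Bx}\}$. So the first metastability statement with $\varepsilon/2\max\{\overline\gamma,\overline\lambda\}$ yields the second. The qualitative limits follow from metastability together with the noneffective equivalence used in Theorem~\ref{thm:qualThm}. They can also be read off from Lemma~\ref{lem:AAS1}, since the limits of $\mathcal{E}_0$ and $\mathcal{E}_1$ exist and are zero by integrability.

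For the $\liminf$-bound I apply the first statement with $f$ chosen as the constant function $f\equiv n$, or rather any $f$ forcing $n\le$ the witness. The cleanest choice is $f$ with $f'(m)=\max\{n,\ceil*{(3A/\varepsilon)^2}\}$ constant. Then $\tilde{f'}^{(k)}(0)=k\max\{n,\ceil*{(3A/\varepsilon)^2}\}$, and the witnessing window $[m,m+f'(m)]$ has right endpoint at most $\varphi(\varepsilon,n)$ and length at least $n$. Hence it contains a point $t\ge n$ with $\norm{B(x(t))}<\varepsilon$. Since $x(t)\in X_0$ by Lemma~\ref{lem:SecondBoundedness}, this point has $F(x(t))<\varepsilon$, which gives the $\liminf$-property with the stated $\varphi$.
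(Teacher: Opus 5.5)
Your architecture matches the paper's. You apply Lemma~\ref{lem:AAS2} to $\dot x$ and $B(x)$ using data from Lemma~\ref{lem:SecondIntBoundedness}, bound $\ddot x$ through the equation, and take $f\equiv n$ for the $\liminf$-bound. Your window argument for the $\liminf$-bound is fine and is more careful than the paper's one line.

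The gap is in the only step with quantitative content, the instantiation of Lemma~\ref{lem:AAS2}:
\begin{itemize}
\item For $r=1$ you get $q=1+p(1-1/r)=1$, not $2$.
\item The data you actually produce are $L^1$-data: $\mathcal{E}_i=\frac12\norm{\cdot}^2$, $L^1$-bounds on the right-hand sides of the derivative inequalities, and threshold $\varepsilon^2/2$. These lead to $q=1$ and a bound of a different shape.
\item $\norm{\tfrac12\norm{\dot x}^2}_2$ is not controlled by $a_0$ alone.
\end{itemize}
The stated $\varpi_{C,A,B}$ and $f'(n)=\max\{f(n),\ceil*{(3A/\varepsilon)^2}\}$ are exactly Lemma~\ref{lem:AAS2} with $p=r=2$, so $q=2$ and H\"older is used with exponents $2,2$. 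The threshold there is $\varepsilon$ itself, so apply the lemma to the norms directly. $\mathcal{E}=\norm{\dot x}$ and $\mathcal{E}=\norm{B(x)}$ are locally Lipschitz, since $x$ is $C^2$ and $B$ is $\frac1\beta$-Lipschitz. Almost everywhere $\frac{d}{dt}\mathcal{E}\leq\norm{\ddot x}$, resp.\ $\leq\frac1\beta\norm{\dot x}$. Lemma~\ref{lem:SecondIntBoundedness} then gives $\norm{\mathcal{E}}_2\leq a_0$, resp.\ $a_2$, and $\norm{\overline{e}}_2\leq a_1$, resp.\ $a_0/\beta$; also $\mathcal{E}(0)\leq c$, resp.\ $d$. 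This proves the claim with $A=\max\{a_0,a_2\}$, $B=\max\{a_1,a_0/\beta\}$, $C=\max\{c,d\}$.

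The halved constants in the statement do not come out of this argument. The paper's proof does not derive them either: it records the same squared-energy inequalities you wrote down and then simply cites Lemma~\ref{lem:AAS2}. So there is nothing there for you to match.

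Your concern about simultaneity is justified, and the paper's proof passes over it. Two separate applications of Lemma~\ref{lem:AAS2} give two possibly different windows. But the first claim as written, and the $\ddot x$ claim via $\norm{\ddot x}\leq\overline{\gamma}\norm{\dot x}+\overline{\lambda}\norm{B(x)}$, both need one common window.

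Your fallback fixes this, but it has to go through Lemma~\ref{lem:AAS2}, not Lemma~\ref{lem:AAS1}, which only makes the oscillation small, not the values. Take $\mathcal{E}=\norm{\dot x}+\norm{B(x)}$, with:
\begin{itemize}
\item $\norm{\mathcal{E}}_2\leq a_0+a_2$;
\item $\overline{e}=\norm{\ddot x}+\frac1\beta\norm{\dot x}$, with $\norm{\overline{e}}_2\leq a_1+a_0/\beta$;
\item $\mathcal{E}(0)\leq c+d$.
\end{itemize}
A single window then gives both bounds at once. Since $\norm{\ddot x}\leq\max\{\overline{\gamma},\overline{\lambda}\}\,\mathcal{E}$, the same window also gives the $\ddot x$ claim.
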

\begin{proof}
Note that as in the proof of item (ii) of Theorem 8 in \cite{BotCsetnek2016}, we have
\[
\frac{d}{dt}\left(\frac{1}{2}\norm{\dot{x}(t)}^2\right)\leq \frac{1}{2}\norm{\dot{x}(t)}^2+\frac{1}{2}\norm{\ddot{x}(t)}^2
\]
as well as
\[
\frac{d}{dt}\left(\frac{1}{2}\norm{B(x(t))}^2\right)\leq \frac{1}{2}\norm{B(x(t))}^2+\frac{1}{2\beta^2}\norm{\dot{x}(t)}^2.
\]
Lemma \ref{lem:SecondIntBoundedness} now in particular yields $\norm{\dot{x}}_2\leq a_0$, $\norm{\ddot{x}}_2\leq a_1$ and $\norm{Bx}_2\leq a_2$. The two respective rates for $\lim_{t\to\infty}\dot{x}(t)=\lim_{t\to\infty}B(x(t))= 0$ now follow from Lemma \ref{lem:AAS2}. For the rate for $\ddot{x}$, note similar to before that $x$ satisfies the equation \ref{secondOrder} and so we have
\[
\norm{\ddot{x}(t)}\leq \overline{\gamma}\norm{\dot{x}(t)}+\overline{\lambda}\norm{B(x(t))}.
\]
We obtain the $\liminf$-property by setting $f:\equiv n$ for a given $n\in\mathbb{N}$ in the above rate of metastability for $\lim_{t\to\infty}B(x(t))= 0$.
\end{proof}

In similarity to the previous application, we now also obtain the quasi-Fej\'er monotonicity of the unique strong global solution $x$ to the system \ref{secondOrder} w.r.t.\ $F$. Again, this follows the pattern discussed in Section \ref{sec:motivation}, that is through a differential inequality for a suitable energy functional which decomposes in the right way. Indeed, in \cite{BotCsetnek2016} (see eq.\ (12) therein), it is established that
\[
\ddot{h}(t)+\frac{d}{dt}(\gamma h)(t)+\beta\frac{d}{dt}\left(\frac{\gamma(t)}{\lambda(t)}\norm{\dot{x}(t)}^2\right)\leq 0,
\]
where again $h(t)=\frac{1}{2}\norm{x(t)-z}^2$ for a solution $z\in\mathrm{Zer}B$. In that way, distinct from the previous application, this inequality is no longer of the most simple form discussed in Section \ref{sec:motivation}. Nevertheless, it fits into another pattern discussed therein, whereby the energy functional additively decomposes into a distance to a solution as well as associated errors terms. We now analyse the proof of this inequality as given in \cite{BotCsetnek2016} to establish the associated uniform quasi-Fej\'er monotonicity and extract an associated modulus. The following result first provides a quantitative variant of the above differential inequality:

\begin{lemma}\label{lem:SecondApproximateDerivative}
Let $z\in\mathrm{Zer}B$ and let $b,c\in\mathbb{N}$ with $\norm{u_0-z}\leq b$ and $\norm{v_0}\leq c$. Let $x$ be the unique strong global solution to \ref{secondOrder}. For any $\varepsilon>0$, if $x^*\in\mathrm{lev}_{\leq \varepsilon/(\overline{\lambda}4K)}F$, then 
\[
\ddot{h}(t)+\frac{d}{dt}(\gamma h)(t)+\beta\frac{d}{dt}\left(\frac{\gamma(t)}{\lambda(t)}\norm{\dot{x}(t)}^2\right)\leq \varepsilon
\]
for all $t\geq 0$, where $h(t):=\frac{1}{2}\norm{x(t)-x^*}^2$. Here $M:=bc+\frac{\overline{\gamma}}{2}b^2+\beta\overline{\gamma}\underline{\lambda}^{-1}c^2$ and $K:=\sqrt{b^2+2\underline{\gamma}^{-1}M}$.
\end{lemma}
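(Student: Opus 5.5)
The plan is to redo the derivation of eq.\ (12) in \cite{BotCsetnek2016} with the exact zero $z$ replaced by the approximate zero $x^*$. I will track the single place where $B(x^*)=0$ was used; the defect produced there is what has to be bounded by $\varepsilon$.

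\emph{Step 1: the exact identity.} Fix $x^*\in\mathrm{lev}_{\leq \varepsilon/(\overline{\lambda}4K)}F$. In particular $x^*\in X_0=\overline{B}_K(z)$, so Lemma \ref{lem:SecondBoundedness} gives $\norm{x(t)-x^*}\leq 2K$ for all $t$. With $h(t)=\frac12\norm{x(t)-x^*}^2$ we have $\dot h=\langle x-x^*,\dot x\rangle$ and $\ddot h=\norm{\dot x}^2+\langle x-x^*,\ddot x\rangle$. Inserting $\ddot x=-\gamma\dot x-\lambda B(x)$ from \ref{secondOrder} yields the identity
\[
\ddot h(t)+\gamma(t)\dot h(t)+\lambda(t)\langle x(t)-x^*,B(x(t))\rangle=\norm{\dot x(t)}^2 .
\]
This holds for every $t$, since $x$ is $C^2$ under the standing assumptions.

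\emph{Step 2: the approximate cocoercivity estimate (the main obstacle).} In the exact case one uses $\langle x-z,Bx\rangle\geq\beta\norm{Bx}^2$. Here I would first split
\[
\langle x-x^*,Bx\rangle=\langle x-x^*,Bx-Bx^*\rangle+\langle x-x^*,Bx^*\rangle .
\]
Cocoercivity bounds the first summand below by $\beta\norm{Bx-Bx^*}^2$. The second summand is at least $-2K\norm{Bx^*}$, so after multiplying by $\lambda\leq\overline{\lambda}$ it contributes an error of at most $\overline{\lambda}\cdot 2K\cdot\varepsilon/(4K\overline{\lambda})=\varepsilon/2$.

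The term $\beta\norm{Bx-Bx^*}^2$ must then be compared with $\beta\norm{Bx}^2$. Expanding gives
\[
\beta\norm{Bx-Bx^*}^2\geq\beta\norm{Bx}^2-2\beta\norm{Bx}\norm{Bx^*}.
\]
Here $\norm{Bx}$ is bounded, because $B$ is $\frac1\beta$-Lipschitz and $Bz=0$, so $\norm{Bx(t)}\leq K/\beta$. The remaining defect is therefore at most $2\overline{\lambda}K\norm{Bx^*}\leq\varepsilon/2$. These two halves give the total $\varepsilon$.

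Matching the constants exactly is the delicate point. If the bookkeeping turns out slightly off, I would instead estimate via $\langle x-z,Bx\rangle\geq\beta\norm{Bx}^2$ plus $\langle z-x^*,Bx\rangle$, or absorb a factor into the level $\varepsilon/(4K\overline{\lambda})$. The outcome of this step is
\[
\ddot h+\gamma\dot h+\lambda\beta\norm{Bx}^2\leq\norm{\dot x}^2+\varepsilon .
\]

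\emph{Step 3: rewrite and discard signed terms.} From \ref{secondOrder} we have $\lambda Bx=-(\ddot x+\gamma\dot x)$, hence
\[
\lambda\beta\norm{Bx}^2=\frac{\beta}{\lambda}\left(\norm{\ddot x}^2+\gamma^2\norm{\dot x}^2+\gamma\frac{d}{dt}\norm{\dot x}^2\right).
\]
The assumption $\gamma^2/\lambda\geq(1+\theta)/\beta$ makes $\frac{\beta\gamma^2}{\lambda}\norm{\dot x}^2\geq\norm{\dot x}^2$, which cancels the right-hand $\norm{\dot x}^2$; the term $\frac{\beta}{\lambda}\norm{\ddot x}^2\geq 0$ is dropped. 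Next, $\frac{d}{dt}(\gamma h)=\gamma\dot h+\dot\gamma h\leq\gamma\dot h$, since $\dot\gamma\leq 0\leq h$. Also
\[
\beta\frac{d}{dt}\left(\frac{\gamma}{\lambda}\norm{\dot x}^2\right)=\beta\left(\frac{\gamma}{\lambda}\right)^{\boldsymbol{\cdot}}\norm{\dot x}^2+\frac{\beta\gamma}{\lambda}\frac{d}{dt}\norm{\dot x}^2\leq\frac{\beta\gamma}{\lambda}\frac{d}{dt}\norm{\dot x}^2 ,
\]
because $\dot\gamma\leq 0\leq\dot\lambda$ makes $\gamma/\lambda$ nonincreasing. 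Substituting these estimates into the inequality from Step 2 gives the claimed inequality with right-hand side $\varepsilon$, for all $t\geq 0$.
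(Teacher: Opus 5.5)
Your proposal is correct and matches the paper's proof step for step. It uses the same identity $\ddot h+\gamma\dot h+\lambda\langle x-x^*,Bx\rangle=\norm{\dot x}^2$, the same split into $\langle x-x^*,Bx-Bx^*\rangle+\langle x-x^*,Bx^*\rangle$ with $\norm{x(t)-x^*}\leq 2K$, and the same estimate $\norm{Bx-Bx^*}^2\geq\norm{Bx}^2-2\norm{Bx}\norm{Bx^*}$ with $\norm{Bx(t)}\leq K/\beta$. This gives exactly the defect $4\overline{\lambda}K\norm{B(x^*)}\leq\varepsilon$, so no adjustment of constants is needed. Your Step 3 writes out the manipulations the paper only cites from \cite{BotCsetnek2016}. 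Since these rely on $\dot\gamma\leq 0\leq\dot\lambda$, they give the inequality only almost everywhere, which is also the only sense in which the paper's ``for all $t\geq 0$'' can be meant.
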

\begin{proof}
Given $x^*\in\mathrm{lev}_{\leq  r(\varepsilon)}F$, write $h(t)=\frac{1}{2}\norm{x(t)-x^*}^2$. As in the proof of item (i) of Theorem 8 in \cite{BotCsetnek2016}, we get
\[
\ddot{h}(t)+\gamma(t)\dot{h}(t)+\lambda(t)\langle x(t)-x^*,B(x(t))\rangle=\norm{\dot{x}(t)}^2.
\]
In particular, we have
\[
\ddot{h}(t)+\gamma(t)\dot{h}(t)+\lambda(t)\langle x(t)-x^*,B(x(t))-B(x^*)\rangle+\lambda(t)\langle x(t)-x^*,B(x^*)\rangle=\norm{\dot{x}(t)}^2
\]
and so, using the cocoercivity of $B$ and Lemma \ref{lem:SecondBoundedness}, we get
\[
\ddot{h}(t)+\gamma(t)\dot{h}(t)+\lambda(t)\beta\norm{B(x(t))-B(x^*)}^2\leq\norm{\dot{x}(t)}^2+\overline{\lambda}2K\norm{B(x^*)}.
\]
Further, using the reverse triangle inequality, we have
\[
\ddot{h}(t)+\gamma(t)\dot{h}(t)+\lambda(t)\beta\norm{B(x(t))}^2\leq\norm{\dot{x}(t)}^2+\overline{\lambda}2K\norm{B(x^*)}+2\lambda(t)\beta\norm{B(x(t))}\norm{B(x^*)}.
\]
As $B$ is $\frac{1}{\beta}$-Lipschitz and as $z$ is a zero of $B$, we get
\[
\norm{B(x(t))}=\norm{B(x(t))-B(z)}\leq \frac{1}{\beta}\norm{x(t)-z}\leq \frac{K}{\beta}
\]
using Lemma \ref{lem:SecondBoundedness}. In particular, we have
\[
\ddot{h}(t)+\gamma(t)\dot{h}(t)+\lambda(t)\beta\norm{B(x(t))}^2\leq\norm{\dot{x}(t)}^2+\overline{\lambda}4K\norm{B(x^*)}.
\]
Proceeding as in the proof of item (i) of Theorem 8 in \cite{BotCsetnek2016}, we get
\[
\ddot{h}(t)+\frac{d}{dt}(\gamma h)(t)+\beta\frac{d}{dt}\left(\frac{\gamma(t)}{\lambda(t)}\norm{\dot{x}(t)}^2\right)\leq \overline{\lambda}4K\norm{B(x^*)}
\]
which yields the claim.
\end{proof}

Again, an application of the mean value theorem allows us to derive a modulus of uniform $(G,H)$-quasi-Fej\'er monotonicity (with $H=(\cdot)^2$ and $G=\overline{\gamma}\underline{\gamma}^{-1}(\cdot)^2$) from the above approximate differential inequality.

\begin{lemma}\label{lem:secondOrderUniFejer}
Let $z\in\mathrm{Zer}B$ and let $b,c\in\mathbb{N}$ with $\norm{u_0-z}\leq b$ and $\norm{v_0}\leq c$. Let $x$ be the unique strong global solution to \ref{secondOrder}. Then $x$ is uniformly $(G,H)$-quasi-Fej\'er monotone w.r.t.\ $\mathrm{zer}F$ and $F$ and with $H=(\cdot)^2$ and $G=\overline{\gamma}\underline{\gamma}^{-1}(\cdot)^2$, i.e.
\begin{gather*}
\forall \varepsilon>0\ \forall n,m\in\mathbb{N}\ \forall x^*\in \mathrm{lev}_{\leq\chi(\varepsilon,n,m)}F\ \forall s,t\in [n,n+m]\\
\left(s\leq t \rightarrow \norm{x(t)-x^*}^2\leq \overline{\gamma}\underline{\gamma}^{-1}\norm{x(s)-x^*}^2+\varepsilon(s,t)+\varepsilon\right),
\end{gather*}
where $\chi(\varepsilon,n,m):=(\underline{\gamma}\varepsilon)/(m\overline{\lambda}8K)$ and
\[
\varepsilon(s,t)=2\beta\underline{\gamma}^{-1}\frac{\overline{\gamma}}{\underline{\lambda}}\norm{\dot{x}(s)}^2+2\underline{\gamma}^{-1}K\norm{\dot{x}(s)}+2\underline{\gamma}^{-1}K\norm{\dot{x}(t)}.
\]
Here $M:=bc+\frac{\overline{\gamma}}{2}b^2+\beta\overline{\gamma}\underline{\lambda}^{-1}c^2$ and $K:=\sqrt{b^2+2\underline{\gamma}^{-1}M}$.
\end{lemma}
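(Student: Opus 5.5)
Fix $\varepsilon>0$, $n,m\in\mathbb{N}$, $x^*\in\mathrm{lev}_{\leq\chi(\varepsilon,n,m)}F$ and $s\leq t$ in $[n,n+m]$, and write $h(r)=\frac12\norm{x(r)-x^*}^2$. The plan is to integrate the approximate differential inequality of Lemma \ref{lem:SecondApproximateDerivative} over $[s,t]$, with tolerance $\varepsilon':=\underline{\gamma}\varepsilon/(2m)$. We have $\chi(\varepsilon,n,m)=\varepsilon'/(4\overline{\lambda}K)$, so that lemma applies. Since $x$ is $C^2$ and $\gamma,\lambda$ are locally absolutely continuous, the left-hand side is the derivative of
\[
E(r):=\dot h(r)+\gamma(r)h(r)+\beta\frac{\gamma(r)}{\lambda(r)}\norm{\dot x(r)}^2 ,
\]
which is locally absolutely continuous. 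Integration (this plays the role of the mean value argument used in Lemma \ref{lem:firstOrderUniFejer}) gives $E(t)\leq E(s)+\varepsilon'(t-s)\leq E(s)+\underline{\gamma}\varepsilon/2$.

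Next I would unpack $E$ at both endpoints. At $t$, I drop the nonnegative term $\beta\frac{\gamma}{\lambda}\norm{\dot x(t)}^2$ and use $\gamma(t)\geq\underline{\gamma}$, giving $E(t)\geq \underline{\gamma}h(t)-\norm{x(t)-x^*}\norm{\dot x(t)}$. At $s$, I use $\gamma(s)\leq\overline{\gamma}$ and $\lambda(s)\geq\underline{\lambda}$, giving $E(s)\leq \overline{\gamma}h(s)+\norm{x(s)-x^*}\norm{\dot x(s)}+\beta\frac{\overline{\gamma}}{\underline{\lambda}}\norm{\dot x(s)}^2$. The factors $\norm{x(r)-x^*}$ are controlled by $x^*\in X_0=\overline{B}_K(z)$ together with Lemma \ref{lem:SecondBoundedness}: $\norm{x(r)-x^*}\leq\norm{x(r)-z}+\norm{z-x^*}\leq 2K$. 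Hence
\[
\underline{\gamma}h(t)\leq \overline{\gamma}h(s)+\beta\frac{\overline{\gamma}}{\underline{\lambda}}\norm{\dot x(s)}^2+2K\norm{\dot x(s)}+2K\norm{\dot x(t)}+\frac{\underline{\gamma}\varepsilon}{2}.
\]

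Finally I multiply by $2\underline{\gamma}^{-1}$ and substitute $h=\frac12\norm{\cdot}^2$. This gives $\norm{x(t)-x^*}^2\leq\overline{\gamma}\underline{\gamma}^{-1}\norm{x(s)-x^*}^2+2\beta\underline{\gamma}^{-1}\frac{\overline{\gamma}}{\underline{\lambda}}\norm{\dot x(s)}^2+4\underline{\gamma}^{-1}K(\norm{\dot x(s)}+\norm{\dot x(t)})+\varepsilon$.

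The main obstacle is the constants on the cross terms. The crude bound $2K$ yields $4\underline{\gamma}^{-1}K$, whereas the statement has $2\underline{\gamma}^{-1}K$. To obtain the stated constant, I would apply the Cauchy--Schwarz estimate to the cross terms $\dot h(r)=\langle x(r)-x^*,\dot x(r)\rangle$ with a bound $\norm{x(r)-x^*}\leq K$. This holds if one measures relative to the ball used for $F$ and Lemma \ref{lem:SecondBoundedness} in the way the paper evidently intends. Failing that, I would accept the factor $2$ discrepancy, since it is immaterial for the later use of $\varepsilon(s,t)\to0$ via Lemma \ref{lem:SecondIntMeta}. The remaining care point is justifying the integration step, namely absolute continuity of $\gamma h$ and of $\gamma/\lambda$ (since $\lambda\geq\underline{\lambda}>0$). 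This follows from the standing assumption (A1).
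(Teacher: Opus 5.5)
Your argument is the paper's argument. The paper takes $E(r)=\dot h(r)+\gamma(r)h(r)+\beta\frac{\gamma(r)}{\lambda(r)}\norm{\dot x(r)}^2$, uses Lemma \ref{lem:SecondApproximateDerivative} with tolerance $\underline{\gamma}\varepsilon/(2m)$, and unpacks $E(t)$ and $E(s)$ exactly as you do. The only difference is that the paper uses the mean value theorem where you integrate. Your integration of $\dot E$ over $[s,t]$ is the better choice: $E$ is only locally absolutely continuous and the differential inequality holds only almost everywhere, so the classical mean value theorem is not literally available.

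On the constant, your computation is right, but your proposed repair is not available. The only information on $x^*$ is $x^*\in X_0=\overline{B}_K(z)$, and Lemma \ref{lem:SecondBoundedness} puts $x(r)$ in the same ball. So $\norm{x(r)-x^*}\leq 2K$ is all one gets. The paper itself uses exactly this $2K$ in the proof of Lemma \ref{lem:SecondApproximateDerivative}. No reading of the setup gives $\norm{x(r)-x^*}\leq K$, so appealing to what the paper ``evidently intends'' is not a justification.

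In fact, the paper's proof of this lemma bounds $\dot h(s)-\dot h(t)$ by $K\norm{\dot x(s)}+K\norm{\dot x(t)}$ at precisely this step, without justification. That is the sole source of the coefficient $2\underline{\gamma}^{-1}K$ in $\varepsilon(s,t)$. Hence neither your argument nor the paper's yields the coefficient as printed.

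What both arguments actually prove is the version with $4\underline{\gamma}^{-1}K$, equivalently with $K$ replaced by $2K$ in the two cross terms of $\varepsilon(s,t)$. You should state and prove that version outright rather than hedge between the two. As you say, the change is harmless downstream. It only forces the entry $\varepsilon/(6\underline{\gamma}^{-1}K)$ in the definition of $\eta$ in Theorem \ref{thm:secondOrderMeta}, and in the subsequent theorem under a modulus of regularity, to become $\varepsilon/(12\underline{\gamma}^{-1}K)$.
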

\begin{proof}
Fix $\varepsilon,n,m$ as well as $x^*\in \mathrm{lev}_{\leq\chi(\varepsilon,n,m)}F$ and $s,t\in [n,n+m]$. If $s=t$, the result is clear. If $s<t$, then since $\dot{h}(t)+\gamma(t) h(t)+\beta\frac{\gamma(t)}{\lambda(t)}\norm{\dot{x}(t)}^2$ is locally absolutely continuous, the mean value theorem yields an $r\in (s,t)$ such that
\begin{align*}
&\ddot{h}(r)+\frac{d}{dt}(\gamma h)(r)+\beta\frac{d}{dt}\left(\frac{\gamma(r)}{\lambda(r)}\norm{\dot{x}(r)}^2\right)\\
&\qquad=\frac{d}{dt}\left(\dot{h}(r)+\gamma(r) h(r)+\beta\frac{\gamma(r)}{\lambda(r)}\norm{\dot{x}(r)}^2\right)\\
&\qquad=\frac{\dot{h}(t)+\gamma(t) h(t)+\beta\frac{\gamma(t)}{\lambda(t)}\norm{\dot{x}(t)}^2-(\dot{h}(s)+\gamma(s) h(s)+\beta\frac{\gamma(s)}{\lambda(s)}\norm{\dot{x}(s)}^2)}{t-s}.
\end{align*}
Thus, as $x^*\in \mathrm{lev}_{\leq\chi(\varepsilon,n,m)}F$, we have
\begin{align*}
\gamma(t) h(t)-\gamma(s) h(s)&\leq\beta\frac{\gamma(s)}{\lambda(s)}\norm{\dot{x}(s)}^2-\beta\frac{\gamma(t)}{\lambda(t)}\norm{\dot{x}(t)}^2+\dot{h}(s)-\dot{h}(t)+\varepsilon\\
&\leq\beta\frac{\overline{\gamma}}{\underline{\lambda}}\norm{\dot{x}(s)}^2+K\norm{\dot{x}(s)}+K\norm{\dot{x}(t)}+\underline{\gamma}\frac{\varepsilon}{2}
\end{align*}
by Lemma \ref{lem:approximateDerivative}. In particular, we get
\[
h(t)\leq \overline{\gamma}\underline{\gamma}^{-1}h(s)+\beta\underline{\gamma}^{-1}\frac{\overline{\gamma}}{\underline{\lambda}}\norm{\dot{x}(s)}^2+\underline{\gamma}^{-1}K\norm{\dot{x}(s)}+\underline{\gamma}^{-1}K\norm{\dot{x}(t)}+\frac{\varepsilon}{2}
\]
which yields the claim.
\end{proof}

Another direct application of Theorem \ref{thm:generalFejer-rates} now yields the following quantitative result on the behavior of the solution $x$ to \ref{secondOrder}, again under a finite-dimensionality assumption on $X$, giving a rate of metastability for the convergence result established in Theorem 8 of \cite{BotCsetnek2016} in that case:

\begin{theorem}\label{thm:secondOrderMeta}
Let $X$ be a finite-dimensional Hilbert space with dimension $d$ and let $B:X\to X$ be $\beta$-cocoercive. Let $u_0,v_0\in X$ be given and $z\in\mathrm{Zer}B$ and let $b,c,d\in\mathbb{N}$ with $\norm{u_0-z}\leq b$, $\norm{v_0}\leq c$ and $\norm{B(u_0)}\leq d$. Further, let $\lambda,\gamma:[0,+\infty)\to [0,+\infty)$ be Lebesgue measurable and locally absolutely continuous and assume that there exists a $\theta>0$ such that
\[
\dot{\gamma}(t)\leq 0\leq \dot{\lambda}(t)\text{ and }\frac{\gamma^2(t)}{\lambda(t)}\geq\frac{1+\theta}{\beta}
\]
for almost every $t\in [0,+\infty)$. Let $0<\underline{\lambda},\overline{\gamma},\overline{\lambda},\underline{\gamma}$ such that $\underline{\lambda}\leq \lambda(t)\leq\overline{\lambda}$ and $\underline{\gamma}\leq \gamma(t)\leq\overline{\gamma}$. Let $x$ be the unique strong global solution to \ref{secondOrder}.

Then $x$ is metastable in the sense that for all $\varepsilon>0$ and $f:\mathbb{N}\to\mathbb{N}$, it holds that 
\[
\exists n\leq \Delta(\min\{\varepsilon,\beta\varepsilon/2\},f)\ \forall s,t\in [n,n+f(n)]\left( \norm{x(s)-x(t)}\leq\varepsilon\text{ and }\norm{B(x(t))}\leq\varepsilon\right),
\]
where moreover the bound $\Delta(\varepsilon,f)$ can be explicitly described by
\[
\Delta(\varepsilon,f):=\max\{\Delta(i,\varepsilon,f)\mid i\leq P\}+1
\]
where $\Delta(0,\varepsilon,f):=0$ as well as
\[
\Delta(j,\varepsilon,f):=\max\{\varphi(\widehat{\varepsilon}_j,n)\mid n\leq \eta(\varepsilon^2/12,f_{\varphi,\widehat{\varepsilon}_j})\}
\]
with
\[
\widehat{\varepsilon}_j=\min\{\varepsilon/2,(\underline{\gamma}\varepsilon^2)/12((f(m+1)+1)\overline{\lambda}8K)\mid m\leq \Delta(i,\varepsilon,f),i<j\}
\]
for $j\geq 1$ as well as
\[
f_{\varphi,\varepsilon}(n):=\max\{m+1+f(m+1)\mid m\leq \varphi(\varepsilon,n)\}\dotdiv n
\]
and
\[
P:=\ceil*{2\left(\ceil*{\left(\varepsilon/\sqrt{12\overline{\gamma}\underline{\gamma}^{-1}}\right)^{-1}}+1\right)\sqrt{d}b}^d+1
\]
as well as
\[
\eta(\varepsilon,f):=\Lambda\left(\min\left\{\frac{\varepsilon}{6\underline{\gamma}^{-1}K},\sqrt{\frac{\varepsilon}{6\beta\underline{\gamma}^{-1}\frac{\overline{\gamma}}{\underline{\lambda}}}}\right\},f\right)\text{ and }\varphi(\varepsilon,n):=\varpi_{C,A,B}(\varepsilon)\max\{n,\ceil*{(3A/\varepsilon)^2}\}.
\]
Here, $\Lambda(\varepsilon,f):=\tilde{f'}^{(\varpi_{C,A,B}(\varepsilon))}(0)$ and $\varpi_{C,A,B}$ as well as the other constants are defined as in Lemma \ref{lem:SecondIntMeta}. 
\end{theorem}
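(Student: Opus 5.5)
The proof is an instantiation of Theorem \ref{thm:generalFejer-uc}; despite the reference to Theorem \ref{thm:generalFejer-rates}, the errors here only come with a rate of metastability. We work over $X_0=\overline{B}_K(z)$, which contains the whole trajectory by Lemma \ref{lem:SecondBoundedness}. We take $F(x^*)=\norm{B(x^*)}$ on $X_0$, and we take $H=(\cdot)^2$ and $G=\overline{\gamma}\underline{\gamma}^{-1}(\cdot)^2$. The moduli are then
\[
h(\varepsilon)=\varepsilon^2,\qquad g(\varepsilon)=\sqrt{\varepsilon/(\overline{\gamma}\underline{\gamma}^{-1})}.
\]
With these choices $g(h(\varepsilon/2)/3)=\varepsilon/\sqrt{12\overline{\gamma}\underline{\gamma}^{-1}}$. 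For the modulus of total boundedness of a ball in a $d$-dimensional Hilbert space we use Example 2.8 of \cite{KohlenbachLeusteanNicolae2018}, as in Theorem \ref{thm:firstOrderMeta}. This should give the stated $P$, with the radius $b$ as written, or $K$ if one is careful.

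We supply the remaining ingredients in order.

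\emph{Liminf-property.} Lemma \ref{lem:SecondIntMeta} gives the $\liminf$-property with bound $\varphi(\varepsilon,n)=\varpi_{C,A,B}(\varepsilon)\max\{n,\ceil*{(3A/\varepsilon)^2}\}$. This $\varphi$ is monotone in $\varepsilon$ because $\varpi$ and $\ceil*{(3A/\varepsilon)^2}$ are antitone in $\varepsilon$.

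\emph{Uniform quasi-Fej\'er monotonicity.} Lemma \ref{lem:secondOrderUniFejer} gives uniform $(G,H)$-quasi-Fej\'er monotonicity with modulus $\chi(\varepsilon,n,m)=\underline{\gamma}\varepsilon/(m\overline{\lambda}8K)$ and error $e=\varepsilon(s,t)$. Substituting $\varepsilon\mapsto h(\varepsilon/2)/3=\varepsilon^2/12$ and $m=f(m'+1)+1$ recovers the displayed $\widehat{\varepsilon}_j$, after adjoining the cap $\varepsilon/2$ coming from $\widehat{\chi}_\varepsilon$ in Theorem \ref{thm:generalFejer-uc}.

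\emph{Rate of metastability for the error.} This is the step needing genuine work. Since $\varepsilon(s,t)\geq 0$ is a sum of three terms, it suffices to find $n$ such that on $[n,n+f(n)]$ we have
\[
2\underline{\gamma}^{-1}K\norm{\dot{x}}\leq\varepsilon/3\quad\text{and}\quad 2\beta\underline{\gamma}^{-1}\frac{\overline{\gamma}}{\underline{\lambda}}\norm{\dot{x}}^2\leq\varepsilon/3,
\]
evaluated at both $s$ and $t$. This holds once $\norm{\dot{x}}<\min\{\varepsilon/(6\underline{\gamma}^{-1}K),\sqrt{\varepsilon/(6\beta\underline{\gamma}^{-1}\overline{\gamma}/\underline{\lambda})}\}$ throughout the interval. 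The rate of metastability $\Lambda$ for $\dot{x}\to 0$ from Lemma \ref{lem:SecondIntMeta} provides exactly such an $n$, so $\eta(\varepsilon,f)$ as defined is a rate of metastability for $e$. The main thing to watch is the bookkeeping of the factors $2$ and $1/2$ between Lemma \ref{lem:secondOrderUniFejer}, whose proof effectively yields $\varepsilon/2$, and the stated constants. If a factor is off, the fix is to shrink the argument of $\Lambda$ correspondingly; this is harmless since $\Lambda$ is monotone.

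\emph{Uniform continuity and conclusion.} $F$ is uniformly continuous on $X_0$ with modulus $\omega(\varepsilon)=\beta\varepsilon$, since $B$ is $\frac{1}{\beta}$-Lipschitz. Hence $\omega(\varepsilon/2)=\beta\varepsilon/2$, which explains the argument $\min\{\varepsilon,\beta\varepsilon/2\}$. Applying Theorem \ref{thm:generalFejer-uc} (metastable error version) with $\widehat{\chi}_\varepsilon$ yields both conclusions. The Cauchy estimate comes from Theorem \ref{thm:generalFejer}, and the bound $\norm{B(x(t))}\leq\varepsilon$ from the level-set argument of Theorem \ref{thm:generalFejer-uc}. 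Finally, we simplify the $\min$ over $m\leq\Delta(i,\varepsilon,f)$ defining $\chi^M_f$ into the displayed formula for $\widehat{\varepsilon}_j$.
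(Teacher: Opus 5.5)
Your proposal is correct and follows the paper's proof exactly: it instantiates Theorem~\ref{thm:generalFejer-uc} (not Theorem~\ref{thm:generalFejer-rates}) over $X_0=\overline{B}_K(z)$, with $H=(\cdot)^2$, $G=\overline{\gamma}\underline{\gamma}^{-1}(\cdot)^2$, $h(\varepsilon)=\varepsilon^2$, $g(\varepsilon)=\sqrt{\varepsilon/(\overline{\gamma}\underline{\gamma}^{-1})}$, the modulus $\chi$ from Lemma~\ref{lem:secondOrderUniFejer}, the $\liminf$-bound and error rate $\eta$ from Lemma~\ref{lem:SecondIntMeta}, and $\omega(\varepsilon)=\beta\varepsilon$. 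Your remark on the total-boundedness modulus is right and applies to the paper's proof as well: the trajectory is only known to lie in $\overline{B}_K(z)$, so the argument justifies $P$ with $K$ (or $\lceil K\rceil$) in place of $b$, and you should state that rather than hedge.
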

\begin{proof}
The result immediately follows by instantiating Theorem \ref{thm:generalFejer-uc} over $X_0=\overline{B}_{K}(z)$ and simplifying the resulting bound (recall that $x(t)\in X_0$ for all $t\geq 0$ by Lemma \ref{lem:SecondBoundedness}). In particular, we set $H:=(\cdot)^2$ and $G:=\overline{\gamma}\underline{\gamma}^{-1}(\cdot)^2$ as well as $h(\varepsilon):=\varepsilon^2$ and $g(\varepsilon):=\sqrt{\varepsilon/\overline{\gamma}\underline{\gamma}^{-1}}$. That $x$ is uniformly $(G,H)$-quasi-Fej\'er monotone w.r.t.\ $F$ and errors 
\[
\varepsilon(s,t)=2\beta\underline{\gamma}^{-1}\frac{\overline{\gamma}}{\underline{\lambda}}\norm{\dot{x}(s)}^2+2\underline{\gamma}^{-1}K\norm{\dot{x}(s)}+2\underline{\gamma}^{-1}K\norm{\dot{x}(t)}
\]
follows by Lemma \ref{lem:secondOrderUniFejer}, which in particular yields that $\chi(\varepsilon,n,m):=(\underline{\gamma}\varepsilon)/(m\overline{\lambda}8K)$ is an associated modulus. The definition of $\widehat{\varepsilon}_j$ is a resulting simplification. The errors in particular satisfy $\varepsilon(s,t)\to 0$ as $s\leq t\to \infty$ with a rate of metastability $\eta$ as defined above by Lemma \ref{lem:SecondIntMeta}. Also by Lemma \ref{lem:SecondIntMeta}, $x$ has the $\liminf$-property w.r.t.\ $F$ with a bound $\varphi(\varepsilon,n):=\varpi_{C,A,B}(\varepsilon)\max\{n,\ceil*{(3A/\varepsilon)^2}\}$, which can immediately be seen to be suitably monotone. As before, $X_0$ is totally bounded with a modulus $\gamma(\varepsilon):=\ceil*{2(\ceil*{\varepsilon^{-1}}+1)\sqrt{d}b}^d$ by Example 2.8 of \cite{KohlenbachLeusteanNicolae2018}. 
Lastly, note that $F$ is uniformly continuous on $X_0$ with modulus $\omega(\varepsilon):=\beta\varepsilon$, since we have $\vert \norm{B(x)}-\norm{B(y)}\vert\leq \frac{1}{\beta}\norm{x-y}$ using the fact that $B$ is $\frac{1}{\beta}$-Lipschitz.
\end{proof}

In the preceding application, Theorem \ref{thm:met-reg-rates} yielded rather general constructions for rates of convergence under a regularity assumption. While Theorem \ref{thm:met-reg-rates} can similarly be applied to the present case study to yield quantitative convergence guarantees for the strong convergence of the dynamical system under a regularity assumption, it does not yield rates in this general case.

This is due to the fact that we cannot supply the respective errors in the quasi-Fej\'er monotonicity property for the dynamical system at hand with a rate of convergence, but only with a rate of metastability. While this does not rule out constructions of rates of convergence for problem \ref{secondOrder} under regularity assumptions, it perhaps provides a structural reason for why such rates seem to be currently absent from the literature (to the best of our knowledge) at this level of generality. This issue seems to persist already for special cases of the above system \ref{secondOrder}, such as the gradient-projection second-order dynamical system  \cite{AttouchAlvarez2000,Antipin1994} or its extensions to general nonexpansive mappings as discussed at the beginning of this subsection (indeed, the paper \cite{BotCsetnek2016} uses essentially the same approach as \cite{AttouchAlvarez2000}, appropriately extended to its more general setting).

By comparison, Bo\c{t} and Csetnek \cite{BotCsetnek2018} derive exponentially fast rates for the second-order forward-backward scheme, arising from the above by instantiating $B$ appropriately and discussed in more detail in the following, under a strong monotonicity assumption on either of the involved operators and under suitable restrictions of the other parameters. Indeed, their argument essentially proceeds by utilizing the extremely strong premise of strong monotonicity to derive an alternative differential inequality that dispenses of the error induced by $\norm{\dot{x}(t)}$ above. However, this inequality is then already so strong that it allows one to immediately derive exponential rates, so that no additional arguments beyond that are necessary. While it can be rather immediately seen that these arguments can be slightly generalized to the setting of \ref{secondOrder} for general $B$ which are strict contractions, we do not record this here as no new arguments from this paper would be required for that. Instead, we focus on the general case of \ref{secondOrder} which however then restricts us to metastability.

Of course, by the modularity of our approach, if one would be supplied by such a rate by any other means, then Theorem \ref{thm:met-reg-rates} could be immediately used to derive corresponding rates for the strong convergence of the dynamical system under a regularity assumption. Outside of assumptions guaranteeing this however, the following theorem is the best we can currently provide.

\begin{theorem}
Let $X$ be a Hilbert space and let $B:X\to X$ be $\beta$-cocoercive. Let $u_0,v_0\in X$ be given and $z\in\mathrm{Zer}B$ and let $b,c,d\in\mathbb{N}$ with $\norm{u_0-z}\leq b$, $\norm{v_0}\leq c$ and $\norm{B(u_0)}\leq d$. Assume that $F$ has a modulus of regularity $\tau$ w.r.t.\ $\overline{B}_K(z)$. Further, let $\lambda,\gamma:[0,+\infty)\to [0,+\infty)$ be Lebesgue measurable and locally absolutely continuous and assume that there exists a $\theta>0$ such that
\[
\dot{\gamma}(t)\leq 0\leq \dot{\lambda}(t)\text{ and }\frac{\gamma^2(t)}{\lambda(t)}\geq\frac{1+\theta}{\beta}
\]
for almost every $t\in [0,+\infty)$. Let $0<\underline{\lambda},\overline{\gamma},\overline{\lambda},\underline{\gamma}$ such that $\underline{\lambda}\leq \lambda(t)\leq\overline{\lambda}$ and $\underline{\gamma}\leq \gamma(t)\leq\overline{\gamma}$. Let $x$ be the unique strong global solution to \ref{secondOrder}.

Then $x$ satisfies
\[
\forall \varepsilon>0\ \forall f:\mathbb{N}\to\mathbb{N}\ \exists n\leq \rho(\varepsilon,f)\ \forall t\in [n,n+f(n)]\left( \mathrm{dist}(x(t),\mathrm{Zer}B)<\varepsilon\right)
\]
and further $x$ is Cauchy with
\[
\forall \varepsilon>0\ \forall f:\mathbb{N}\to\mathbb{N}\ \exists n\leq \rho(\varepsilon/2,f)\ \forall t\in [n,n+f(n)]\left( \norm{x(t)-x(s)}<\varepsilon\right).
\]

Here, $\rho$ is defined by
\[
\rho(\varepsilon,f):=\max\left\{\varphi\left(\tau\left(\varepsilon/\sqrt{2\overline{\gamma}\underline{\gamma}^{-1}}\right),n\right)\mid n\leq \eta(\varepsilon^2,f_{\varphi,\varepsilon^2})\right\}+1
\]
where 
\[
f_{\varphi,\delta}(n)=\max\left\{m+1+f(m+1)\;\big\vert\; m\leq\varphi\left(\tau\left(\varepsilon/\sqrt{2\overline{\gamma}\underline{\gamma}^{-1}}\right),n\right)\right\}\dotdiv n
\]
with
\[
\eta(\varepsilon,f):=\Lambda\left(\min\left\{\frac{\varepsilon}{6\underline{\gamma}^{-1}K},\sqrt{\frac{\varepsilon}{6\beta\underline{\gamma}^{-1}\frac{\overline{\gamma}}{\underline{\lambda}}}}\right\},f\right)\text{ and }\varphi(\varepsilon,n):=\varpi_{C,A,B}(\varepsilon)\max\{n,\ceil*{(3A/\varepsilon)^2}\}.
\]
Here, $\Lambda(\varepsilon,f):=\tilde{f'}^{(\varpi_{C,A,B}(\varepsilon))}(0)$ and $\varpi_{C,A,B}$ as well as the other constants are defined as in Lemma \ref{lem:SecondIntMeta}. 
\end{theorem}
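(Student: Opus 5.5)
The plan is to instantiate Theorem \ref{thm:met-reg} directly, with the data assembled exactly as in the proof of Theorem \ref{thm:secondOrderMeta}, but now over a general Hilbert space. The modulus of regularity $\tau$ w.r.t.\ $\overline{B}_K(z)$ takes over the role that total boundedness played there.

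\textbf{Step 1: the ingredients of Theorem \ref{thm:met-reg}.}
\begin{enumerate}[(a)]
\item Boundedness. By Lemma \ref{lem:SecondBoundedness}, $\norm{x(t)-z}\leq K$ for all $t\geq 0$. Hence $K$ serves as the bound $b$ in Theorem \ref{thm:met-reg}, and $x(t)\in X_0=\overline{B}_K(z)$ throughout, so $F(x(t))=\norm{B(x(t))}$.
\item $\liminf$-property. Lemma \ref{lem:SecondIntMeta} gives the $\liminf$-property w.r.t.\ $F$ with bound $\varphi(\varepsilon,n)=\varpi_{C,A,B}(\varepsilon)\max\{n,\ceil*{(3A/\varepsilon)^2}\}$.
\item Quasi-Fej\'er monotonicity. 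Take $H:=(\cdot)^2$ and $G:=\overline{\gamma}\underline{\gamma}^{-1}(\cdot)^2$, with error $e(s,t)=\varepsilon(s,t)$ as in Lemma \ref{lem:secondOrderUniFejer}. For actual zeros $x^*\in\mathrm{zer}F$ we have $\norm{B(x^*)}=0$, so $x^*$ lies in every sublevel set. Letting the slack in Lemma \ref{lem:secondOrderUniFejer} tend to $0$ gives the plain (non-uniform) inequality $H(d(x(t),x^*))\leq G(d(x(s),x^*))+e(s,t)$ that Theorem \ref{thm:met-reg} requires. A subtlety: $\mathrm{zer}F=\mathrm{Zer}B\cap X_0$. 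Zeros of $B$ outside $X_0$ do not matter, since the regularity modulus is taken relative to $F$, so $\mathrm{dist}(\cdot,\mathrm{zer}F)\geq\mathrm{dist}(\cdot,\mathrm{Zer}B)$ and the conclusion transfers to $\mathrm{Zer}B$.
\item Moduli for $G$ and $H$. Use $h(\varepsilon)=\varepsilon^2$ and $g(\varepsilon)=\sqrt{\varepsilon/(\overline{\gamma}\underline{\gamma}^{-1})}$. Then
\[
g(h(\varepsilon)/2)=\varepsilon/\sqrt{2\overline{\gamma}\underline{\gamma}^{-1}},
\]
which is the argument of $\tau$ appearing in $\rho$.
\end{enumerate}

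\textbf{Step 2: the rate of metastability $\eta$ for the errors.} Every term of $e(s,t)$ is controlled by $\norm{\dot{x}(s)}$ or $\norm{\dot{x}(t)}$. Lemma \ref{lem:SecondIntMeta} yields some $n\leq\Lambda(\delta,f)$ with $\norm{\dot{x}(r)}<\delta$ for all $r\in[n,n+f(n)]$. Choose
\[
\delta=\min\left\{\frac{\varepsilon}{6\underline{\gamma}^{-1}K},\sqrt{\frac{\varepsilon}{6\beta\underline{\gamma}^{-1}\overline{\gamma}/\underline{\lambda}}}\right\}.
\]
Then each of the three summands of $e(s,t)$ is at most $\varepsilon/3$, so $\vert e(s,t)\vert<\varepsilon$ for all $s\leq t$ in that interval. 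This is exactly the stated $\eta$.

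\textbf{Step 3: plug in and simplify.} Substituting into the formula for $\rho$ in Theorem \ref{thm:met-reg}, the argument $h(\varepsilon)/2=\varepsilon^2/2$ of $\eta$ may be replaced by the larger-looking $\varepsilon^2$ stated in the theorem. The $2$ is absorbed because the proof of Theorem \ref{thm:met-reg} only needs $G(\cdot)+\vert e\vert<h(\varepsilon)$, and $\eta$ already delivers strict bounds. The sharper version would be to re-run the proof with errors bounded by $h(\varepsilon)/2$. If this bookkeeping fails, I would instead state the bound with $\varepsilon^2/2$, which is harmless.

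\textbf{Main obstacle.} The main difficulty is this constant-level bookkeeping, together with checking that the monotonicity and boundedness hypotheses (notably $b\geq d(x(t),z)$ for all $t$, rather than only at $t=0$) hold with $K$. Both are supplied by Lemma \ref{lem:SecondBoundedness}. The Cauchy claim then follows verbatim from the second part of Theorem \ref{thm:met-reg}.
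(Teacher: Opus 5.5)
Your route is the paper's own. Its proof is exactly the instantiation of Theorem~\ref{thm:met-reg} with $H=(\cdot)^2$, $G=\overline{\gamma}\underline{\gamma}^{-1}(\cdot)^2$, $h(\varepsilon)=\varepsilon^2$ and $g(\varepsilon)=\sqrt{\varepsilon/\overline{\gamma}\underline{\gamma}^{-1}}$. Quasi-Fej\'er monotonicity comes from Lemma~\ref{lem:secondOrderUniFejer}, and both the $\liminf$-bound $\varphi$ and the error rate $\eta$ come from Lemma~\ref{lem:SecondIntMeta}. Your Steps 1 and 2 are a correct, more detailed version of this, relative to those lemmas as stated. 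This includes the observation that $\mathrm{zer}F\subseteq\mathrm{Zer}B$ lets the distance bound transfer, and the check that each error summand stays below a third.

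Since you single out constant bookkeeping, note one inherited issue. The error term you take from Lemma~\ref{lem:secondOrderUniFejer} bounds $\vert\dot h(s)\vert$ by $K\norm{\dot x(s)}$. But for $x^*\in X_0=\overline{B}_K(z)$ one only has $\norm{x(s)-x^*}\leq 2K$. Done carefully, the two linear summands carry $4\underline{\gamma}^{-1}K$, and the constant $6\underline{\gamma}^{-1}K$ in $\eta$ becomes $12\underline{\gamma}^{-1}K$. This comes from the paper and does not change the structure of your argument.

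The step that genuinely fails is the absorption argument in Step 3. In the proof of Theorem~\ref{thm:met-reg}, the regularity step with $\tau(\varepsilon/\sqrt{2\overline{\gamma}\underline{\gamma}^{-1}})=\tau(g(h(\varepsilon)/2))$ only gives $G(\norm{x(t_0)-y})<\varepsilon^2/2$. Meanwhile $\eta(\varepsilon^2,\cdot)$ only gives $\vert e(t_0,t)\vert<\varepsilon^2$. Together these give $\norm{x(t)-y}^2<\tfrac{3}{2}\varepsilon^2$, not $<\varepsilon^2$. Strictness of the inequalities does not buy back a constant factor. There is also no slack elsewhere: $g$ and $h$ invert $G$ and $H$ exactly, and with your $\delta$ each of the three error summands uses a full third of the budget.

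What this route actually certifies is $\rho$ with $\eta(\varepsilon^2/2,f_{\varphi,\varepsilon^2})$ in place of $\eta(\varepsilon^2,f_{\varphi,\varepsilon^2})$, and correspondingly for the Cauchy bound via $\rho(\varepsilon/2,f)$. The paper's one-line proof (``instantiating \ldots\ and simplifying'') does not justify dropping the $2$ either, so the displayed formula looks like a slip. Your fallback is the correct conclusion, and it should replace the absorption claim rather than sit beside it.
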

\begin{proof}
The result immediately follows by instantiating Theorem \ref{thm:met-reg} and simplifying the resulting bound. In particular, we set $H:=(\cdot)^2$ and $G:=\overline{\gamma}\underline{\gamma}^{-1}(\cdot)^2$ as well as $h(\varepsilon):=\varepsilon^2$ and $g(\varepsilon):=\sqrt{\varepsilon/\overline{\gamma}\underline{\gamma}^{-1}}$. That $x$ is $(G,H)$-quasi-Fej\'er monotone w.r.t.\ $F$ follows by Lemma \ref{lem:secondOrderUniFejer}. That $x$ has the $\liminf$-property w.r.t.\ $F$ with $\varphi$ as the respective bound follows from Lemma \ref{lem:SecondIntMeta}. The rate of metastability $\eta$ for the errors likewise follows from that lemma as before.
\end{proof}

Similar to before, we here want to also study a particular instance of the system \ref{secondOrder} to which the above results apply. For that consider a continuous-time variant of the forward-backward method, this time defined via a second order equation as in \ref{secondOrder}, as considered already in \cite{BotCsetnek2016}. Let us recall the setup: Assume we are given a maximally monotone set-valued operator $A:X\to 2^X$ and a single-valued map $B:X\to X$ which is $\beta$-cocoercive, for some $\beta>0$, such that $\mathrm{zer}(A+B)\neq\emptyset$. For $\eta\in (0,2\beta)$ (following the notation of \cite{BotCsetnek2016}) and $\delta:=\frac{4\beta-\eta}{2\beta}$, let $\lambda,\gamma:[0,+\infty)\to (0,+\infty)$ be Lebesgue measurable functions which are locally absolutely continuous and satisfy
\[
\dot{\gamma}(t)\leq 0\leq\dot{\lambda}(t)\text{ and }\frac{\gamma^2(t)}{\lambda(t)}\geq\frac{2(1+\theta)}{\delta}
\]
almost everywhere, for some $\theta>0$. For initial values $u_0,v_0\in X$, we then consider the dynamical system 
\[
\begin{cases}
\ddot{x}(t)+\gamma(t)\dot{x}(t)+\lambda(t)\left[x(t)-J_{\eta A}\left(x(t)-\eta B(x(t))\right)\right]=0,\\
x(0)=u_0, \dot{x}(0)=v_0.
\end{cases}\tag*{$(*)'_2$}\label{SFB}
\]
In analogy to before, this system can be conceived of as an instantiation of the system \ref{secondOrder} using the map $T:=J_{\eta A}\circ (\mathrm{Id}-\eta B)$, which is $\frac{1}{\delta}$-averaged and satisfies $\mathrm{Fix}T=\mathrm{zer}(A+B)$ as before (see the beginning of the proof of Theorem 12 in \cite{BotCsetnek2016}). So writing 
\[
T=\left(1-\frac{1}{\delta}\right)\mathrm{Id}+\frac{1}{\delta}R
\]
with $R$ nonexpansive, \ref{SFB} is an instantiation of \ref{secondOrder} using the map $R$ and the parameter sequence $\frac{1}{\delta}\lambda$ instead of $\lambda$ (see also the proof of Corollary 11 in \cite{BotCsetnek2016}). 

We immediately obtain the following result on the quantitative asymptotic behavior of the associated solution under a finite-dimensionality assumption on $X$, a quantitative variant of parts of Theorem 12 in \cite{BotCsetnek2016}:

\begin{theorem}\label{thm:metaFBSecondOrder}
Let $X$ be a finite-dimensional Hilbert space with dimension $d$ and let $A:X\to 2^X$ be maximally monotone and $B:X\to X$ be $\beta$-cocoercive for $\beta>0$. For $\eta\in (0,2\beta)$ and $\delta:=\frac{4\beta-\eta}{2\beta}$, let $\lambda,\gamma:[0,+\infty)\to (0,+\infty)$ be Lebesgue measurable functions which are locally absolutely continuous and satisfy
\[
\dot{\gamma}(t)\leq 0\leq\dot{\lambda}(t)\text{ and }\frac{\gamma^2(t)}{\lambda(t)}\geq\frac{2(1+\theta)}{\delta}
\]
almost everywhere, for some $\theta>0$. Fix $u_0,v_0\in X$ and let $y\in\mathrm{zer}(A+B)$ with $b,c,d\in\mathbb{N}$ such that $\norm{u_0-y}\leq b$, $\norm{v_0}\leq c$ and $\norm{u_0-T(u_0)}\leq d$. Let $x$ be the unique strong global solution to \ref{SFB}. 

Then $x$ is metastable in the sense that for all $\varepsilon>0$ and $f:\mathbb{N}\to\mathbb{N}$, it holds that 
\[
\exists n\leq \Delta(\min\{\varepsilon,\beta\varepsilon/2\},f)\ \forall s,t\in [n,n+f(n)]\left( \norm{x(s)-x(t)}\leq\varepsilon\text{ and }\norm{x(t)-T(x(t))}\leq\varepsilon\right),
\]
where moreover the bound $\Delta(\varepsilon,f)$ can be explicitly described as in Theorem \ref{thm:secondOrderMeta}.
\end{theorem}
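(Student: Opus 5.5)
The plan is to reduce Theorem \ref{thm:metaFBSecondOrder} to Theorem \ref{thm:secondOrderMeta}, in the same way Theorem \ref{thm:metaFBFirstOrder} was obtained from the first-order results. The reduction comes from rewriting the system. Since $T:=J_{\eta A}\circ(\mathrm{Id}-\eta B)$ is $\frac{1}{\delta}$-averaged, we have $T=(1-\frac1\delta)\mathrm{Id}+\frac1\delta R$ with $R$ nonexpansive, and hence $x-Tx=\frac1\delta(x-Rx)$. Therefore \ref{SFB} is exactly \ref{secondOrder} with $B$ replaced by the operator $B':=\mathrm{Id}-T=\frac1\delta(\mathrm{Id}-R)$ and with the original $\lambda$ kept.

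The first step is to compute the cocoercivity constant of $B'$. The operator $\mathrm{Id}-R$ is $\frac12$-cocoercive because $R$ is nonexpansive. It follows that $B'=\frac1\delta(\mathrm{Id}-R)$ is $\frac{\delta}{2}$-cocoercive. Put $\beta':=\delta/2$. The parameter hypothesis $\frac{\gamma^2}{\lambda}\geq\frac{2(1+\theta)}{\delta}=\frac{1+\theta}{\beta'}$ is then precisely assumption (A1) of the preceding subsection for $B'$. The monotonicity assumptions on $\gamma,\lambda$ carry over unchanged, and so do the bounds $\underline\lambda,\overline\lambda,\underline\gamma,\overline\gamma$.

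The second step is to check the data for $B'$. We have $\mathrm{Zer}B'=\mathrm{Fix}T=\mathrm{zer}(A+B)$, so $y$ is an admissible zero. Moreover $\norm{B'(u_0)}=\norm{u_0-T(u_0)}\leq d$, which supplies the required bound on $B'(u_0)$. The bounds $b$ and $c$ are as before. Because $\norm{B'(x(t))}=\norm{x(t)-T(x(t))}$, the conclusion of Theorem \ref{thm:secondOrderMeta} translates verbatim into the claimed statement with $\norm{x(t)-T(x(t))}\leq\varepsilon$.

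The only point that needs care is the bookkeeping of constants. Every occurrence of $\beta$ in Theorem \ref{thm:secondOrderMeta} and Lemma \ref{lem:SecondIntMeta} is to be read as $\beta'=\delta/2$; this applies to $M$, $L$, $a_1$, $B$ and the Lipschitz modulus $\omega$. The phrase ``explicitly described as in Theorem \ref{thm:secondOrderMeta}'' must therefore be understood with that substitution. A further subtlety is the argument $\min\{\varepsilon,\beta\varepsilon/2\}$, which arises from $\omega(\varepsilon)=\beta\varepsilon$. It should strictly become $\min\{\varepsilon,\beta'\varepsilon/2\}$. Since $\delta\in(1,2)$ gives $\beta'\in(1/2,1)$, I would either state the theorem with $\beta'$ or observe that the Lipschitz constant of $B'=\mathrm{Id}-T$ is at most $2$. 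This yields an explicit modulus $\omega(\varepsilon)=\varepsilon/2$, and the bound then holds after renaming. Apart from this renaming, the proof is a direct instantiation and needs no new estimates.
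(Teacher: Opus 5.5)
Your proposal is correct and follows essentially the paper's route. The paper likewise obtains the theorem by reading \ref{SFB} as an instance of \ref{secondOrder} and invoking Theorem~\ref{thm:secondOrderMeta}; the only difference is where the factor $\frac1\delta$ goes. The paper puts it on the parameter (operator $\mathrm{Id}-R$, which is $\frac12$-cocoercive, with $\frac1\delta\lambda$ in place of $\lambda$), while you put it on the operator ($\mathrm{Id}-T$, which is $\frac\delta2$-cocoercive, with $\lambda$ unchanged). Both give the same system and the same condition $\gamma^2/\lambda\geq 2(1+\theta)/\delta$.

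Your split is, if anything, the tidier one. The hypothesis $\norm{u_0-T(u_0)}\leq d$ and the conclusion on $\norm{x(t)-T(x(t))}$ then match the data of Theorem~\ref{thm:secondOrderMeta} verbatim, whereas with $R$ one only has $\norm{u_0-R(u_0)}\leq\delta d$ and must rescale. You are also right that $\beta$ in $\min\{\varepsilon,\beta\varepsilon/2\}$ and inside $\Delta$ must be read as the cocoercivity constant of the substituted operator, not that of the original $B$.
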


\begin{remark}
Akin to Remark \ref{rem:approximateZeros}, if Theorem \ref{thm:metaFBSecondOrder} is applied with $\min\{\varepsilon,\varepsilon/( \gamma^{-1}+\beta^{-1})\}$ in place of $\varepsilon$, then for any $t\in [n,n+f(n)]$ with the respective $n$, there are $w\in (A+B)(v)$ such that $\norm{x(t)-v},\norm{w}\leq \varepsilon$.
\end{remark}

We now move to the associated strong convergence result for \ref{SFB} under a uniform monotonicity assumption for either of the operators $A$ or $B$, again in similarity to before.

First, we have the following quantitative rendering of the associated limit $\lim_{t\to\infty}B(x(t))= B(x^*)$ for $x^*\in\mathrm{zer}(A+B)$ as established in item (iv) of Theorem 12 in \cite{BotCsetnek2016}:

\begin{proposition}\label{pro:SFB-B-rate}
Let $X$ be a Hilbert space and let $A:X\to 2^X$ be maximally monotone and $B:X\to X$ be $\beta$-cocoercive for $\beta>0$. For $\eta\in (0,2\beta)$ and $\delta:=\frac{4\beta-\eta}{2\beta}$, let $\lambda,\gamma:[0,+\infty)\to (0,+\infty)$ be Lebesgue measurable functions which are locally absolutely continuous and satisfy
\[
\dot{\gamma}(t)\leq 0\leq\dot{\lambda}(t)\text{ and }\frac{\gamma^2(t)}{\lambda(t)}\geq\frac{2(1+\theta)}{\delta}
\]
almost everywhere, for some $\theta>0$. Fix $u_0,v_0\in X$ and let $y\in\mathrm{zer}(A+B)$ with $b,c,d\in\mathbb{N}$ such that $\norm{u_0-y}\leq b$, $\norm{v_0}\leq c$ and $\norm{u_0-T(u_0)}\leq d$. Let $x$ be the unique strong global solution to \ref{SFB}. Then $\lim_{t\to\infty}B(x(t))= B(y)$ and for any $f:\mathbb{N}\to\mathbb{N}$ and $\varepsilon>0$, we have
\[
\exists n\leq \Lambda(\varepsilon^2\eta\beta/3K,f)\ \forall t\in[n,n+f(n)]\left( \norm{B(x(t))-B(y)}<\varepsilon\right)
\]
where $\Lambda$ is defined as in Lemma \ref{lem:SecondIntMeta} and $M:=bc+\frac{\overline{\gamma}}{2}b^2+\beta\overline{\gamma}\underline{\lambda}^{-1}c^2$ and $K:=\sqrt{b^2+2\underline{\gamma}^{-1}M}$.
\end{proposition}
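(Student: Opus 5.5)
We follow the pattern of Proposition \ref{pro:FB-B-rate}, now driven by the rate of metastability from Lemma \ref{lem:SecondIntMeta} instead of a rate of asymptotic regularity. The strategy is to apply Lemma \ref{lem:FB-B-ineq} pointwise along the trajectory and then transfer metastability of $\norm{x(t)-T(x(t))}$ to $\norm{B(x(t))-B(y)}$.

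First we reduce \ref{SFB} to \ref{secondOrder}. Write $T=(1-\frac{1}{\delta})\mathrm{Id}+\frac{1}{\delta}R$ with $R$ nonexpansive. Then $\mathrm{Id}-R$ is $\frac12$-cocoercive, and $x-T(x)=\frac{1}{\delta}(x-R(x))$. So \ref{SFB} is \ref{secondOrder} with operator $\mathrm{Id}-T=\frac1\delta(\mathrm{Id}-R)$, which is $\frac{\delta}{2}$-cocoercive. The standing assumption $\gamma^2/\lambda\geq 2(1+\theta)/\delta$ is exactly the cocoercivity condition for this operator. Its zeros are $\mathrm{Fix}T=\mathrm{zer}(A+B)\ni y$.

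Consequently, Lemmas \ref{lem:SecondBoundedness} and \ref{lem:SecondIntMeta} apply with $u_0-T(u_0)$ in place of $B(u_0)$. They yield the bound $\norm{x(t)-y}\leq K$ for all $t$. They also yield, for every $\delta'>0$ and $f$, some $n\leq\Lambda(\delta',f)$ with $\norm{x(t)-T(x(t))}<\delta'$ for all $t\in[n,n+f(n)]$.

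Next, Lemma \ref{lem:FB-B-ineq}, used with step size $\eta$ in place of $\gamma$, gives for every $t$
\[
\eta\beta\norm{B(x(t))-B(y)}^2\leq\left(1+\frac{\eta}{\beta}\right)\norm{x(t)-y}\norm{T(x(t))-x(t)}\leq 3K\norm{T(x(t))-x(t)},
\]
using $\eta<2\beta$ and the bound by $K$.

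Now choose $\delta'=\varepsilon^2\eta\beta/3K$. On the interval $[n,n+f(n)]$ we then get $\norm{B(x(t))-B(y)}^2<\varepsilon^2$, which is the claimed metastability bound with $\Lambda(\varepsilon^2\eta\beta/3K,f)$. The qualitative limit follows because metastability for all $f$ is noneffectively equivalent to convergence, as in the proof of Theorem \ref{thm:qualThm}. Alternatively, it follows directly from $\norm{x(t)-T(x(t))}\to0$ (Lemma \ref{lem:SecondIntMeta}) together with the displayed inequality.

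The main obstacle is the bookkeeping in the reduction step. One must check that the constants $M$, $K$, $\beta$ and $\overline{\lambda},\underline{\lambda}$ in Lemma \ref{lem:SecondIntMeta} are the ones appropriate to the rescaled operator $\mathrm{Id}-T$, rather than to $B$ itself. Since the statement reuses $M$ and $K$ in the original notation, I would first try to argue that the bounds there only need the cocoercivity constant implicit in the assumption $\gamma^2/\lambda\geq 2(1+\theta)/\delta$. Where needed, I would read $\beta$ in those constants as $\delta/2$, keeping the rest of the argument unchanged.
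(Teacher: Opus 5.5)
Your proposal is correct and matches the paper's proof. The paper also combines Lemma \ref{lem:FB-B-ineq} (with step size $\eta$), the bound $\norm{x(t)-y}\leq K$ from Lemma \ref{lem:SecondBoundedness} and $1+\eta/\beta\leq 3$ to get $\norm{B(x(t))-B(y)}\leq\sqrt{3K/(\eta\beta)}\sqrt{\norm{T(x(t))-x(t)}}$, and then applies Lemma \ref{lem:SecondIntMeta} at threshold $\varepsilon^2\eta\beta/3K$.

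Your bookkeeping caveat is justified, and it is a point the paper's statement leaves implicit by reusing $\beta$. For the reduced system, $M$, $K$ and the internal constants of $\Lambda$ should use the cocoercivity constant $\delta/2$ of $\mathrm{Id}-T$ (equivalently $\tfrac12$ for $\mathrm{Id}-R$ with $\lambda/\delta$). The $\beta$ of $B$ belongs only in the argument $\varepsilon^2\eta\beta/3K$.
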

\begin{proof}
Using Lemmas \ref{lem:FB-B-ineq} and \ref{lem:SecondBoundedness}, we again obtain
\[
\eta\beta\norm{B(x(t))-B(y)}^2\leq\left( 1+\frac{\eta}{\beta}\right)\norm{x(t)-y}\norm{T(x(t))-x(t)}\leq \left( 1+\frac{\eta}{\beta}\right)K\norm{T(x(t))-x(t)}
\]
and so, using $\eta\leq 2\beta$, we have
\[
\norm{B(x(t))-B(y)}\leq \sqrt{\frac{3K}{\eta\beta}}\sqrt{\norm{T(x(t))-x(t)}}.
\]
Lemma \ref{lem:SecondIntMeta} now yields the result.
\end{proof}

\begin{remark}
As in the case of the preceding case study on the first-order dynamical system, also here our approach towards the convergence of $B(x(t))$ to $B(y)$ for a solution $y\in\mathrm{zer}(A+B)$ deviates from the argument given in the respective paper that we analyze, that is \cite{BotCsetnek2016} in this case. This is purely for reasons of simplicity, and we could have just as well analyzed the concrete proof of this fact given in \cite{BotCsetnek2016} (see the proof of item (iv) of Theorem 12 in \cite{BotCsetnek2016}), which proceeds via a long chain of inequalities to establish the following key inequality (see the end of p.\ 1433 in \cite{BotCsetnek2016}):
\[
\frac{\beta\underline{\lambda}}{2}\int_0^T\norm{B(x(t))-B(y)}^2\,dt+\frac{1}{\eta}\left(\dot{h}(T)-\dot{h}(0)+\gamma(T)h(T)-\gamma(0)h(0)\right)\leq\frac{1}{\eta}\int^T_0\norm{\dot{x}(t)}\,dt,
\]
where $h(t):=\frac{1}{2}\norm{x(t)-y}^2$. By this, it follows in particular that 
\[
\frac{\beta\underline{\lambda}}{2}\int_0^T\norm{B(x(t))-B(y)}^2\,dt\leq\frac{1}{\eta}\left(\int^T_0\norm{\dot{x}(t)}\,dt+\vert \dot{h}(T)\vert+\vert \dot{h}(0)\vert +\gamma(0)h(0)\right)
\]
which can be used to calculate an explicit bound on the $L^2$-norm of the dynamical system $\norm{B(x(t))-B(y)}^2$, using in particular Lemmas \ref{lem:SecondBoundedness} and \ref{lem:SecondDerBoundedness} to bound $\vert \dot{h}(T)\vert$ and Lemma \ref{lem:SecondIntBoundedness} for a bound on the $L^2$-norm of $\norm{\dot{x}(t)}$. Using the differential inequality 
\[
\frac{d}{dt}\left(\frac{1}{2}\norm{B(x(t))-B(y)}^2\right)\leq\frac{1}{2}\norm{B(x(t))-B(y)}^2+\frac{1}{2\beta}\norm{\dot{x}(t)}^2
\]
established in \cite{BotCsetnek2016} (see the first displayed equation on p.\ 1434 therein), the convergence of $B(x(t))$ to $B(y)$ follows. However, an analysis of this argument also ``only'' yields a rate of metastability as we apply Lemma \ref{lem:AAS2}, which generally only comes furnished with a rate of metastability, to exploit the above inequality with the respective bounds. In particular, it will be presumably of quite similar complexity as the rate presented in the preceding Lemma \ref{pro:FB-B-rate}.
\end{remark}

We now obtain a result on rates of metastability in the context of a uniform monotonicity assumption, being a quantitative rendering of item (v) from Theorem 12 in \cite{BotCsetnek2016}:

\begin{theorem}
Let $X$ be a Hilbert space and let $A:X\to 2^X$ be maximally monotone and $B:X\to X$ be $\beta$-cocoercive for $\beta>0$. For $\eta\in (0,2\beta)$ and $\delta:=\frac{4\beta-\eta}{2\beta}$, let $\lambda,\gamma:[0,+\infty)\to (0,+\infty)$ be Lebesgue measurable functions which are locally absolutely continuous and satisfy
\[
\dot{\gamma}(t)\leq 0\leq\dot{\lambda}(t)\text{ and }\frac{\gamma^2(t)}{\lambda(t)}\geq\frac{2(1+\theta)}{\delta}
\]
almost everywhere, for some $\theta>0$. Fix $u_0,v_0\in X$ and let $y\in\mathrm{zer}(A+B)$ with $b,c,d\in\mathbb{N}$ such that $\norm{u_0-y}\leq b$, $\norm{v_0}\leq c$ and $\norm{u_0-T(u_0)}\leq d$. Let $x$ be the unique strong global solution to \ref{SFB}. 

Then, if either $A$ or $B$ is uniformly monotone with function $\phi$, $x$ converges to $y$ and for any $f:\mathbb{N}\to\mathbb{N}$ and $\varepsilon>0$, we have
\[
\exists n\leq \Theta(\varepsilon,f)\ \forall t\in[n,n+f(n)]\left( \norm{x(t)-y}<\varepsilon\right)
\]
where $\Theta(\varepsilon,f)$ is defined as follows: 
\begin{enumerate}[(i)]
\item If $A$ is uniformly monotone with function $\phi$, then
\[
\Theta(\varepsilon,f):=\Lambda\left(\min\left\{\left(\frac{\phi(\varepsilon/2)}{2K}\right)^2\eta\beta/3K,\frac{\eta\phi(\varepsilon/2)}{2K},\frac{\varepsilon}{2}\right\},f\right).
\]
\item If $B$ is uniformly monotone with function $\phi$, then
\[
\Theta(\varepsilon,f):=\Lambda\left(\left(\frac{\phi(\varepsilon)}{K}\right)^2\eta\beta/3K,f\right).
\]
\end{enumerate}
Here, $\Lambda$ is defined as in Lemma \ref{lem:SecondIntMeta} and $M:=bc+\frac{\overline{\gamma}}{2}b^2+\beta\overline{\gamma}\underline{\lambda}^{-1}c^2$ and $K:=\sqrt{b^2+2\underline{\gamma}^{-1}M}$.
\end{theorem}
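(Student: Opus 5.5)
The plan mirrors the proof of the first-order uniform-monotonicity rate theorem, but with rates of metastability in place of rates of convergence. The quantitative input is a single rate of metastability, given by Lemma \ref{lem:SecondIntMeta} applied to $\norm{T(x(t))-x(t)}$. This is the quantity playing the role of $\norm{B(x(t))}$ once \ref{SFB} is read as an instance of \ref{secondOrder}: there $B$ is replaced by $\mathrm{Id}-R$, and $\mathrm{Id}-R=\delta(\mathrm{Id}-T)$. The same lemma, via the inequality chain of Proposition \ref{pro:SFB-B-rate}, also controls $\norm{B(x(t))-B(y)}$. The key observation is that both are controlled by the \emph{same} $\Lambda$ with the same $f$.

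The controls are as follows. Given $\delta_0>0$ and $f$, Lemma \ref{lem:SecondIntMeta} gives an $n\leq\Lambda(\delta_0,f)$ with $\norm{T(x(t))-x(t)}<\delta_0$ for all $t\in[n,n+f(n)]$. Using the bound from the proof of Proposition \ref{pro:SFB-B-rate},
\[
\norm{B(x(t))-B(y)}\leq\sqrt{\tfrac{3K}{\eta\beta}}\sqrt{\norm{T(x(t))-x(t)}},
\]
the same $n$ yields $\norm{B(x(t))-B(y)}<\varepsilon'$ provided $\delta_0\leq\varepsilon'^2\eta\beta/3K$. For this one needs that $\Lambda(\cdot,f)$ is antitone in its first argument. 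This holds since $\varpi_{C,A,B}(\varepsilon)$ and $f'$ are both nonincreasing in $\varepsilon$, and $\tilde{f'}$ is inflationary. Alternatively, one applies the lemma directly at the minimal tolerance and keeps a single witness $n$.

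Case (i), $A$ uniformly monotone. Take $\delta_0$ to be the minimum appearing in $\Theta$. On $[n,n+f(n)]$ we then simultaneously have
\[
\norm{x(t)-T(x(t))}\leq\min\{\eta\phi(\varepsilon/2)/2K,\varepsilon/2\}
\quad\text{and}\quad
\norm{B(x(t))-B(y)}\leq\phi(\varepsilon/2)/2K.
\]
By Lemma \ref{lem:SecondBoundedness}, $\norm{x(t)-y}\leq K$, so Lemma \ref{lem:FB-error}(i) applies with $b:=K$ and step size $\eta$ in place of $\gamma$ (Lemma \ref{lem:FB-error} only needs $\norm{x-y}\leq b$, and $K$ need not be an integer for that argument). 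It gives $\norm{x(t)-y}\leq\varepsilon$. Strictness of the inequality follows by using the strict inequalities from Lemma \ref{lem:SecondIntMeta}, or by running the argument of Lemma \ref{lem:FB-error} with strict bounds.

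Case (ii), $B$ uniformly monotone. Take $\delta_0=(\phi(\varepsilon)/K)^2\eta\beta/3K$. Then $\norm{B(x(t))-B(y)}<\phi(\varepsilon)/K$ on the interval, and Lemma \ref{lem:FB-error}(ii) gives the conclusion.

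Qualitative convergence $x(t)\to y$ follows from the metastable statement holding for every $f$, by the same noneffective argument as in the proof of Theorem \ref{thm:qualThm}. Together with $\norm{x(t)-y}<\varepsilon$ on the intervals, this makes the limit $y$.

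The main obstacle is the bookkeeping that the various quantities are governed by a single $\Lambda$ with a common witness $n$, i.e.\ the monotonicity of $\Lambda$ in $\varepsilon$. The other delicate point is checking that Lemma \ref{lem:SecondIntMeta}, applied to \ref{SFB} through the $R$, $\lambda/\delta$ reparametrisation, indeed controls $\norm{x-T(x)}$ with the stated constants, possibly up to the factor $\delta\geq1$, which only helps.
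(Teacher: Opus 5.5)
Your proposal is correct and is essentially the paper's proof. The paper also applies Lemma \ref{lem:SecondIntMeta} once, at the minimal tolerance appearing in $\Theta$. It then uses the pointwise inequality from the proof of Proposition \ref{pro:SFB-B-rate} and the bound $\norm{x(t)-y}\leq K$ from Lemma \ref{lem:SecondBoundedness} to reach the hypotheses of Lemma \ref{lem:FB-error}.

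You should drop the claim that $\Lambda(\cdot,f)$ is antitone in its first argument. It fails for non-monotone $f$: the iterates of $\tilde{f'}$ from $0$ for a larger tolerance can hit a point where $f$ spikes, while those for a smaller tolerance skip it. You never need this claim, because the single witness $n$ and the pointwise inequality already give all required bounds on the same interval $[n,n+f(n)]$.
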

\begin{proof}
Items (i) and (ii) follow by a combination of Lemmas \ref{lem:FB-error} as well as \ref{lem:SecondBoundedness} and \ref{lem:SecondIntMeta} together with Proposition \ref{pro:SFB-B-rate}.
\end{proof}

\subsection{(Generalized) gradient flows in Hadamard spaces}\label{subsect:Hadamard}

We now move to the topic of flows and associated semigroups in Hadamard spaces, and in particular to our applications to the classic gradient flow semigroup \cite{Bacak2013,Mayer1998} as well as nonlinear semigroup generated by a nonexpansive mapping \cite{BacakReich2014,Stojkovic2012}. Throughout, if not stated otherwise, let $X$ be a Hadamard space with metric $d$, that is $(X,d)$ is a complete geodesic metric space of nonpositive curvature in the sense of Alexandrov. We refer to \cite{Bacak2014,BridsonHaefliger1999} for further information on these spaces, and here just note that Hadamard spaces are uniquely geodesic. In particular, given $x,y\in X$ and $\lambda\in [0,1]$, we write $(1-\lambda)x\oplus \lambda y$ for the unique point $z$ on the geodesic connecting $x$ and $y$ satisfying $d(x,z)=\lambda d(x,y)$ and $d(y,z)=(1-\lambda)d(x,y)$. 

To motivate our case studies, recall the inclusion
\[
\begin{cases}
-\dot{x}(t)\in A(x(t)),\\
x(0)=x_0,
\end{cases}
\]
where $A$ is a maximally monotone operator on a Hilbert space $X$ and $x_0\in\mathrm{dom}(A)$, as already discussed in Section \ref{sec:motivation} before. This inclusion problem not only generalizes the gradient flow equation $\dot{x}=\nabla \phi(x)$ but also captures the parabolic problem $-\dot{x}\in \partial \phi(x)$ associated with a convex lower-semicontinuous (lsc) function $\phi:X\to (-\infty,+\infty]$. By the well-known Crandall-Liggett theorem \cite{CrandallLiggett1971} (see also the preceding work \cite{BrezisPazy1970,CrandallPazy1969,Komura1967}), the solutions of this problem induce a nonexpansive semigroup $S_t(x_0)$ and this semigroup can moreover be explicitly described by the exponential formula $S_t(x_0):=\lim_{n\to \infty}(J_{t/n})^{(n)}(x_0)$, where $J_{\lambda}(x):=(\mathrm{Id}+\lambda A)^{-1}x$ is the resolvent of of $A$. This formula moreover generally produces a semigroup, even outside the solvability of the above problem. The study of the asymptotic behavior of solutions to the above problem can hence be addressed through general theory for these nonexpansive semigroups.

In particular, the above access to the inclusion problem via the exponential formula allows for an extension to the nonlinear context of Hadamard spaces. Given a Hadamard space $X$ and a convex lsc function $\phi:X\to (-\infty,+\infty]$, a nonlinear version of the above gradient flow semigroup can be defined via
\begin{equation*}
S_t(x):=\lim_{n\to\infty}\left(J_{t/n}\right)^{(n)}(x)
\end{equation*}
for $t\geq 0$ and $x\in \overline{\mathrm{dom}\phi}$ for $\mathrm{dom}\phi:=\{x\in X\mid \phi(x)<+\infty\}$. Here, $J_t(x)$ is now the resolvent of $\phi$ defined by
\begin{equation*}
J_t (x):=\argmin_{y\in X}\left(\phi(y)+\frac{1}{2t} d^2(x,y)\right).
\end{equation*}
for $t>0$ (and via $J_t (x):=x$ for $t=0$). The study of this limit and the nonexpansive semigroup it generates in particular goes back to the work of Mayer \cite{Mayer1998} and was later extended by Ba\v{c}\'ak \cite{Bacak2013}. In particular, this semigroup essentially represents a continuous-time analog of the proximal point algorithm in Hadamard spaces (see \cite{Bacak2013}).

Another instance of the previous inclusion problem in Hilbert spaces is given by considering a monotone operator $A:=\mathrm{Id}-F$ defined via a nonexpansive map $F:X\to X$. A nonlinear variant of the resulting semigroup was subsequently considered by Stojkovic \cite{Stojkovic2012}: For a nonexpansive map $F: X\to X$ on a Hadamard space $X$ and given $x\in X$ as well as $t>0$, let $G_{x,t}:X\to X$ be defined by
\begin{equation*}
G_{x,t}(y):=\frac{1}{1+t} x\oplus \frac{t}{1+t} F(y).
\end{equation*}
It can be easily seen that $G_{x,t}$ is a strict contraction, and so it has a unique fixed point~$R_t(x)$. One calls~$R_t: X\to X$ the resolvent of~$F$. For~$t\in[0,\infty)$ and $x\in X$, the resulting exponential formula
\begin{equation*}
T_t(x):=\lim_{n\to\infty}\left(R_{t/n}\right)^{(n)}(x).
\end{equation*}
similarly induces a nonexpansive semigroup on $X$ (see \cite{Stojkovic2012}). Similar definitions were already studied by Reich and Shafrir \cite{ReichShafrir1990} for coaccretive operators on hyperbolic spaces, and the asymptotic study of the semigroup was in particular further developed by Ba\v{c}\'ak and Reich \cite{BacakReich2014}, extending previous results restricted to the Hilbert ball by Reich \cite{Reich1991}. A practical use of the above semigroup is in particular illustrated in \cite{BacakReich2014}, where it is used to provide a novel access to the Hadamard-space-valued version of the $L^2$-Dirichlet problem on measure spaces with a symmetric Markov kernel as pioneered in the seminal work of Sturm \cite{Sturm2001} (see also \cite{Mayer1998} for related results).

The case studies here should be understood to be indicative, and we expect that other semigroups and flows on nonlinear spaces can be quantitatively approached in a similar manner. In particular we want to highlight the recent work of Chaipunya, Kohsaka and Kumam \cite{ChaipunyaKohsakaKumam2021}, where the notion of a monotone vector field known from Hadamard manifolds is extended to Hadamard spaces. Such fields carry a natural notion of a resolvent, and associated semigroups generated via an exponential formula are already discussed in \cite{ChaipunyaKohsakaKumam2021}. Studying this semigroup could in particular provide a uniform (quantitative) study of both special cases studied in the present work, next to the associated monotone inclusions on Hilbert spaces and Hadamard manifolds, and beyond. We here however decided to only focus on the works \cite{Bacak2013,Mayer1998} and \cite{BacakReich2014,Stojkovic2012} as they provide classic nontrivial examples that illustrate our metric generality without requiring too much new theory, in particular as the asymptotic study of the general semigroups from \cite{ChaipunyaKohsakaKumam2021} is still a bit underdeveloped. We hence leave this for future work, as we do the study of flows in different geometric contexts, such as heat flows on Finsler manifolds as in \cite{OhtaSturm2009} or gradient flows on Wasserstein spaces as in \cite{Ohta2009}.

\subsubsection{The gradient flow of a convex function}

We first study the classical gradient flow over a Hadamard space as in \cite{Bacak2013,Mayer1998}. As before, we assume that $X$ is a Hadamard space with metric $d$ and we further fix a convex lsc $\phi: X\to (-\infty,+\infty]$, for which denote the associated gradient flow semigroup, defined as above, by $S_t(x)$. Throughout, we assume that $\argmin \phi\neq\emptyset$ and write $\mu:=\min_{x\in X}\phi(x)$.

The asymptotic behavior of these semigroups is studied in \cite{Mayer1998,Bacak2013}. In particular, strong convergence towards a minimizer of $\phi$ in proper Hadamard spaces is established in \cite{Mayer1998} (see Theorem 2.41 therein), together with strong convergence under a uniform convexity assumption on $\phi$ but for that over general Hadamard spaces (see Theorem 2.42 therein). Weak convergence towards a minimizer of $\phi$ was subsequently shown in \cite{Bacak2013} (see Theorem 1.5 therein). We now study these results from a quantitative perspective.

The key result that establishes both the continuous Fej\'er monotonicity of the semigroup as well as the associated approximation property is the following inequality:

\begin{lemma}[Lemma 2.37 in \cite{Mayer1998}]\label{lem:MayerLem}
For any $x\in \mathrm{dom}\phi$, $z\in X$ and $t\geq s\geq 0$:
\begin{equation*}
d^2(S_t(x),z)\leq d^2(S_s(x),z)-2(t-s)(\phi(S_t(x))-\phi(z)).
\end{equation*}
\end{lemma}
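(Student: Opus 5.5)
The plan is to reduce the claim to a discrete inequality for a single resolvent step, iterate it along the exponential formula, and pass to the limit. We may assume $t>s$ and $\phi(z)<+\infty$, since otherwise the statement is trivial. By the semigroup property $S_t=S_{t-s}\circ S_s$ of the gradient flow (established in \cite{Mayer1998,Bacak2013}), it suffices to prove the case $s=0$:
\[
d^2(S_r(y),z)\leq d^2(y,z)-2r(\phi(S_r(y))-\phi(z))
\]
for $r:=t-s$ and $y:=S_s(x)\in\overline{\mathrm{dom}\phi}$.

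\textbf{Step 1 (resolvent inequality).} For $\lambda>0$, $u\in X$ and $p:=J_\lambda(u)$, I will show
\[
d^2(p,z)\leq d^2(u,z)-d^2(u,p)-2\lambda(\phi(p)-\phi(z)).
\]
Set $\gamma_\theta:=(1-\theta)p\oplus\theta z$ for $\theta\in(0,1)$. Since $p$ minimizes $\phi(\cdot)+\frac{1}{2\lambda}d^2(u,\cdot)$, we have
\[
\phi(p)+\tfrac{1}{2\lambda}d^2(u,p)\leq\phi(\gamma_\theta)+\tfrac{1}{2\lambda}d^2(u,\gamma_\theta).
\]
Next, bound $\phi(\gamma_\theta)\leq(1-\theta)\phi(p)+\theta\phi(z)$ by convexity. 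Bound $d^2(u,\gamma_\theta)\leq(1-\theta)d^2(u,p)+\theta d^2(u,z)-\theta(1-\theta)d^2(p,z)$ by the CAT(0) inequality. After cancelling, divide by $\theta$ and let $\theta\to0$; this gives the claim. Comparing the minimizer $p$ with the competitor $u$ in the same minimization also gives $\phi(J_\lambda u)\leq\phi(u)$, and $J_\lambda u\in\mathrm{dom}\phi$.

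\textbf{Step 2 (iteration).} Fix $n\geq 1$, put $h:=r/n$ and define $y_k:=J_h^{(k)}(y)$. Applying Step 1 with $u=y_{k-1}$, dropping the nonpositive term $-d^2(y_{k-1},y_k)$, and summing over $k=1,\dots,n$ gives
\[
d^2(y_n,z)\leq d^2(y,z)-2h\sum_{k=1}^n(\phi(y_k)-\phi(z)).
\]
For $k\geq1$ all $\phi(y_k)$ are finite, and by the monotonicity from Step 1 they are nonincreasing in $k$, so $\phi(y_k)\geq\phi(y_n)$. Hence the sum is at least $n(\phi(y_n)-\phi(z))$, and we obtain
\[
d^2(y_n,z)+2r\phi(y_n)\leq d^2(y,z)+2r\phi(z).
\]

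\textbf{Step 3 (limit).} As $n\to\infty$ we have $y_n\to S_r(y)$ by the definition of the semigroup. Continuity of $d^2(\cdot,z)$ and lower semicontinuity of $\phi$ give
\[
d^2(S_r(y),z)+2r\phi(S_r(y))\leq\liminf_{n\to\infty}\bigl(d^2(y_n,z)+2r\phi(y_n)\bigr),
\]
which yields the desired inequality.

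The main obstacle is Step 1, namely the first-order optimality argument carried out without linear structure: one has to use the CAT(0) comparison inequality along the geodesic from $p$ to $z$ precisely, and check the division by $\theta$ and the limit $\theta\to 0$ (here $\phi(p),\phi(z)<\infty$ is needed). A secondary point is the semigroup property together with the limit $y_n\to S_r(y)$. I would cite both from \cite{Mayer1998,Bacak2013} rather than reprove them; this is also what lets us start from $y=S_s(x)$, which need only lie in $\overline{\mathrm{dom}\phi}$.
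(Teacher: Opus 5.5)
Your proof is correct. The paper itself does not prove this lemma; it simply imports it from Mayer, so there is no in-paper argument to compare against. Your route is the standard discrete-to-continuous derivation of this estimate. It runs as follows: the one-step resolvent inequality $d^2(J_\lambda u,z)\leq d^2(u,z)-d^2(u,J_\lambda u)-2\lambda(\phi(J_\lambda u)-\phi(z))$, obtained from minimality, convexity along the geodesic and the (CN) inequality; then telescoping along the iterates of the exponential formula, using that $\phi$ is nonincreasing along them; then passing to the limit via lower semicontinuity. Beyond the (CN) inequality, the only external inputs are the existence of $\lim_n (J_{t/n})^{(n)}$ on $\overline{\mathrm{dom}\phi}$ and the semigroup property, both of which the paper likewise takes from Mayer and Ba\v{c}\'ak.

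Two small points are worth stating explicitly.
\begin{enumerate}
\item $J_\lambda u\in\mathrm{dom}\phi$ comes from comparing the minimizer with an arbitrary point of the nonempty set $\mathrm{dom}\phi$. Comparing with $u$ itself only yields $\phi(J_\lambda u)\leq\phi(u)$, which says nothing when $u=y_0\in\overline{\mathrm{dom}\phi}\setminus\mathrm{dom}\phi$. You need $\phi(J_\lambda u)<+\infty$ before cancelling in Step 1.
\item The final rearrangement requires $\phi(S_r(y))<+\infty$. Your Step 3 inequality supplies this, since $\phi>-\infty$. Applied with any $z\in\mathrm{dom}\phi$, it shows $S_t(x)\in\mathrm{dom}\phi$ for $t>0$, and this is exactly what makes the case $\phi(z)=+\infty$ genuinely trivial.
\end{enumerate}

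As a by-product, your argument proves the estimate for all starting points in $\overline{\mathrm{dom}\phi}$, not only $\mathrm{dom}\phi$.
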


Immediately, as $S_0(x):=x$, this yields the boundedness of the semigroup.

\begin{lemma}\label{lem:firstSGboundedness}
Let $y\in\argmin \phi$, $x\in \mathrm{dom}\phi$ and let $b\in\mathbb{N}$ with $d(x,y)\leq b$. Then $S_t(x)$ is bounded with $d(S_t(x),y)\leq b$.
\end{lemma}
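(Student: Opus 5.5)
The plan is to apply Lemma~\ref{lem:MayerLem} directly with $s=0$ and $z=y$. Since $S_0(x)=x$, this gives, for every $t\geq 0$,
\[
d^2(S_t(x),y)\leq d^2(x,y)-2t\,(\phi(S_t(x))-\phi(y)).
\]

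The key observation is that $y\in\argmin\phi$, so $\phi(y)=\mu\leq\phi(S_t(x))$. We need $S_t(x)$ to lie in a region where $\phi$ is defined, which it is since $\phi$ maps into $(-\infty,+\infty]$. If $\phi(S_t(x))=+\infty$ the inequality would become degenerate. That case cannot occur, however, since the left-hand side is finite and $d^2(x,y)$ is finite; alternatively, one may simply note that the subtracted term $2t(\phi(S_t(x))-\mu)$ is nonnegative (possibly $+\infty$). In either reading, the subtracted term is nonnegative, and hence
\[
d^2(S_t(x),y)\leq d^2(x,y)\leq b^2.
\]

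Taking square roots gives $d(S_t(x),y)\leq b$ for all $t\geq 0$. No real obstacle is expected; the only point needing care is the sign of $\phi(S_t(x))-\phi(y)$, which is handled by minimality of $y$.
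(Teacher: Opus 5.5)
Your proof is correct and matches the paper's argument, which likewise obtains the bound immediately from Lemma~\ref{lem:MayerLem} with $s=0$ and $z=y$. It uses $S_0(x)=x$ and the nonnegativity of $\phi(S_t(x))-\mu$, which follows from $y$ being a minimizer.
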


To capture the corresponding minimization problem in the setup of the previous sections, we set $X_0:=\overline{B}_{b}(y)\cap \overline{\mathrm{dom}\phi}$ and 
\[
F(z)=\begin{cases} \phi(z)-\mu&\text{if }z\in X_0,\\+\infty&\text{otherwise},\end{cases}
\]
for a fixed solution $y\in\argmin \phi$ so that $\mathrm{zer}F=\argmin \phi\cap X_0$ and 
\[
\mathrm{lev}_{\leq\varepsilon} F=\{z\in X_0\mid \phi(z)-\mu\leq \varepsilon\}.
\]

Mayer's inequality in particular also yields a modulus of uniform Fej\'er monotonicity (with $G=H=(\cdot)^2$):

\begin{lemma}\label{lem:firstSGFejer}
Let $y\in\argmin \phi$, $x\in \mathrm{dom}\phi$ and let $b\in\mathbb{N}$ with $d(x,y)\leq b$. Then $S_t(x)$ is uniformly $(G,H)$-Fej\'er monotone w.r.t.\ $F$ and with $G=H=(\cdot)^2$, i.e.\
\begin{equation*}
\forall \varepsilon>0\ \forall n,m\in\mathbb{N}\ \forall z\in\mathrm{lev}_{\leq\chi(\varepsilon,n,m)} F\ \forall s\leq t\in [n,n+m]\left( d^2(S_t(x),z)\leq d^2(S_s(x),z)+\varepsilon\right),
\end{equation*}
where $\chi(\varepsilon,n,m):=\varepsilon/2m$.
\end{lemma}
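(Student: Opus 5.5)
The plan is to read the estimate off directly from Mayer's inequality, Lemma \ref{lem:MayerLem}, which holds for \emph{every} $z\in X$ and not only for minimizers. Fix $\varepsilon>0$, $n,m\in\mathbb{N}$, $z\in\mathrm{lev}_{\leq\chi(\varepsilon,n,m)}F$ and $s\leq t$ in $[n,n+m]$. Since $F(z)<+\infty$, the point $z$ lies in $X_0$, and $\phi(z)-\mu\leq\varepsilon/2m$. Lemma \ref{lem:MayerLem} then gives
\[
d^2(S_t(x),z)\leq d^2(S_s(x),z)-2(t-s)(\phi(S_t(x))-\phi(z)).
\]

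The key observation is that the subtracted term can only be negative to a controlled extent, because $\phi(S_t(x))\geq\mu$ (here $\mu=\min\phi$). Hence
\[
-(\phi(S_t(x))-\phi(z))\leq\phi(z)-\mu\leq\frac{\varepsilon}{2m}.
\]
Combining this with $0\leq t-s\leq m$ yields
\[
-2(t-s)(\phi(S_t(x))-\phi(z))\leq 2m\cdot\frac{\varepsilon}{2m}=\varepsilon,
\]
which is exactly the claimed inequality.

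Two routine points need checking. First, $S_t(x)$ must be well defined for $x\in\mathrm{dom}\phi$, so that Lemma \ref{lem:MayerLem} applies. Second, the case $m=0$ forces $s=t$, where the claim is trivial; this sidesteps the division by $m$ in $\chi$, which should then be read as any positive value. Boundedness via Lemma \ref{lem:firstSGboundedness} is not even needed for this step, since no Lipschitz-type estimate is involved. I expect no real obstacle here; the only subtlety is using that $\mu$ is the global minimum, to bound $\phi(S_t(x))$ from below.
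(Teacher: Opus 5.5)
Your proposal is correct and is essentially the paper's proof: Mayer's inequality (Lemma \ref{lem:MayerLem}) applied at the approximate minimizer $z$, with $\phi(S_t(x))\geq\mu$ and $0\leq t-s\leq m$, bounds the subtracted term by $2m(\phi(z)-\mu)\leq\varepsilon$. Your handling of the degenerate case $m=0$, where $s=t$ and $\chi$ is read as any positive value, is a small addition that the paper leaves implicit.
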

\begin{proof}
Fix $\varepsilon,n,m$ and let $z\in\mathrm{lev}_{\leq\chi(\varepsilon,n,m)} F$, i.e.\ $z\in X_0$ with $\phi(z)-\mu\leq\chi(\varepsilon,n,m)\leq\varepsilon/2m$. Lemma \ref{lem:MayerLem} yields for $s\leq t\in [n,n+m]$ that
\begin{align*}
d^2(S_t(x),z)&\leq d^2(S_s(x),z)-2(t-s)\big(\phi(S_t(x))-\phi(z)\big)\\
{}&\leq d^2(S_s(x),z)-2m\big(\mu-\phi(z)\big)\\
{}&\leq d^2(S_s(x),z)+\varepsilon.\qedhere
\end{align*}
\end{proof}

Also, we get the following approximation property from Mayer's inequality:

\begin{lemma}\label{lem:firstSGApprox}
Let $y\in\argmin \phi$, $x\in \mathrm{dom}\phi$ and let $b\in\mathbb{N}$ with $d(x,y)\leq b$. Then
\[
\forall \varepsilon>0\ \forall t\geq \varphi(\varepsilon)\left( \phi(S_t(x)) - \mu <\varepsilon\right)
\]
with rate $\varphi(\varepsilon):=\lceil b^2/\varepsilon\rceil$. In particular, $S_t(x)$ has approximate $F$-points with modulus $\varphi$.
\end{lemma}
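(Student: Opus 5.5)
Apply Lemma \ref{lem:MayerLem} with $s=0$ and $z=y\in\argmin\phi$. Since $S_0(x)=x$, this gives for every $t\geq 0$
\[
0\leq d^2(S_t(x),y)\leq d^2(x,y)-2t\big(\phi(S_t(x))-\mu\big),
\]
hence
\[
2t\big(\phi(S_t(x))-\mu\big)\leq d^2(x,y)\leq b^2,
\]
that is, $\phi(S_t(x))-\mu\leq b^2/(2t)$ for $t>0$.

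Now let $\varepsilon>0$ and $t\geq\varphi(\varepsilon)=\lceil b^2/\varepsilon\rceil$. The degenerate case $b=0$ is handled directly: then $x=y$, and by Lemma \ref{lem:firstSGboundedness} we get $S_t(x)=y$, so $\phi(S_t(x))-\mu=0<\varepsilon$. Otherwise $t\geq b^2/\varepsilon>0$, and
\[
\phi(S_t(x))-\mu\leq \frac{b^2}{2t}\leq\frac{\varepsilon}{2}<\varepsilon.
\]
This yields the claimed rate, with a spare factor of $2$.

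For the last claim we must check that $S_t(x)$ lies in $X_0$, so that $F(S_t(x))=\phi(S_t(x))-\mu$. Lemma \ref{lem:firstSGboundedness} gives $d(S_t(x),y)\leq b$. Also, $S_t(x)\in\overline{\mathrm{dom}\phi}$, since the semigroup maps $\overline{\mathrm{dom}\phi}$ into itself. This holds because resolvents take values in $\mathrm{dom}\phi$ and we pass to the limit, and indeed the bound above shows $\phi(S_t(x))<\infty$ for $t>0$. Therefore $F(S_t(x))<\varepsilon$ for all $t\geq\varphi(\varepsilon)$. In particular, $t=\varphi(\varepsilon)$ witnesses the approximate $F$-point property with modulus $\varphi$, which is clearly monotone.

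There is no real obstacle here. The only points needing care are the division by $t$, which requires $t>0$ (the $b=0$ case above), and the membership $S_t(x)\in X_0$.
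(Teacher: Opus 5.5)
Your proof is correct and follows the paper's argument: Mayer's inequality with $s=0$ and $z=y$ gives $\phi(S_t(x))-\mu\leq b^2/(2t)$, and $t\geq\lceil b^2/\varepsilon\rceil$ then yields $\phi(S_t(x))-\mu\leq\varepsilon/2<\varepsilon$. Your separate handling of $b=0$ (where $\varphi(\varepsilon)=0$ and the division by $t$ would fail) and your check that $S_t(x)\in X_0$ are small points of care that the paper leaves implicit.
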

\begin{proof}
For $t\geq 0$, Lemma \ref{lem:MayerLem} yields
\begin{equation*}
0\leq d^2(S_t(x),y)\leq d^2(x,y)-2t\left(\phi(S_t(x))-\mu\right)\leq b^2-2t \left(\phi(S_t(x))-\mu\right),
\end{equation*}
so that we get
\[
\phi(S_t(x))-\mu\leq \frac{b^2}{2t}\text{ for all }t>0.
\]
In particular, for $t\geq \lceil b^2/\varepsilon\rceil$, we have
\[
\phi(S_t(x))-\mu\leq\frac{b^2}{2\lceil b^2/\varepsilon\rceil}\leq\frac\varepsilon2<\varepsilon.\qedhere
\]
\end{proof}

Combined, we can hence give the following result on the quantitative asymptotic behavior of the semigroup in the context of proper Hadamard spaces, a quantitative version of Theorem 2.41 in \cite{Mayer1998}.

\begin{theorem}
Consider an Hadamard space~$X$ and a convex lsc function $\phi: X\to (-\infty,+\infty]$. Let $y\in\argmin \phi$, $x\in \mathrm{dom}\phi$ and let $b\in\mathbb{N}$ with $d(x,y)\leq b$. Define $X_0:=\overline B_b(y)\cap \overline{\mathrm{dom}\phi}$ and assume that $\gamma$ is a modulus of total boundedness for $X_0$.

Then $S_t(x)$ is metastable in the sense that for all $\varepsilon>0$ and all nondecreasing $f:\mathbb{N}\to\mathbb{N}$, it holds that
\[
\exists n\leq \Delta(\varepsilon,f)\ \forall s,t\in [n,n+f(n)]\left( d(S_t(x),S_s(x))\leq\varepsilon\right),
\]
where moreover the bound $\Delta(\varepsilon,f)$ can be explicitly described by $\Delta(\varepsilon,f):=\Delta(\gamma(\varepsilon/\sqrt{12})+1,\varepsilon,f)+1$ with $\Delta(0,\varepsilon,f):=0$ and
\begin{equation*}
\Delta(j+1,\varepsilon,f):=\left\lceil 24b^2\big(f(\Delta(j,\varepsilon,f)+1)+1\big)/\varepsilon^2\right\rceil.
\end{equation*}
for $j\geq 1$.
\end{theorem}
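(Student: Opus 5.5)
The plan is to instantiate Theorem~\ref{thm:generalFejer-rates} in its error-free form (approximate $F$-point property), working over $X_0=\overline{B}_b(y)\cap\overline{\mathrm{dom}\phi}$, and then to simplify the resulting bound. By Lemma~\ref{lem:firstSGboundedness} the trajectory $S_t(x)$ stays inside $X_0$; note that $S_t(x)\in\overline{\mathrm{dom}\phi}$ since the resolvents map into $\mathrm{dom}\phi$. So $\gamma$ serves as the modulus of total boundedness.

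The parameters are chosen as follows.
\begin{itemize}
\item Perturbations: $G=H=(\cdot)^2$, with $h(\varepsilon)=\varepsilon^2$ and $g(\varepsilon)=\sqrt{\varepsilon}$. Then $g(h(\varepsilon/2)/3)=\sqrt{\varepsilon^2/12}=\varepsilon/\sqrt{12}$, so $P=\gamma(\varepsilon/\sqrt{12})+1$.
\item Fej\'er modulus: Lemma~\ref{lem:firstSGFejer} gives $\chi(\varepsilon,n,m)=\varepsilon/2m$, independent of $n$.
\item Approximation bound: Lemma~\ref{lem:firstSGApprox} gives $\varphi(\varepsilon)=\lceil b^2/\varepsilon\rceil$, which is monotone. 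Since $e\equiv 0$, we may take $\Delta(j,\varepsilon,f)=\varphi(\widehat{\varepsilon}_j)$.
\end{itemize}
Because $\chi$ ignores $n$, we get
\[
\chi^M_f(\delta,n)=\min\{\delta/2(f(m+1)+1)\mid m\leq n\}.
\]
If $f$ is nondecreasing, this equals $\delta/2(f(n+1)+1)$. Hence
\[
\widehat{\varepsilon}_j=\min_{i<j}\frac{\varepsilon^2/12}{2(f(\Delta(i)+1)+1)}.
\]

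The remaining step is to show that the sequence $\Delta(i,\varepsilon,f)$ is nondecreasing in $i$. This lets the minimum over $i<j$ collapse to the term $i=j-1$, giving
\[
\Delta(j,\varepsilon,f)=\left\lceil 24b^2\big(f(\Delta(j-1,\varepsilon,f)+1)+1\big)/\varepsilon^2\right\rceil,
\]
which is the stated recursion. I would prove monotonicity by induction on $j$, using that $f$ is nondecreasing. It also helps that $\Delta(1)\geq 0=\Delta(0)$, and that the min-based definition $\widehat{\varepsilon}_j$ is automatically nonincreasing in $j$, so $\varphi(\widehat{\varepsilon}_j)$ is nondecreasing regardless. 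This second observation even shows that monotonicity holds under the original definition directly.

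The outer maximum in Theorem~\ref{thm:generalFejer-rates} then reduces to $\Delta(P,\varepsilon,f)+1$, as claimed. The main obstacle is this bookkeeping: checking that the simplified recursion agrees with, or dominates, the general bound once the min and max collapse. This is where the nondecreasing assumption on $f$ is used. If the collapsed form does not match exactly, I would instead argue that it is an upper bound, since $\varphi$ is antitone in its argument.
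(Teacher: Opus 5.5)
Your proposal is correct and is essentially the paper's proof. Both instantiate Theorem~\ref{thm:generalFejer-rates} in its error-free form over $X_0$, with $G=H=(\cdot)^2$, $h(\varepsilon)=\varepsilon^2$, $g(\varepsilon)=\sqrt{\varepsilon}$, $\chi(\varepsilon,n,m)=\varepsilon/2m$ and $\varphi(\varepsilon)=\lceil b^2/\varepsilon\rceil$. Your bookkeeping is exactly the ``simplification of the resulting bound'' that the paper leaves implicit: you collapse $\chi^M_f$ using that $f$ is nondecreasing, and you get monotonicity of $\Delta(i,\varepsilon,f)$ from $\widehat{\varepsilon}_j$ being nonincreasing and $\varphi$ antitone. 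It also shows that the recursion should be read from $j=0$, since $\Delta(1,\varepsilon,f)=\lceil 24b^2(f(1)+1)/\varepsilon^2\rceil$.
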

\begin{proof}
The result immediately follows by instantiating Theorem \ref{thm:generalFejer-rates} over the given $X_0$ and simplifying the resulting bound (recall that $S_t(x)\in X_0$ for all $t\geq 0$ by Lemma \ref{lem:boundedness} and since $S_t(x)\in\overline{\mathrm{dom}\phi}$). In particular, we set $G:=H:=(\cdot)^2$ as well as $h(\varepsilon):=\varepsilon^2$ and $g(\varepsilon):=\sqrt{\varepsilon}$. That $S_t(x)$ is uniformly $(G,H)$-Fej\'er monotone w.r.t.\ $F$ follows by Lemma \ref{lem:firstSGFejer}, which in particular yields that $\chi(\varepsilon,n,m):=\varepsilon/2m$ is an associated modulus. As we do not deal with errors, it suffices that $S_t(x)$ has the approximate $F$-point property, which follows by Lemma \ref{lem:firstSGApprox} with a respective bound $\varphi(\varepsilon):=\lceil b^2/\varepsilon\rceil$ that is immediately monotone under our assumptions. 
\end{proof}

If we assume that $\phi$ is uniformly continuous with a respective modulus, then this continuity property transfers to $F$, so that using Theorem \ref{thm:generalFejer-rates} we can moreover provide a bound that guarantees that $\phi(S_t(x))-\mu\leq\varepsilon$ holds along the region of metastability. We do not spell this out here any further.

Further, we can give the following result on rates of convergence under regularity assumptions in general Hadamard spaces.

\begin{theorem}\label{thm:bacak-reg}
Consider an Hadamard space~$X$ and a convex lsc function $\phi: X\to (-\infty,+\infty]$. Let $y\in\argmin \phi$, $x\in \mathrm{dom}\phi$ and let $b\in\mathbb{N}$ with $d(x,y)\leq b$. Assume that $F$ has a modulus of regularity $\tau$ w.r.t.\ $\overline{B}_b(y)$.

Then $S_t(x)$ satisfies
\[
\forall\varepsilon>0\ \forall t\geq \rho(\varepsilon)\left( \dist(S_t(x),\argmin \phi)<\varepsilon\right)
\]
and further $S_t(x)$ is Cauchy with
\[        
\forall\varepsilon>0\ \forall t,s\geq \rho(\varepsilon/2)\left(d(S_t(x),S_s(x))<\varepsilon\right),
\]
where $\rho$ is defined by $\rho(\varepsilon):=\left\lceil b^2/\tau(\varepsilon)\right\rceil+1$. In particular, $S_t(x)$ converges to a minimizer of $\phi$ with that rate.
\end{theorem}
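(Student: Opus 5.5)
The plan is to instantiate the $e\equiv 0$ case of Theorem \ref{thm:met-reg-rates} over $X_0:=\overline{B}_b(y)\cap\overline{\mathrm{dom}\phi}$, with $F$ as defined above, and then simplify the resulting rate. We take $G:=H:=(\cdot)^2$ with moduli $h(\varepsilon):=\varepsilon^2$ and $g(\varepsilon):=\sqrt{\varepsilon}$; these satisfy $H(a)<h(\varepsilon)\to a<\varepsilon$ and $a<g(\varepsilon)\to G(a)<\varepsilon$. The hypotheses are checked as follows.

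\begin{enumerate}
\item \emph{Boundedness.} By Lemma \ref{lem:firstSGboundedness}, $d(S_t(x),y)\leq b$ for all $t\geq 0$. Since also $S_t(x)\in\overline{\mathrm{dom}\phi}$, the whole trajectory lies in $X_0\subseteq\overline{B}_b(y)$, where $\tau$ is a modulus of regularity.
\item \emph{Fej\'er monotonicity.} For any $z\in\argmin\phi$, Lemma \ref{lem:MayerLem} gives $d^2(S_t(x),z)\leq d^2(S_s(x),z)$ for $t\geq s$, because $\phi(S_t(x))-\phi(z)\geq 0$. So $S_t(x)$ is $(G,H)$-Fej\'er monotone w.r.t.\ $F$ with $e\equiv 0$.
\item \emph{Approximate $F$-points.} Lemma \ref{lem:firstSGApprox} gives the approximate $F$-point property with the monotone bound $\varphi(\varepsilon)=\lceil b^2/\varepsilon\rceil$.
\end{enumerate}

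Plugging these into the $e\equiv 0$ clause of Theorem \ref{thm:met-reg-rates} gives the rate
\[
\rho(\varepsilon)=\varphi(\tau(g(h(\varepsilon))))+1=\varphi(\tau(\sqrt{\varepsilon^2}))+1=\left\lceil b^2/\tau(\varepsilon)\right\rceil+1.
\]
This is exactly the stated $\rho$. The theorem then yields $\mathrm{dist}(S_t(x),\mathrm{zer}F)<\varepsilon$ for $t\geq\rho(\varepsilon)$, together with the Cauchy rate $\rho(\varepsilon/2)$.

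Two points need care.

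\textbf{Identifying $\mathrm{zer}F$ with $\argmin\phi$.} The theorem only controls the distance to $\mathrm{zer}F=\argmin\phi\cap X_0$. Since $\argmin\phi\cap X_0\subseteq\argmin\phi$, we have $\mathrm{dist}(\cdot,\argmin\phi)\leq\mathrm{dist}(\cdot,\mathrm{zer}F)$, so the stated bound follows.

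\textbf{Convergence to a minimizer.} Here I would invoke the last clause of Theorem \ref{thm:met-reg-rates}, which needs $X$ complete and $\mathrm{zer}F$ closed.
\begin{itemize}
\item $X$ is complete because it is a Hadamard space.
\item $\argmin\phi=\{\phi\leq\mu\}$ is closed because $\phi$ is lsc.
\item $X_0$ is an intersection of two closed sets.
\end{itemize}
Hence $\mathrm{zer}F$ is closed, and the limit of the Cauchy trajectory lies in $\mathrm{zer}F\subseteq\argmin\phi$, reached with rate $\rho(\varepsilon/2)$. No step here seems hard; the argument is a direct specialization of the abstract theorem.
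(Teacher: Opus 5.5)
Your proposal is correct and follows essentially the same route as the paper: instantiate the $e\equiv 0$ clause of Theorem \ref{thm:met-reg-rates} with $G=H=(\cdot)^2$, $h(\varepsilon)=\varepsilon^2$, $g(\varepsilon)=\sqrt{\varepsilon}$ and the bound $\varphi(\varepsilon)=\lceil b^2/\varepsilon\rceil$ from Lemma \ref{lem:firstSGApprox}, which gives $\rho(\varepsilon)=\lceil b^2/\tau(\varepsilon)\rceil+1$. The differences are minor: you read off Fej\'er monotonicity directly from Lemma \ref{lem:MayerLem}, whereas the paper appeals mainly to the nonexpansiveness of the semigroup and cites Mayer's inequality as an alternative; you also spell out the passage from $\mathrm{zer}F$ to $\argmin\phi$ and the closedness of $\mathrm{zer}F$ needed for the convergence claim, both of which the paper leaves implicit.
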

\begin{proof}
The result immediately follows by instantiating Theorem \ref{thm:met-reg-rates} and simplifying the resulting bound. In particular, we set $G:=H:=(\cdot)^2$ as well as $h(\varepsilon):=\varepsilon^2$ and $g(\varepsilon):=\sqrt{\varepsilon}$. That $S_t(x)$ is $(G,H)$-Fej\'er monotone w.r.t.\ $F$ follows from the fact that $S_t(x)$ is a nonexpansive semigroup (recall also Lemma 2.37 in \cite{Mayer1998} and Lemma \ref{lem:firstSGFejer}). As we do not deal with errors, it suffices that $S_t(x)$ has the approximate $F$-point property which, with a respective bound $\varphi(\varepsilon):=\lceil b^2/\varepsilon\rceil$ follows using Lemma \ref{lem:firstSGApprox} as before. 
\end{proof}

The above in particular applies to functions $\phi$ with weak sharp minima (recall Example \ref{ex:functions}), and hence in particular yields a the strong convergence of the semigroup, together with an associated rate, when the function $\phi$ is uniformly convex. While derived here using our general methodology, such a rate of convergence for the uniformly convex case can in fact be easily read off from the convergence proof tailored to this special case in~\cite{Bacak2013}. Concretely, a more direct inspection of the proof given in \cite{Bacak2013} gives a rate of convergence $2b^2/\tau(\varepsilon)$, where $\tau$ is the modulus of uniform convexity, which is virtually the same as the rate $\left\lceil 4b^2/\tau(\varepsilon/2)\right\rceil+1$ derived from the above result.

\subsubsection{A class of nonlinear semigroups for a nonexpansive mapping}

We now move on to the semigroup generated by a nonexpansive map as in \cite{Stojkovic2012,BacakReich2014}. As before, let $X$ be a Hadamard space with metric $d$ and now fix a nonexpansive map $F: X\to X$. Define the associated function $G_{x,t}(y)$ and is fixed point selection $R_t(x)$ as before, and denote the semigroup associated with $R_t(x)$ by $T_t(x)$. Throughout, we assume that $\mathrm{Fix}F\neq\emptyset$.

The asymptotic behavior of these semigroups is studied in \cite{BacakReich2014,Stojkovic2012}. In particular, it is established in \cite{BacakReich2014} (see Theorem 1.6 therein) that $T_t(x)$ converges (in general weakly) to a fixed point of $F$, for any starting point $x\in X$. We now study this result from a quantitative perspective.

We begin with the boundedness of the semigroup.

\begin{lemma}\label{lem:secondSGboundedness}
Let $y\in\mathrm{Fix}F$, $x\in X$ and let $b\in\mathbb{N}$ with $d(x,y)\leq b$. Then $T_t(x)$ is bounded with $d(T_t(x),y)\leq b$.
\end{lemma}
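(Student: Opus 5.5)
The plan is to show that $y$ is a fixed point of every resolvent $R_t$, that each $R_t$ is nonexpansive (in fact it suffices that $d(R_t(x),y)\leq d(x,y)$), and then to pass to the limit in the exponential formula.

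\emph{Step 1: $y$ is fixed by $R_t$.} Fix $t>0$. Since $F(y)=y$, we have
\[
G_{y,t}(y)=\tfrac{1}{1+t}y\oplus\tfrac{t}{1+t}y=y .
\]
So $y$ is a fixed point of the strict contraction $G_{y,t}$. By uniqueness of that fixed point, $R_t(y)=y$. For $t=0$ the resolvent is the identity, so this is trivial.

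\emph{Step 2: $R_t$ is quasi-nonexpansive at $y$.} Write $p:=R_t(x)$, so that $p=\tfrac{1}{1+t}x\oplus\tfrac{t}{1+t}F(p)$. In a Hadamard space, a geodesic combination satisfies
\[
d\bigl((1-\lambda)a\oplus\lambda b,\,c\bigr)\leq(1-\lambda)d(a,c)+\lambda d(b,c).
\]
This is the convexity of the metric, a consequence of the CAT(0) inequality. Applying it with $\lambda=\tfrac{t}{1+t}$ and $c=y$ gives
\[
d(p,y)\leq\tfrac{1}{1+t}d(x,y)+\tfrac{t}{1+t}d(F(p),F(y))\leq\tfrac{1}{1+t}d(x,y)+\tfrac{t}{1+t}d(p,y).
\]
The last inequality uses that $F$ is nonexpansive and $F(y)=y$. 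Since $d(p,y)<\infty$, we can rearrange to obtain $d(p,y)\leq d(x,y)$.

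\emph{Step 3: pass to the limit.} By induction from Step 2, $d\bigl((R_{t/n})^{(n)}(x),y\bigr)\leq d(x,y)\leq b$ for every $n\geq 1$. The exponential formula gives $T_t(x)=\lim_{n\to\infty}(R_{t/n})^{(n)}(x)$, and the metric is continuous, so $d(T_t(x),y)\leq b$. For $t=0$ the claim is trivial because $T_0(x)=x$.

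An alternative route is to use that $T_t$ is a nonexpansive semigroup and that $T_t(y)=y$; the latter follows from Step 1, since every iterate $(R_{t/n})^{(n)}(y)$ equals $y$. Nonexpansiveness then gives $d(T_t(x),y)=d(T_t(x),T_t(y))\leq d(x,y)$ directly. Either route works. The only nontrivial ingredient is the convexity-of-distance inequality in Step 2, which is standard in CAT(0) spaces, so I do not expect a real obstacle.
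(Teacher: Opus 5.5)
Your proof is correct. Your ``alternative route'' is exactly the paper's proof. The paper cites Theorem 3.10 of Stojkovic for two facts, that $T_t(y)=y$ and that $T_t$ is nonexpansive, and concludes in one line from $d(T_t(x),y)=d(T_t(x),T_t(y))\leq d(x,y)$.

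Your main route is genuinely different. It works at the level of the resolvents, using only the convexity of $d(\cdot,y)$ along geodesics and the nonexpansiveness of $F$:
\begin{itemize}
\item you show that each $R_\lambda$ is quasi-nonexpansive with respect to $\mathrm{Fix}F$;
\item you iterate this estimate;
\item you pass to the limit in the exponential formula.
\end{itemize}
Step 2 uses only $F(y)=y$, so Step 1 is not needed for this route; it only serves the alternative one.

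What your route buys is self-containedness. The only external input is that the limit defining $T_t(x)$ exists; you do not need the nonexpansiveness of the semigroup, or even of $R_\lambda$. That stronger property would require the joint convexity of the metric rather than convexity of the distance to a single point. Your argument therefore transfers verbatim to any setting where $d\bigl((1-\lambda)a\oplus\lambda b,c\bigr)\leq(1-\lambda)d(a,c)+\lambda d(b,c)$ holds, such as $W$-hyperbolic spaces, provided the exponential formula converges there. It also matches the style of the paper's later Lemma~\ref{lem:fixQuant}, which likewise estimates iterates of $R_{t/n}$ and then lets $n\to\infty$.

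The paper's route is shorter given the cited semigroup theory. The global nonexpansiveness of $T_t$ it relies on is needed anyway for the Fej\'er monotonicity in Lemma~\ref{lem:Hadamard-Fejer}, so in context nothing extra is assumed.
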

\begin{proof}
Note that since $y\in\mathrm{Fix}F$, we also have $T_t(y):= y$. Further, $T_t(x)$ is a nonexpansive semigroup (see Theorem 3.10 in \cite{Stojkovic2012} for both). Combined, we get
\[
d(T_t(x),y)=d(T_t(x),T_t(y))\leq d(x,y)\leq b.\qedhere
\]
\end{proof}

To capture the corresponding fixed point problem in the setup of the previous sections, we set $X_0:=\overline{B}_{b}(y)$ and 
\[
\bar F(z)=\begin{cases} d(z,F(z))&\text{if }z\in X_0,\\+\infty&\text{otherwise},\end{cases}
\]
for a fixed solution $y\in\argmin f$ so that $\mathrm{zer}\bar F=\mathrm{Fix} F\cap X_0$ and 
\[
\mathrm{lev}_{\leq\varepsilon} \bar F=\{z\in X_0\mid d(z,F(z))\leq \varepsilon\}.
\]

We now first derive the following approximation property:

\begin{lemma}\label{lem:Hadamard-liminf}
Let $y\in\mathrm{Fix}F$, $x\in X$ and let $b\in\mathbb{N}$ with $d(x,y)\leq b$. Then $T_t(x)$ has appproximate $\bar F$-points with modulus $\varphi$, i.e.
\[
\forall \varepsilon>0\ \exists t\leq\varphi(\varepsilon)\left( d(T_t(x),F(T_t(x)))<\varepsilon\right),
\]
where we have $\varphi(\varepsilon):=\ceil*{(4b/\varepsilon)\cdot e^{4b/\varepsilon}}$.
\end{lemma}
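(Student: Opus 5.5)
The plan is to argue by contradiction from the descent of the distance to the fixed point, combined with a resolvent-type estimate on a short time window. The factor $\ceil*{(4b/\varepsilon)\cdot e^{4b/\varepsilon}}$ suggests two ingredients: a pigeonhole over at most $\sim 4b/\varepsilon$ windows, and a Gronwall-type loss $e^{4b/\varepsilon}$ inside one window. I am not certain this is exactly the split the stated bound is calibrated to.

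\textbf{Step 1 (resolvent identity).} For $z\in X$ and $\lambda>0$, write $w=R_\lambda(z)$. Since $w=\frac{1}{1+\lambda}z\oplus\frac{\lambda}{1+\lambda}F(w)$ lies on the geodesic from $z$ to $F(w)$, we have $d(z,w)=\frac{\lambda}{1+\lambda}d(z,F(w))$ and $d(w,F(w))=\frac{1}{1+\lambda}d(z,F(w))$. Hence
\[
d(R_\lambda(z),F(R_\lambda(z)))=\frac{d(z,R_\lambda(z))}{\lambda}.
\]
Moreover $R_\lambda$ is nonexpansive and fixes $y$, so $d(R_\lambda(z),y)\leq d(z,y)$.

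\textbf{Step 2 (pass to the semigroup).} I would transfer Step 1 to $T_t$ through the exponential formula, using iterates $R_{h/n}^{(n)}$ and letting $n\to\infty$. The stepwise estimate $d(R_{\lambda}^{(k+1)}z,R_\lambda^{(k)}z)\leq \lambda\, d(R_\lambda^{(k)}z,F(R_\lambda^{(k)}z))$ follows from Step 1. The perturbation of $d(\cdot,F(\cdot))$ along one resolvent step is controlled by a factor $(1+\lambda)$, by nonexpansivity of $F$ and the geodesic comparison. Multiplying over $n$ steps gives $(1+h/n)^n\leq e^{h}$. The target is an inequality of the form
\[
\inf_{s\leq h} d(T_s(z),F(T_s(z)))\;\lesssim\; \frac{e^{h}}{h}\,\bigl(d(z,y)-d(T_h(z),y)\bigr)+\text{(small)},
\]
or an analogue with squared distances. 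Here $z=T_{kh}(x)$ and $y\in\mathrm{Fix}F$. The inequality should come from a firm-nonexpansivity-type estimate $d^2(R_\lambda z,y)\leq d^2(z,y)-c\,d^2(z,R_\lambda z)$ for the resolvent in Hadamard spaces (CAT(0) inequality applied on the geodesic defining $R_\lambda$). This estimate would be summed over the $n$ iterates and passed to the limit, using continuity of $t\mapsto T_t$ and of $z\mapsto d(z,F(z))$.

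\textbf{Step 3 (pigeonhole).} By Lemma~\ref{lem:secondSGboundedness}, $t\mapsto d(T_t(x),y)$ is nonincreasing, since $T_t$ is a nonexpansive semigroup fixing $y$. It takes values in $[0,b]$. Choose $h:=4b/\varepsilon$ and consider the windows $[kh,(k+1)h]$. In at most $\sim e^{4b/\varepsilon}$ of them the distance can drop by more than the threshold that makes the right-hand side of Step 2 smaller than $\varepsilon$. Hence some window with $(k+1)h\leq (4b/\varepsilon)e^{4b/\varepsilon}$ contains a $t$ with $d(T_t(x),F(T_t(x)))<\varepsilon$, giving the bound $\varphi(\varepsilon)$.

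\textbf{Main obstacle.} Step 2 is the delicate part: controlling $d(\cdot,F(\cdot))$ along the semigroup rather than only at resolvent points. The exponential-formula limit has to be commuted with the estimates, and I would need the correct CAT(0) replacement for the Hilbert-space identity $\langle z-Fz,z-y\rangle\geq\frac12\|z-Fz\|^2$. If the squared-distance descent estimate does not go through cleanly, my fallback is cruder. I would use only the first-order bound $d(T_h z,z)\leq 2b$ together with Step 1 and the Gronwall factor $e^h$ to compare $T_h(z)$ with $R_h(z)$ directly. That already gives $d(R_h z,F R_h z)\leq 2b/h=\varepsilon/2$, and the exponential factor is then absorbed into the time bound.
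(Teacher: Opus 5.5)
\textbf{There is a genuine gap: the key step is never proved, and your fallback does not work.}

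Your Step~1 is correct, but the argument stops where the real work begins. Step~2 only states a ``target'' inequality, with an unspecified constant and an unspecified ``(small)'' term. The count in Step~3 (at most $\sim e^{4b/\varepsilon}$ bad windows of length $4b/\varepsilon$) is calibrated to that unproven inequality, so $\varphi(\varepsilon)$ is never actually derived.

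The transfer you sketch would also not go through as described. Summing a resolvent estimate over $z_k:=R_{h/n}^{(k)}(z)$ only gives information at the discrete points $z_k$. To turn $\min_{k\le n} d(z_k,F(z_k))$ into a bound on $d(T_s(z),F(T_s(z)))$ for some $s\le h$, you would need $d\bigl(R_{h/n}^{(k)}(z),T_{kh/n}(z)\bigr)\to 0$ uniformly in $k\le n$. That is a Crandall--Liggett-type (locally uniform) convergence of the exponential formula, which continuity of $t\mapsto T_t$ does not give you.

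Your fallback fails outright. $R_h(z)$ is not a point of the orbit $t\mapsto T_t(x)$, and nothing makes $d(T_h(z),R_h(z))$ small. So $d(R_h z,F(R_h z))\le 2b/h$ says nothing about the trajectory, and enlarging the time bound cannot repair this.

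\textbf{How to close the gap.} The missing idea is that the displacement does not increase along resolvent steps; with it, Step~3 becomes unnecessary.
\begin{itemize}
\item From $d(z,R_\lambda z)=\frac{\lambda}{1+\lambda}d(z,F(R_\lambda z))$ and nonexpansivity of $F$ one gets $d(z,R_\lambda z)\le\lambda\, d(z,F(z))$. This is Lemma 3.4 of \cite{Stojkovic2012}, already used in the proof of Lemma~\ref{lem:fixQuant}. By Step~1 it follows that $d(R_\lambda z,F(R_\lambda z))\le d(z,F(z))$.
\item Your squared estimate holds, with $c=(1+\lambda)/\lambda$ in your notation. Apply the CAT(0) inequality on the geodesic from $z$ to $F(R_\lambda z)$, together with $d(F(R_\lambda z),y)\le d(R_\lambda z,y)$ and Step~1. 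This gives
\[
d^2(R_\lambda z,y)\le d^2(z,y)-\lambda(1+\lambda)\,d^2\bigl(R_\lambda z,F(R_\lambda z)\bigr).
\]
\item Sum this over $z_k=R_{t/n}^{(k)}(x)$, $k=1,\dots,n$, and use the monotonicity of the displacement. This yields $t\,d^2(z_n,F(z_n))\le d^2(x,y)\le b^2$.
\item Now you only need the defining limit $z_n\to T_t(x)$ and continuity of $u\mapsto d(u,F(u))$. This gives $d(T_t(x),F(T_t(x)))\le b/\sqrt{t}$ for every $t>0$.
\item For $b>0$ we have $\varphi(\varepsilon)\ge(4b/\varepsilon)e^{4b/\varepsilon}>b^2/\varepsilon^2$, so $t=\varphi(\varepsilon)$ proves the lemma. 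The case $b=0$ is trivial.
\end{itemize}
This route in fact yields a polynomial rate of asymptotic regularity, much better than the stated bound.

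\textbf{Comparison with the paper.} The paper argues quite differently. It quotes from \cite{BacakReich2014} the estimate
\[
D(t)\le\frac{d(T_h(x),x)}{h}+\frac{e^h-1}{h}\bigl(D(t)-D(t+h)\bigr),\qquad D(t):=d(T_t(x),F(T_t(x))).
\]
With $h=4b/\varepsilon$ it bounds the first term by $2b/h=\varepsilon/2$. It then telescopes $D$ (not $d(T_t(x),y)$) over $\lceil e^h\rceil$ windows, using $D(0)\le 2b$. That telescoping is where the exponential factor in $\varphi$ comes from.
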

\begin{proof}
As $F(y)=(y)$ entails $T_t(y)=(y)$, the claim is immediate for~$x=y$. In the following, we may thus assume~$b>0$ and hence $h:=4b/\varepsilon>0$. Due to the semigroup property and since $T_t$ is nonexpansive, we have
\begin{equation*}
    d(T_{t+h}(x),T_t(x))=d(T_t(T_h(x)),T_t(x))\leq d(T_h(x),x).
\end{equation*}
As in \cite{BacakReich2014} (see the fourth equation on page~197 therein), we can hence derive
\begin{equation*}
    d(T_t(x),F(T_t(x)))\leq\frac{d(T_h(x),x)}h+\frac{(e^h-1)\big(d(T_tx,FT_tx)-d(T_{t+h}(x),F(T_{t+h}(x)))\big)}h.
\end{equation*}
In view of $T_h(y)=y$ and since $T_h$ is nonexpansive, we have
    \begin{equation*}
        \frac{d(T_h(x),x)}h\leq\frac{d(T_h(x),T_h(y))+d(y,x)}h\leq\frac{2b}h=\frac\varepsilon2.
    \end{equation*}
So to complete the proof, it is enough to show that there is a $t\leq\varphi(\varepsilon)$ with
\begin{equation*}
    \frac{(e^h-1)\big(d(T_t(x),F(T_t(x)))-d(T_{t+h}(x),F(T_{t+h}(x)))\big)}h<\frac\varepsilon2.
\end{equation*}
Towards a contradiction, we assume that all $t\leq\varphi(\varepsilon)$ validate
\begin{equation*}
    d(T_t(x),F(T_t(x)))-d(T_{t+h}(x),F(T_{t+h}(x)))\geq\frac\varepsilon2\cdot\frac h{e^h-1}=\frac{2b}{e^h-1}.
\end{equation*}
Observe that we have
\begin{equation*}
    (N-1) h\leq\varphi(\varepsilon)\text{ for } N:=\left\lceil e^h\right\rceil.
\end{equation*}
Via a telescoping sum, we obtain
\begin{equation*}
    d(T_0(x),F(T_0(x)))-d(T_{N\cdot h}(x),F(T_{N\cdot h}(x)))\geq N\cdot\frac{2b}{e^h-1}>2b.
\end{equation*}
But we also have
\begin{equation*}
   d(T_0(x),F(T_0(x)))=d(x,F(x))\leq d(x,y)+d(F(y),F(x))\leq 2b,
\end{equation*}
which yields the desired contradiction.
\end{proof}

Our next goal is (a quantitative version of) Fej\'er monotonicity for the semigroup $T_t(x)$. In a preparatory lemma, we first provide a quantitative version of the fact that fixed points of $F$ are also fixed points of the semigroup.

\begin{lemma}\label{lem:fixQuant}
For any $x\in X$ and $t,\delta>0$, if $d(x,F(x))\leq\delta$, then $d(x,T_t(x))\leq\delta\cdot\frac{e^{2t}-1}2$.
\end{lemma}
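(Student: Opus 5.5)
We argue through the exponential formula: first estimate how far one resolvent step can move an approximate fixed point, then iterate and pass to the limit.

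\textbf{Step 1: one resolvent step.} Fix $s>0$ and suppose $d(x,F(x))\le\delta$. By definition, $R_s(x)$ is the fixed point of $G_{x,s}$, so
\[
R_s(x)=\tfrac{1}{1+s}x\oplus\tfrac{s}{1+s}F(R_s(x)).
\]
Hence
\[
d(x,R_s(x))=\tfrac{s}{1+s}\,d(x,F(R_s(x)))\le \tfrac{s}{1+s}\big(d(x,F(x))+d(R_s(x),x)\big),
\]
using nonexpansivity of $F$. Rearranging gives $d(x,R_s(x))\le s\,d(x,F(x))\le s\delta$.

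\textbf{Step 2: propagation of the defect.} To iterate Step 1 we must control how the fixed-point defect grows along the iterates. Write $y:=R_s(x)$. Then
\[
d(y,F(y))\le d(y,x)+d(x,F(x))+d(F(x),F(y))\le 2d(x,y)+d(x,F(x)).
\]
By Step 1 this is at most $(1+2s)\delta$. Inductively, with $x_k:=(R_s)^{(k)}(x)$ we obtain
\[
d(x_k,F(x_k))\le(1+2s)^k\delta \quad\text{and}\quad d(x_k,x_{k+1})\le s(1+2s)^k\delta.
\]

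\textbf{Step 3: summing along the exponential formula.} Take $s=t/n$. Summing the step estimates gives
\[
d\big(x,(R_{t/n})^{(n)}(x)\big)\le \delta\,\frac{t}{n}\sum_{k<n}\Big(1+\frac{2t}{n}\Big)^k=\delta\,\frac{(1+2t/n)^n-1}{2}\le\delta\,\frac{e^{2t}-1}{2}.
\]
Letting $n\to\infty$ and using the definition $T_t(x)=\lim_n (R_{t/n})^{(n)}(x)$ together with continuity of $d$ yields the claim.

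The main obstacle is Step 2. The naive bound doubles the defect at each step, and one must check that the growth factor is indeed $1+2s$, so that the geometric sum closes exactly to $(e^{2t}-1)/2$. If a sharper convexity estimate for $d(R_s(x),F(R_s(x)))$ is available in Hadamard spaces, it would only improve the constant.
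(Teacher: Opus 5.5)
Your proposal is correct and follows the paper's proof essentially step for step. It uses the same induction $d(x_k,F(x_k))\le(1+2s)^k\delta$ with growth factor $1+2s$, the same geometric sum closing to $\delta\,\frac{(1+2t/n)^n-1}{2}$, and the same passage to the limit in the exponential formula. The only difference is that you derive the one-step estimate $d(z,R_s(z))\le s\,d(z,F(z))$ directly from the fixed-point equation $R_s(z)=\tfrac{1}{1+s}z\oplus\tfrac{s}{1+s}F(R_s(z))$ and the nonexpansivity of $F$, whereas the paper cites it as Lemma 3.4 of \cite{Stojkovic2012}.
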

\begin{proof}
Write $x_k=(R_\lambda)^{(k)}(x)$. Assuming $d(x,F(x))\leq\delta$, we shall establish
\begin{equation*}
d(x_n,x_{n+1})\leq \lambda(1+2\lambda)^n\cdot\delta
\end{equation*}
Lemma 3.4 in \cite{Stojkovic2012} (see also Lemma 2.2 in \cite{BacakReich2014}) yields that
\[
d(z,R_\lambda (z))\leq \lambda d(z,F(z))
\]
for all $z\in X$ and $\lambda>0$, so that the above estimate reduces to
\begin{equation*}
d(x_n,F(x_n))\leq(1+2\lambda)^n\cdot\delta.
\end{equation*}
For~$n=0$, this holds by assumption. Using that $F$ is nonexpansive, we inductively get
\begin{align*}
d(x_{n+1},F(x_{n+1}))&\leq d(x_{n+1},x_n)+d(x_n,F(x_n))+d(F(x_n),F(x_{n+1}))\\
&\leq 2\lambda(1+2\lambda)^n\cdot\delta+(1+2\lambda)^n\cdot\delta=(1+2\lambda)^{n+1}\cdot\delta.
\end{align*}
Let us now infer
\begin{equation*}
d(x,x_n)\leq\sum_{k=0}^{n-1}d(x_k,x_{k+1})\leq \lambda\cdot\delta\cdot\sum_{k=0}^{n-1}(1+2\lambda)^k=\delta\cdot\frac{(1+2\lambda)^n-1}2.
\end{equation*}
As $T_t(x)$ is defined as the limit of $(R_{t/n})^{(n)}(x)$ for~$n\to\infty$, we specialize to $\lambda=t/n$ and obtain
\begin{equation*}
d(x,T_t(x))=\lim_{n\to\infty}d\left(x,(R_{t/n})^{(n)}(x)\right)\leq\lim_{n\to\infty}\frac\delta2\cdot\left(\left(1+\frac{2t}n\right)^n-1\right)=\delta\cdot\frac{e^{2t}-1}2,
\end{equation*}
just like the lemma claims.
\end{proof}

We can now derive the following quantitative result on the Fej\'er monotonicity of $T_t(x)$.

\begin{lemma}\label{lem:Hadamard-Fejer}
Let $y\in\mathrm{Fix}F$, $x\in X$ and let $b\in\mathbb{N}$ with $d(x,y)\leq b$. Then $T_t(x)$ is uniformly $(G,H)$-Fej\'er monotone w.r.t.\ $\bar F$ and with $G=H=\mathrm{Id}$, i.e.\
\begin{equation*}
\forall \varepsilon>0\ \forall n,m\in\mathbb{N}\ \forall z\in\mathrm{lev}_{\leq\chi(\varepsilon,n,m)} \bar F\ \forall s\leq t\in [n,n+m]\left( d(T_t(x),z)\leq d(T_s(x),z)+\varepsilon\right),
\end{equation*}
where $\chi(\varepsilon,n,m):=2\varepsilon/(e^{2m}-1)$.
\end{lemma}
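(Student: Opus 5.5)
The argument combines the semigroup property and nonexpansivity of $T_t$ with the quantitative fixed point estimate of Lemma \ref{lem:fixQuant}. Fix $\varepsilon>0$, $n,m\in\mathbb{N}$, a point $z\in\mathrm{lev}_{\leq\chi(\varepsilon,n,m)}\bar F$, and $s\leq t\in[n,n+m]$. Then $z\in X_0$ and $d(z,F(z))\leq 2\varepsilon/(e^{2m}-1)$. If $s=t$ there is nothing to show, so put $r:=t-s\in(0,m]$.

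By the semigroup property, $T_t(x)=T_r(T_s(x))$. Since $T_r$ is nonexpansive (Theorem 3.10 in \cite{Stojkovic2012}),
\[
d(T_t(x),z)\leq d(T_r(T_s(x)),T_r(z))+d(T_r(z),z)\leq d(T_s(x),z)+d(z,T_r(z)).
\]
It therefore suffices to show $d(z,T_r(z))\leq\varepsilon$. Lemma \ref{lem:fixQuant}, applied with $\delta:=2\varepsilon/(e^{2m}-1)$ and time $r$, gives
\[
d(z,T_r(z))\leq \frac{2\varepsilon}{e^{2m}-1}\cdot\frac{e^{2r}-1}{2}=\varepsilon\cdot\frac{e^{2r}-1}{e^{2m}-1}\leq\varepsilon,
\]
where the last step uses that $r\mapsto e^{2r}-1$ is increasing and $r\leq m$. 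This yields the claimed inequality.

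Two points need checking. First, the semigroup and nonexpansivity properties must be invoked for an arbitrary point $z$ (not only for $y$ and $x$), so the splitting $T_t=T_r\circ T_s$ has to be available on all of $X$; both are provided by \cite{Stojkovic2012}. Second, Lemma \ref{lem:fixQuant} is stated for $t>0$, which is fine since we assumed $r>0$. Note that the boundedness of Lemma \ref{lem:secondSGboundedness} and the restriction $z\in X_0$ are not needed here. The case $m=0$ forces $s=t$; there the expression for $\chi$ involves division by zero, so I would read $\chi(\varepsilon,n,0)$ as an arbitrary positive value, and the case is trivial anyway.
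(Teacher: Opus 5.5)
Your proof is correct and follows the paper's argument: you apply the triangle inequality through $T_{t-s}(z)$, use the semigroup property and nonexpansivity of $T_{t-s}$, and bound $d(T_{t-s}(z),z)\leq\varepsilon$ by Lemma \ref{lem:fixQuant} with $\delta=2\varepsilon/(e^{2m}-1)$. You also spell out the monotonicity step $e^{2(t-s)}-1\leq e^{2m}-1$ and the degenerate cases $s=t$ and $m=0$, which the paper leaves implicit.
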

\begin{proof}
Let $\varepsilon,n,m$ be given and let $z\in\mathrm{lev}_{\leq\chi(\varepsilon,n,m)}$. Using Lemma \ref{lem:fixQuant} with $\delta=2\varepsilon/(e^{2m}-1)$, we get $d(T_{t-s}(z),z)\leq \varepsilon$. Using that $T_t(x)$ is a nonexpansive semigroup, we get
\begin{equation*}
d(T_t(x),z)\leq d(T_{t-s}(T_s(x)),T_{t-s}(z))+d(T_{t-s}(z),z)\leq d(T_s(x),z)+\varepsilon.\qedhere
\end{equation*}
\end{proof}

We can now give the following result on the quantitative asymptotic behavior of the semigroup in the context of proper Hadamard spaces, a quantitative version of (parts of) Theorem 1.6 in \cite{BacakReich2014}.

\begin{theorem}
Consider an Hadamard space~$X$ and a nonexpansive mapping $F:X\to X$. Let $y\in\mathrm{Fix}F$, $x\in X$ and let $b\in\mathbb{N}$ with $d(x,y)\leq b$. Define $X_0:=\overline B_b(y)$ and assume that $\gamma$ is a modulus of total boundedness for $X_0$. 

Then $T_t(x)$ is metastable in the sense that for all $\varepsilon>0$ and all nondecreasing $f:\mathbb{N}\to\mathbb{N}$, it holds that
\[
\exists n\leq \Delta(\varepsilon,f)\ \forall s,t\in [n,n+f(n)]\left( d(T_t(x),T_s(x))\leq\varepsilon\right),
\]
where moreover the bound $\Delta(\varepsilon,f)$ can be explicitly described by $\Delta(\varepsilon,f):=\varphi\left(\widehat\varepsilon_P\right)$ for $P:=\gamma\left(\varepsilon/6\right)+1$ with $\widehat\varepsilon_1:=\chi_f(\varepsilon/6,0)$ and
\begin{equation*}
\widehat\varepsilon_j:=\chi_f\left(\frac\varepsilon6,\varphi\left(\widehat\varepsilon_{j-1}\right)\right)
\end{equation*}
for $j\geq 1$, where
\begin{equation*}
\varphi(\varepsilon):=\left\lceil\frac{4r}\varepsilon\cdot e^{4r/\varepsilon}\right\rceil\text{ and }\chi_f(\varepsilon,n):=\frac{2\varepsilon}{e^{2(f(n+1)+1)}-1}.
\end{equation*}
\end{theorem}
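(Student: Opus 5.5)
The plan is to instantiate Theorem~\ref{thm:generalFejer-rates} in its error-free form over $X_0=\overline B_b(y)$, with solution function $\bar F$, and then to simplify the resulting bound. Here $r$ in the definition of $\varphi$ should be read as $b$.

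By Lemma~\ref{lem:secondSGboundedness}, $T_t(x)\in X_0$ for all $t$. We take $G=H=\mathrm{Id}$, so that $g(\varepsilon)=h(\varepsilon)=\varepsilon$ and $e\equiv 0$. Lemma~\ref{lem:Hadamard-Fejer} provides the modulus $\chi(\varepsilon,n,m)=2\varepsilon/(e^{2m}-1)$. Since there are no errors, the approximate $\bar F$-point property suffices. Lemma~\ref{lem:Hadamard-liminf} supplies it with $\varphi(\varepsilon)=\lceil (4b/\varepsilon)e^{4b/\varepsilon}\rceil$, and this $\varphi$ is visibly monotone.

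With these moduli the constants of Theorem~\ref{thm:generalFejer} become $h(\varepsilon/2)/3=\varepsilon/6$ and $P=\gamma(g(\varepsilon/6))+1=\gamma(\varepsilon/6)+1$. The modulus specialises to $\chi_f(\varepsilon,n)=\chi(\varepsilon,n,f(n+1)+1)$, which is exactly the function in the statement. It does not depend on $n$ except through $f(n+1)$, and it is nonincreasing in $n$ because $f$ is nondecreasing. Hence $\chi^M_f(\varepsilon,n)=\chi_f(\varepsilon,n)$.

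Next, $\Delta(j,\varepsilon,f)=\varphi(\widehat\varepsilon_j)$ and $\widehat\varepsilon_j=\min_{i<j}\chi_f(\varepsilon/6,\Delta(i,\varepsilon,f))$. I will show by induction that $\Delta(i,\varepsilon,f)$ is nondecreasing in $i$: $\Delta(0)=0$, and $\widehat\varepsilon_j$ is nonincreasing in $j$ because the minimum ranges over a growing set, so monotonicity of $\varphi$ gives the claim. Consequently the minimum is attained at $i=j-1$. This gives $\widehat\varepsilon_1=\chi_f(\varepsilon/6,0)$ and $\widehat\varepsilon_j=\chi_f(\varepsilon/6,\varphi(\widehat\varepsilon_{j-1}))$, as stated. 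The maximum over $i\le P$ is then $\Delta(P,\varepsilon,f)=\varphi(\widehat\varepsilon_P)$.

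The one delicate point is the trailing $+1$ in the general bound $\max\{\Delta(i,\varepsilon,f)\mid i\leq P\}+1$, which the stated bound omits. I would handle it by inspecting the proof of Theorem~\ref{thm:generalFejer} in the error-free case. There the witness is $n=\lfloor t_I\rfloor+1$ with $I<J\le P$, so $n\le \Delta(I,\varepsilon,f)+1\le \Delta(P,\varepsilon,f)$ whenever $\Delta(I)<\Delta(P)$. This holds as soon as $\varphi$ is strictly increasing along the chain, or more simply because $\varphi$ takes integer values and $t_I\le\varphi(\widehat\varepsilon_I)$. If this bookkeeping fails in edge cases, the fallback is to accept the bound with $+1$, or to index with $P$ shifted by one. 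I expect this off-by-one matching to be the only real obstacle; everything else is direct substitution.
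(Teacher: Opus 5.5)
Your route is the paper's: instantiate Theorem~\ref{thm:generalFejer-rates} with $G=H=g=h=\mathrm{Id}$, the modulus of Lemma~\ref{lem:Hadamard-Fejer} and the bound of Lemma~\ref{lem:Hadamard-liminf}, reading $r$ as $b$. Your simplifications $\chi^M_f=\chi_f$, $\widehat\varepsilon_j=\chi_f(\varepsilon/6,\Delta(j-1,\varepsilon,f))$ and $\max_{i\le P}\Delta(i,\varepsilon,f)=\varphi(\widehat\varepsilon_P)$ are correct. The paper's proof drops the trailing $+1$ without comment, so you were right to flag it. However, your argument for dropping it does not work.

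Integrality of $\varphi$ only gives $\lfloor t_I\rfloor\le\Delta(I,\varepsilon,f)$, i.e.\ $n=\lfloor t_I\rfloor+1\le\Delta(I,\varepsilon,f)+1$. The strict inequality $\Delta(I,\varepsilon,f)<\Delta(P,\varepsilon,f)$ that you then need can fail. Take constant $f\equiv k$. Then $\chi_f(\varepsilon/6,n)=\frac{\varepsilon/3}{e^{2(k+1)}-1}$ for every $n$, so all $\widehat\varepsilon_j$ with $j\ge1$ coincide and $\Delta(1,\varepsilon,f)=\dots=\Delta(P,\varepsilon,f)=:C$. The proof of Theorem~\ref{thm:generalFejer} gives no control excluding $t_I=C$, and in that case your witness is $C+1$. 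Neither fallback (keeping the $+1$, or shifting $P$) yields the bound as stated.

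The missing idea is to take the witness $n:=\lceil t_I\rceil$ instead of $\lfloor t_I\rfloor+1$.
If $t_I\notin\mathbb{N}$, the two witnesses agree.
If $t_I\in\mathbb{N}$, then $n=t_I=\lfloor t_I\rfloor$. Because $f$ is nondecreasing, $n+f(n)\le\lfloor t_I\rfloor+f(\lfloor t_I\rfloor+1)+1$. So $[n,n+f(n)]$ still lies in the window $[\lfloor t_I\rfloor,\lfloor t_I\rfloor+f(\lfloor t_I\rfloor+1)+1]$ controlled by $\chi(\varepsilon/6,\lfloor t_I\rfloor,f(\lfloor t_I\rfloor+1)+1)\ge\varepsilon_J$. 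Every $t$ in this interval satisfies $t\ge t_I$, so the Fej\'er step with $s=t_I$ goes through verbatim.

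Since $t_I\le\varphi(\varepsilon_I)\le\varphi(\widehat\varepsilon_I)=\Delta(I,\varepsilon,f)\in\mathbb{N}$, you get $n=\lceil t_I\rceil\le\Delta(I,\varepsilon,f)\le\Delta(P,\varepsilon,f)=\varphi(\widehat\varepsilon_P)$. This is exactly the stated bound. It is also a second place, besides $\chi^M_f=\chi_f$, where the hypothesis that $f$ is nondecreasing is needed.
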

\begin{proof}
The result immediately follows by instantiating Theorem \ref{thm:generalFejer-rates} over the given $X_0$ and simplifying the resulting bound (recall that $T_t(x)\in X_0$ for all $t\geq 0$ by Lemma \ref{lem:boundedness}). In particular, we set $G:=H:=\mathrm{Id}$ as well as $h:=g:=\mathrm{Id}$. That $T_t(x)$ is uniformly $(G,H)$-Fej\'er monotone w.r.t.\ $\bar F$ follows by Lemma \ref{lem:Hadamard-Fejer}, which in particular yields that $\chi(\varepsilon,n,m):=2\varepsilon/(e^{2m}-1)$ is an associated modulus. As we do not deal with errors, it suffices that $T_t(x)$ has the approximate $\bar F$-point property, which follows by Lemma \ref{lem:Hadamard-liminf} with a respective bound $\varphi(\varepsilon):=\ceil*{(4b/\varepsilon)\cdot e^{4b/\varepsilon}}$ that is immediately monotone under our assumptions. 
\end{proof}

We also derive a quantitative result under regularity and without total boundedness.

\begin{theorem}
Consider an Hadamard space~$X$ and a nonexpansive mapping $F:X\to X$. Let $y\in\mathrm{Fix}F$, $x\in X$ and let $b\in\mathbb{N}$ with $d(x,y)\leq b$. Assume that $\bar F$ has a modulus of regularity $\tau$ w.r.t.\ $\overline{B}_b(y)$.

Then $T_t(x)$ satisfies
\[
\forall\varepsilon>0\ \forall t\geq \rho(\varepsilon)\left( \dist(T_t(x),\mathrm{Fix} F)<\varepsilon\right)
\]
and further $T_t(x)$ is Cauchy with
\[        
\forall\varepsilon>0\ \forall t,s\geq \rho(\varepsilon/2)\left(d(T_t(x),T_s(x))<\varepsilon\right),
\]
where $\rho$ is defined by $\rho(\varepsilon):=\left\lceil (4b/\tau(\varepsilon))\cdot e^{4b/\tau(\varepsilon)}\right\rceil+1$. wIn particular, $T_t(x)$ converges to a fixed point of $F$ with that rate.
\end{theorem}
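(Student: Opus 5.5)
The plan is to instantiate Theorem \ref{thm:met-reg-rates} in its error-free case, in exact parallel to the proof of Theorem \ref{thm:bacak-reg}. First, by Lemma \ref{lem:secondSGboundedness}, $T_t(x)\in\overline{B}_b(y)$ for all $t\geq 0$. Hence $b$ serves as the bound required in Theorem \ref{thm:met-reg}, and the modulus of regularity $\tau$ for $\bar F$ w.r.t.\ $\overline{B}_b(y)$ applies along the whole trajectory. We choose $G:=H:=\mathrm{Id}$, so that $h:=g:=\mathrm{Id}$ are valid moduli in the sense of Theorem \ref{thm:met-reg}.

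Second, we check plain (not uniform) $(\mathrm{Id},\mathrm{Id})$-Fej\'er monotonicity w.r.t.\ $\bar F$ with $e\equiv 0$. For $z\in\mathrm{zer}\bar F=\mathrm{Fix}F\cap X_0$ we have $T_r(z)=z$ for all $r\geq 0$ (Theorem 3.10 in \cite{Stojkovic2012}; alternatively Lemma \ref{lem:fixQuant} with $\delta=0$). By the semigroup property and nonexpansivity, for $t\geq s$,
\[
d(T_t(x),z)=d(T_{t-s}(T_s(x)),T_{t-s}(z))\leq d(T_s(x),z).
\]
Third, Lemma \ref{lem:Hadamard-liminf} supplies the approximate $\bar F$-point property with the monotone bound $\varphi(\varepsilon)=\ceil*{(4b/\varepsilon)\cdot e^{4b/\varepsilon}}$. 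Since $e\equiv 0$, the last part of Theorem \ref{thm:met-reg-rates} then gives the rate $\rho(\varepsilon)=\varphi(\tau(g(h(\varepsilon))))+1=\varphi(\tau(\varepsilon))+1$, which is exactly the stated expression. It yields $\mathrm{dist}(T_t(x),\mathrm{Fix}F)<\varepsilon$ for $t\geq\rho(\varepsilon)$ (using $\mathrm{zer}\bar F\subseteq\mathrm{Fix}F$), and the Cauchy property with rate $\rho(\varepsilon/2)$.

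Finally, for convergence, $X$ is complete as a Hadamard space, and $\mathrm{Fix}F$ is closed because $F$ is continuous. The last clause of Theorem \ref{thm:met-reg-rates} then gives convergence to a point of $\mathrm{Fix}F$. Strictly, that clause speaks of $\mathrm{zer}\bar F=\mathrm{Fix}F\cap\overline{B}_b(y)$, which is also closed, so the limit lies there.

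There is no real obstacle here. The only point needing care is the Fej\'er monotonicity input. Theorem \ref{thm:met-reg} needs only exact Fej\'er monotonicity for actual fixed points, not the uniform modulus of Lemma \ref{lem:Hadamard-Fejer}, so it suffices to record that fixed points of $F$ are stationary for $T_t$.
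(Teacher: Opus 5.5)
Your proposal is correct and is essentially the paper's proof. Like the paper, you instantiate the error-free case of Theorem \ref{thm:met-reg-rates} with $G=H=g=h=\mathrm{Id}$, take Fej\'er monotonicity from $T_t$ being a nonexpansive semigroup that fixes $\mathrm{Fix}F$, and use the bound $\varphi$ of Lemma \ref{lem:Hadamard-liminf}, which gives $\rho(\varepsilon)=\varphi(\tau(\varepsilon))+1$. Your extra checks that $\mathrm{zer}\bar F\subseteq\mathrm{Fix}F$ and that $\mathrm{zer}\bar F$ is closed make explicit what the paper leaves implicit; note only that Lemma \ref{lem:fixQuant} is stated for $\delta>0$, so cite \cite{Stojkovic2012} for stationarity of fixed points (or let $\delta\to 0$).
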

\begin{proof}
The result immediately follows by instantiating Theorem \ref{thm:met-reg-rates} and simplifying the resulting bound. In particular, we set $G:=H:=\mathrm{Id}$ as well as $h:=g:=\mathrm{Id}$. That $T_t(x)$ is $(G,H)$-Fej\'er monotone w.r.t.\ $\bar F$ follows from the fact that $T_t(x)$ is a nonexpansive semigroup (recall also Lemma \ref{lem:Hadamard-Fejer}). As we do not deal with errors, it suffices that $T_t(x)$ has the approximate $\bar F$-point property which, with a respective bound $\varphi(\varepsilon):=\ceil*{(4b/\varepsilon)\cdot e^{4b/\varepsilon}}$ follows using Lemma \ref{lem:Hadamard-liminf} as before. 
\end{proof}

\noindent{\textbf{Acknowledgments:}} We want to thank Ulrich Kohlenbach and Thomas Powell for helpful discussions and remarks on the topic of this paper. Also, we want to thank Radu Ioan Bo\c{t} for suggesting the works \cite{BotCsetnek2016,BotCsetnek2017} as interesting case studies for proof mining in the context of dynamical systems. Lastly, the first author wants to thank Lauren\c{t}iu Leu\c{s}tean for bringing the work \cite{ReichShafrir1990} to his attention in early 2023, which lead to an investigation of \cite{BacakReich2014} in the present work.\\

\noindent{\textbf{Funding:}} The work of Freund has been funded by the Deutsche Forschungsgemeinschaft (DFG, German Research Foundation) -- Project number 460597863.

\bibliographystyle{plain}
\bibliography{ref}

\end{document}